
\documentclass[11pt, makeidx]{amsart}
\usepackage{amssymb,amsfonts,amsthm}
\usepackage{srcltx}
\usepackage{float}

\usepackage{color}
\makeindex

\setcounter{secnumdepth}{6}

\makeatletter
\def\sssub{\@startsection{paragraph}{4}}

\renewcommand\paragraph{\@startsection{paragraph}{4}{\z@}{1.25ex}{0.0001pt}{\normalfont\normalsize\em}}



\makeatother

\numberwithin{paragraph}{subsubsection}

\newcommand{\R}{{\mathbb R}}
\newcommand{\HH}{{\mathbb H}}
\newcommand{\HNN}{\mathrm{HNN}}

\newcommand{\Z}{{\mathbb Z}}
\newcommand{\SL}{{\mathrm{SL}}}

\newcommand{\GL}{{\mathrm{GL}}}
\newcommand{\SO}{{\mathrm{SO}}}

\newcommand{\e }{\varepsilon }

\newcommand{\N}{{\mathbb{N}}}

\newcommand{\la}{\langle}
\newcommand{\ra}{\rangle}
\newcommand{\g}{{\mathfrak g}}
\newcommand{\Con}{{\mathrm{Con}}}
\newcommand{\bbb}{{\mathcal B}}
\newcommand{\dist}{{\mathrm{dist}}}

\newcommand{\cost}{{\mathrm{time}}}

\newtheorem{theorem}{Theorem}[section]
\newtheorem{ex}[theorem]{Example}
\newtheorem{exercise}[theorem]{Exercise}
\newtheorem{example}[theorem]{Example}
\newtheorem{lemma}[theorem]{Lemma}

\newtheorem{cy}[theorem]{Corollary}

\newtheorem{prop}[theorem]{Proposition}
\theoremstyle{definition}
\newtheorem{df}[theorem]{Definition}

\newtheorem{rk}[theorem]{Remark}
\newtheorem{prob}[theorem]{Problem}

\newtheorem{ctt}[theorem]{Quasi-Theorem}

\newcommand{\area}{\mathrm{Area}}

\newcommand{\ttt}{{\mathcal T}}

\newcommand{\vk}{van Kampen }

\newcommand{\ccc}{{\mathcal C}}

\newcommand{\cee}{\alpha}


\newcommand{\dol}{\omega}
\newcommand{\iv}{^{-1}}

\newcommand{\pp}{{\mathcal P} }

\newcommand{\sss}{{\mathcal S} }

\textheight 9 in
\textwidth 6 in
\begin{document}

\def\thesubsubsection       {\thesubsection.\Alph{subsubsection}}

\def\theparagraph       {\thesubsubsection\arabic{paragraph}}

\title[Asymptotic invariants and complexity of groups]{Asymptotic invariants, complexity of groups and related problems}
\author{Mark Sapir}\thanks{The research was supported in part by NSF grant DMS-0700811.}
\address{Department of Mathematics, Vanderbilt University, Nashville, TN 37240, U.S.A.}
\email{m.sapir@vanderbilt.edu}
\subjclass[2000]{{Primary 20F65; Secondary 20F69, 20F38, 22F50}} \keywords{word problem, conjugacy problem, computational complexity,  Dehn function, Higman embedding, asymptotic cone}

\begin{abstract} We survey results about computational complexity of the word problem in groups, Dehn functions of groups and related problems.
\end{abstract}

\maketitle
\tableofcontents

\section{Introduction}\label{I}

The word and conjugacy problems are the most classical algorithmic problem for groups going back to the work of Dehn and Tietze at the beginning of the 20th century. These very basic problems are interested by themselves and because of applications to other areas of mathematics, especially topology (where the analogs of these problems are problems of existence of pointed or free homotopy between loops). There are very many papers devoted to these problems for various classes of groups. Different aspects of these problems are discussed in many books and surveys (see, for example, \cite{BCL, Br, Ch, DrutuIJAC,Eautomatic,Gi, KS, LS,  Mi, OlSoros, OSsur, Ro, SaICM}). Still there are very many new results and ideas that appeared after the last of these surveys was published. We can mention, for example, the construction of finitely presented groups with polynomial-non-recursive and even quadratic-non-recursive Dehn functions \cite{OSnq}, finding a nilpotent finitely generated group with Dehn function not of the form $n^\alpha$ for any $\alpha$ \cite{Wen2}, the proof that all groups $\SL_n(\Z)$, $n\ge 5$, the R.Thompson group $F$, extensions of finitely generated free groups by cyclic groups,  have quadratic Dehn functions \cite{Young,GubaF,BrGr}, the description of all FFFL (space) functions of finitely presented groups and the algebraic characterization of groups with PSPACE word problem \cite{OlFFFL}, solving the isomorphism problem for arbitrary hyperbolic groups \cite{DaG} and many others. Several new methods are used in the proofs of these results and our goal in this survey is to give as gentle as possible an introduction to these results and methods. To this end, we often present not the results in their full generality but their easier to explain approximations. In this regard, this survey is similar to our survey \cite{KS} with Olga Kharlampovich.

Another feature of this survey is the emphasis on ``related problems". For example we consider hyperbolic groups (Section \ref{hgac}). This is the class of groups with the smallest possible (linear) Dehn functions. The  word problem in a hyperbolic group can be solved in linear time (in fact by a real time Turing machine \cite{Hol}). We mention that this class is very large: almost all finitely presented groups are hyperbolic. This leads us to the discussion of various probabilistic models used to clarify the words ``almost all", to small cancelation conditions (including coarse small cancelation conditions over hyperbolic group), to different ways of constructing hyperbolic groups,  Gromov-Olshanskii theory of quotients of hyperbolic groups and various combination theorems, to various monsters constructed as limits of hyperbolic groups and to Gromov's random groups. We also discuss various weakening of the linear isoperimetric inequality including Wenger's result from \cite{Wen1} and Cartan-Hadamard local-to-global type theorems of Gromov \cite{GrHyp} and others. Similarly, when we consider groups with quadratic Dehn functions (Section \ref{dfaapog}), we describe the very diverse zoo of examples of these groups, present ideas of the proofs of results from \cite{Young}, \cite{GubaF} and others. Then we notice that all known groups with quadratic Dehn functions (except for automatic groups) are known to have solvable conjugacy problem although the solvability of conjugacy problem was proved by very  different methods in different classes of groups. So we formulate and discuss a general problem (due to Rips).

Most of the paper is devoted to the Dehn functions which measure the time complexity of the word problem. One can argue that after the \label{pa1}{\em \index{Growth function}growth function} (first introduced by A.~S.~\v Svarc in \cite{Sv}, and later by Milnor \cite{M}) which counts the number of different elements of the group that can be represented by products of generators of length $\le n$, this is possibly the most important and basic geometric invariant of a group.\footnote{Dehn functions are possibly even more important than growth functions because the set of Dehn functions is much more diverse than the set of growth functions (see Section \ref{dfaapog}), so Dehn functions capture more information about a group.} We also discuss the space complexity and the recent results of Olshanskii \cite{OlFFFL}. We present an algebraic characterization of groups with word problem in NP from \cite{BORS} and groups with word problem in PSPACE from \cite{OlFFFL}, discuss examples of groups with NP-complete \cite{SBR} and  coNP-complete \cite{Bir1} word problems.

Note that we could not survey all aspects of complexity of the word and conjugacy problems in groups. For example we do not talk about the average case complexity, generic solutions of algorithmic problems,  search problems and other complexity related issues used in group based cryptography \cite{MSU}. Fortunately, there is a nice recent survey by Shpilrain \cite{Shpil} where at least some of these topics are discussed.

{\bf Acknowledgement.} I am grateful to Efim Zelmanov who inspired me to write this survey. Several people contributed with suggestions, comments and even pieces of text. I am especially grateful to Martin Bridson, Fran\c cois Dahmani, Daniel Groves, Victor Guba, Sergei Ivanov,  Bruce Kleiner, Igor Lys\"enok, Alexander Olshanskii, Denis Osin,  Tim Riley, Stefan Wenger and Robert Young.

\section{Algorithmic problems in groups}

Let $X$ be a set. A \label{pa2}{\em \index{Word}word} over $X$ is a sequence of elements of $X$. A \label{pa3} {\em \index{Group word}group word} over $X$ is a sequence of elements of $X$ and their inverses, i.e. symbols $x\iv$, $x\in X$. The length of a word $W$ is denoted by $|W|$. A group word is called \label{pa4}{\em \index{Group word!reduced}reduced} if it does not contain subwords of the form $xx\iv$ and $x\iv x$, $x\in X$. Every group word can be made reduced by removing all subwords of these forms. The set $F(X)$ of all reduced group words over $X$ is equipped with the binary operation: the product $UV$ of two reduced group words $U, V$ is the result of reducing the concatenation of $U$ and $V$. With this operation, the set $F(X)$ turns into a group which is called the \label{pa5}{\em \index{Group!free}free group} over $X$. The identity element of that group is the empty word denoted by $\emptyset$. For every group $G$, any map $X\to G$ extends uniquely to a homomorphism $F(X)\to G$: any word is mapped to the product of the images of its letters. In particular, every group $G$ generated by at most $|X|$ elements is a homomorphic image of $F(X)$ so that the image of $X$ generates $G$. Let $\phi$ be one of these (surjective) homomorphisms.

The \label{pa6}{\em \index{Word problem}word problem} in $G$ (related to $\phi$) is the following

\begin{prob}\rm{(}The word problem\rm{)}\label{p1}.

{\bf Input:} A reduced group word $W$ over $X$.

{\bf Output:} ``Yes" if $\phi(W)=1$ in $G$ and ``No" otherwise.

\end{prob}

Let us also introduce a ``close cousin" of the word problem, the \label{pa7}{\em conjugacy problem}. We say that two elements $a,b$ of a group $G$ are \label{pa8}{\em conjugate} if there exists an element $t$ (called a \label{pa9}{\em conjugator}) such that $tat\iv=b$.

The \label{pa10}{\em \index{Conjugacy problem}conjugacy problem} in $G$ (related to $\phi$) is the following

\begin{prob}\rm{(}The conjugacy problem\rm{)}\label{p2}.

{\bf Input:} Two reduced group words $U, V$ over $X$.

{\bf Output:} ``Yes" if $\phi(U)$ and $\phi(V)$ are conjugate in $G$ and ``No" otherwise.

\end{prob}

In principle, the existence of an algorithm to solve Problem \ref{p1} or \ref{p2} depends not only on $G$ but also on $\phi$. There is, however, an important case when $\phi$ does not matter. Suppose that $X$ is finite. Then $G=\phi(F(X))$ is called \label{pa11}{\em \index{Group!finitely generated}finitely generated}. Let $N$ be the kernel of $\phi$. It is a normal subgroup of $F(X)$. Suppose that $N$ is generated as a normal subgroup by a finite set $R$ (``generated as a normal subgroup" means that every element of $N$ is a product of conjugates of elements of $R$ and their inverses). Then we say that $G$ has a finite presentation $\la X\mid R\ra$. For {\em \index{Group!finitely presented} finitely presented} groups the solvability of the word or conjugacy problem (i.e. the existence of the needed algorithm) does not depend on the choice of $\phi$ or even on the choice of the finite set $X$ (as long as $\phi$ is surjective).

Note that the conjugacy problem is stronger than the word problem, that is if the conjugacy problem is solvable in $G$, then the word problem is also solvable. Indeed, only the identity element can be a conjugate of the identity element.

\subsection{Van Kampen diagrams}\label{s:vkd}

\subsubsection{The definition}\label{td}
Let $G$ be a finitely presented group given by a finite set of generators $X$ and a finite set of relators $R$, $\phi$ is a homomorphism from $F(X)$ onto $G$, $N$ is the kernel of this homomorphism. Then words in $N$ are precisely products of generators of $G$ and their inverses that are equal to 1 in $G$. The {\em word problem} for $G$ is then the membership problem in $N$ for the elements of $F(X)$. Note that if the product $uv$ is in $N$ then $vu$ is also in $N$ (as a conjugate of $uv$). The word $vu$ is called a \label{pa12}{\em \index{Word!cyclic shift of}cyclic shift} of $uv$. We shall always assume (without loss of generality) that $R$ consists of {\em \index{Word!cyclically reduced}cyclically reduced} words, that is group words all of whose cyclic shifts are reduced.
Since $N$ is generated by $R$ as a normal subgroup, a reduced group word $W$ from $F(X)$ is in $N$ if and only if $W$ can be represented in the free group as a product of conjugates of elements of $R$ and their inverses:


\begin{equation}\label{e1}
W=\prod_{i=1}^m s_i r_i^{\pm 1}s_i\iv
\end{equation}

\noindent where $s_i\in F(X)$, $r_i\in R$.

The minimal number $m$ in all representations (\ref{e1}) of $W$ is called the \label{pa13}{\em
\index{Word!area of}area} of $W$ for the reasons explained below.
For every representation (\ref{e1}), we can draw a planar {\em diagram}, a bouquet  of ``lollipops" which is a planar labeled graph. Each ``lollipop" corresponds to one of the factors $s_ir_is_i\iv$, it has a stem, a path labeled by $s_i$ (i.e. the stem is subdivided into edges labeled by the letters of $s_i$), and a candy, a cycle path labeled by $r_i$ (see Figure \ref{f1}).
\begin{center}
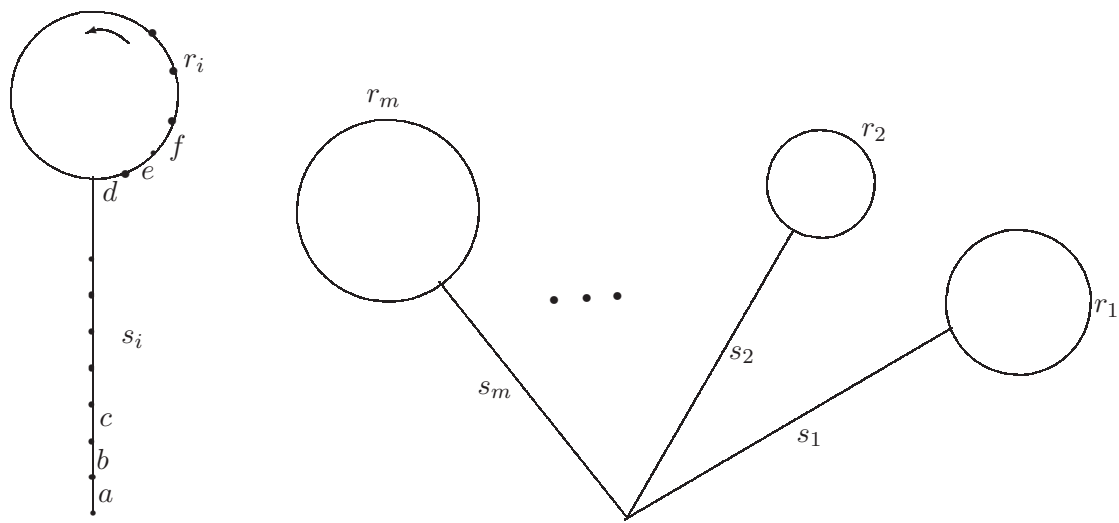
\begin{figure}[ht]
\unitlength .75mm 
\linethickness{0.4pt}
\ifx\plotpoint\undefined\newsavebox{\plotpoint}\fi 
\begin{picture}(197.75,99.576)(0,0)
\put(33.076,84.75){\line(0,1){.7411}}
\put(33.058,85.491){\line(0,1){.7392}}
\put(33.002,86.23){\line(0,1){.7355}}
\multiput(32.91,86.966)(-.03227,.18249){4}{\line(0,1){.18249}}
\multiput(32.78,87.696)(-.033081,.144521){5}{\line(0,1){.144521}}
\multiput(32.615,88.418)(-.033552,.118906){6}{\line(0,1){.118906}}
\multiput(32.414,89.132)(-.02959,.08781){8}{\line(0,1){.08781}}
\multiput(32.177,89.834)(-.030171,.076641){9}{\line(0,1){.076641}}
\multiput(31.906,90.524)(-.030568,.067534){10}{\line(0,1){.067534}}
\multiput(31.6,91.199)(-.030823,.059928){11}{\line(0,1){.059928}}
\multiput(31.261,91.859)(-.030965,.053454){12}{\line(0,1){.053454}}
\multiput(30.889,92.5)(-.033598,.051839){12}{\line(0,1){.051839}}
\multiput(30.486,93.122)(-.0333665,.0462414){13}{\line(0,1){.0462414}}
\multiput(30.052,93.723)(-.0330906,.0413361){14}{\line(0,1){.0413361}}
\multiput(29.589,94.302)(-.0327744,.0369884){15}{\line(0,1){.0369884}}
\multiput(29.097,94.857)(-.0324208,.0330975){16}{\line(0,1){.0330975}}
\multiput(28.579,95.386)(-.0363036,.0335313){15}{\line(-1,0){.0363036}}
\multiput(28.034,95.889)(-.0379343,.0316748){15}{\line(-1,0){.0379343}}
\multiput(27.465,96.364)(-.0422894,.0318633){14}{\line(-1,0){.0422894}}
\multiput(26.873,96.811)(-.0472006,.0319951){13}{\line(-1,0){.0472006}}
\multiput(26.259,97.226)(-.052803,.032062){12}{\line(-1,0){.052803}}
\multiput(25.626,97.611)(-.059279,.032054){11}{\line(-1,0){.059279}}
\multiput(24.974,97.964)(-.066888,.031956){10}{\line(-1,0){.066888}}
\multiput(24.305,98.283)(-.076002,.031747){9}{\line(-1,0){.076002}}
\multiput(23.621,98.569)(-.08718,.031398){8}{\line(-1,0){.08718}}
\multiput(22.923,98.82)(-.101304,.030858){7}{\line(-1,0){.101304}}
\multiput(22.214,99.036)(-.119839,.030048){6}{\line(-1,0){.119839}}
\multiput(21.495,99.217)(-.14543,.028825){5}{\line(-1,0){.14543}}
\put(20.768,99.361){\line(-1,0){.7334}}
\put(20.035,99.468){\line(-1,0){.7379}}
\put(19.297,99.539){\line(-1,0){.7405}}
\put(18.556,99.573){\line(-1,0){.7413}}
\put(17.815,99.57){\line(-1,0){.7402}}
\put(17.075,99.529){\line(-1,0){.7373}}
\put(16.337,99.452){\line(-1,0){.7325}}
\multiput(15.605,99.338)(-.145174,-.030089){5}{\line(-1,0){.145174}}
\multiput(14.879,99.188)(-.119574,-.03109){6}{\line(-1,0){.119574}}
\multiput(14.162,99.001)(-.101031,-.031738){7}{\line(-1,0){.101031}}
\multiput(13.454,98.779)(-.086904,-.032155){8}{\line(-1,0){.086904}}
\multiput(12.759,98.522)(-.075723,-.032407){9}{\line(-1,0){.075723}}
\multiput(12.078,98.23)(-.066607,-.032536){10}{\line(-1,0){.066607}}
\multiput(11.412,97.905)(-.058998,-.032568){11}{\line(-1,0){.058998}}
\multiput(10.763,97.547)(-.052522,-.03252){12}{\line(-1,0){.052522}}
\multiput(10.132,97.156)(-.0469206,-.0324044){13}{\line(-1,0){.0469206}}
\multiput(9.522,96.735)(-.0420107,-.0322299){14}{\line(-1,0){.0420107}}
\multiput(8.934,96.284)(-.0376574,-.0320035){15}{\line(-1,0){.0376574}}
\multiput(8.369,95.804)(-.03376,-.0317304){16}{\line(-1,0){.03376}}
\multiput(7.829,95.296)(-.0321318,-.0333782){16}{\line(0,-1){.0333782}}
\multiput(7.315,94.762)(-.0324515,-.037272){15}{\line(0,-1){.037272}}
\multiput(6.828,94.203)(-.0327299,-.0416223){14}{\line(0,-1){.0416223}}
\multiput(6.37,93.62)(-.032963,-.0465298){13}{\line(0,-1){.0465298}}
\multiput(5.942,93.015)(-.033146,-.052129){12}{\line(0,-1){.052129}}
\multiput(5.544,92.39)(-.033271,-.058604){11}{\line(0,-1){.058604}}
\multiput(5.178,91.745)(-.033331,-.066214){10}{\line(0,-1){.066214}}
\multiput(4.845,91.083)(-.03331,-.07533){9}{\line(0,-1){.07533}}
\multiput(4.545,90.405)(-.033191,-.086513){8}{\line(0,-1){.086513}}
\multiput(4.279,89.713)(-.032944,-.100645){7}{\line(0,-1){.100645}}
\multiput(4.049,89.008)(-.032517,-.119193){6}{\line(0,-1){.119193}}
\multiput(3.854,88.293)(-.031822,-.144803){5}{\line(0,-1){.144803}}
\multiput(3.694,87.569)(-.03068,-.18277){4}{\line(0,-1){.18277}}
\put(3.572,86.838){\line(0,-1){.7363}}
\put(3.486,86.102){\line(0,-1){.7397}}
\put(3.437,85.362){\line(0,-1){.7412}}
\put(3.424,84.621){\line(0,-1){.7409}}
\put(3.449,83.88){\line(0,-1){.7387}}
\put(3.511,83.141){\line(0,-1){.7347}}
\multiput(3.61,82.407)(.027084,-.145764){5}{\line(0,-1){.145764}}
\multiput(3.746,81.678)(.028613,-.12019){6}{\line(0,-1){.12019}}
\multiput(3.917,80.957)(.029644,-.101665){7}{\line(0,-1){.101665}}
\multiput(4.125,80.245)(.030353,-.08755){8}{\line(0,-1){.08755}}
\multiput(4.368,79.545)(.030836,-.076376){9}{\line(0,-1){.076376}}
\multiput(4.645,78.857)(.031154,-.067265){10}{\line(0,-1){.067265}}
\multiput(4.957,78.185)(.031343,-.059658){11}{\line(0,-1){.059658}}
\multiput(5.302,77.529)(.031429,-.053182){12}{\line(0,-1){.053182}}
\multiput(5.679,76.89)(.0314284,-.0475798){13}{\line(0,-1){.0475798}}
\multiput(6.087,76.272)(.0313554,-.0426673){14}{\line(0,-1){.0426673}}
\multiput(6.526,75.674)(.0334489,-.0410467){14}{\line(0,-1){.0410467}}
\multiput(6.995,75.1)(.0330948,-.036702){15}{\line(0,-1){.036702}}
\multiput(7.491,74.549)(.0327075,-.0328143){16}{\line(0,-1){.0328143}}
\multiput(8.014,74.024)(.0365939,-.0332143){15}{\line(1,0){.0365939}}
\multiput(8.563,73.526)(.0409375,-.0335825){14}{\line(1,0){.0409375}}
\multiput(9.136,73.056)(.0425649,-.0314943){14}{\line(1,0){.0425649}}
\multiput(9.732,72.615)(.0474771,-.0315834){13}{\line(1,0){.0474771}}
\multiput(10.349,72.204)(.053079,-.031602){12}{\line(1,0){.053079}}
\multiput(10.986,71.825)(.059556,-.031537){11}{\line(1,0){.059556}}
\multiput(11.641,71.478)(.067163,-.031373){10}{\line(1,0){.067163}}
\multiput(12.313,71.165)(.076275,-.031085){9}{\line(1,0){.076275}}
\multiput(13,70.885)(.08745,-.030638){8}{\line(1,0){.08745}}
\multiput(13.699,70.64)(.101568,-.029976){7}{\line(1,0){.101568}}
\multiput(14.41,70.43)(.120096,-.029005){6}{\line(1,0){.120096}}
\multiput(15.131,70.256)(.145675,-.027559){5}{\line(1,0){.145675}}
\put(15.859,70.118){\line(1,0){.7344}}
\put(16.593,70.017){\line(1,0){.7385}}
\put(17.332,69.952){\line(1,0){.7408}}
\put(18.073,69.925){\line(1,0){.7412}}
\put(18.814,69.935){\line(1,0){.7398}}
\put(19.554,69.981){\line(1,0){.7366}}
\multiput(20.29,70.065)(.18287,.03009){4}{\line(1,0){.18287}}
\multiput(21.022,70.185)(.144906,.03135){5}{\line(1,0){.144906}}
\multiput(21.746,70.342)(.119299,.032128){6}{\line(1,0){.119299}}
\multiput(22.462,70.535)(.100752,.032615){7}{\line(1,0){.100752}}
\multiput(23.167,70.763)(.086621,.032909){8}{\line(1,0){.086621}}
\multiput(23.86,71.026)(.075438,.033065){9}{\line(1,0){.075438}}
\multiput(24.539,71.324)(.066322,.033115){10}{\line(1,0){.066322}}
\multiput(25.203,71.655)(.058713,.03308){11}{\line(1,0){.058713}}
\multiput(25.848,72.019)(.052237,.032976){12}{\line(1,0){.052237}}
\multiput(26.475,72.415)(.046637,.0328112){13}{\line(1,0){.046637}}
\multiput(27.081,72.841)(.0417288,.032594){14}{\line(1,0){.0417288}}
\multiput(27.666,73.298)(.0373776,.0323298){15}{\line(1,0){.0373776}}
\multiput(28.226,73.783)(.0334827,.0320228){16}{\line(1,0){.0334827}}
\multiput(28.762,74.295)(.0318403,.0336564){16}{\line(0,1){.0336564}}
\multiput(29.272,74.833)(.0321261,.0375528){15}{\line(0,1){.0375528}}
\multiput(29.753,75.397)(.0323667,.0419054){14}{\line(0,1){.0419054}}
\multiput(30.207,75.983)(.0325571,.0468147){13}{\line(0,1){.0468147}}
\multiput(30.63,76.592)(.032691,.052415){12}{\line(0,1){.052415}}
\multiput(31.022,77.221)(.03276,.058892){11}{\line(0,1){.058892}}
\multiput(31.382,77.869)(.032753,.066501){10}{\line(0,1){.066501}}
\multiput(31.71,78.534)(.032654,.075617){9}{\line(0,1){.075617}}
\multiput(32.004,79.214)(.032438,.086799){8}{\line(0,1){.086799}}
\multiput(32.263,79.909)(.032067,.100927){7}{\line(0,1){.100927}}
\multiput(32.488,80.615)(.031479,.119472){6}{\line(0,1){.119472}}
\multiput(32.677,81.332)(.030562,.145075){5}{\line(0,1){.145075}}
\put(32.83,82.057){\line(0,1){.7321}}
\put(32.946,82.79){\line(0,1){.737}}
\put(33.026,83.527){\line(0,1){1.2235}}
\put(18,70.25){\line(0,-1){59.5}}
\put(18,10.75){\circle*{.707}}
\put(17.75,17){\circle*{.707}}
\put(17.75,23.5){\circle*{.707}}
\put(17.75,30){\circle*{.707}}
\put(17.75,36.5){\circle*{.707}}
\put(17.75,43){\circle*{.707}}
\put(17.75,49.5){\circle*{.707}}
\put(18,17){\circle*{1.118}}
\put(17.75,23.25){\circle*{1.118}}
\put(17.75,29.75){\circle*{1.118}}
\put(17.75,36.25){\circle*{1.118}}
\put(17.75,42.75){\circle*{1.118}}
\put(17.75,49.25){\circle*{1.118}}
\put(17.75,55.75){\circle*{1.118}}
\put(20.25,13.75){\makebox(0,0)[cc]{$a$}}
\put(19.75,20.25){\makebox(0,0)[cc]{$b$}}
\put(20.25,27.25){\makebox(0,0)[cc]{$c$}}
\put(25,41.5){\makebox(0,0)[cc]{$s_i$}}
\put(23.75,70.75){\circle*{1.5}}
\put(28.75,74.5){\circle*{1}}
\put(32,80.25){\circle*{1.5}}
\put(32.25,89){\circle*{1.581}}
\put(28.5,95.75){\circle*{1.581}}
\put(21,67.75){\makebox(0,0)[cc]{$d$}}
\put(27.75,71){\makebox(0,0)[cc]{$e$}}
\put(33,75.5){\makebox(0,0)[cc]{$f$}}
\put(36,90.5){\makebox(0,0)[cc]{$r_i$}}
\put(16.75,95.75){\vector(-3,-1){.07}}\qbezier(24.25,94)(20.5,97.625)(16.75,95.75)
\multiput(112.25,9.5)(.05753968254,.03373015873){1008}{\line(1,0){.05753968254}}
\multiput(112.5,9.5)(.03373015873,.05839002268){882}{\line(0,1){.05839002268}}
\multiput(112.5,10)(-.03372210953,.04234279919){986}{\line(0,1){.04234279919}}
\put(194.838,48){\line(0,1){.6626}}
\put(194.821,48.663){\line(0,1){.6609}}
\put(194.77,49.324){\line(0,1){.6573}}
\put(194.684,49.981){\line(0,1){.6521}}
\multiput(194.565,50.633)(-.03053,.129009){5}{\line(0,1){.129009}}
\multiput(194.412,51.278)(-.030957,.106051){6}{\line(0,1){.106051}}
\multiput(194.227,51.914)(-.031191,.08941){7}{\line(0,1){.08941}}
\multiput(194.008,52.54)(-.031294,.076721){8}{\line(0,1){.076721}}
\multiput(193.758,53.154)(-.0313,.06667){9}{\line(0,1){.06667}}
\multiput(193.476,53.754)(-.031229,.058469){10}{\line(0,1){.058469}}
\multiput(193.164,54.339)(-.031096,.051617){11}{\line(0,1){.051617}}
\multiput(192.822,54.906)(-.033719,.049943){11}{\line(0,1){.049943}}
\multiput(192.451,55.456)(-.033231,.044125){12}{\line(0,1){.044125}}
\multiput(192.052,55.985)(-.0327358,.0390931){13}{\line(0,1){.0390931}}
\multiput(191.627,56.493)(-.0322307,.0346834){14}{\line(0,1){.0346834}}
\multiput(191.175,56.979)(-.033978,.0329735){14}{\line(-1,0){.033978}}
\multiput(190.7,57.441)(-.0383758,.0335739){13}{\line(-1,0){.0383758}}
\multiput(190.201,57.877)(-.0400576,.0315483){13}{\line(-1,0){.0400576}}
\multiput(189.68,58.287)(-.045102,.031892){12}{\line(-1,0){.045102}}
\multiput(189.139,58.67)(-.050932,.032205){11}{\line(-1,0){.050932}}
\multiput(188.579,59.024)(-.057779,.032487){10}{\line(-1,0){.057779}}
\multiput(188.001,59.349)(-.065977,.032735){9}{\line(-1,0){.065977}}
\multiput(187.407,59.644)(-.076026,.032946){8}{\line(-1,0){.076026}}
\multiput(186.799,59.907)(-.088714,.033118){7}{\line(-1,0){.088714}}
\multiput(186.178,60.139)(-.105356,.033244){6}{\line(-1,0){.105356}}
\multiput(185.546,60.339)(-.128318,.033314){5}{\line(-1,0){.128318}}
\multiput(184.904,60.505)(-.16233,.03331){4}{\line(-1,0){.16233}}
\put(184.255,60.638){\line(-1,0){.6553}}
\put(183.599,60.738){\line(-1,0){.6596}}
\put(182.94,60.803){\line(-1,0){.6621}}
\put(182.278,60.835){\line(-1,0){.6629}}
\put(181.615,60.832){\line(-1,0){.6618}}
\put(180.953,60.795){\line(-1,0){.659}}
\put(180.294,60.724){\line(-1,0){.6545}}
\multiput(179.64,60.619)(-.129639,-.027732){5}{\line(-1,0){.129639}}
\multiput(178.991,60.48)(-.106696,-.028655){6}{\line(-1,0){.106696}}
\multiput(178.351,60.308)(-.090064,-.029249){7}{\line(-1,0){.090064}}
\multiput(177.721,60.104)(-.07738,-.029627){8}{\line(-1,0){.07738}}
\multiput(177.102,59.867)(-.075747,-.033581){8}{\line(-1,0){.075747}}
\multiput(176.496,59.598)(-.065701,-.033286){9}{\line(-1,0){.065701}}
\multiput(175.904,59.298)(-.057505,-.032969){10}{\line(-1,0){.057505}}
\multiput(175.329,58.969)(-.050661,-.03263){11}{\line(-1,0){.050661}}
\multiput(174.772,58.61)(-.044833,-.032268){12}{\line(-1,0){.044833}}
\multiput(174.234,58.223)(-.0397921,-.0318825){13}{\line(-1,0){.0397921}}
\multiput(173.717,57.808)(-.0353725,-.0314729){14}{\line(-1,0){.0353725}}
\multiput(173.222,57.368)(-.0337008,-.0332567){14}{\line(-1,0){.0337008}}
\multiput(172.75,56.902)(-.0319394,-.0349519){14}{\line(0,-1){.0349519}}
\multiput(172.303,56.413)(-.0324075,-.0393657){13}{\line(0,-1){.0393657}}
\multiput(171.881,55.901)(-.03286,-.044401){12}{\line(0,-1){.044401}}
\multiput(171.487,55.368)(-.033299,-.050224){11}{\line(0,-1){.050224}}
\multiput(171.121,54.816)(-.033729,-.057063){10}{\line(0,-1){.057063}}
\multiput(170.783,54.245)(-.030739,-.058728){10}{\line(0,-1){.058728}}
\multiput(170.476,53.658)(-.030741,-.066929){9}{\line(0,-1){.066929}}
\multiput(170.199,53.055)(-.030651,-.07698){8}{\line(0,-1){.07698}}
\multiput(169.954,52.439)(-.030442,-.089668){7}{\line(0,-1){.089668}}
\multiput(169.741,51.812)(-.030068,-.106306){6}{\line(0,-1){.106306}}
\multiput(169.561,51.174)(-.029449,-.12926){5}{\line(0,-1){.12926}}
\put(169.413,50.528){\line(0,-1){.653}}
\put(169.3,49.875){\line(0,-1){.658}}
\put(169.22,49.217){\line(0,-1){.6613}}
\put(169.174,48.555){\line(0,-1){.6628}}
\put(169.163,47.893){\line(0,-1){.6625}}
\put(169.185,47.23){\line(0,-1){.6604}}
\put(169.242,46.57){\line(0,-1){.6566}}
\multiput(169.333,45.913)(.03115,-.16276){4}{\line(0,-1){.16276}}
\multiput(169.457,45.262)(.031608,-.128749){5}{\line(0,-1){.128749}}
\multiput(169.615,44.618)(.031843,-.105788){6}{\line(0,-1){.105788}}
\multiput(169.807,43.984)(.031938,-.089146){7}{\line(0,-1){.089146}}
\multiput(170.03,43.36)(.031935,-.076456){8}{\line(0,-1){.076456}}
\multiput(170.286,42.748)(.031856,-.066405){9}{\line(0,-1){.066405}}
\multiput(170.572,42.15)(.031717,-.058205){10}{\line(0,-1){.058205}}
\multiput(170.889,41.568)(.031527,-.051355){11}{\line(0,-1){.051355}}
\multiput(171.236,41.003)(.031291,-.045521){12}{\line(0,-1){.045521}}
\multiput(171.612,40.457)(.033599,-.043845){12}{\line(0,-1){.043845}}
\multiput(172.015,39.931)(.0330619,-.0388178){13}{\line(0,-1){.0388178}}
\multiput(172.445,39.426)(.0325199,-.0344124){14}{\line(0,-1){.0344124}}
\multiput(172.9,38.944)(.0342528,-.032688){14}{\line(1,0){.0342528}}
\multiput(173.38,38.487)(.0386554,-.0332516){13}{\line(1,0){.0386554}}
\multiput(173.882,38.055)(.0403202,-.031212){13}{\line(1,0){.0403202}}
\multiput(174.406,37.649)(.045367,-.031513){12}{\line(1,0){.045367}}
\multiput(174.951,37.271)(.0512,-.031778){11}{\line(1,0){.0512}}
\multiput(175.514,36.921)(.058049,-.032002){10}{\line(1,0){.058049}}
\multiput(176.094,36.601)(.066249,-.032181){9}{\line(1,0){.066249}}
\multiput(176.691,36.311)(.076299,-.032309){8}{\line(1,0){.076299}}
\multiput(177.301,36.053)(.088988,-.032374){7}{\line(1,0){.088988}}
\multiput(177.924,35.826)(.105631,-.032361){6}{\line(1,0){.105631}}
\multiput(178.558,35.632)(.128593,-.032239){5}{\line(1,0){.128593}}
\multiput(179.201,35.471)(.16261,-.03195){4}{\line(1,0){.16261}}
\put(179.851,35.343){\line(1,0){.6562}}
\put(180.507,35.249){\line(1,0){.6601}}
\put(181.167,35.189){\line(1,0){.6624}}
\put(181.83,35.163){\line(1,0){.6628}}
\put(182.493,35.172){\line(1,0){.6615}}
\put(183.154,35.214){\line(1,0){.6584}}
\put(183.812,35.291){\line(1,0){.6536}}
\multiput(184.466,35.401)(.129403,.028816){5}{\line(1,0){.129403}}
\multiput(185.113,35.545)(.106452,.029547){6}{\line(1,0){.106452}}
\multiput(185.752,35.723)(.089816,.030002){7}{\line(1,0){.089816}}
\multiput(186.38,35.933)(.077129,.030273){8}{\line(1,0){.077129}}
\multiput(186.997,36.175)(.067079,.030412){9}{\line(1,0){.067079}}
\multiput(187.601,36.448)(.058878,.030451){10}{\line(1,0){.058878}}
\multiput(188.19,36.753)(.057228,.033449){10}{\line(1,0){.057228}}
\multiput(188.762,37.087)(.050386,.033053){11}{\line(1,0){.050386}}
\multiput(189.316,37.451)(.044562,.032642){12}{\line(1,0){.044562}}
\multiput(189.851,37.843)(.0395239,.0322144){13}{\line(1,0){.0395239}}
\multiput(190.365,38.261)(.0351079,.0317678){14}{\line(1,0){.0351079}}
\multiput(190.857,38.706)(.0334213,.0335376){14}{\line(0,1){.0335376}}
\multiput(191.324,39.176)(.0316457,.035218){14}{\line(0,1){.035218}}
\multiput(191.767,39.669)(.032077,.0396355){13}{\line(0,1){.0396355}}
\multiput(192.184,40.184)(.032487,.044675){12}{\line(0,1){.044675}}
\multiput(192.574,40.72)(.032878,.050501){11}{\line(0,1){.050501}}
\multiput(192.936,41.276)(.03325,.057343){10}{\line(0,1){.057343}}
\multiput(193.269,41.849)(.033607,.065537){9}{\line(0,1){.065537}}
\multiput(193.571,42.439)(.030179,.067184){9}{\line(0,1){.067184}}
\multiput(193.843,43.044)(.030005,.077234){8}{\line(0,1){.077234}}
\multiput(194.083,43.661)(.02969,.08992){7}{\line(0,1){.08992}}
\multiput(194.29,44.291)(.029177,.106554){6}{\line(0,1){.106554}}
\multiput(194.466,44.93)(.028366,.129502){5}{\line(0,1){.129502}}
\put(194.607,45.578){\line(0,1){.654}}
\put(194.716,46.232){\line(0,1){.6587}}
\put(194.79,46.89){\line(0,1){1.1096}}
\put(156.552,69){\line(0,1){.5212}}
\put(156.538,69.521){\line(0,1){.5196}}
\put(156.496,70.041){\line(0,1){.5165}}
\put(156.425,70.557){\line(0,1){.5119}}
\multiput(156.326,71.069)(-.0317,.12643){4}{\line(0,1){.12643}}
\multiput(156.199,71.575)(-.030838,.09961){5}{\line(0,1){.09961}}
\multiput(156.045,72.073)(-.030189,.081483){6}{\line(0,1){.081483}}
\multiput(155.864,72.562)(-.029648,.068327){7}{\line(0,1){.068327}}
\multiput(155.656,73.04)(-.033332,.066607){7}{\line(0,1){.066607}}
\multiput(155.423,73.506)(-.032302,.056603){8}{\line(0,1){.056603}}
\multiput(155.164,73.959)(-.031415,.048673){9}{\line(0,1){.048673}}
\multiput(154.882,74.397)(-.030621,.042197){10}{\line(0,1){.042197}}
\multiput(154.575,74.819)(-.032878,.040464){10}{\line(0,1){.040464}}
\multiput(154.247,75.224)(-.031852,.0351){11}{\line(0,1){.0351}}
\multiput(153.896,75.61)(-.033719,.03331){11}{\line(-1,0){.033719}}
\multiput(153.525,75.976)(-.035486,.03142){11}{\line(-1,0){.035486}}
\multiput(153.135,76.322)(-.040863,.032381){10}{\line(-1,0){.040863}}
\multiput(152.726,76.646)(-.047298,.033449){9}{\line(-1,0){.047298}}
\multiput(152.301,76.947)(-.049053,.030818){9}{\line(-1,0){.049053}}
\multiput(151.859,77.224)(-.056994,.031608){8}{\line(-1,0){.056994}}
\multiput(151.403,77.477)(-.067009,.032516){7}{\line(-1,0){.067009}}
\multiput(150.934,77.705)(-.080131,.033613){6}{\line(-1,0){.080131}}
\multiput(150.453,77.906)(-.081846,.029191){6}{\line(-1,0){.081846}}
\multiput(149.962,78.082)(-.09998,.029619){5}{\line(-1,0){.09998}}
\multiput(149.462,78.23)(-.12681,.03015){4}{\line(-1,0){.12681}}
\put(148.955,78.35){\line(-1,0){.5131}}
\put(148.442,78.443){\line(-1,0){.5174}}
\put(147.925,78.508){\line(-1,0){.5201}}
\put(147.405,78.544){\line(-1,0){.5213}}
\put(146.883,78.552){\line(-1,0){.521}}
\put(146.362,78.531){\line(-1,0){.5191}}
\put(145.843,78.482){\line(-1,0){.5156}}
\put(145.328,78.405){\line(-1,0){.5106}}
\multiput(144.817,78.3)(-.12603,-.03324){4}{\line(-1,0){.12603}}
\multiput(144.313,78.167)(-.099226,-.032052){5}{\line(-1,0){.099226}}
\multiput(143.817,78.006)(-.081108,-.031182){6}{\line(-1,0){.081108}}
\multiput(143.33,77.819)(-.067959,-.03048){7}{\line(-1,0){.067959}}
\multiput(142.854,77.606)(-.057921,-.029875){8}{\line(-1,0){.057921}}
\multiput(142.391,77.367)(-.056205,-.032991){8}{\line(-1,0){.056205}}
\multiput(141.941,77.103)(-.048285,-.032007){9}{\line(-1,0){.048285}}
\multiput(141.507,76.815)(-.04182,-.031134){10}{\line(-1,0){.04182}}
\multiput(141.089,76.504)(-.040059,-.03337){10}{\line(-1,0){.040059}}
\multiput(140.688,76.17)(-.034708,-.032278){11}{\line(-1,0){.034708}}
\multiput(140.306,75.815)(-.032895,-.034123){11}{\line(0,-1){.034123}}
\multiput(139.944,75.44)(-.030985,-.035867){11}{\line(0,-1){.035867}}
\multiput(139.604,75.045)(-.03188,-.041255){10}{\line(0,-1){.041255}}
\multiput(139.285,74.633)(-.032868,-.047703){9}{\line(0,-1){.047703}}
\multiput(138.989,74.203)(-.030217,-.049425){9}{\line(0,-1){.049425}}
\multiput(138.717,73.758)(-.030909,-.057375){8}{\line(0,-1){.057375}}
\multiput(138.47,73.299)(-.031695,-.067402){7}{\line(0,-1){.067402}}
\multiput(138.248,72.828)(-.032632,-.080536){6}{\line(0,-1){.080536}}
\multiput(138.052,72.344)(-.028189,-.082196){6}{\line(0,-1){.082196}}
\multiput(137.883,71.851)(-.028395,-.100334){5}{\line(0,-1){.100334}}
\put(137.741,71.349){\line(0,-1){.5087}}
\put(137.627,70.841){\line(0,-1){.5142}}
\put(137.54,70.327){\line(0,-1){.5181}}
\put(137.482,69.809){\line(0,-1){2.081}}
\put(137.533,67.728){\line(0,-1){.5146}}
\put(137.616,67.213){\line(0,-1){.5093}}
\multiput(137.728,66.704)(.02782,-.100495){5}{\line(0,-1){.100495}}
\multiput(137.867,66.201)(.033262,-.098827){5}{\line(0,-1){.098827}}
\multiput(138.033,65.707)(.03217,-.080721){6}{\line(0,-1){.080721}}
\multiput(138.226,65.223)(.031308,-.067582){7}{\line(0,-1){.067582}}
\multiput(138.445,64.75)(.03058,-.057552){8}{\line(0,-1){.057552}}
\multiput(138.69,64.289)(.033675,-.055797){8}{\line(0,-1){.055797}}
\multiput(138.959,63.843)(.032594,-.047891){9}{\line(0,-1){.047891}}
\multiput(139.253,63.412)(.031643,-.041437){10}{\line(0,-1){.041437}}
\multiput(139.569,62.997)(.030779,-.036044){11}{\line(0,-1){.036044}}
\multiput(139.908,62.601)(.032699,-.034311){11}{\line(0,-1){.034311}}
\multiput(140.267,62.224)(.034523,-.032476){11}{\line(1,0){.034523}}
\multiput(140.647,61.866)(.039868,-.033599){10}{\line(1,0){.039868}}
\multiput(141.046,61.53)(.041641,-.031374){10}{\line(1,0){.041641}}
\multiput(141.462,61.217)(.048101,-.032283){9}{\line(1,0){.048101}}
\multiput(141.895,60.926)(.056015,-.033312){8}{\line(1,0){.056015}}
\multiput(142.343,60.66)(.057749,-.030206){8}{\line(1,0){.057749}}
\multiput(142.805,60.418)(.067784,-.030869){7}{\line(1,0){.067784}}
\multiput(143.28,60.202)(.080928,-.031646){6}{\line(1,0){.080928}}
\multiput(143.765,60.012)(.099041,-.03262){5}{\line(1,0){.099041}}
\multiput(144.26,59.849)(.100673,-.027168){5}{\line(1,0){.100673}}
\put(144.764,59.713){\line(1,0){.51}}
\put(145.274,59.605){\line(1,0){.5152}}
\put(145.789,59.525){\line(1,0){.5188}}
\put(146.308,59.473){\line(1,0){.5208}}
\put(146.829,59.449){\line(1,0){.5213}}
\put(147.35,59.454){\line(1,0){.5203}}
\put(147.87,59.487){\line(1,0){.5177}}
\put(148.388,59.549){\line(1,0){.5136}}
\put(148.902,59.639){\line(1,0){.5079}}
\multiput(149.409,59.756)(.100148,.029045){5}{\line(1,0){.100148}}
\multiput(149.91,59.902)(.082011,.028722){6}{\line(1,0){.082011}}
\multiput(150.402,60.074)(.080322,.033154){6}{\line(1,0){.080322}}
\multiput(150.884,60.273)(.067195,.032131){7}{\line(1,0){.067195}}
\multiput(151.355,60.498)(.057174,.031281){8}{\line(1,0){.057174}}
\multiput(151.812,60.748)(.049228,.030537){9}{\line(1,0){.049228}}
\multiput(152.255,61.023)(.047489,.033177){9}{\line(1,0){.047489}}
\multiput(152.682,61.321)(.041047,.032147){10}{\line(1,0){.041047}}
\multiput(153.093,61.643)(.035666,.031217){11}{\line(1,0){.035666}}
\multiput(153.485,61.986)(.033909,.033116){11}{\line(1,0){.033909}}
\multiput(153.858,62.351)(.032052,.034917){11}{\line(0,1){.034917}}
\multiput(154.211,62.735)(.033109,.040275){10}{\line(0,1){.040275}}
\multiput(154.542,63.137)(.030863,.042021){10}{\line(0,1){.042021}}
\multiput(154.851,63.558)(.031693,.048492){9}{\line(0,1){.048492}}
\multiput(155.136,63.994)(.032626,.056417){8}{\line(0,1){.056417}}
\multiput(155.397,64.445)(.033713,.066415){7}{\line(0,1){.066415}}
\multiput(155.633,64.91)(.030039,.068156){7}{\line(0,1){.068156}}
\multiput(155.843,65.387)(.030655,.081309){6}{\line(0,1){.081309}}
\multiput(156.027,65.875)(.031408,.099432){5}{\line(0,1){.099432}}
\multiput(156.184,66.372)(.03242,.12625){4}{\line(0,1){.12625}}
\put(156.314,66.877){\line(0,1){.5113}}
\put(156.416,67.389){\line(0,1){.5161}}
\put(156.489,67.905){\line(0,1){.5194}}
\put(156.535,68.424){\line(0,1){.5758}}
\put(86.384,64.25){\line(0,1){.79}}
\put(86.365,65.04){\line(0,1){.7881}}
\put(86.307,65.828){\line(0,1){.7843}}
\multiput(86.21,66.612)(-.03371,.19467){4}{\line(0,1){.19467}}
\multiput(86.075,67.391)(-.028798,.128521){6}{\line(0,1){.128521}}
\multiput(85.903,68.162)(-.030048,.10882){7}{\line(0,1){.10882}}
\multiput(85.692,68.924)(-.030923,.093816){8}{\line(0,1){.093816}}
\multiput(85.445,69.675)(-.031538,.081946){9}{\line(0,1){.081946}}
\multiput(85.161,70.412)(-.031961,.072273){10}{\line(0,1){.072273}}
\multiput(84.842,71.135)(-.032238,.064201){11}{\line(0,1){.064201}}
\multiput(84.487,71.841)(-.032398,.057334){12}{\line(0,1){.057334}}
\multiput(84.098,72.529)(-.032461,.0513957){13}{\line(0,1){.0513957}}
\multiput(83.676,73.197)(-.032443,.0461914){14}{\line(0,1){.0461914}}
\multiput(83.222,73.844)(-.0323548,.0415776){15}{\line(0,1){.0415776}}
\multiput(82.737,74.467)(-.0322049,.037447){16}{\line(0,1){.037447}}
\multiput(82.221,75.067)(-.0319998,.0337178){17}{\line(0,1){.0337178}}
\multiput(81.677,75.64)(-.0336124,.0321105){17}{\line(-1,0){.0336124}}
\multiput(81.106,76.186)(-.0373409,.0323278){16}{\line(-1,0){.0373409}}
\multiput(80.508,76.703)(-.041471,.0324913){15}{\line(-1,0){.041471}}
\multiput(79.886,77.19)(-.0460845,.0325947){14}{\line(-1,0){.0460845}}
\multiput(79.241,77.647)(-.0512887,.0326297){13}{\line(-1,0){.0512887}}
\multiput(78.574,78.071)(-.057227,.032586){12}{\line(-1,0){.057227}}
\multiput(77.888,78.462)(-.064095,.032449){11}{\line(-1,0){.064095}}
\multiput(77.183,78.819)(-.072168,.032198){10}{\line(-1,0){.072168}}
\multiput(76.461,79.141)(-.081842,.031807){9}{\line(-1,0){.081842}}
\multiput(75.724,79.427)(-.093714,.031231){8}{\line(-1,0){.093714}}
\multiput(74.975,79.677)(-.108721,.030406){7}{\line(-1,0){.108721}}
\multiput(74.214,79.89)(-.128426,.02922){6}{\line(-1,0){.128426}}
\multiput(73.443,80.065)(-.155643,.027476){5}{\line(-1,0){.155643}}
\put(72.665,80.202){\line(-1,0){.784}}
\put(71.881,80.302){\line(-1,0){.7879}}
\put(71.093,80.362){\line(-1,0){1.58}}
\put(69.513,80.367){\line(-1,0){.7883}}
\put(68.725,80.312){\line(-1,0){.7846}}
\multiput(67.94,80.218)(-.19478,-.03307){4}{\line(-1,0){.19478}}
\multiput(67.161,80.086)(-.128615,-.028375){6}{\line(-1,0){.128615}}
\multiput(66.389,79.915)(-.108919,-.029691){7}{\line(-1,0){.108919}}
\multiput(65.627,79.708)(-.093917,-.030615){8}{\line(-1,0){.093917}}
\multiput(64.875,79.463)(-.082049,-.031268){9}{\line(-1,0){.082049}}
\multiput(64.137,79.181)(-.072378,-.031723){10}{\line(-1,0){.072378}}
\multiput(63.413,78.864)(-.064307,-.032027){11}{\line(-1,0){.064307}}
\multiput(62.706,78.512)(-.05744,-.032209){12}{\line(-1,0){.05744}}
\multiput(62.017,78.125)(-.0515021,-.0322919){13}{\line(-1,0){.0515021}}
\multiput(61.347,77.705)(-.0462978,-.032291){14}{\line(-1,0){.0462978}}
\multiput(60.699,77.253)(-.0416837,-.032218){15}{\line(-1,0){.0416837}}
\multiput(60.074,76.77)(-.0375527,-.0320816){16}{\line(-1,0){.0375527}}
\multiput(59.473,76.257)(-.0338228,-.0318888){17}{\line(-1,0){.0338228}}
\multiput(58.898,75.715)(-.0322208,-.0335067){17}{\line(0,-1){.0335067}}
\multiput(58.35,75.145)(-.0324503,-.0372345){16}{\line(0,-1){.0372345}}
\multiput(57.831,74.549)(-.0326274,-.041364){15}{\line(0,-1){.041364}}
\multiput(57.341,73.929)(-.032746,-.0459771){14}{\line(0,-1){.0459771}}
\multiput(56.883,73.285)(-.0327981,-.0511812){13}{\line(0,-1){.0511812}}
\multiput(56.457,72.62)(-.032774,-.05712){12}{\line(0,-1){.05712}}
\multiput(56.063,71.934)(-.032659,-.063988){11}{\line(0,-1){.063988}}
\multiput(55.704,71.231)(-.032436,-.072062){10}{\line(0,-1){.072062}}
\multiput(55.38,70.51)(-.032076,-.081737){9}{\line(0,-1){.081737}}
\multiput(55.091,69.774)(-.031539,-.093611){8}{\line(0,-1){.093611}}
\multiput(54.839,69.025)(-.030763,-.10862){7}{\line(0,-1){.10862}}
\multiput(54.623,68.265)(-.029642,-.128329){6}{\line(0,-1){.128329}}
\multiput(54.446,67.495)(-.027988,-.155552){5}{\line(0,-1){.155552}}
\put(54.306,66.717){\line(0,-1){.7837}}
\put(54.204,65.934){\line(0,-1){.7877}}
\put(54.141,65.146){\line(0,-1){.7899}}
\put(54.116,64.356){\line(0,-1){.7901}}
\put(54.13,63.566){\line(0,-1){.7885}}
\put(54.183,62.777){\line(0,-1){.7849}}
\multiput(54.275,61.993)(.03243,-.19488){4}{\line(0,-1){.19488}}
\multiput(54.404,61.213)(.033543,-.154449){5}{\line(0,-1){.154449}}
\multiput(54.572,60.441)(.029332,-.109016){7}{\line(0,-1){.109016}}
\multiput(54.777,59.678)(.030306,-.094017){8}{\line(0,-1){.094017}}
\multiput(55.02,58.925)(.030998,-.082152){9}{\line(0,-1){.082152}}
\multiput(55.299,58.186)(.031485,-.072482){10}{\line(0,-1){.072482}}
\multiput(55.614,57.461)(.031815,-.064412){11}{\line(0,-1){.064412}}
\multiput(55.964,56.753)(.03202,-.057546){12}{\line(0,-1){.057546}}
\multiput(56.348,56.062)(.0321224,-.051608){13}{\line(0,-1){.051608}}
\multiput(56.765,55.391)(.0321387,-.0464037){14}{\line(0,-1){.0464037}}
\multiput(57.215,54.742)(.0320808,-.0417894){15}{\line(0,-1){.0417894}}
\multiput(57.696,54.115)(.031958,-.0376579){16}{\line(0,-1){.0376579}}
\multiput(58.208,53.512)(.0317775,-.0339274){17}{\line(0,-1){.0339274}}
\multiput(58.748,52.936)(.0334006,-.0323308){17}{\line(1,0){.0334006}}
\multiput(59.316,52.386)(.0371276,-.0325725){16}{\line(1,0){.0371276}}
\multiput(59.91,51.865)(.0412565,-.0327632){15}{\line(1,0){.0412565}}
\multiput(60.529,51.373)(.0458693,-.0328969){14}{\line(1,0){.0458693}}
\multiput(61.171,50.913)(.0510731,-.0329662){13}{\line(1,0){.0510731}}
\multiput(61.835,50.484)(.057012,-.032961){12}{\line(1,0){.057012}}
\multiput(62.519,50.089)(.06388,-.032869){11}{\line(1,0){.06388}}
\multiput(63.222,49.727)(.071955,-.032672){10}{\line(1,0){.071955}}
\multiput(63.941,49.4)(.081631,-.032344){9}{\line(1,0){.081631}}
\multiput(64.676,49.109)(.093507,-.031847){8}{\line(1,0){.093507}}
\multiput(65.424,48.854)(.108519,-.03112){7}{\line(1,0){.108519}}
\multiput(66.184,48.637)(.128231,-.030064){6}{\line(1,0){.128231}}
\multiput(66.953,48.456)(.155459,-.028499){5}{\line(1,0){.155459}}
\put(67.73,48.314){\line(1,0){.7833}}
\put(68.514,48.21){\line(1,0){.7875}}
\put(69.301,48.144){\line(1,0){.7898}}
\put(70.091,48.117){\line(1,0){.7902}}
\put(70.881,48.128){\line(1,0){.7887}}
\put(71.67,48.178){\line(1,0){.7852}}
\multiput(72.455,48.267)(.19499,.03178){4}{\line(1,0){.19499}}
\multiput(73.235,48.394)(.154559,.033035){5}{\line(1,0){.154559}}
\multiput(74.008,48.559)(.109111,.028974){7}{\line(1,0){.109111}}
\multiput(74.771,48.762)(.094117,.029997){8}{\line(1,0){.094117}}
\multiput(75.524,49.002)(.082253,.030728){9}{\line(1,0){.082253}}
\multiput(76.265,49.279)(.072585,.031247){10}{\line(1,0){.072585}}
\multiput(76.991,49.591)(.064516,.031603){11}{\line(1,0){.064516}}
\multiput(77.7,49.939)(.05765,.031831){12}{\line(1,0){.05765}}
\multiput(78.392,50.321)(.0517133,.0319526){13}{\line(1,0){.0517133}}
\multiput(79.064,50.736)(.0465091,.031986){14}{\line(1,0){.0465091}}
\multiput(79.715,51.184)(.0418946,.0319433){15}{\line(1,0){.0418946}}
\multiput(80.344,51.663)(.0377627,.031834){16}{\line(1,0){.0377627}}
\multiput(80.948,52.173)(.0361587,.0336449){16}{\line(1,0){.0361587}}
\multiput(81.527,52.711)(.0324404,.0332942){17}{\line(0,1){.0332942}}
\multiput(82.078,53.277)(.0326944,.0370204){16}{\line(0,1){.0370204}}
\multiput(82.601,53.869)(.0328986,.0411486){15}{\line(0,1){.0411486}}
\multiput(83.095,54.486)(.0330475,.0457609){14}{\line(0,1){.0457609}}
\multiput(83.557,55.127)(.0331339,.0509645){13}{\line(0,1){.0509645}}
\multiput(83.988,55.79)(.033149,.056903){12}{\line(0,1){.056903}}
\multiput(84.386,56.472)(.033079,.063772){11}{\line(0,1){.063772}}
\multiput(84.75,57.174)(.032909,.071847){10}{\line(0,1){.071847}}
\multiput(85.079,57.892)(.032612,.081525){9}{\line(0,1){.081525}}
\multiput(85.372,58.626)(.032154,.093402){8}{\line(0,1){.093402}}
\multiput(85.63,59.373)(.031477,.108416){7}{\line(0,1){.108416}}
\multiput(85.85,60.132)(.030485,.128132){6}{\line(0,1){.128132}}
\multiput(86.033,60.901)(.02901,.155365){5}{\line(0,1){.155365}}
\put(86.178,61.678){\line(0,1){.783}}
\put(86.285,62.461){\line(0,1){.7873}}
\put(86.353,63.248){\line(0,1){1.0018}}
\put(99.75,48.5){\circle*{1.581}}
\put(105.5,48.75){\circle*{1.581}}
\put(111,49.25){\circle*{1.581}}
\put(145,24){\makebox(0,0)[cc]{$s_1$}}
\put(133,38.75){\makebox(0,0)[cc]{$s_2$}}
\put(89.25,32.5){\makebox(0,0)[cc]{$s_m$}}
\put(197.75,47){\makebox(0,0)[cc]{$r_1$}}
\put(156.5,78.25){\makebox(0,0)[cc]{$r_2$}}
\put(69.5,84.25){\makebox(0,0)[cc]{$r_m$}}
\end{picture}
\caption{Using lollipops to build a van Kampen diagram} \label{f1}
\end{figure}

\end{center}

Going counterclockwise around the ``lollipop" starting and ending at the tip of the stem, we read $s_ir_is_i\iv$.
Thus going counterclockwise around the diagram which is the bouquet of ``lollipops", we read the word which is the right hand side of (\ref{e1}).

In order to make the word $W$ from this word, we need to {\em reduce} the boundary of the bouquet of ``lollipops" (the boundary is traced counterclockwise): every time we see a pair of consecutive edges on the boundary of the diagram, which have the same label and the same initial or terminal vertex (see Figure \ref{f8}), we identify these two edges (if the edges have both vertices in common, we identify the two edges and remove the whole subgraph bounded by them on the plane). This amounts to removing a subwords $xx\iv$ and $x\iv x$ from the right hand side of (\ref{e1}). The resulting picture is a {\em \index{Van Kampen diagram} van Kampen diagram for} $W$ over the presentation $\la X\mid R\ra$, that is a planar graph with edges labeled by elements of $X$,  the boundary of each {\em \index{Van Kampen diagram!cell of}cell} (i.e. the closure of a bounded connected components of the plane minus the graph) labeled by words from $R^{\pm 1}$, and the \index{Van Kampen diagram!boundary of}boundary of the whole graph (i.e. the boundary of the infinite component of the plane minus the graph) is labeled by $W$ (see \cite{LS} for a slightly different definition and \cite{Olbook} for another one).

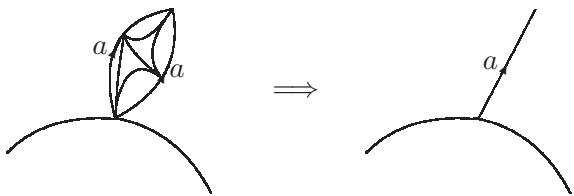
\begin{figure}[H]
\unitlength .5mm 
\linethickness{0.4pt}
\ifx\plotpoint\undefined\newsavebox{\plotpoint}\fi 
\begin{picture}(159,54)(0,0)
\qbezier(9.25,15.75)(19.375,26.25)(38,24.75)
\qbezier(104.75,15.75)(114.875,26.25)(133.5,24.75)
\qbezier(38,24.75)(31.375,50.125)(53.25,54)
\qbezier(53.25,54)(56.875,35)(38,25)
\qbezier(38,25)(54.75,22)(63.5,5)
\qbezier(133.5,25)(150.25,22)(159,5)
\put(38.25,43.625){\vector(1,2){.07}}\multiput(37.25,41.5)(.03333333,.07083333){60}{\line(0,1){.07083333}}
\put(51.25,37){\vector(1,2){.07}}\multiput(50.25,35.25)(.03333333,.05833333){60}{\line(0,1){.05833333}}
\put(34,43.75){\makebox(0,0)[cc]{$a$}}
\put(54.5,37.5){\makebox(0,0)[cc]{$a$}}
\qbezier(40.25,47.25)(45.375,42.5)(53,53.75)
\qbezier(53,53.75)(44.25,42.375)(50.5,35.5)
\qbezier(50.5,35.5)(44.375,40.625)(40.75,35.25)
\qbezier(40.75,35.25)(38.125,31)(38,24.75)
\qbezier(40.25,46.75)(38.375,35.625)(38,25)
\qbezier(40.5,46.75)(41.625,42.75)(50.25,35.75)
\put(142.125,39.375){\vector(1,2){.07}}\multiput(134.5,24.75)(.0337389381,.0647123894){452}{\line(0,1){.0647123894}}
\put(137.75,39.75){\makebox(0,0)[cc]{$a$}}
\put(86,32){\makebox(0,0)[cc]{$\Longrightarrow$}}
\end{picture}
\caption{Reducing the boundary.}\label{f8}
\end{figure}

Figure \ref{f2} shows how a typical van Kampen diagram may look like. One can see that the cells may be of different sizes and shapes, a cell can touch itself, etc. It is important also that a diagram itself may not be an embedded disc: several pieces as on Figure \ref{f2} can be connected by paths to form a tree of discs (as the bouquet of lollipops in Fig \ref{f1}). Nevertheless it is a planar graph, and there are various general methods to study van Kampen diagrams (the whole book \cite{Olbook} is essentially about the study of van Kampen diagrams used to construct groups with extreme properties such as infinite bounded torsion groups, Tarski monsters, etc.).

\begin{figure}[H]

\unitlength .6mm 
\linethickness{0.4pt}
\ifx\plotpoint\undefined\newsavebox{\plotpoint}\fi 

\begin{center}
\begin{picture}(143.25,66)(0,0)
\qbezier(7.25,40.25)(36.375,66)(71,63.75)
\qbezier(71,63.75)(130.5,61)(136,45.25)
\qbezier(136,45.25)(143.25,22.75)(85.5,12.25)
\qbezier(85.5,12.25)(15.75,1.25)(12,18.25)
\qbezier(12,18.25)(9.625,29.75)(10.75,28.25)
\qbezier(10.75,28.25)(-3,31.25)(7.25,40.25)
\qbezier(11.75,44)(60.125,43)(11,28)
\qbezier(35.75,39.5)(74.5,41.75)(66.25,64)
\qbezier(66.25,64)(125.25,35.875)(65.25,22.25)
\qbezier(65.25,22.25)(94.125,46.875)(58.5,44)
\qbezier(58.5,44)(36.25,34.125)(44,27.75)
\qbezier(44,27.75)(70.5,40)(62,36.25)
\qbezier(62,36.25)(35.875,23.5)(44.25,27.75)
\qbezier(44.25,27.75)(18.625,15.5)(65.5,22.25)
\qbezier(60.75,21.5)(63.25,13.125)(71.75,10.25)
\qbezier(71.75,10.25)(111.875,17.5)(103.5,27.75)
\qbezier(103.5,27.75)(98.25,34)(93,34.25)
\qbezier(93,34.25)(129.75,40.75)(133.5,49.25)
\end{picture}
\end{center}
\caption{A van Kampen diagram without labels.} \label{f2}
\end{figure}
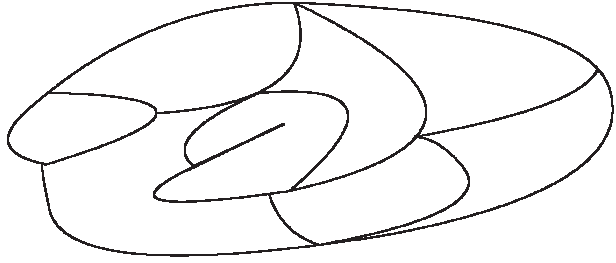

Thus we have shown

\begin{lemma}[The first half of the van Kampen lemma]\label{lvk} If a reduced group word $W$ over the alphabet $X$  is equal to 1 in $G$, then there exists a van Kampen diagram over the presentation of $G$ with boundary label $W$.
\end{lemma}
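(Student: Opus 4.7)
The plan is to formalize the informal construction that the excerpt has already sketched, turning the bouquet of lollipops plus boundary-reduction procedure into a rigorous argument.

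First I would extract the algebraic input. Since $W$ represents $1$ in $G$, we have $W \in N = \ker \phi$. Because $N$ is generated as a normal subgroup of $F(X)$ by $R$, there exist $s_1,\dots,s_m \in F(X)$, relators $r_1,\dots,r_m \in R$, and signs $\e_i \in \{\pm 1\}$ so that
\[
W \;=\; \prod_{i=1}^{m} s_i\, r_i^{\e_i}\, s_i^{-1}
\]
holds as an equality of reduced words in $F(X)$, where equality means that after free reduction the right-hand side becomes $W$. I may assume each $s_i$ is reduced.

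Next I would build the geometric object. Fix a basepoint $O$ in the plane and, for each $i$, attach a disjoint lollipop $L_i$: a stem which is a path from $O$ with edges labeled consecutively by the letters of $s_i$, together with a cycle (the candy) at the tip of the stem whose counterclockwise boundary spells $r_i^{\e_i}$. Bounding the cycle we place a $2$-cell, which is thereby labeled by a cyclic shift of an element of $R^{\pm 1}$. Reading the outer boundary of this planar bouquet $B$ counterclockwise from $O$ yields exactly the word $W' = \prod_i s_i r_i^{\e_i} s_i^{-1}$, which in general is not reduced but equals $W$ in $F(X)$.

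The third step is to reduce $B$ to a genuine van Kampen diagram with boundary label $W$. Whenever two consecutive boundary edges $e, e'$ meet at a vertex $v$ and carry the same label with orientations making the boundary spell $xx^{-1}$ or $x^{-1}x$, fold them: identify $e$ with $e'$ (and, if they cobound a region, delete the open region between them). This fold is a quotient by a planar equivalence relation that preserves planarity, preserves all cell labels (the folded edge is still labeled by $x$, and the $2$-cells untouched by the fold keep their labels), and replaces the boundary word by the result of cancelling that single $xx^{\pm 1}$ pair. Iterating this operation corresponds to free reduction on the boundary word; since free reduction is confluent and $W$ is itself reduced, we terminate with a planar diagram whose boundary label is $W$. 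The $2$-cells are still each labeled by elements of $R^{\pm 1}$, so we have produced the desired van Kampen diagram.

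The main thing that requires care, and which I would spell out carefully, is the folding step: one must check that after each fold the object is still a planar graph, that the newly identified edge lies on the boundary of the new outer region, and that no $2$-cell label is damaged by deletions (deletions only remove whole subdiagrams wedged between the two cancelling edges, which are themselves van Kampen diagrams over the same presentation, so discarding them is harmless). Beyond that mild bookkeeping, everything is forced by the algebraic decomposition of $W$ into conjugates of relators.
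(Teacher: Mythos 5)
Your proposal is correct and follows essentially the same route as the paper: it turns the algebraic decomposition $W=\prod s_i r_i^{\pm1}s_i^{-1}$ into a planar bouquet of lollipops and then free-reduces the boundary by folding consecutive cancelling edges (deleting any enclosed subdiagram) until the boundary word equals the reduced word $W$. The additional care you flag about planarity, preservation of cell labels, and harmlessly discarding the wedged subdiagrams is exactly the bookkeeping implicit in the paper's informal construction.
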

This is a half of the so called {\em \index{Van Kampen lemma}van Kampen lemma} \cite{LS, Olbook, Br}. The converse statement (that the boundary label of every van Kampen diagram is equal to 1 in $G$) constitutes the second half. In order to prove it, we have to ``undo" the construction of van Kampen diagram above, and, given a van Kampen diagram, produce an equality of the form (\ref{e1}). The proof is by cutting a van Kampen diagram along the edges, to produce a tree of ``lollipops". It is obvious that this can be done somehow. In \cite{OSsur}, we presented a useful and economical way to cut a van Kampen diagram (so that every edge is used at most four times). Since cutting van Kampen diagrams into nice pieces is the main ingredient of many proofs involving van Kampen diagrams (see Section \ref{pnrdf} below, for example), we reproduce the proof here.

Note that if a word $W$ can be represented in the free group in the form $$u_1r_1u_2\ldots u_mr_mu_{m+1}$$ where $u_1u_2\ldots u_{m+1}=1$ in the free group, then $W$ is equal (in the free group) to $$\begin{array}{l} (u_1r_1u_1\iv)(u_1u_2 r_2 u_2\iv u_1\iv) \ldots  (u_1\ldots u_mr_mu_m\iv\ldots u_1\iv) u_1\ldots u_mu_{m+1}\\ =(u_1r_1u_1\iv)(u_1u_2 r_2 u_2\iv u_1\iv) \ldots  (u_1\ldots u_mr_mu_m\iv\ldots u_1\iv)\end{array},$$ and so $W\in N$. The converse statement is obvious.

\begin{prop}[The second half of the van Kampen lemma]\label{prop1} Let $\Delta$ be a \vk diagram over a presentation $\langle X\
|\ R\rangle$ where $X=X\iv$, $R$ is closed under cyclic shifts and inverses.
Let $W$ be the boundary label of $\Delta$.  Then $W$ is equal in the free
group to a word of the form $u_1r_1u_2r_2\ldots u_mr_du_{m+1}$ where:
\begin{enumerate} \item Each $r_i$ is a cyclic shift of a word from $R^{\pm 1}$;
\item $u_1u_2\ldots u_{m+1}=1$ in the free group; \item
$\sum_{i=1}^{m+1}|u_i|\le 4e$ where $e$ is the number of edges of $\Delta$.
\end{enumerate} \label{lmm1}
In particular, $W$ is in the normal subgroup $N$ and is equal to $1$ in $G$.
\end{prop}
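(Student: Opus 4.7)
The plan is to induct on the number of 2-cells of $\Delta$, combined with a careful choice of ``stems'' coming from a fixed spanning structure, to control the total length of the $u_i$.

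\textbf{Base case.} If $\Delta$ has no 2-cells then $\Delta$ is a finite simply connected planar 1-complex, i.e., a planar tree (or forest glued at the basepoint). Its boundary walk traverses each edge exactly twice, once in each direction, so $W$ is the trivial element of the free group. Take $m=0$, $u_1=W$; then $|u_1|\le 2e$ and conditions (1)--(3) are vacuous or trivial.

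\textbf{Inductive step.} Suppose $\Delta$ has $m\ge 1$ cells. The plan is to fix once and for all a spanning tree $T$ of the 1-skeleton of $\Delta$, rooted at the basepoint $v_0$ of the boundary walk. For each cell $\Pi_i$ I would choose a stem $s_i$ which is a path in $T$ from $v_0$ to a vertex of $\partial\Pi_i$, and order the cells by a depth-first traversal of the dual graph rooted at the unbounded face. Processing cells in that order, I can ``peel off'' $\Pi_i$ by rewriting the current boundary word: replace an arc on the present boundary by the complementary arc of $\partial\Pi_i$, conjugated by $s_i$. Iterating gives an expression $W=u_1 r_1 u_2 r_2\cdots u_m r_m u_{m+1}$ with each $r_i$ a cyclic shift of some $r\in R^{\pm 1}$. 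Condition (2), $u_1u_2\cdots u_{m+1}=1$ in $F(X)$, follows automatically, because the concatenation of the $u_i$ is obtained from the boundary walk of $\Delta$ by deleting the relator excursions; after all cells are peeled, what remains is a walk on a tree.

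\textbf{Length bound (the main obstacle).} The real work is (3). The idea is to interpret the concatenation $u_1u_2\cdots u_{m+1}$ as a single physical walk in $\Delta$ which, outside of the relator excursions, travels along the stems $s_i$ and $s_i\iv$. If the peel-off order and stems are chosen coherently with the depth-first traversal of the dual tree, then each oriented edge of $\Delta$ that is \emph{not} itself part of some $\partial\Pi_i$ used as $r_i$ is entered and later exited at most twice by this walk (once on the descent to a cell and once on the return), and each of the two possible orientations contributes at most twice; this yields the bound $4e$. Establishing exactly this bookkeeping---and not a weaker bound like $O(em)$---is the delicate point and requires the coordinated choice of order and stems described above.

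Once (1)--(3) hold, the identity recorded immediately before the statement rewrites $u_1r_1u_2\cdots u_mr_mu_{m+1}$ (using $u_1\cdots u_{m+1}=1$) as a product $\prod (u_1\cdots u_i)\,r_i\,(u_1\cdots u_i)\iv$ of conjugates of elements of $R^{\pm 1}$, so $W\in N$ and $W=1$ in $G$.
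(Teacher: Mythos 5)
Your approach is genuinely different from the paper's, but it is not a complete proof: the crucial step is explicitly left open.

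The paper's argument transforms $\Delta$ in two stages before reading off the decomposition. First it cuts along internal edges (one vertex at a time) until there are none; this leaves the type-$1$ edges unchanged and at most doubles the type-$2$ edges (those on cell boundaries), so the transformed diagram $\Delta_1$ has at most $2e$ edges. Second, whenever a cell touches the rest of the diagram at more than one place, it is replaced by a shrunken copy attached by a single edge; iterating yields a diagram $\Delta_2$ which is literally a tree with cells hanging as leaves, and the tree has at most $2e$ edges. The decomposition $u_1r_{i_1}\cdots u_mr_{i_m}u_{m+1}$ is then read straight off the boundary walk of $\Delta_2$: the $u_i$'s spell out the boundary walk of the tree, and since a boundary walk of a tree traverses each edge twice, $\sum |u_i| \le 2\cdot 2e = 4e$. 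The bound is thus completely transparent once the transformation is done.

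Your proposal instead fixes a spanning tree $T$ of the $1$-skeleton, chooses stems $s_i$ in $T$, and peels cells in a depth-first order on the dual graph. This is a reasonable alternative strategy. However, the entire content of item~(3) is concentrated in the claim that each oriented edge is traversed at most twice by the concatenation of the $u_i$'s --- and you openly identify this as ``the delicate point'' and ``requires the coordinated choice of order and stems,'' without establishing it. That is precisely the step that cannot be taken on faith: when many cells share long stem segments in $T$, or when the dual DFS repeatedly descends into subtrees hanging off the same primal vertex, it is not obvious that the cancellations between consecutive $u_i$'s keep the count below $4$ passages per edge. The proposal as written is a plan with the hard part outlined but not carried out, whereas the paper's route (doubling by cutting, then reducing to a tree with leaves) sidesteps all of that bookkeeping and makes the $4e$ bound immediate. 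If you want to pursue the spanning-tree/DFS route, you would need a careful lemma quantifying edge multiplicities of the induced walk, and I would not be surprised if that lemma, done correctly, ends up re-proving something equivalent to the diagram transformation the paper uses.
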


\proof If $\Delta$ has an internal edge (i.e. an edge which belongs to
the boundaries of two cells) then it has an internal edge $f$ one of whose
vertices belongs to the boundary. Let us  cut $\Delta$ along $f$ leaving the
second vertex of $f$ untouched. We can repeat this operation until we get a
diagram $\Delta_1$ which does not have internal edges. It is easy to see
that the boundary label of $\Delta_1$ is equal to $W$ in the free group. The
number of edges of $\Delta_1$ which do not belong to contours of cells (let us call
them {\em edges of type 1}) is the
same as the number of such edges in $\Delta$ and the
number of edges which belong
to contours of cells in $\Delta_1$ ({\em edges of type 2})
is at most twice the number of such edges of
$\Delta$ (we cut each edge from a contour of a cell
at most once, after the cut we get two external
edges instead of one internal edge).

Suppose that a cell $\Pi$ in $\Delta_1$ has more
than one edge which has a
common vertex with $\Pi$ but does not belong to the contour of $\Pi$.

\begin{center}
\begin{figure}[H]
\unitlength 1mm 
\linethickness{0.4pt}
\ifx\plotpoint\undefined\newsavebox{\plotpoint}\fi 
\begin{picture}(73.75,56.25)(0,0)
\put(53.14,25.5){\line(0,1){.6949}}
\put(53.123,26.195){\line(0,1){.6931}}
\put(53.07,26.888){\line(0,1){.6895}}
\multiput(52.981,27.578)(-.03084,.17103){4}{\line(0,1){.17103}}
\multiput(52.858,28.262)(-.031607,.135386){5}{\line(0,1){.135386}}
\multiput(52.7,28.939)(-.032052,.111333){6}{\line(0,1){.111333}}
\multiput(52.508,29.607)(-.032299,.093905){7}{\line(0,1){.093905}}
\multiput(52.282,30.264)(-.032411,.08062){8}{\line(0,1){.08062}}
\multiput(52.022,30.909)(-.032423,.070102){9}{\line(0,1){.070102}}
\multiput(51.73,31.54)(-.032357,.061523){10}{\line(0,1){.061523}}
\multiput(51.407,32.155)(-.032227,.054359){11}{\line(0,1){.054359}}
\multiput(51.052,32.753)(-.032041,.048259){12}{\line(0,1){.048259}}
\multiput(50.668,33.332)(-.0318076,.0429824){13}{\line(0,1){.0429824}}
\multiput(50.254,33.891)(-.0315306,.0383557){14}{\line(0,1){.0383557}}
\multiput(49.813,34.428)(-.0334436,.0366996){14}{\line(0,1){.0366996}}
\multiput(49.345,34.942)(-.0329185,.0326183){15}{\line(-1,0){.0329185}}
\multiput(48.851,35.431)(-.0370045,.033106){14}{\line(-1,0){.0370045}}
\multiput(48.333,35.894)(-.0416155,.0335761){13}{\line(-1,0){.0416155}}
\multiput(47.792,36.331)(-.043272,.0314125){13}{\line(-1,0){.043272}}
\multiput(47.229,36.739)(-.048551,.031598){12}{\line(-1,0){.048551}}
\multiput(46.647,37.118)(-.054652,.031727){11}{\line(-1,0){.054652}}
\multiput(46.046,37.467)(-.061817,.031792){10}{\line(-1,0){.061817}}
\multiput(45.427,37.785)(-.070396,.03178){9}{\line(-1,0){.070396}}
\multiput(44.794,38.071)(-.080914,.031671){8}{\line(-1,0){.080914}}
\multiput(44.147,38.325)(-.094197,.031438){7}{\line(-1,0){.094197}}
\multiput(43.487,38.545)(-.111622,.031031){6}{\line(-1,0){.111622}}
\multiput(42.817,38.731)(-.13567,.030365){5}{\line(-1,0){.13567}}
\put(42.139,38.883){\line(-1,0){.6852}}
\put(41.454,39){\line(-1,0){.6903}}
\put(40.764,39.082){\line(-1,0){.6936}}
\put(40.07,39.129){\line(-1,0){.695}}
\put(39.375,39.14){\line(-1,0){.6947}}
\put(38.68,39.116){\line(-1,0){.6926}}
\put(37.988,39.056){\line(-1,0){.6887}}
\multiput(37.299,38.962)(-.17074,-.0324){4}{\line(-1,0){.17074}}
\multiput(36.616,38.832)(-.13509,-.032846){5}{\line(-1,0){.13509}}
\multiput(35.941,38.668)(-.111035,-.033071){6}{\line(-1,0){.111035}}
\multiput(35.274,38.469)(-.093605,-.033158){7}{\line(-1,0){.093605}}
\multiput(34.619,38.237)(-.08032,-.033148){8}{\line(-1,0){.08032}}
\multiput(33.977,37.972)(-.069802,-.033064){9}{\line(-1,0){.069802}}
\multiput(33.348,37.675)(-.061224,-.032919){10}{\line(-1,0){.061224}}
\multiput(32.736,37.345)(-.054061,-.032723){11}{\line(-1,0){.054061}}
\multiput(32.142,36.985)(-.047964,-.032482){12}{\line(-1,0){.047964}}
\multiput(31.566,36.596)(-.0426892,-.0322001){13}{\line(-1,0){.0426892}}
\multiput(31.011,36.177)(-.0380652,-.0318807){14}{\line(-1,0){.0380652}}
\multiput(30.478,35.731)(-.0339656,-.0315266){15}{\line(-1,0){.0339656}}
\multiput(29.969,35.258)(-.0323153,-.033216){15}{\line(0,-1){.033216}}
\multiput(29.484,34.76)(-.0327656,-.0373062){14}{\line(0,-1){.0373062}}
\multiput(29.025,34.237)(-.0331935,-.0419214){13}{\line(0,-1){.0419214}}
\multiput(28.594,33.692)(-.033599,-.047188){12}{\line(0,-1){.047188}}
\multiput(28.19,33.126)(-.031152,-.048838){12}{\line(0,-1){.048838}}
\multiput(27.817,32.54)(-.031225,-.05494){11}{\line(0,-1){.05494}}
\multiput(27.473,31.936)(-.031224,-.062106){10}{\line(0,-1){.062106}}
\multiput(27.161,31.315)(-.031133,-.070684){9}{\line(0,-1){.070684}}
\multiput(26.881,30.678)(-.030928,-.081201){8}{\line(0,-1){.081201}}
\multiput(26.633,30.029)(-.030573,-.094481){7}{\line(0,-1){.094481}}
\multiput(26.419,29.368)(-.030007,-.111902){6}{\line(0,-1){.111902}}
\multiput(26.239,28.696)(-.029121,-.135942){5}{\line(0,-1){.135942}}
\put(26.094,28.016){\line(0,-1){.6862}}
\put(25.983,27.33){\line(0,-1){.691}}
\put(25.907,26.639){\line(0,-1){1.3891}}
\put(25.862,25.25){\line(0,-1){.6945}}
\put(25.892,24.556){\line(0,-1){.692}}
\put(25.958,23.864){\line(0,-1){.6878}}
\multiput(26.059,23.176)(.027171,-.136345){5}{\line(0,-1){.136345}}
\multiput(26.195,22.494)(.028402,-.11232){6}{\line(0,-1){.11232}}
\multiput(26.365,21.82)(.029217,-.094909){7}{\line(0,-1){.094909}}
\multiput(26.57,21.156)(.029763,-.081635){8}{\line(0,-1){.081635}}
\multiput(26.808,20.503)(.030118,-.071123){9}{\line(0,-1){.071123}}
\multiput(27.079,19.863)(.033702,-.069496){9}{\line(0,-1){.069496}}
\multiput(27.382,19.237)(.033479,-.06092){10}{\line(0,-1){.06092}}
\multiput(27.717,18.628)(.033217,-.053759){11}{\line(0,-1){.053759}}
\multiput(28.082,18.037)(.03292,-.047664){12}{\line(0,-1){.047664}}
\multiput(28.477,17.465)(.0325898,-.0423924){13}{\line(0,-1){.0423924}}
\multiput(28.901,16.914)(.0322281,-.0377716){14}{\line(0,-1){.0377716}}
\multiput(29.352,16.385)(.0318364,-.0336753){15}{\line(0,-1){.0336753}}
\multiput(29.83,15.88)(.0335107,-.0320097){15}{\line(1,0){.0335107}}
\multiput(30.333,15.399)(.0376049,-.0324224){14}{\line(1,0){.0376049}}
\multiput(30.859,14.946)(.0422237,-.032808){13}{\line(1,0){.0422237}}
\multiput(31.408,14.519)(.047494,-.033166){12}{\line(1,0){.047494}}
\multiput(31.978,14.121)(.053587,-.033494){11}{\line(1,0){.053587}}
\multiput(32.567,13.753)(.055224,-.030721){11}{\line(1,0){.055224}}
\multiput(33.175,13.415)(.062389,-.030654){10}{\line(1,0){.062389}}
\multiput(33.799,13.108)(.070966,-.030484){9}{\line(1,0){.070966}}
\multiput(34.437,12.834)(.081481,-.030183){8}{\line(1,0){.081481}}
\multiput(35.089,12.592)(.094757,-.029706){7}{\line(1,0){.094757}}
\multiput(35.753,12.384)(.112172,-.028981){6}{\line(1,0){.112172}}
\multiput(36.426,12.211)(.136203,-.027874){5}{\line(1,0){.136203}}
\put(37.107,12.071){\line(1,0){.6872}}
\put(37.794,11.967){\line(1,0){.6917}}
\put(38.485,11.897){\line(1,0){.6943}}
\put(39.18,11.863){\line(1,0){.6951}}
\put(39.875,11.865){\line(1,0){.6942}}
\put(40.569,11.901){\line(1,0){.6914}}
\put(41.26,11.974){\line(1,0){.6868}}
\multiput(41.947,12.081)(.13609,.028419){5}{\line(1,0){.13609}}
\multiput(42.628,12.223)(.112055,.02943){6}{\line(1,0){.112055}}
\multiput(43.3,12.4)(.094637,.030086){7}{\line(1,0){.094637}}
\multiput(43.962,12.61)(.081359,.030509){8}{\line(1,0){.081359}}
\multiput(44.613,12.854)(.070844,.030768){9}{\line(1,0){.070844}}
\multiput(45.251,13.131)(.062266,.030904){10}{\line(1,0){.062266}}
\multiput(45.874,13.44)(.0551,.030942){11}{\line(1,0){.0551}}
\multiput(46.48,13.781)(.053453,.033708){11}{\line(1,0){.053453}}
\multiput(47.068,14.151)(.04736,.033355){12}{\line(1,0){.04736}}
\multiput(47.636,14.552)(.042092,.0329768){13}{\line(1,0){.042092}}
\multiput(48.183,14.98)(.0374747,.0325728){14}{\line(1,0){.0374747}}
\multiput(48.708,15.436)(.0333822,.0321436){15}{\line(1,0){.0333822}}
\multiput(49.209,15.918)(.0317013,.0338025){15}{\line(0,1){.0338025}}
\multiput(49.684,16.425)(.0320765,.0379003){14}{\line(0,1){.0379003}}
\multiput(50.133,16.956)(.0324198,.0425226){13}{\line(0,1){.0425226}}
\multiput(50.555,17.509)(.032729,.047796){12}{\line(0,1){.047796}}
\multiput(50.947,18.082)(.033002,.053892){11}{\line(0,1){.053892}}
\multiput(51.31,18.675)(.033235,.061053){10}{\line(0,1){.061053}}
\multiput(51.643,19.286)(.033424,.06963){9}{\line(0,1){.06963}}
\multiput(51.944,19.912)(.033562,.080148){8}{\line(0,1){.080148}}
\multiput(52.212,20.554)(.033641,.093433){7}{\line(0,1){.093433}}
\multiput(52.448,21.208)(.033643,.110863){6}{\line(0,1){.110863}}
\multiput(52.649,21.873)(.033542,.134919){5}{\line(0,1){.134919}}
\multiput(52.817,22.547)(.03328,.17057){4}{\line(0,1){.17057}}
\put(52.95,23.23){\line(0,1){.6882}}
\put(53.048,23.918){\line(0,1){.6923}}
\put(53.111,24.61){\line(0,1){.8899}}
\put(10.5,25){\line(1,0){15.25}}
\put(53,25){\line(1,0){20.75}}
\put(41.75,39){\line(0,1){17.25}}
\put(37,24.25){\makebox(0,0)[cc]{$\Pi$}}
\qbezier(53,25)(44.25,42.625)(38.5,35.75)
\qbezier(38.5,35.75)(39.875,1.5)(52.75,25.25)
\put(45.5,25.5){\makebox(0,0)[cc]{$\Pi'$}}
\put(55.25,21.5){\makebox(0,0)[cc]{$O$}}
\put(53,25.25){\circle*{1.581}}
\end{picture}
\caption{Cutting $\Pi'$ inside $\Pi$.} \label{f3}
\end{figure}
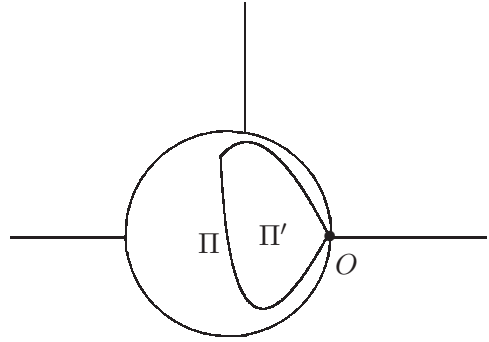
\end{center}
Take any point $O$ on\footnote{We use $\partial$ to denote boundaries of cells, diagrams, subgraphs, etc. .} $\partial(\Pi)$ which belongs to one of the edges not on $\partial(\Pi)$. Let $p$ be the boundary path of $\Delta_1$
starting at $O$ and let $q$ be the boundary path of $\Pi$ starting at $O$.
Consider the path $qq\iv p$. The subpath $q\iv p$ bounds a subdiagram of
$\Delta_1$ containing all cells but $\Pi$. Replace the path $q$ in $qq\iv p$
by a loop $q'$ with the same label starting at $O$ and lying inside the
cell $\Pi$. Let the region inside $q'$ be a new cell $\Pi'$. Then the path
$q'q\iv p$ bounds a diagram whose boundary label is equal to $W$ in the free group $F(X)$.
Notice that $\Pi'$ has exactly one edge having a common vertex with $\Pi'$ and not belonging to the contour of $\Pi'$ (see Figure \ref{f3}). Thus this operation reduces
the number of cells $\Pi$ such that more
than one edge of the diagram has a
common vertex with $\Pi$ but does not belong to the contour of $\Pi$.

After a number of such transformations
we shall have a diagram $\Delta_2$
which has the form of a tree $T$ with cells hanging like leaves (each has exactly
one common vertex with the tree).

The number of
edges of type 1 in $\Delta_2$ cannot be bigger
than the number of all edges in $\Delta_1$, so it cannot be more
than two times
bigger than the total number of edges in $\Delta$.

The boundary label of $\Delta_2$ is equal to $W$ in the free group, and it has the form
$$u_1r_{i_1}u_2r_{i_2}\ldots u_mr_{i_m}u_{m+1}$$ where $m$ is the number of cells
in $\Delta$, $u_1u_2\ldots u_{m+1}$ is the boundary label of a tree (traced counterclockwise), so
$u_1u_2\ldots u_{m+1}=1$ in the free group. The sum of lengths
of $u_i$ is
at most four times the number of edges in $\Delta$ because
the word $u_1u_2\ldots u_{m+1}$  is written on the tree $T$, and when we
travel along the tree,  we pass through each edge twice.\endproof

\subsubsection{The $0$-cells}\label{t0c}

One can modify van Kampen diagrams to make all cells embedded discs (so that the boundary of a cell does not touch itself) and to make the whole diagram an embedded disc by introducing the so-called {\em \index{$0$-cell}$0$-cells} \cite{Olbook}, i.e. cells corresponding to relations $1\cdot 1=1$ and $1\cdot a\cdot 1\cdot a\iv=1$ (see Figure \ref{f7}).

\begin{figure}[H]
\unitlength .6mm 
\linethickness{0.4pt}
\ifx\plotpoint\undefined\newsavebox{\plotpoint}\fi 
\begin{picture}(121.5,34)(0,0)
\multiput(22.25,5.75)(.0337150127,.0623409669){393}{\line(0,1){.0623409669}}
\multiput(35.5,30.25)(.0337209302,-.0569767442){430}{\line(0,-1){.0569767442}}
\put(50,5.75){\line(-1,0){27.5}}
\put(69,29.75){\line(0,-1){24.25}}
\put(69,5.5){\line(1,0){49}}
\put(118,5.5){\line(0,1){25.25}}
\put(118,30.75){\line(-1,0){49}}
\multiput(69,30.75)(.03125,-.28125){8}{\line(0,-1){.28125}}
\put(26.5,18.25){\makebox(0,0)[cc]{1}}
\put(45.25,18.75){\makebox(0,0)[cc]{1}}
\put(35.75,2.5){\makebox(0,0)[cc]{1}}
\put(65.25,18){\makebox(0,0)[cc]{1}}
\put(92.75,2.5){\makebox(0,0)[cc]{a}}
\put(92.75,34){\makebox(0,0)[cc]{a}}
\put(121.5,18.75){\makebox(0,0)[cc]{1}}
\put(94.875,30.75){\vector(1,0){.07}}\put(89.25,30.75){\line(1,0){11.25}}
\put(93.5,5.5){\vector(1,0){.07}}\put(88.75,5.5){\line(1,0){9.5}}
\end{picture}
\caption{$0$-cells} \label{f7}
\end{figure}

The edges labeled by 1 are called {\em \index{$0$-edge}$0$-edges}. Indeed, adding these relations does not change the group. Now, for every cell in the van Kampen diagram (the cell is drawn with thick lines on Figure \ref{f6}), we draw an embedded disc inside the cell, label its boundary by the same word which labels the boundary of the original cell, and connect every vertex of the boundary of the new cell with the corresponding vertex of the boundary of the original cell by a 0-edge. Thus we replace every cell of the original diagram by a new cell with the same boundary label and several 0-cells. The new van Kampen diagram will have all cells embedded discs. Similarly, if a diagram is not an embedded disc, say, it consists of  two disc subdiagrams connected by a path which is a part of the boundary of the diagram, we can replace that path by a sequence of 0-cells. As a result, we can make the whole diagram an embedded disc. When we count the number of cells (edges) in a diagram we usually do not count 0-cells (0-edges). In particular $0$-edges do not affect the calculation of the length of a path in a van Kampen diagram.

\begin{figure}[H]
\unitlength 1mm 
\linethickness{2 pt}
\ifx\plotpoint\undefined\newsavebox{\plotpoint}\fi 
\begin{picture}(85.25,77.25)(0,0)
\qbezier(63.75,52.5)(11.25,77.25)(13.75,27)
\qbezier(13.75,27)(15.25,-5.125)(33.75,6.25)
\qbezier(33.75,6.25)(85.25,42.125)(63.75,52.5)
\multiput(63.75,52.5)(-.0337136929,-.0357883817){482}{\line(0,-1){.0357883817}}
\linethickness{0.4 pt}
\qbezier(41.5,29.75)(53.75,50.75)(48,54.75)
\qbezier(48,54.75)(21.625,60.625)(18.75,35)
\qbezier(18.75,35)(17.5,24.375)(23.25,8.25)
\qbezier(23.25,8.25)(27.25,-1.5)(66.25,40.75)
\qbezier(66.25,40.75)(75,50.125)(41.75,30)

\multiput(29.5,53.25)(-.0333333,.225){30}{\line(0,1){.225}}
\multiput(39.5,55.75)(.0333333,.2833333){15}{\line(0,1){.2833333}}
\multiput(47,55)(.03333333,.07777778){45}{\line(0,1){.07777778}}
\multiput(49.75,50.5)(.033653846,.036858974){156}{\line(0,1){.036858974}}
\multiput(49.25,45.75)(.07253886,.033678756){193}{\line(1,0){.07253886}}
\put(67.25,43.75){\line(0,1){.5}}
\multiput(46.5,40.5)(.4333333,.0333333){15}{\line(1,0){.4333333}}
\multiput(55.25,37.75)(-.03358209,.05223881){67}{\line(0,1){.05223881}}
\multiput(62.5,41.75)(.0333333,.3416667){30}{\line(0,1){.3416667}}
\multiput(67.75,43)(.03333333,.06666667){60}{\line(0,1){.06666667}}
\multiput(58.5,32)(.65625,-.03125){8}{\line(1,0){.65625}}
\multiput(45.25,36)(.1833333,-.0333333){15}{\line(1,0){.1833333}}
\multiput(47.75,35.25)(.0333333,-.1166667){15}{\line(0,-1){.1166667}}
\put(42.25,30.25){\line(1,1){5.5}}
\thicklines
\multiput(49.5,37.25)(.0336879433,.036643026){423}{\line(0,1){.036643026}}
\end{picture}
\caption{Inscribing an embedded cell inside a non-embedded cell of a van Kampen diagram.}\label{f6}
\end{figure}
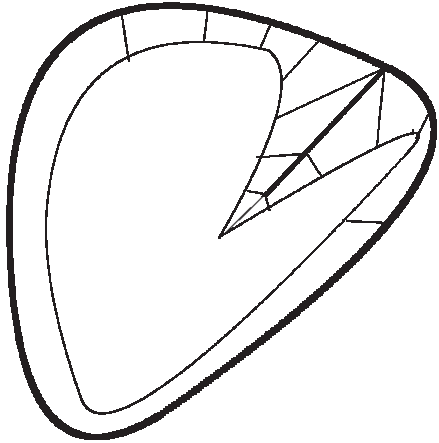

The $0$-cells were introduced by Olshanskii \cite{Olbook} in order to make his proofs cleaner, but they proved to be a very useful technical tool. Let us mention just two applications. First, 0-cells were  used in solving quadratic equations in free groups in \cite{Olsurf} (see \ref{ss:tcosqe}) and subsequent papers (there are no non-0-relations in the standard presentation of the free group). Second, a natural generalization of 0-cells was a crucial tool in constructing a finitely presented non-amenable group  without free subgroups \cite{OSamenab} (see \ref{s:afpnagwfs}).

\subsubsection{Van Kampen diagrams and tilings. An elementary school problem and its non-elementary
solution}\label{vkat}

As an easy application of van Kampen diagrams, consider the following elementary problem.

\begin{example}\label{ex1}
Let $P$ be the standard $8\times 8$ chess board with two opposite squares removed (Figure \ref{f4}). Prove that $P$ cannot be tiled by the standard $2\times 1$ dominos. \end{example}

\begin{center}

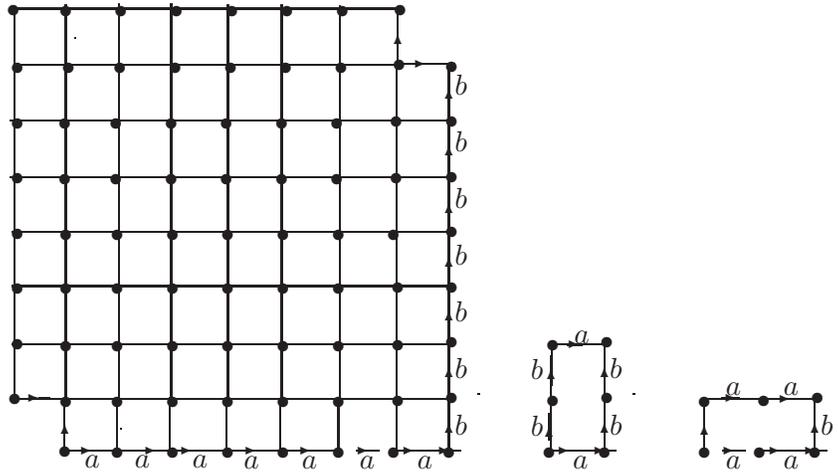
\begin{figure}[H]

\unitlength .7mm 
\linethickness{0.4pt}
\ifx\plotpoint\undefined\newsavebox{\plotpoint}\fi 
\begin{picture}(162.5,104.75)(0,0)
\put(25,14){\line(0,1){.25}}
\put(25,23.75){\line(0,1){.25}}
\put(16.25,98){\line(0,1){.25}}
\put(93,30.5){\line(-1,0){.25}}
\put(122.5,30.5){\line(-1,0){.25}}
\put(162.5,30.5){\line(-1,0){.25}}
\put(14,19.75){\line(1,0){10.75}}
\put(14,29.5){\line(1,0){10.75}}
\put(14,40){\line(1,0){10.75}}
\put(14,51){\line(1,0){10.75}}
\put(14,61.25){\line(1,0){10.75}}
\put(13.75,71.75){\line(1,0){10.75}}
\put(13.75,82.5){\line(1,0){10.75}}
\put(14.5,93){\line(1,0){10.75}}
\put(4.25,29.5){\line(1,0){10.75}}
\put(4.25,40){\line(1,0){10.75}}
\put(4.25,51){\line(1,0){10.75}}
\put(4.25,61.25){\line(1,0){10.75}}
\put(4,71.75){\line(1,0){10.75}}
\put(4,82.5){\line(1,0){10.75}}
\put(4.75,93){\line(1,0){10.75}}
\put(4.25,103.75){\line(1,0){10.75}}
\put(77.5,93.25){\line(1,0){10.75}}
\put(87.25,19.5){\line(0,1){10.75}}
\put(116.75,19.5){\line(0,1){10.75}}
\put(156.75,19.5){\line(0,1){10.75}}
\put(4.75,29.75){\line(0,1){10.75}}
\put(77.5,93.25){\line(0,1){10.75}}
\put(14.25,18.75){\line(0,1){10.75}}
\put(24.25,19){\line(0,1){10.75}}
\put(34.25,19){\line(0,1){10.75}}
\put(45,18.75){\line(0,1){10.75}}
\put(55.25,18.75){\line(0,1){10.75}}
\put(66.25,18.5){\line(0,1){10.75}}
\put(135.75,18.5){\line(0,1){10.75}}
\put(24.5,19.75){\line(1,0){10.75}}
\put(24.5,29.5){\line(1,0){10.75}}
\put(24.5,40){\line(1,0){10.75}}
\put(24.5,51){\line(1,0){10.75}}
\put(24.5,61.25){\line(1,0){10.75}}
\put(24.25,71.75){\line(1,0){10.75}}
\put(24.25,82.5){\line(1,0){10.75}}
\put(25,93){\line(1,0){10.75}}
\put(14.75,103.75){\line(1,0){10.75}}
\put(87.25,30){\line(0,1){10.75}}
\put(116.75,30){\line(0,1){10.75}}
\put(4.75,40.25){\line(0,1){10.75}}
\put(35,19.75){\line(1,0){10.75}}
\put(35,29.5){\line(1,0){10.75}}
\put(35,40){\line(1,0){10.75}}
\put(35,51){\line(1,0){10.75}}
\put(35,61.25){\line(1,0){10.75}}
\put(34.75,71.75){\line(1,0){10.75}}
\put(34.75,82.5){\line(1,0){10.75}}
\put(35.5,93){\line(1,0){10.75}}
\put(25.25,103.75){\line(1,0){10.75}}
\put(87.25,40.5){\line(0,1){10.75}}
\put(4.75,50.75){\line(0,1){10.75}}
\put(45.5,19.75){\line(1,0){10.75}}
\put(45.5,29.5){\line(1,0){10.75}}
\put(45.5,40){\line(1,0){10.75}}
\put(45.5,51){\line(1,0){10.75}}
\put(45.5,61.25){\line(1,0){10.75}}
\put(45.25,71.75){\line(1,0){10.75}}
\put(45.25,82.5){\line(1,0){10.75}}
\put(46,93){\line(1,0){10.75}}
\put(35.75,103.75){\line(1,0){10.75}}
\put(87.25,51){\line(0,1){10.75}}
\put(4.75,61.25){\line(0,1){10.75}}
\put(56,19.75){\line(1,0){10.75}}
\put(56,29.5){\line(1,0){10.75}}
\put(56,40){\line(1,0){10.75}}
\put(56,51){\line(1,0){10.75}}
\put(56,61.25){\line(1,0){10.75}}
\put(55.75,71.75){\line(1,0){10.75}}
\put(55.75,82.5){\line(1,0){10.75}}
\put(56.5,93){\line(1,0){10.75}}
\put(46.25,103.75){\line(1,0){10.75}}
\put(87.25,61.5){\line(0,1){10.75}}
\put(4.75,71.75){\line(0,1){10.75}}
\put(66.5,29.5){\line(1,0){10.75}}
\put(136,29.5){\line(1,0){10.75}}
\put(66.5,40){\line(1,0){10.75}}
\put(66.5,51){\line(1,0){10.75}}
\put(66.5,61.25){\line(1,0){10.75}}
\put(66.25,71.75){\line(1,0){10.75}}
\put(66.25,82.5){\line(1,0){10.75}}
\put(67,93){\line(1,0){10.75}}
\put(56.75,103.75){\line(1,0){10.75}}
\put(87.25,72){\line(0,1){10.75}}
\put(4.75,82.25){\line(0,1){10.75}}
\put(77,19.75){\line(1,0){10.75}}
\put(106.5,19.75){\line(1,0){10.75}}
\put(146.5,19.75){\line(1,0){10.75}}
\put(77,29.5){\line(1,0){10.75}}
\put(146.5,29.5){\line(1,0){10.75}}
\put(77,40){\line(1,0){10.75}}
\put(106.5,40){\line(1,0){10.75}}
\put(77,51){\line(1,0){10.75}}
\put(77,61.25){\line(1,0){10.75}}
\put(76.75,71.75){\line(1,0){10.75}}
\put(76.75,82.5){\line(1,0){10.75}}
\put(67.25,103.75){\line(1,0){10.75}}
\put(87.25,82.5){\line(0,1){10.75}}
\put(4.75,92.75){\line(0,1){10.75}}
\put(14.25,19.5){\circle*{2.062}}
\put(4.5,103.5){\circle*{2.062}}
\put(77.75,93){\circle*{2.062}}
\put(24.25,19.25){\circle*{2.121}}
\put(24.25,29){\circle*{2.121}}
\put(24.25,39.5){\circle*{2.121}}
\put(24.25,50.5){\circle*{2.121}}
\put(24.25,60.75){\circle*{2.121}}
\put(24,71.25){\circle*{2.121}}
\put(24,82){\circle*{2.121}}
\put(24.75,92.5){\circle*{2.121}}
\put(14.5,103.25){\circle*{2.121}}
\put(87.75,29.75){\circle*{2.121}}
\put(117.25,29.75){\circle*{2.121}}
\put(157.25,29.75){\circle*{2.121}}
\put(5.25,40){\circle*{2.121}}
\put(78,103.5){\circle*{2.121}}
\put(34.75,19.25){\circle*{2.121}}
\put(34.75,29){\circle*{2.121}}
\put(34.75,39.5){\circle*{2.121}}
\put(34.75,50.5){\circle*{2.121}}
\put(34.75,60.75){\circle*{2.121}}
\put(34.5,71.25){\circle*{2.121}}
\put(34.5,82){\circle*{2.121}}
\put(35.25,92.5){\circle*{2.121}}
\put(25,103.25){\circle*{2.121}}
\put(87.75,40.25){\circle*{2.121}}
\put(117.25,40.25){\circle*{2.121}}
\put(5.25,50.5){\circle*{2.121}}
\put(45.25,19.25){\circle*{2.121}}
\put(45.25,29){\circle*{2.121}}
\put(45.25,39.5){\circle*{2.121}}
\put(45.25,50.5){\circle*{2.121}}
\put(45.25,60.75){\circle*{2.121}}
\put(45,71.25){\circle*{2.121}}
\put(45,82){\circle*{2.121}}
\put(45.75,92.5){\circle*{2.121}}
\put(35.5,103.25){\circle*{2.121}}
\put(87.75,50.75){\circle*{2.121}}
\put(5.25,61){\circle*{2.121}}
\put(55.75,19.25){\circle*{2.121}}
\put(55.75,29){\circle*{2.121}}
\put(55.75,39.5){\circle*{2.121}}
\put(55.75,50.5){\circle*{2.121}}
\put(55.75,60.75){\circle*{2.121}}
\put(55.5,71.25){\circle*{2.121}}
\put(55.5,82){\circle*{2.121}}
\put(56.25,92.5){\circle*{2.121}}
\put(46,103.25){\circle*{2.121}}
\put(87.75,61.25){\circle*{2.121}}
\put(5.25,71.5){\circle*{2.121}}
\put(66.25,19.25){\circle*{2.121}}
\put(135.75,19.25){\circle*{2.121}}
\put(66.25,29){\circle*{2.121}}
\put(135.75,29){\circle*{2.121}}
\put(66.25,39.5){\circle*{2.121}}
\put(66.25,50.5){\circle*{2.121}}
\put(66.25,60.75){\circle*{2.121}}
\put(66,71.25){\circle*{2.121}}
\put(66,82){\circle*{2.121}}
\put(66.75,92.5){\circle*{2.121}}
\put(56.5,103.25){\circle*{2.121}}
\put(87.75,71.75){\circle*{2.121}}
\put(5.25,82){\circle*{2.121}}
\put(76.75,19.25){\circle*{2.121}}
\put(106.25,19.25){\circle*{2.121}}
\put(146.25,19.25){\circle*{2.121}}
\put(77.5,29.25){\circle*{2.121}}
\put(107,29.25){\circle*{2.121}}
\put(147,29.25){\circle*{2.121}}
\put(77.5,39.75){\circle*{2.121}}
\put(107,39.75){\circle*{2.121}}
\put(77.5,50.75){\circle*{2.121}}
\put(76.75,60.75){\circle*{2.121}}
\put(77.25,71.5){\circle*{2.121}}
\put(77.25,82.25){\circle*{2.121}}
\put(67,103.25){\circle*{2.121}}
\put(87.75,82.25){\circle*{2.121}}
\put(5.25,92.5){\circle*{2.121}}
\put(87.25,19.25){\circle*{2.121}}
\put(116.75,19.25){\circle*{2.121}}
\put(156.75,19.25){\circle*{2.121}}
\put(4.75,29.5){\circle*{2.121}}
\put(87.75,92.75){\circle*{2.121}}
\put(14.5,29){\circle*{2.121}}
\put(14.5,39.5){\circle*{2.121}}
\put(14.5,50.5){\circle*{2.121}}
\put(14.5,60.75){\circle*{2.121}}
\put(14.25,71.25){\circle*{2.121}}
\put(14.25,82){\circle*{2.121}}
\put(15,92.5){\circle*{2.121}}
\put(14.5,29.25){\line(0,1){75.25}}
\put(24.5,29.5){\line(0,1){75.25}}
\put(34.5,29.5){\line(0,1){75.25}}
\put(45.25,29.25){\line(0,1){75.25}}
\put(55.5,29.25){\line(0,1){75.25}}
\put(66.5,29){\line(0,1){75.25}}
\put(77.25,93.25){\line(0,-1){74.25}}
\put(19.5,17.75){\makebox(0,0)[cc]{$a$}}
\put(29,17.75){\makebox(0,0)[cc]{$a$}}
\put(40,17.75){\makebox(0,0)[cc]{$a$}}
\put(49.75,17.5){\makebox(0,0)[cc]{$a$}}
\put(60.75,17.5){\makebox(0,0)[cc]{$a$}}
\put(71.75,17.5){\makebox(0,0)[cc]{$a$}}
\put(141.25,17.5){\makebox(0,0)[cc]{$a$}}
\put(141.25,31.5){\makebox(0,0)[cc]{$a$}}
\put(82.75,17.5){\makebox(0,0)[cc]{$a$}}
\put(112.25,17.5){\makebox(0,0)[cc]{$a$}}
\put(112.5,41.25){\makebox(0,0)[cc]{$a$}}
\put(152.25,17.5){\makebox(0,0)[cc]{$a$}}
\put(152.25,31.5){\makebox(0,0)[cc]{$a$}}
\put(89.5,24.75){\makebox(0,0)[cc]{$b$}}
\put(119,24.75){\makebox(0,0)[cc]{$b$}}
\put(104,24.5){\makebox(0,0)[cc]{$b$}}
\put(159,24.75){\makebox(0,0)[cc]{$b$}}
\put(89.5,35.5){\makebox(0,0)[cc]{$b$}}
\put(119,35.5){\makebox(0,0)[cc]{$b$}}
\put(104,35.25){\makebox(0,0)[cc]{$b$}}
\put(89.5,46.25){\makebox(0,0)[cc]{$b$}}
\put(89.5,57){\makebox(0,0)[cc]{$b$}}
\put(89.5,67.75){\makebox(0,0)[cc]{$b$}}
\put(89.5,78.5){\makebox(0,0)[cc]{$b$}}
\put(89.5,89.25){\makebox(0,0)[cc]{$b$}}
\put(19.375,19.75){\vector(1,0){.07}}\put(17.25,19.75){\line(1,0){4.25}}
\put(31.625,19.75){\vector(1,0){.07}}\put(29.5,19.75){\line(1,0){4.25}}
\put(38.375,19.75){\vector(1,0){.07}}\put(36.25,19.75){\line(1,0){4.25}}
\put(50.625,19.75){\vector(1,0){.07}}\put(48.5,19.75){\line(1,0){4.25}}
\put(80.625,19.75){\vector(1,0){.07}}\put(78.5,19.75){\line(1,0){4.25}}
\put(110.125,19.75){\vector(1,0){.07}}\put(108,19.75){\line(1,0){4.25}}
\put(150.125,19.75){\vector(1,0){.07}}\put(148,19.75){\line(1,0){4.25}}
\put(87.375,19.75){\vector(1,0){.07}}\put(85.25,19.75){\line(1,0){4.25}}
\put(116.875,19.75){\vector(1,0){.07}}\put(114.75,19.75){\line(1,0){4.25}}
\put(156.875,19.75){\vector(1,0){.07}}\put(154.75,19.75){\line(1,0){4.25}}
\put(61,19.75){\vector(1,0){.07}}\put(58.5,19.75){\line(1,0){5}}
\put(71.875,19.75){\vector(1,0){.07}}\put(69.75,19.75){\line(1,0){4.25}}
\put(141.375,19.75){\vector(1,0){.07}}\put(139.25,19.75){\line(1,0){4.25}}
\put(87.25,25.125){\vector(0,1){.07}}\put(87.25,23){\line(0,1){4.25}}
\put(116.75,25.125){\vector(0,1){.07}}\put(116.75,23){\line(0,1){4.25}}
\put(156.75,25.125){\vector(0,1){.07}}\put(156.75,23){\line(0,1){4.25}}
\put(87.25,35.625){\vector(0,1){.07}}\put(87.25,32.75){\line(0,1){5.75}}
\put(116.75,35.625){\vector(0,1){.07}}\put(116.75,32.75){\line(0,1){5.75}}
\put(87.25,46.375){\vector(0,1){.07}}\put(87.25,44){\line(0,1){4.75}}
\put(87.25,56.625){\vector(0,1){.07}}\put(87.25,53.75){\line(0,1){5.75}}
\put(87.25,66.875){\vector(0,1){.07}}\put(87.25,64){\line(0,1){5.75}}
\put(87.25,77.75){\vector(0,1){.07}}\put(87.25,75){\line(0,1){5.5}}
\put(87.25,88.625){\vector(0,1){.07}}\put(87.25,86){\line(0,1){5.25}}
\put(77.5,99){\vector(0,1){.07}}\put(77.5,96.5){\line(0,1){5}}
\put(82.75,93.25){\vector(1,0){.07}}\put(80.5,93.25){\line(1,0){4.5}}
\put(9.375,29.625){\vector(1,0){.07}}\multiput(7.25,29.5)(.53125,.03125){8}{\line(1,0){.53125}}
\put(14.25,24.875){\vector(0,1){.07}}\put(14.25,22.5){\line(0,1){4.75}}
\multiput(106.75,40)(-.03125,-2.625){8}{\line(0,-1){2.625}}
\put(106.625,35.25){\vector(0,1){.07}}\multiput(106.75,32.75)(-.03125,.625){8}{\line(0,1){.625}}
\put(106.25,24.25){\vector(0,1){.07}}\put(106.25,21.75){\line(0,1){5}}
\put(112,39.875){\vector(1,0){.07}}\multiput(109.75,39.75)(.5625,.03125){8}{\line(1,0){.5625}}
\put(141.75,29.625){\vector(1,0){.07}}\multiput(139,29.75)(.6875,-.03125){8}{\line(1,0){.6875}}
\put(152,29.5){\vector(1,0){.07}}\put(149.5,29.5){\line(1,0){5}}
\put(135.75,24.625){\vector(0,1){.07}}\put(135.75,22.5){\line(0,1){4.25}}
\end{picture}

\caption{The chess board with two squares removed and two dominos.} \label{f4}
\end{figure}
\end{center}

\proof The elementary solution of this problem is well known. Color squares of $P$ in black and white in the usual (chessboard) way. Then the number of squares of one color (black or white) in $P$ differs from the number of squares of the other color by 2. Since each domino covers exactly one white and exactly one black square,  $P$ cannot be tiled by dominos.

Here is a solution which, although less elementary, can be applied to many regions of the plane square grid for which the elementary proof above does not work; it also applies to regions of non-square (say, hexagonal) lattices on the plane. This solution first appeared in a paper by W. Thurston \cite{Th}. The ideas of that paper have many applications in several areas of mathematics from combinatorics to probability to mathematical physics.

Let us label horizontal edges pointed rightward in $P$ by the letter $a$ and vertical edges pointed upward by the latter $b$. Then the (counterclockwise) boundary of $P$ has label $W=a^7b^7a\iv ba^{-7}b^{-7}ab$. Every domino can be placed either vertically or horizontally. In the first case its boundary label is $ab^2a\iv b^{-2}$, and in the second case its boundary label is $a^2ba^{-2}b\iv$. Now consider the group $G$ with the presentation $\la a,b \mid ab^2a\iv b^{-2}=1, a^2ba^{-2}b\iv=1\ra$. Suppose that $P$ can be tiled by the dominos. Then every such tiling turns $P$ into a van Kampen diagram over the presentation of $G$. Hence, by the van Kampen lemma (more precisely by Proposition \ref{prop1}), the word $W$ would be equal to 1 in $G$. But consider the 6-element symmetric group $S_3$ and two permutations $\alpha=(1,2), \beta=(2,3)$ in it. Clearly both relations of $G$ hold if we replace $a$ by $\alpha$ and $b$ by $\beta$. Hence the map $a\mapsto \alpha$, $b\mapsto\beta$ extends to a homomorphism $G\to S_3$. Note that $W(\alpha, \beta)=(\alpha\beta)^4=\alpha\beta$ is the permutation $(1,3,2)$ which is not trivial. Hence $W$ is not equal to 1 in $G$, a contradiction.\endproof

This example shows a similarity between the word problem in groups and the tiling problem. Indeed, by the van Kampen lemma, a word $W$ is equal to 1 in $G=\la X\mid R\ra$ if and only if we can tile a disc with boundary labeled by $W$ by pieces whose boundaries are labeled by words from $R$. Nevertheless the word problem differs from the tiling problem as in Example \ref{ex1} because, as we have seen, when we draw a van Kampen diagram, we do not fix the shapes or sizes of the tiles, only the labels of the boundaries of them, so one can view a general van Kampen diagram as a tiling of a disc by tiles made of soft rubber, while the traditional tiling problems such as Example \ref{ex1} are about tiles made of hard plastic.

\subsection{The computational complexity and algebraic systems}

\label{s:CAP}

\subsubsection{Solvability of algorithmic problems: theory and practice}\label{ss:soap} The solvability of an algorithmic problem does not mean that the problem can
be solved in practice.
First of all the existence of an algorithm does not mean that
it is readily available. For example, the word problem in every finite group is certainly decidable. But there is no algorithm that, given a finite set of relations $r_i=1$, $i=1,\ldots,n$, and a relation $r=1$ over a finite alphabet $X$, would decide whether every finite group generated by $X$ where the relations $r_i=1$ hold ($i=1,\ldots,n$) also satisfies $r=1$. That is a result of Slobodskoj \cite{Slo}. Thus there is no {\em \index{Uniform algorithm}uniform} algorithm solving the word problem in all finite groups at once. In general we usually ask for {\em existence} of an algorithm solving certain algorithmic problem but not for a procedure to find this algorithm. (In most concrete cases, if an algorithmic problem in a given group is decidable, we are able to write down an algorithm that solves the problem.)

Another, more important, obstacle is that the algorithm can be too slow.
For example, if a finitely presented group $G$ is \index{Group!residually finite}{\em residually finite} \footnote{A group is residually finite if the intersection of all its subgroups of finite index is trivial.}, then the word problem can be solved by the so called McKinsey algorithm \cite{McK}: list all finite quotients of $G$ (that is consider all finite groups $H$ one by one, consider every map $\phi$ from the set of generators of $G$ into $H$, and check if this  map takes all $r_i$ to $1$, and whether the images of generators of $G$ generate $H$; if - yes, then include the pair $(H, \phi)$ in the list of quotients), and at the same time list all the corollaries of the defining relations of $G$ (say, list all products of conjugates of $r_i$'s and their inverses). Every word $r$ is either in the list of corollaries or is not trivial in one of the finite quotients. In the first case $r=1$ in $G$, in the second case, $r\ne 1$ in $G$, and after some time we will surely know which of this options holds. It is clear that in general this algorithm is very slow. Even for a short word $r$ it would take a lot of time
to decide, using this algorithm, if $r=1$ in a given residually finite group.

\subsubsection{Computational complexity}\label{ss:cc} Thus the next thing to do, after we find out that an algorithmic problem is decidable,
is to find the computational complexity of this problem.

To make this more precise let us present here some concepts from the Computational
Complexity Theory.

Any decision problem $D$ may be considered as a membership problem for elements
of some set $B_D$ in a subset $S_D$. For example if $D$ is the word
problem for a
group $G=\langle X\mid R\rangle$, then $B_D$ is the set of all reduced group words in the alphabet $X$,
and $S_D$
is the set of products of conjugates of elements of $R$ and their inverses in the free group over $X$ (i.e. the normal subgroup of the free group over $X$ generated by $R$).

With any element $x$ in $B_D$ one associates a number which is called the
{\em size} of this element. Usually the size is roughly the minimal space
which is needed to write
$x$ down. For example the size of a word is typically its length.

The size depends on the way we choose to represent the elements.
For example if $x$ is a natural number then we can represent $x$
as $x$ units. Then the
size of $x$ will be equal to $x$. If we represent $x$ as a sequence of binary
digits then the size of $x$ will be approximately $\log_2(x)$.

Algorithms may be realized by Turing
machines (for  a formal definition see below).
We can assume that this machine is equipped with a voice
synthesizer and can say two words ``Yes" and ``No".
An algorithm solving
the decision problem $D$ starts working with an element $x$ of $B_D$
written on the tape of the machine. When it ends, it says ``Yes" if
$x\in S_D$ or ``No" if $x\not\in S_D$. There are also different kinds of algorithms that say ``Yes" when $x\in S_D$ and work indefinitely long without saying anything if $x\not\in S_D$.
Note that if there is an algorithm that recognizes the ``yes" part and there is an algorithm recognizing the ``no" part of $D$, then there exists an algorithm solving $D$.

\subsubsection{Turing machines (a formal definition)} \label{tm}

Recall one of the many equivalent definitions of a \index{Turing machine}{\em Turing machine}.

A
 {\em \index{Turing machine!multi-tape}multi-tape} Turing machine has $k$ tapes and $k$ heads observing
the tapes. One can view it as a quadruple $$M= \langle I, Y, Q,
\Theta\rangle$$ where $I$ is the input alphabet, $Y$ is the tape
alphabet ($I\subseteq Y$), $Q=\sqcup Q_i,
i=1,\ldots, k$ is the set of
states\footnote{$\sqcup$ denotes disjoint union.} of the heads
of the machine,
$\Theta$ is a set of
commands. Some ({\em \index{Turing machine!recognizing}recognizing}) Turing machines also have two distinguished vectors of state letters: $\vec s_1$ is the $k$-vector of start states, $\vec s_0$ is the $k$-vector of accept states.

The leftmost square on every tape is always marked by $\cee$, the
rightmost square is always marked by $\dol$.
The head is placed between two consecutive squares on the tape. A
{\em \index{Configuration of a tape}configuration} of a tape of a Turing machine is a word $\cee u
q v \dol$ where $q$ is the current state of the head of that tape,
$u$ is the word in $Y$ to the left of the head and $v$ is the word
in $Y$  to the right of the head, and so the word written on the
entire tape is $uv.$

At every moment the head of each tape observes two letters on that
tape: the last letter of $u$ (or $\alpha$) and the first letter of
$v$ (or $\omega$).

A {\em \index{Configuration of a Turing machine}configuration} $U$ of a
Turing machine is the word
$$U_1U_2\dots U_k$$
where $U_i$ is a configuration of tape $i$.

Assuming that the Turing machine is recognizing, we can define input configurations and accepted (stop) configurations.
An {\it input configuration} is a configuration where the word written
on the first tape is in $I^+$, all other tapes are empty, the head on the first tape observes
the right marker $\omega$, and the states of all tapes form the start vector $\vec s_1$.
An {\em \index{Accept configuration}accept (or stop) configuration} is any configuration
where the state vector is $\vec s_0$, the accept vector of the machine. We shall always assume (as can be easily achieved) that in the accept configuration every tape is empty.

A {\em command} of a Turing machine is determined
by the states of the heads and some of the $2k$ letters
observed by the heads. As a result of a
transition we replace some of these $2k$ letters by other letters,
insert new squares in some of the tapes and may move the heads
one square to the
left (right) with respect to the corresponding tapes.

For example in a one-tape machine every transition is of the
following form: $$uqv\to u'q'v'$$ where $u, v, u', v'$ are letters
(could be end markers) or empty words. The only constraint  is that
the result of applying the substitution $uqv\to u'q'v'$ to a
configuration word must be a configuration word again, in particular
the end markers cannot be deleted or inserted.

This command means that if the state of the
head is $q$, $u$ is written to the left of $q$ and $v$ is written to
the right of $q$ then the machine must replace $u$ by $u'$, $q$ by
$q'$ and $v$ by $v'$.

For a general  $k$-tape machine a {\em \index{Command of a Turing machine}command} is a vector
$$[U_1\to V_1,\ldots, U_k\to V_k]$$
where $U_i\to V_i$ is a command of a 1-tape machine, the elementary commands (also called {\em \index{Part of a command}parts of the command})
$U_i\to V_i$ are listed in the order of tape numbers. In order to execute this
command, the machine first checks if every $U_i$ is a subword of the configuration
of tape $i$ ($i=1,\ldots, k$), and then replaces all $U_i$ by $V_i$.

A {\em \index{Computation}computation} is a sequence of configurations $C_1\to \dots\to C_n$ such that for
every $i=1,\ldots,  n-1$ the machine passes from $C_i$ to $C_{i+1}$ by applying one
of the commands from $\Theta$.  A configuration $C$ is said to be {\em
\index{Computation!accepted}accepted} by a machine $M$ if there exists at least one computation which starts
with $C$
and ends with an accept configuration.

A word $u\in X^*$ is said to be {\em \index{Word!accepted by a Turing machine}accepted} by the machine if the
corresponding input configuration is accepted.  The set of all accepted words
over the alphabet $X$ is called the {\em \index{Language!accepted (recognized) by a Turing machine}language accepted (recognized) by the machine}

Let  $C = C_1\to\dots\to C_n$
be a computation of a machine $M$ such that for
every $j=1,\ldots, g-1$ the configuration $C_{j+1}$ is obtained from $C_j$ by a
command $\theta_j$ from $\Theta$.  Then we call the word $\theta_1 \ldots \theta_{n-1}$
the {\em
\index{History of computation}history} of this computation.  The number $n$ will be called the {\em \index{Time of computation}time} (or {\em length})
of the computation.  Let $p_i$ ($i=1,\ldots, g$) be the sum of the
lengths of the configurations of the tapes in the configuration $C_i$.
Then the maximum of all $p_i$
will be called the {\em \index{Space of computation}space of the computation}.

We do not only consider {\em \index{Turing machine!deterministic}deterministic} Turing machines, for example,
we allow several transitions with the same left side.

Note that Turing machines can also be viewed as {\em rewriting systems} where objects are the configurations and substitution rules are the commands of the machine. This point of view allows one to use the tools of the string rewriting theory, normal forms, confluence, etc.
One can also view Turing machines as semigroups of partial transformations of the set of configurations generated by the transformations induced by the commands. This point of view allows to explore dynamics properties of Turing machines (say, periodic computations). That is often needed when one tries to simulate a Turing machine by a group \cite{OSamenab}.

\subsubsection{The time and space functions}\label{ss:ttasf} With every Turing machine $A$ one can associate two
important functions: the time function and
the space function.

The {\em \index{Time function of a Turing machine} time } ({\em \index{Space function of a Turing machine}space }) function $t_A(n)$ (resp. $s_A(n)$) of a Turing machine $A$ is the minimal number of steps of the machine (resp. number of cells on the tape visited by the machine) to decide that any element $x\in S_D$ of size
$\leq n$ is indeed in $S_D$.

We can compare the time (space) functions of algorithms as follows. If $f,g\colon {\mathbb N}\to \N$, we write \index{$\prec$}$f\prec g$ if for some constant $C$ we have $f(n)\le Cg(Cn)+Cn+C$ for every $n\in \N$. We say that $f$ and $g$ are {\em \index{Equivalent functions}equivalent}, if $f\prec g$ and $g\prec f$. Thus every sublinear function is equivalent to $n$, functions $n^5$ and $2n^5$, $2^x$ and $3^x$ are equivalent, but functions $n^{3.1}$ and $n^{3.2}$ are not.

The following relation is clear: $$s_A(n)\prec t_A(n).$$

Indeed, even if at every step the algorithm used a new tape cell,
$s_A(n)$ would be only equivalent to $t_A(n)$ (since the number of cells used by the machine at every step is bounded). In reality $s_A(n)$ is almost always
less than $t_A(n)$. On the other hand the following relation also holds \cite{GJ}:
$$t_A(n) \prec 2^{s_A(n)}$$

\subsubsection{A very rarely accepting Turing machine}\label{ss:avratm}

Turing machines, like groups, can have some extreme properties. It is well known, for example, that a Turing machine can have undecidable {\em \index{Halting problem}halting problem}, that is there is no algorithm that checks whether the machine will ever stop starting with a given input configuration. This is obviously equivalent to the property that the time and space functions of this Turing machine are bigger than any given recursive function (i.e. a function $\N\to \N$ whose graph and its complement can be recognized by a Turing machine). In Section \ref{pnrdf} we shall need a Turing machine with even stronger property.

Let $X$ be a recursively enumerable language in a 2-letter alphabet, i.e. a set of  binary
words recognized by a (non-deterministic) Turing machine $M$. If
$x\in X$ then the {\em time} of $x$ (denoted $\cost(x)$ or $\cost_M(x)$) is, by
definition, the minimal time of an accepting computation of $M$ with
input $x$. For any increasing function $h\colon \N\to \N$, a real number $m$ is called {\em \index{$h$-good number}$h$-good} for $M$ if for
every $w\in X$, $|w|<m$ implies $h(\cost(w))<m$.

Let $h(n)=\exp\exp(n)$.

\begin{theorem}[Olshanskii, Sapir \cite{OSnq}] \label{re}
There exists a Turing machine $M$ recognizing a recursively enumerable non-recursive
set $X$ such that the set of all $h$-good numbers for $M$ is infinite.
\end{theorem}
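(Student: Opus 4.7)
The key observation is the following: if we can arrange $X$ to have elements with lengths $\ell_1<\ell_2<\cdots$ such that each $1^{\ell_i}$ is accepted by $M$ in time $T_i$ with $\ell_{i+1}>h(T_i)$, and such that no other element of $X$ has length in $[1,\ell_{i+1}]$, then every $m\in(h(T_i),\ell_{i+1}]$ is $h$-good: the only words $w\in X$ with $|w|<m$ are of lengths $\ell_j$ with $j\le i$, and by monotonicity $h(\cost(w))\le h(T_i)<m$. Infinitely many $i$ would then yield infinitely many $h$-good $m$.

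To implement this, I would start from a non-recursive r.e.\ set $H\subseteq\N$ with a fixed enumerator $E_H$ producing $a_1,a_2,\ldots$ at steps $\sigma_1<\sigma_2<\cdots$, and an infinite recursive subset $H_0\subseteq H$ whose characteristic function runs in primitive recursive time $f_0$ (such an $H_0$ exists for every infinite r.e.\ set). Define lengths inductively by
\[
\ell_1 = 2,\qquad \ell_{i+1}=h\bigl(\ell_i+f_0(\ell_i)+i\bigr)+a_{i+1}+1,
\]
so that each $\ell_i$ is primitive recursive in $(a_1,\ldots,a_i)$ and $a_{i+1}$ can be recovered from $\ell_{i+1}$ given the previous data. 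Design $M$ as a nondeterministic machine with two branches: a ``fast'' branch that on input $1^n$ decides in time $n+f_0(n)$ whether $n=\ell_i$ for some $i$ with $a_i\in H_0$, using the decision procedure for $H_0$; and a ``slow'' branch that guesses some $i$ with $a_i\in H\setminus H_0$, together with a halting computation trace for the TM encoded by $a_i$, and verifies $n=L_i$ for a much larger padded length $L_i$. Let $X$ consist of the easy part $\{1^{\ell_i}:a_i\in H_0\}$ together with the hard part $\{1^{L_i}:a_i\in H\setminus H_0\}$, where each $L_i$ is chosen (only at the moment it is inserted) to exceed every $\ell_j$ already produced as well as the next easy length to be produced, so that hard elements never land in a critical interval $(h(T_i),\ell_{i+1}]$.

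For $i$ such that $a_i\in H_0$ the fast branch gives $\cost(1^{\ell_i})\le\ell_i+f_0(\ell_i)+i=:T_i$, exactly the quantity absorbed in the recursion, so $\ell_{i+1}>h(T_i)$; combined with the placement rule on the hard part, every $m\in(h(T_i),\ell_{i+1}]$ is $h$-good, and since $H_0$ is infinite this yields infinitely many $h$-good $m$. The set $X$ is r.e.\ by the explicit enumerator just described; it is non-recursive because a decision procedure for $X$, combined with decidability of $H_0$ and injectivity of the sequences $i\mapsto\ell_i$ and $i\mapsto L_i$, would let us decide whether each $a\in\N$ eventually appears in $H\setminus H_0$, hence decide membership in $H$ --- a contradiction.

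The main obstacle is precisely the placement of hard elements: since certificate sizes $\sigma_i$ for $a_i\in H\setminus H_0$ can exceed any recursive function of $a_i$, these elements must be delayed and padded so that $L_i$ lies far above any current easy length. This delay is possible because $H_0$ is infinite and the easy enumeration proceeds at a primitive recursive rate, so the enumerator of $X$ can always wait until the next easy length $\ell_{i'}$ has been produced and then choose $L_i>\ell_{i'}$ before actually inserting the hard element. With this bookkeeping in place, the two verifications above (correctness of the cost bound on easy inputs and absence of hard inputs inside critical intervals) are routine.
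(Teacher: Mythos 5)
Your proposal has a clean high-level structure, but it hides two serious difficulties precisely at the points where the actual proof needs a priority argument (the paper explicitly notes both a priority argument and Matiyasevich's solution of Hilbert's tenth problem are used). Let me name them.

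\textbf{The fast-branch time bound is circular.} You define $\ell_{i+1}=h(\ell_i+f_0(\ell_i)+i)+a_{i+1}+1$ with $a_{i+1}$ the $(i+1)$-st output of the enumerator $E_H$ of a non-recursive r.e.\ set. For the ``fast'' branch on input $1^n$ to decide in time $n+f_0(n)$ whether $n=\ell_i$ for some $i$ with $a_i\in H_0$, it must recompute $\ell_1,\ldots,\ell_i$, hence recover $a_1,\ldots,a_i$; but the step number $\sigma_i$ at which $E_H$ produces $a_i$ admits no recursive bound in $a_i$ or $i$ (otherwise $H$ would be recursive). So either the fast branch must simulate $E_H$ for an unbounded time, making $\cost(1^{\ell_i})\ge\sigma_i$ unbounded in terms of $\ell_i$ (which destroys the claim $\ell_{i+1}>h(T_i)$ that creates the gaps), or the fast branch does not actually verify that the decoded $a_1,\ldots,a_i$ are the true enumeration, in which case the nondeterministic machine accepts many additional words and the intended description of $X$ is wrong. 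You cannot simultaneously make $\ell_i$ fast to recognize and make it encode the unbounded enumeration; disentangling these (so that the easy lengths depend only on $H_0$) removes the nonrecursive information from the easy part entirely and shifts the burden onto the hard part, where the second problem kills the argument.

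\textbf{Placing $L_i$ above the current easy lengths does not control its bad interval.} A number $m$ fails to be $h$-good because of $w$ whenever $|w|<m\le h(\cost(w))$, i.e.\ $m$ lies in the ``bad interval'' $(|w|,h(\cost(w))]$. For any reasonable machine $\cost(1^{L_i})\ge L_i$ (the head must traverse the input), so the bad interval of a hard element is at least $(L_i,\exp\exp(L_i)]$ and in fact can reach $h(\sigma)$ for the huge verification time $\sigma$. Your placement rule only guarantees that $L_i$ itself lies above the easy lengths produced so far --- that is, the \emph{left} endpoint avoids the critical intervals. But the right endpoint $h(\cost(1^{L_i}))$ can swallow every subsequent critical interval $(h(T_j),\ell_{j+1}]$ you intended to preserve, because you commit to $\ell_{j+1},\ell_{j+2},\ldots$ (via a primitive recursion) before you learn how long the verification of $a_i$ will take. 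Padding $L_i$ upward does not help: increasing $L_i$ increases $\cost(1^{L_i})$ with it (reading the padding is already linear), so the bad interval moves up but does not shrink relative to the future critical intervals. Handling this --- allowing later easy lengths to ``jump'' past the damage caused by a newly discovered hard element, while keeping infinitely many gaps intact --- is exactly the kind of injury/repair bookkeeping that a priority argument supplies, and it is not ``routine.''

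Finally, the non-recursiveness argument for $X$ is not in place either: $L_i$ is chosen dynamically as a function of the enumeration history, so there is no recursive map from $a$ to a candidate length to query $X$ on, and the sketch ``decide $X$ $\Rightarrow$ decide $H$'' does not go through as stated. In short, the gaps-between-easy-elements idea is a correct intuition, but the interaction with the nonrecursive data is where the real work lives, and the proposal does not resolve it.
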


 It is easy to observe that the set $G$ of $h$-good numbers of $M$ cannot be recursively enumerable. Moreover it cannot contain any infinite recursively enumerable subset. Indeed, if $G$ contained an infinite recursively enumerable subset $R$, then for every input configuration $C$ of $M$ we would start the Turing machine enumerating $R$, and wait till we get a number $r$ from $R$ that is bigger than the size of $C$. Then to check if $M$ halts on $C$, we would check all (finitely many) computations of $M$ starting with $C$ and having length less than $\log\log r$. The Turing machine $M$ halts starting with $C$ if one of these computations ends with the accept configuration. Hence the set $X$  would be recursive. On the other hand, the complement of $G$ is recursively enumerable because in order to verify that $r$ is {\em not} an $h$-good number, one needs only to start $M$ and wait till it accepts an input configuration $C$ of size $\le r$ but $\cost(C)>\log\log r$. Thus $G$ is an {\em \index{Immune set}immune set} and its complement is {\em \index{Simple set}simple} in terminology of \cite{Mal}. Note that the very existence of simple and immune sets was a non-trivial problem solved by Muchnik and Friedberg using the famous {\em priority argument}, answering a question of Post (see \cite{Mal} and \cite{So} for details). Our proof of Theorem \ref{re} also uses the priority argument and Matiyasevich's solution of the Hilbert 10th problem \cite{Mat}.

\subsubsection{The complexity classes}\label{ss:tcc} If there exists an algorithm $A$ which solves the ``yes" part of problem $D$ and $t_A(n)$ is bounded
from above by a polynomial in $n$ then we say that $D$
{\em \index{Polynomial time}can be solved in
polynomial time}. The solvability in exponential time, polynomial space,
exponential  space, etc. is defined similarly.

It is worth mentioning that if we modernize the Turing machine by, say,
adding more
tapes or heads, we won't
change the complexity of the problem much. For example a non-polynomial
time (space) problem cannot become polynomial as a result of that.
The class of all problems which can be solved in polynomial time is
denoted by P.

If in the definition of the time complexity, we replace the deterministic
Turing machines by non-deterministic Turing machines, then we obtain
definitions of the solvability in non-deterministic polynomial time,
non-deterministic exponential time, etc.
Recall that a {\em \index{Turing machine!non-deterministic}non-deterministic} Turing machine
is more intelligent than a deterministic one: it does not blindly
obey the commands of the program, but, at every step, guesses itself
what the next step should be.
Roughly speaking a problem $D$ can be solved in non-deterministic polynomial
time if
for every
element $x\in B_D$ there exists a proof (usually called {\em \index{Witness}witness}) that $x$ belongs
to $S_D$ and the length (size)
of this proof is bounded by a polynomial of the size of $x$.
The class of all problems which can be solved in polynomial time by
a non-deterministic Turing machine is denoted by NP.
It is not known if P=NP. This is one of the central problems in Theoretical
Computer Science.

In order to prove that a problem is solvable in polynomial time it is enough
to find a polynomial time algorithm solving this problem.

We have mentioned that the complexity of the McKinsey algorithm presented in \ref{s:CAP} is very high, but we do not know any finitely presented residually finite group with really hard word problem. Thus we formulate

\begin{prob} Is there (time or space) complexity bound for the word problem in a finitely presented residually finite group. More precisely, what is the highest class in the time complexity hierarchy \cite{GJ} where the word problem in every finitely presented residually finite group belongs? An even more bold question: is the word problem of every finitely presented residually finite group in NP?
\end{prob}

\subsubsection{Reducing one problem to another}\label{ropta}
In order to prove that a problem $D$ cannot be solved in polynomial (exponential, etc.) time
one has to take another problem $Q$ which is known to be
``hard" and reduce it to $D$.

There are several kinds of reductions used in the Computer Science literature.
One of them, the polynomial reduction in the sense of Karp \cite{GJ},
is the following. A reduction of a problem $Q$ to a problem $D$ is a function
$\phi$ from $B_Q$ to $B_D$ such that

\begin{itemize}
\item An element $x$ from $B_Q$ belongs to $S_Q$ if and only if $\phi(x)$
belongs to $S_D$.
\item The element $\phi(x)$ can be computed in polynomial time,
in particular the size of $\phi(x)$ is bounded by a polynomial
of the size of $x$.
\end{itemize}

It is clear that if $Q$ is ``hard" and $Q$ can be reduced to $D$ then $D$
is ``hard" as well.

Notice that, when we prove the undecidability of a problem $D$, we usually
also
reduce a problem $Q$ known to be ``hard" (which in this case means undecidable),
to $D$.
In order to reduce $Q$ to $D$ we find a similar mapping $\phi$, but
we do not care about the size of $\phi(x)$. The ``basic hard" problem is usually the halting problem of a Turing machine (or some other kind of machine such as Minsky machines). But it could be a different kind of undecidable problem such as the Hilbert's 10th problem (see \cite[Section 6]{KS} for details) or a membership in the range of a general recursive function as in McKenzie-Thompson \cite{McT}.

\subsubsection{An NP-complete problem}\label{anp}
Many known problems $A$ are such that $A$ is in NP and every other NP-problem polynomially reduces to $A$. Such problems are called {\em \index{NP-complete problem} NP-complete}. One of these problems is the following {\em \index{Exact bin packing problem}exact bin packing problem} \cite[Page 226]{GJ}, \cite[Page 255]{KLMT}:

\begin{prob}(Exact bin packing problem)\label{bin}
\begin{itemize}
\item {\bf Input:} A $k$-tuple of positive integers $(r_1, \ldots, r_k)$ and positive integers $B,N$.

\item {\bf Output:} ``Yes" if there exists a partition of $\{1,\ldots, k\}$ into $N$ subsets
$\{1,\ldots, k\} = S_1 \sqcup \ldots \sqcup S_N$
such that for each $i = 1,\ldots, N$ we have
$$
\sum_{j\in S_i} r_j=B.
$$
\end{itemize}
\end{prob}

\subsubsection{Complexity classes and algebraic systems} \label{ss:ccaas} A deep connection between Computer Science and  algebra
was found by R.Fagin \cite{Fa74}. He proved, in particular,
the following amazing model-theoretic characterization of classes of finite
algebras whose membership problem
can be solved in non-deterministic polynomial time. Recall that an algebraic system is by definition a set with certain operations and predicates of various arities \cite{Mal}. For example, a linearly ordered group is an algebraic system with one binary operation (product), one unary operation (taking inverses), one nullary operation (the identity element) and one binary predicate giving the order relation.

\begin{theorem}[Fagin \cite{Fa74}] \label{ct:fagin}
The membership problem for an abstract (i.e. closed under isomorphisms)
class of finite algebraic
systems is in NP if and only if
it is the class of all finite models of a second-order
formula of the following
type:
$$\exists Q_1\exists Q_2\ldots \exists Q_n (\Theta)$$
where $Q_i$ is a predicate, and $\Theta$ is a first-order formula.
\end{theorem}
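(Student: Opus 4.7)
The theorem has two implications, which I would prove separately. For the easy direction, suppose the class $\mathcal{C}$ is defined by a formula $\exists Q_1\cdots\exists Q_n(\Theta)$ with $\Theta$ first-order. Given a system $A$ of cardinality $n$, a nondeterministic polynomial-time Turing machine can guess each interpretation $Q_i\subseteq A^{k_i}$ (where $k_i$ is the arity of $Q_i$), recording it in polynomial space $O(n^{k_i})$, and then deterministically verify $\Theta$. Verification is polynomial because a first-order sentence of fixed quantifier depth $d$ over an $n$-element structure is decided by iterating through the $O(n^d)$ assignments to the quantified variables.

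The substantive direction is the converse: if a nondeterministic Turing machine $M$ recognizes $\mathcal{C}$ in time $T(n)\le n^c$ (after some reasonable encoding of finite structures), then $\mathcal{C}$ is definable by an existential second-order sentence $\Phi$. My plan is to construct $\Phi$ so that its existentially quantified relations describe an accepting computation of $M$ on the input structure $A$. Since both time and tape position are bounded by $n^c$, I would index time steps and tape cells by $c$-tuples of elements of $A$; this provides exactly the $n^c$ addresses a computation can use. The existentially quantified parameters would be: a binary relation $<$ to be asserted a linear order on $A$ (it induces a lexicographic order on $A^c$ with first-order-definable successor); for each tape symbol $s$, a $2c$-ary relation $T_s(\bar t,\bar p)$ meaning ``at time $\bar t$, cell $\bar p$ holds $s$''; and for each state $q$, a $2c$-ary relation $H_q(\bar t,\bar p)$ meaning ``at time $\bar t$ the head is at $\bar p$ in state $q$''.

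The first-order matrix $\Theta$ would then conjoin the following clauses: $<$ is a total order on $A$; at time $\bar 0$ the tape contents encode $A$ (using $<$ to enumerate tuples in lex order and marking each relation's truth value in the appropriate block); exactly one symbol and at most one head-state occupy each cell at each time; consecutive configurations at time $\bar t$ and its successor $S(\bar t)$ are linked by some legal transition of $M$, a purely local condition involving only the head's neighborhood; and some accept state appears at some time. Each clause is first-order expressible because the lex-successor on $A^c$ is definable from $<$ by a $c$-fold case split, and the transition conditions involve only a bounded number of cells. Closure of $\mathcal{C}$ under isomorphism is automatic, since $\Phi$ mentions no specific ordering but only the existence of one.

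The main obstacle is the encoding of the input structure on the initial tape: $\Theta$ must assert that $T_s(\bar 0,\bar p)$ holds precisely when the symbol $s$ appears at position $\bar p$ in the canonical tape encoding of $A$. This requires a first-order ``decoding'' that, given a position tuple $\bar p$, identifies which relation symbol $R$ of the vocabulary and which tuple $\bar a\in A^{\mathrm{arity}(R)}$ the position $\bar p$ encodes, and then consults the actual relation $R$ of $A$. The decoding is technical but feasible: $<$ ranks the elements of $A$, hence the tuples of any fixed arity, and bounded arithmetic on these ranks (which block $\bar p$ lies in, then which tuple within the block) is expressible by a large but finite first-order formula, all computations being on fixed-length tuples and using only $<$ and the ambient relations. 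Verifying that this encoding step fits into first-order logic, rather than the easier-looking but less powerful setting of existential first-order logic, is the subtlety on which the theorem ultimately rests.
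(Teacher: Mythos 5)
The paper does not prove this result — it is stated and cited to Fagin's 1974 paper without argument — so there is no in-text proof to compare against. Your sketch is a correct and standard outline of Fagin's original argument: the easy direction by guess-and-verify, and the hard direction by arithmetizing a time-$n^c$ nondeterministic computation with $2c$-ary tableau relations $T_s$, $H_q$ together with an existentially quantified linear order, with the lex-successor on $A^c$ and the block decoding of the canonical input encoding handled by fixed first-order formulas; you also correctly flag the input-encoding clause as the technical crux.
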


Basically this theorem means that the membership problem of a class of
finite algebraic systems is in NP if and only if we
can describe the structure of algebraic systems of this class in terms
of functions and relations.
Since all known methods of studying the structure of algebraic systems are
based on studying functions (homomorphisms,
polynomial functions, etc.) and relations (congruences, orders, etc.)
we can conclude that for the membership problem of
a class of finite algebras
being in NP is equivalent to admitting
a reasonable structure description.

Note that a word in an alphabet with $k$ letters can be considered as an algebraic system as well: a word is a linearly ordered finite set (say, $\{1,2,\ldots,n\}$) with $k$ unary predicates $L_x$ indexed by the letters of the alphabet such that $L_x(p)$ holds if the $p$-th letter in the word is $x$. The set of all words in the $k$-letter alphabet can be defined by obvious axioms involving these $k+1$ predicates (the order predicate should satisfy the axioms of linear order, plus for every $p=1,2,\ldots, n$ one and only one of the statements $L_x(p)$ holds). Reduced group words can also be defined using similar axioms, etc. Hence being able to solve the word problem in a group in non-deterministic polynomial time means, by Fagin's theorem, that we can find algebraic description of words that are equal to 1 in the group.

Note that classes where the membership problem has other types of computational
complexity have similar model theoretic characterizations. Classes with
membership problem in P have been characterized by Immerman \cite{Imm86},
Sazonov \cite{Saz80a}, \cite{Saz80b} and Vardi \cite{Var82}.
Classes with exponential time membership problem
were characterized by Fagin \cite{Fa74}. Classes with non-deterministic
exponential time membership problem have been characterized by Fagin \cite{Fa74}
and Jones and Selman \cite{JS74}. For a detailed survey of these results
see \cite{Fa90}.

\subsubsection{Van Kampen diagrams as witnesses}\label{sub:witness}

Now let again $G=\la X\mid R\ra$ be a finitely presented group. Suppose that a word $w$ in $X$ is equal to 1 in $G$. Then by Lemma \ref{lvk}of van Kampen, $w$ labels the boundary of a van Kampen diagram $\Delta$ over the presentation of $G$. Let us consider that van Kampen diagram as a witness for the equality $w=1$. Since a van Kampen diagram cannot be drawn on a tape of a Turing machine, we need to replace it by an equivalent object which is a word. Such an object is given by Proposition \ref{prop1}. Instead of $\Delta$, we should consider the (non-reduced) product

\begin{equation}\label{word}
u_1\cdot r_1\cdot u_2\cdot r_2\cdot \ldots \cdot u_m\cdot r_d\cdot u_{m+1}
\end{equation} from that proposition. By Proposition \ref{prop1}, the length of that word is --- up to a constant multiple --- the number of cells in $\Delta$ plus $|w|$. Since we identify functions that differ by a constant factor or linear summand, we can say that the size of $\Delta$ is the number of cells, i.e. the {\em area} of $\Delta$. Clearly given a word of the form (\ref{word}), i.e. a word subdivided into subwords $u_i, r_j$, it takes linear time (in the length of the word) to check if it indeed satisfies conditions of Proposition \ref{prop1}. Thus we obtain the following

\begin{prop}[See the proof of Theorem 1.1 \cite{SBR}]\label{Dehn1} Suppose that for every word $w$ that is equal to 1 in $G$ there exists a van Kampen diagram over the presentation of $G$ with at most $f(|w|)$ cells and boundary label $w$. Then the ``yes part" of the word problem of $G$ can be solved in non-deterministic time at most $f(n)$. In particular if $f$ is bounded by a polynomial, then the ``yes" part of the word problem of $G$ is in NP.
\end{prop}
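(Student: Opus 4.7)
The plan is to describe a non-deterministic Turing machine that, given an input word $w$ of length $n$ with $w =_G 1$, guesses a witness and verifies it in time bounded by $f(n)$ up to the equivalence $\prec$ introduced in \ref{ss:ttasf}. The witness, of course, is exactly the object produced by Proposition \ref{prop1}: a factorization
\[
W \;=\; u_1 r_1 u_2 r_2 \cdots u_m r_m u_{m+1}
\]
in the free group $F(X)$, together with the subdivision into the subwords $u_i$ and $r_i$. Here $W$ is required to be freely equal to the input $w$, each $r_i$ must be a cyclic shift of an element of $R^{\pm1}$, and the concatenation $u_1 u_2 \cdots u_{m+1}$ must freely reduce to the empty word.

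First I would apply the hypothesis together with the second half of the van Kampen lemma: if $w =_G 1$, there is a diagram $\Delta$ with at most $m \le f(|w|)$ cells and boundary label $w$, and the number of edges $e$ of $\Delta$ is bounded by a linear function of $m$ and $|w|$ (each cell contributes at most $\max_{r\in R}|r|$ edges to its contour, and the boundary contributes $|w|$). By Proposition \ref{prop1}, $\Delta$ yields a factorization as above with $m$ factors $r_i$ (each of length at most $\max_{r\in R}|r|$, a constant depending only on the presentation) and with $\sum |u_i| \le 4e$. Consequently the total length of the witness word, written out together with the markers indicating where each $u_i$ and $r_i$ starts and ends, is $O(f(n) + n)$, which is $\prec f(n)$ (and $\prec n^k$ if $f$ is polynomial).

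Next I would describe the verification phase. The machine, having nondeterministically guessed a string of length at most $Cf(n) + Cn + C$ together with its subdivision into factors, performs three linear-time checks: (i) that each $r_i$ appears in the finite list of cyclic shifts of elements of $R^{\pm 1}$ (a constant-size table lookup per factor); (ii) that the word $u_1 u_2 \cdots u_{m+1}$ freely reduces to the empty word (a single pass with a pushdown stack, easily implementable on a multi-tape Turing machine in linear time); and (iii) that the full word $u_1 r_1 \cdots u_m r_m u_{m+1}$ freely reduces to the input $w$ (again linear). If all three checks succeed, the machine accepts. The correctness in both directions is immediate: if $w =_G 1$ the witness produced above is accepted, and conversely if the machine accepts then conditions (i)--(iii) certify $w \in N$ via the identity displayed just before Proposition \ref{prop1}.

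There is no real obstacle; the only thing to be careful about is the bookkeeping of constants so that the total running time remains $\prec f(n)$. The number of nondeterministic steps for the guess is linear in the witness length, the verification is linear in the same quantity, and the witness length is $O(f(n) + n)$. Under the equivalence of \ref{ss:ttasf} this collapses to $f(n)$, giving the first claim; and polynomial $f$ gives polynomial total time, establishing that the yes-part of the word problem lies in NP.
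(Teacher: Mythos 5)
Your proposal is correct and follows essentially the same argument that the paper gives in the paragraph immediately preceding the proposition (Section~\ref{sub:witness}): the witness is the factorization $u_1 r_1 \cdots u_m r_m u_{m+1}$ supplied by Proposition~\ref{prop1}, its total length is linear in the number of cells plus $|w|$ since $\sum|u_i|\le 4e$ and $e = O(m+|w|)$, and checking the three conditions is linear time. You simply spell out the edge-count bookkeeping and the three verification checks a bit more explicitly than the paper does.
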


\subsection{Diagrams on surfaces, quadratic equations and Poincar\'e conjecture} \label{dos}

\subsubsection{The definition}\label{ss:td} Let $S=S_{g,n}$ be a (possibly non-oriented) surface of genus $g$ with $n$ boundary components. Let again ${\mathcal P}=\la X\mid R\ra$ be a finite group presentation. A {\em \index{Van Kampen diagram!on a surface}diagram} on $S$ is a tessellation of $S$ by a labeled graph with all edges labeled by letters of $X$ and all cells (regions) being discs with boundaries labeled by words from $R^{\pm 1}$. In fact, a diagram on a surface can be obtained from a usual (disc) van Kampen diagram as follows. Let $\Delta$ be a disc diagram whose boundary is a polygon where some pairs of sides are labeled by the same or mutually inverse words. If we identify these pairs of sides, we get a diagram on a surface (with possibly non-empty boundary). Conversely,
given a diagram on a surface $S$, we can cut the surface by simple closed curves and arcs following edges of the diagram so that we obtain a polygon with some pairs of sides (corresponding to the cutting curves) labeled by the same or mutually inverse words). The result is an ordinary van Kampen diagram. For example, if we want to find out whether an element represented by a word $u$ is a product of $g$ commutators\footnote{$[x,y]$ denotes $xyx\iv y\iv$.} $[x_1,y_1]\ldots [x_g,y_g]$ in a group $G=\la X\mid R\ra$, we need to check whether there exists a van Kampen diagram on an oriented surface of genus $g$ with one boundary component labeled by $u$. In general, one can consider van Kampen diagrams on surfaces with boundaries when one tries to solve any {\em \index{Equation}quadratic equation} $w(x_1,\ldots, x_k)=1$ in $G$. Here $w$ is a word in the alphabet $X\cup \{x_1,\ldots, x_k\}$ (letters from $X$ are called {\em \index{Equation! coefficients of}coefficients}, $x_i$ are called {\em \index{Equation!unknowns of} unknowns}), and for every $i$, the word $w$ has exactly two occurrences of the form $x_i^{\pm 1}$ (hence the term {\em \index{Equation!quadratic}quadratic}). If the equation $w=1$ does not have any coefficients, say, $w=[x_1,y_1][x_2,y_2]\ldots [x_n,y_n]$, then solutions of $w=1$ in a group $G$ are in one-to-one correspondence with homomorphisms from the surface group $\pi_1(S)$ to $G$.

\subsubsection{The complexity of solving quadratic equations}\label{ss:tcosqe}

Using van Kampen diagrams on surfaces Olshanskii  proved:

\begin{theorem}[Olshanskii \cite{Olsurf}, see also Grigorchuk and Kurchanov \cite{GrK}, Griorchuk and Lys\"enok \cite{GL}, and  Lys\"enok \cite{Lys}]\label{Olsurf} For every $n$, the solvability problem for quadratic equations in $n$ variables in any free group is in $P$ (the size of the input of the problem is the sum of lengths of the coefficients).
\end{theorem}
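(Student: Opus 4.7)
The plan is to interpret solutions via van Kampen diagrams on surfaces, as in Section~\ref{dos}. The quadratic word $w$ determines a compact surface $S=S_w$ (orientable or not, with boundary) whose topological type depends only on the combinatorial pattern of the $2n$ occurrences of the unknowns: identifying the two sides of each $x_i$ on the boundary of a polygon with label $w$ produces $S_w$. Because $n$ is fixed, the topological type of $S_w$ is one of finitely many possibilities. The coefficients of $w$ are read off along the arcs of $\partial S_w$, and a solution of $w=1$ in the free group $F=F(X)$ is precisely a van Kampen diagram on $S_w$ over the empty presentation of $F$, with substitutions $x_i\mapsto u_i$ read off along the ``unknown'' cuts.

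The first step is to bound the combinatorial complexity of a minimal such diagram. Since $F$ has no relators, every $2$-cell is a $0$-cell in the sense of Section~\ref{t0c}, so the diagram is effectively a labeled graph on $S_w$; a minimal reduced one is organized into \emph{bands}, maximal strips of $0$-cells carrying a single letter of $X$, each running between two positions on $\partial S_w$ (possibly crossing the unknown cuts). An Euler-characteristic argument on $S_w$, combined with the fact that in a minimal diagram two distinct bands ending at the same coefficient arc must be separated by at least one coefficient letter, bounds the total number of bands by $O(N)$, where $N=\sum|c_i|$ is the input size. The combinatorial type of the diagram—the band graph together with its cyclic embedding in $S_w$ and its incidence with the coefficient arcs—then has at most polynomially many possibilities in $N$, with an exponent depending only on the topological complexity of $S_w$, hence only on $n$.

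The second step is to check realizability of each combinatorial type in polynomial time. A type prescribes (a) a pairing of positions in the coefficient arcs of $\partial S_w$ by the two endpoints of each band, and (b) the cyclic order in which bands cross each unknown cut. Realizability requires that paired positions carry mutually inverse generators of $X$ and that the labels read along the two sides of each unknown cut agree as words, after which the substitution for $x_i$ is simply the sequence of band-labels along its cut. Both tests reduce to verifying $O(N)$ equalities between substrings of the coefficients, which can be performed in $O(N^2)$ time (or faster using suffix-array techniques). Summing over the polynomially many combinatorial types yields a polynomial-time algorithm.

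The main obstacle is justifying the polynomial bound in the first step. One must rule out, in a minimal diagram, both long bands that wind many times around handles or crosscaps of $S_w$ before returning to $\partial S_w$, and large packets of parallel bands that could be jointly simplified. This is carried out by a Dehn--Nielsen style reduction: whenever a band admits such a shortcut on $S_w$, a handle-slide or boundary-compression strictly decreases some complexity measure (for instance, the sum of the lengths of the $u_i$, together with a lexicographic secondary invariant), contradicting minimality. Making this bookkeeping uniform across all topological types of $S_w$—and in particular handling the non-orientable case and diagrams that are not embedded discs—is the technical heart of the argument, but for fixed $n$ it yields the required polynomial bound on the number and size of minimal combinatorial types.
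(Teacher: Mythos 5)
The paper does not give a proof of this theorem: it merely cites Olshanskii \cite{Olsurf} (and related work), remarking in Section~\ref{ss:tcosqe} that the proof uses van Kampen diagrams on surfaces and in Section~\ref{t0c} that $0$-cells were one of the tools. Your framework---reading a solution as a diagram on the surface $S_w$ over the empty presentation of $F$, organizing the $0$-cells into bands, and enumerating combinatorial types of band patterns, then checking realizability of each type in polynomial time---is aligned with those hints, and your realizability check is unproblematic.

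The gap is in the step you yourself flag as ``the technical heart.'' You assert that for a minimal diagram (i) the number of bands is $O(N)$ and (ii) no band winds essentially around a handle or crosscap of $S_w$, so that the number of combinatorial types is polynomial in $N$ with an exponent depending only on $n$; and you propose a ``Dehn--Nielsen style reduction'' to justify (ii). This is precisely the nontrivial content of the theorem and it is not established by what you write. Two concrete issues. First, the argument you give for (i) controls only band \emph{endpoints} on the coefficient arcs; it does not bound how many times a single band crosses the unknown cuts, so the combinatorial size of a band is not yet controlled and neither, therefore, is the length of the prospective $u_i$. Second, even granting a linear bound on the number of bands, the number of isotopy classes of a system of $O(N)$ disjoint simple arcs with prescribed endpoints on a surface of fixed positive genus can grow exponentially in $N$ unless one proves that a minimal diagram uses only boundedly many (in terms of $n$) arc patterns up to isotopy rel endpoints. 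The claim that any arc admitting a shortcut can be straightened by a surgery strictly decreasing $\sum_i |u_i|$ together with a lexicographic tie-breaker is plausible, but you do not exhibit the surgery, verify that it preserves the property of being a solution, prove termination of the reduction, or treat the non-orientable case; these are exactly the places where such surgery arguments typically go wrong. Until this is made precise, the polynomial bound on the number of combinatorial types---the statement that actually implies membership in $P$ for each fixed $n$---is unproved, so the proposal has a genuine hole at its core even though the overall architecture is the right one.
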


Note that the solvability of that problem was first proved by Cromerford and Edmunds in \cite{CE}.

Recently Kharlampovich, Lys\"enok, Myasnikov and Touikan \cite{KLMT} proved that the uniform solvability problem (when the number of variables is not fixed) is harder. They also use diagrams on surfaces and results from \cite{Olsurf}, plus some clever arguments reducing NP-complete {\em exact bin packing} problem (Problem \ref{bin}) to the problem of filling (tesselating) a surface with a van Kampen diagram over the presentation consisting of $0$-cells.

\begin{theorem}[Kharlampovich, Lys\"enok, Myasnikov,Touikan \cite{KLMT}] \label{KLMT} The (uniform problem of) solvability of quadratic equations in the free group in NP-complete (the {\bf input} of that problem is an equation, the {\bf output} is ``yes", if the equation has a solution in a free group).
\end{theorem}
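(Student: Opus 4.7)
The plan is to establish NP-completeness by proving NP-membership and NP-hardness separately. For NP-membership, I would exploit the correspondence between quadratic equations and van Kampen diagrams on surfaces from Subsection \ref{dos}. A solution of $w(x_1,\ldots,x_k) = 1$ over a free group $F$ corresponds to a van Kampen diagram on a compact surface $S$ (whose topological type is determined by the shape of $w$) whose boundary curves are labeled by the coefficients and whose interior is tessellated entirely by $0$-cells, since $F$ admits the presentation $\la X\mid \emptyset\ra$. Such a diagram serves as the witness; following Proposition \ref{prop1}, it can be encoded as a word of size proportional to its number of (0-)cells. The necessary polynomial bound on the minimal number of $0$-cells, as a function of the total coefficient length, comes from the surface-minimization arguments that underlie Olshanskii's Theorem \ref{Olsurf}; this lets a non-deterministic machine guess and verify a witness in polynomial time.

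For NP-hardness, I would reduce the exact bin packing problem (Problem \ref{bin}) to quadratic-equation solvability. Given data $(r_1,\ldots,r_k,B,N)$, I would construct in polynomial time a quadratic equation whose associated surface is a sphere (or low-genus surface) with $N+k$ boundary components, encoded as conjugacy factors $z_j c_j z_j\iv$ in the equation. The $N$ ``bin'' boundaries carry labels of length $B$ over a distinguished alphabet, and the $k$ ``item'' boundaries carry labels of length $r_i$ over the same alphabet. A $0$-cell tessellation of this surface then realizes a matching of each item boundary into exactly one bin boundary, with the capacity constraint $\sum_{j\in S_i} r_j = B$ enforced because the total item-label length entering each bin must equal that bin's circumference. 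Since the size of the equation is polynomial in $k + N + \sum r_i + B$, this is a legitimate Karp reduction.

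The main obstacle lies in the hardness direction: one must arrange the coefficient words so that $0$-cell fillings cannot cheat by ``splitting'' an item across two bins or by mixing segments of items from different bins. The remedy is to choose item-labels from mutually disjoint sub-alphabets (or to frame each item by marker letters absent from all others) so that the label sequences around each bin uniquely determine a partition of $\{1,\ldots,k\}$. A careful case analysis of how $0$-cell patches can traverse or glue boundary components then shows that every valid filling must carry an item boundary entirely into a single bin boundary, yielding a genuine partition. Verifying this rigidity, and ensuring that no spurious tessellations arise from exotic $0$-cell configurations along the interior, will be the delicate combinatorial part; once this is achieved, the equivalence of the two problems -- together with the NP upper bound from the first paragraph -- gives NP-completeness.
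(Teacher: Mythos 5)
Your proof sketch matches the paper's (and \cite{KLMT}'s) stated approach precisely: NP membership via $0$-cell van Kampen diagrams on surfaces serving as polynomial-size witnesses, with the size bound drawn from the surface-minimization arguments behind Olshanskii's Theorem \ref{Olsurf}, and NP-hardness via a Karp reduction of the exact bin packing problem \ref{bin} to the problem of tessellating a surface by $0$-cells. The rigidity concerns you flag at the end are indeed where the real technical work of the reduction lies, but the architecture you propose is the one the cited authors use.
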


Note that the fact that solvability of quadratic equations over free monoids is NP-hard was proved before \cite{KLMT} by Diekert and Robson in \cite{DR}. It is still not known whether that (seemingly easier) problem is NP-complete. As Volker Diekert told us, the NP-completeness in the case of monoids would imply Theorem \ref{KLMT}, and so the case of monoids is very interesting.

\subsubsection{Quadratic equations and the Poincar\'e conjecture}\label{ss:qeatpc}

Note that quadratic equations even without coefficients (i.e. homomorphisms from the surface group) are closely related to the Poincar\'e conjecture. There are several reformulations of the Poincar\' e conjecture in terms of homomorphisms of surface groups onto the direct product of two free groups (see Jaco \cite{J}, Stallings \cite{Stal1}, Olshanskii \cite{Olsurf}). Here is the reformulation from Hempel \cite{H}. Since the Poincar\'e conjecture has now been proved by Perelman \cite{P1,P2}, we formulate that reformulation as a theorem.

\begin{theorem}[Hampel \cite{H}, Perelman \cite{P1,P2}]\label{t:hp} Let $S$ be a closed oriented surface, $\pi_1(S)$ be its fundamental group. Suppose that $\phi_1$ and $\phi_2$ are two homomorphisms from $\pi_1(S)$ onto the direct product of two free groups $F_n\times F_n$. Then there is an automorphism $\alpha$ of $\pi_1(S)$ and an automorphism $\beta$ of $F_n\times F_n$ such that $\alpha\phi_1\beta=\phi_2$.
\end{theorem}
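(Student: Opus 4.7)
The plan is to combine the topological-algebraic dictionary set up by Hempel with Perelman's proof of the Poincar\'e conjecture, which I would invoke as a black box. Under Hempel's correspondence, equivalence classes of epimorphisms $\pi_1(S)\twoheadrightarrow F_n\times F_n$ (under the natural action of $\mathrm{Aut}(\pi_1(S))\times\mathrm{Aut}(F_n\times F_n)$) correspond bijectively to closed orientable $3$-manifolds equipped with a genus-$n$ Heegaard splitting. The theorem then asserts that exactly one such equivalence class exists for each $n$, which under the dictionary reads: every such manifold is homeomorphic to $S^3$.

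First, I would set up the dictionary. Given a Heegaard splitting $M^{3} = H_1\cup_{S} H_2$ with each $H_i$ a handlebody of genus $n$, so that $\pi_1(H_i)\cong F_n$, the two inclusions $S\hookrightarrow H_i$ induce epimorphisms $\psi_i:\pi_1(S)\twoheadrightarrow F_n$ which combine into $\Phi = (\psi_1,\psi_2):\pi_1(S)\to F_n\times F_n$. Conversely, each $\Phi$ is realized (up to the action on the right by $\mathrm{Aut}(F_n\times F_n)$, which records the choice of generators in the two handlebodies) by a Heegaard splitting, and two surjections determine homeomorphic manifolds precisely when they are in the same $\mathrm{Aut}(\pi_1(S))\times\mathrm{Aut}(F_n\times F_n)$-orbit: the $\mathrm{Aut}(\pi_1(S))$-factor records the mapping-class-group ambiguity in how $S$ sits inside $M$ via Dehn--Nielsen--Baer.

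Second, I would verify that surjectivity of $\Phi$ already forces $\pi_1(M)$ to be trivial. By Van Kampen, $\pi_1(M)\cong F_n *_{\pi_1(S)} F_n$, which simplifies to $F_n/\langle\langle\psi_2(\ker\psi_1)\rangle\rangle$. If $\Phi$ is surjective, then for each $y\in F_n$ the pair $(1,y)$ has a preimage $g\in\pi_1(S)$, so $g\in\ker\psi_1$ with $\psi_2(g)=y$; hence $\psi_2(\ker\psi_1)=F_n$ and $\pi_1(M)=1$. Thus every manifold appearing in the dictionary is a homotopy $3$-sphere. Applying Perelman's theorem that every simply connected closed orientable $3$-manifold is homeomorphic to $S^3$, we conclude that all such manifolds are $S^3$, so by the dictionary any two surjections $\phi_1,\phi_2$ lie in the same orbit, which is exactly the existence of $\alpha,\beta$ with $\alpha\phi_1\beta=\phi_2$.

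The main obstacle is of course Perelman's theorem itself, which the statement of the theorem permits us to cite. The remaining effort is in establishing Hempel's dictionary; the most delicate point is the second half of the correspondence, where one must check that the equivalence relation of $3$-manifold homeomorphism matches the algebraic equivalence up to automorphisms. This rests on the Reidemeister--Singer-type stable uniqueness of Heegaard splittings together with a description of handlebody mapping class groups acting on $\pi_1$. A purely algebraic proof of the theorem, in the spirit of Stallings' program to attack the Poincar\'e conjecture via manipulations of surjections to $F_n\times F_n$, would be of independent interest, but no such proof is known.
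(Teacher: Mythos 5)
The paper does not give a proof of this theorem; it states it as Hempel's reformulation of the Poincar\'e conjecture, promoted to a theorem by Perelman, and simply cites \cite{H,P1,P2}. So there is no ``paper's own proof'' to compare against, and a sketch of the standard argument is a reasonable thing to supply. Your algebraic step is correct: if $\Phi=(\psi_1,\psi_2)$ is onto $F_n\times F_n$, then $\psi_2(\ker\psi_1)=F_n$, so the van Kampen pushout $F_n *_{\pi_1(S)} F_n \cong F_n/\langle\langle\psi_2(\ker\psi_1)\rangle\rangle$ is trivial and $M$ is a homotopy sphere. The overall strategy is the right one.

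The problem is that your ``dictionary'' is stated incorrectly, in a way that would make the theorem false before Perelman ever enters. You assert that two surjections lie in the same $\mathrm{Aut}(\pi_1(S))\times\mathrm{Aut}(F_n\times F_n)$-orbit precisely when the resulting manifolds are homeomorphic. That is not true: the orbit records the Heegaard \emph{splitting} up to equivalence, and a single closed $3$-manifold can carry many pairwise inequivalent Heegaard splittings of the same genus. After Perelman gives $M\cong S^3$, you still need Waldhausen's theorem --- that $S^3$ has a \emph{unique} Heegaard splitting of each genus up to isotopy --- to conclude that the two splittings, and hence the two surjections, are equivalent. You invoke ``Reidemeister--Singer-type stable uniqueness'' instead, which is the wrong tool: Reidemeister--Singer gives equivalence only after stabilizations (increasing the genus), and would yield at best a stable version of the statement. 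Two further points worth recording: the realization of an arbitrary surjection $\Phi$ by an actual Heegaard splitting rests on the nontrivial theorem (Jaco, Zieschang) that the kernel of any epimorphism $\pi_1(S_g)\twoheadrightarrow F_g$ contains a full cut system of disjoint simple closed curves --- this is where the geometric content of the dictionary really lives and it should be named rather than folded into a vague appeal to ``Hempel's dictionary''; and the whole Heegaard argument requires the genus of $S$ to equal $n$, an assumption the paper's statement does not make explicit and which you should either impose or explain how to remove.
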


This theorem means that there exists essentially at most one ``maximal" solution of the equation $[x_1,y_1]\ldots[x_g,y_g]=1$ in $F_n\times F_n$ for every $n$.

\subsubsection{A link between Poincar\'e conjecture and NP}\label{ss:albpcanp} The proof of Poincar\'e conjecture implies that certain algorithmic problems are in NP. For example, consider the following problem

\begin{prob}\rm{(}Triviality of the fundamental group of a 3-manifold\rm{)}.\label{Ivanov}

{\bf Input.} A compact 3-manifold $M$ without boundary represented as a union of tetrahedra.

{\bf Output.} ``Yes" if the fundamental group of $M$ is trivial and ``no" otherwise.

\end{prob}

Note that the triviality problem is a part of the uniform word problem (see \ref{ss:soap}): given a presentation of a group we need to decide if each generator is equal to 1 in the group. Note also that giving a triangulation of $M$, we can easily find the presentation of $\pi_1(M)$ which is of a comparable size as $M$.

That Problem \ref{Ivanov} is in NP follows from the fact that simply connected compact 3-manifolds are homeomorphic to the 3-sphere (the Poincar\'e conjecture) and the result of S.V. Ivanov \cite{Ivanov} that the problem of recognizing a 3-sphere is in NP.

\subsubsection{Other equations in free groups}\label{ss:oeifg}

Arbitrary equations in free groups correspond to diagrams on more complicated 2-dimensional complexes than surfaces and the solvability problem for them is much harder. A very non-trivial result of Makanin shows that this problem is decidable.

\begin{theorem}[Makanin \cite{Mak}]\label{Mak} There exists an algorithm to check whether a system of equations over a free group has a solution.
\end{theorem}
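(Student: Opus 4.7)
The plan is to reduce decidability of solvability of a system of equations in a free group $F$ to a terminating combinatorial process on finite discrete objects, the \emph{generalized equations} of Makanin.

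\textbf{Step 1: Reduction to a single equation.} First I would reduce a finite system $w_1 = 1, \ldots, w_k = 1$ to a single equation $W(x_1, \ldots, x_m) = 1$ with coefficients in $F$. This is a classical observation: in a free group, the solvability of a finite conjunction of equations can be expressed by the solvability of a single equation obtained by combining the $w_i$ with fresh unknowns so that a ``product'' of the constraints vanishes iff each does. Hence it suffices to decide solvability of one equation $W = 1$.

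\textbf{Step 2: Generalized equations.} Following Makanin, I would encode $W$ by a \emph{generalized equation}: a finite interval of integer cells covered by a collection of \emph{bases}, where each of the two occurrences of an unknown $x_i$ becomes a base whose cell-labels must coincide with those of its partner base, and where each coefficient occurrence becomes a base with prescribed label. A solution of $W = 1$ in $F$ is equivalent to an admissible labeling of the cells of the interval by letters of $F$ that is compatible with every base.

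\textbf{Step 3: The Makanin process.} The engine of the argument is a finite-branching transformation procedure on generalized equations whose elementary steps (the \emph{entire transformations}) shift bases past one another and peel off regions whose labels are forced. Each step is both sound and complete: the parent generalized equation has a solution if and only if at least one of its successors does. The leaves of the process are either manifestly solvable (e.g.\ the empty interval) or manifestly contradictory, so the algorithm will be: build the generalized equation from the input, execute the process, and accept iff some leaf is manifestly solvable.

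\textbf{Step 4: Termination --- the main obstacle.} The genuinely hard part, and the heart of Makanin's paper, is to show that starting from the original generalized equation the process always halts within a number of steps bounded by a computable function of the input length. Makanin introduces a subtle complexity measure on generalized equations and carries out a detailed analysis of the ``periodic'' configurations, which are the only mechanism that could produce arbitrarily long branches; he shows such configurations can be detected and collapsed to structurally simpler generalized equations so that the complexity measure eventually decreases. This periodicity/complexity analysis, rather than the set-up in Steps 1--3, is where essentially all the depth of the proof lies, and I would expect it to be the bulk of the work.
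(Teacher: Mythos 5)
The paper does not prove this theorem; it simply cites Makanin \cite{Mak} (and the later works of Razborov, Diekert--Gutierrez--Hagenah for refinements). So there is no ``paper's own proof'' to compare against; I can only assess your sketch against what the cited argument actually does.

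Your high-level architecture is the right one, and your Step 4 correctly locates where essentially all the difficulty sits: the complexity function on generalized equations and the analysis of periodic structures are the heart of Makanin's paper, and you are right not to attempt to reproduce them. Step 1 (collapsing a finite system to a single equation) is indeed a classical manipulation valid in free groups. But Step 2 has a real gap as written. You say that a solution of $W=1$ in the free group $F$ ``is equivalent to an admissible labeling of the cells of the interval.'' That identification holds directly for word equations over a \emph{free monoid}, where a solution literally is a concatenation filling the interval; for a free group it fails because a solution is a tuple of reduced words whose \emph{product need not be reduced} --- cancellation between consecutive factors of $W$ is exactly what makes $W=1$ possible. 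Before one can pass to a generalized equation on a finite interval, one has to enumerate the possible cancellation patterns (equivalently, the boundary structure of a reduced van Kampen diagram over the empty presentation, or the ``cancellation tree'' of the product), and for each choice one obtains a genuine system over a free monoid with involution (to account for inverses). This enumeration is finite, so it does not harm decidability, but it is a necessary and non-obvious preliminary step specific to the group case, and it is precisely the new ingredient Makanin had to add on top of his earlier free-semigroup result. Without it, Step 2 does not get off the ground. With it, the rest of your outline (generalized equations, entire transformations, termination via the complexity bound) is a faithful summary.
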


Note that for some relatively easy groups the problem of solvability of systems of equations is undecidable (this is so for, say, a finitely generated nilpotent group, see \ref{ss:tacatcp}).

Makanin's algorithm is very complicated and its complexity is known to be a tower of many exponents. Later, Makanin's algorithm was simplified and clarified by  Razborov \cite{Raz1, Raz2}.  Razborov also gave a description of all solutions. Still the exact complexity of solving equations in free groups was an open problem till the following relatively recent result.

\begin{theorem}[Diekert, Gutierrez, Hagenah \cite{DGH}] The problem of solvability of an equation in a free group is in PSPACE, that is it can be solved by a deterministic Turing machine using polynomial amount of space (in terms of the size of the equation).
\end{theorem}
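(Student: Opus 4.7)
The plan is to adapt Plandowski's PSPACE algorithm for word equations over a free monoid \cite{Plandowski} to the setting of a free group by introducing an involution on the alphabet. First I would reduce the problem: given an equation $U=V$ over the free group $F(\Sigma)$ with unknowns $x_1,\ldots,x_k$ and coefficients in $\Sigma^{\pm 1}$, I would look for solutions in the free monoid $(\Sigma\cup\Sigma^{-1})^*$ that respect the involution $a\mapsto a^{-1}$ and are freely reduced. One must be careful because unknowns can be cancelled by coefficients during reduction, so a solution is really a tuple of reduced words $(w_1,\ldots,w_k)$ together with some discrete ``cancellation pattern'' telling how adjacent occurrences of the $x_i$ and the coefficients cancel; the set of such patterns has size bounded polynomially in $|U|+|V|$, so one can branch over them in polynomial space.

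The heart of the argument is the \emph{exponential bound on compressed solutions}. Here a solution is represented not by the words $w_i$ themselves (which can have doubly exponential length in Makanin's original bound) but by a straight line program (SLP), i.e. a context-free grammar generating a single string. The key fact I would invoke is that every minimal solution of a word equation with involution admits an SLP encoding of size polynomial in the input length $n$; this is established by a combinatorial analysis refining Plandowski's argument, using compression lemmas of the Lempel--Ziv/SLP type. Granted this, the PSPACE algorithm would guess such an SLP of polynomial size, then verify correctness.

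For the verification step I would rely on the fact, due to Plandowski, Rytter and collaborators, that equality of two SLP-generated strings can be tested in polynomial time, and more generally that one can evaluate the result of substituting SLP-encoded values for the unknowns $x_i$ into $U$ and $V$, freely reduce the resulting SLP (the free reduction of an SLP-compressed word can itself be computed in polynomial time by a result of Schleimer and others), and compare the two sides. Since all of this happens in polynomial time, it happens in polynomial space, and by Savitch's theorem $\mathrm{NPSPACE}=\mathrm{PSPACE}$, so the nondeterministic guessing of the SLP is absorbed.

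The main obstacle, as I see it, is proving the polynomial-size compressed-solution bound in the free-group setting: the combinatorics of cancellations $xx^{-1}\to 1$ interacts non-trivially with the factorization arguments underlying Plandowski's bound, and one has to show that the ``exponent of periodicity'' of a minimal solution --- the largest $\alpha$ such that some $u^\alpha$ occurs as a factor --- is at most singly exponential in $n$ even after allowing the involution. Everything else (enumerating cancellation patterns, manipulating SLPs, applying Savitch) is comparatively routine once this structural bound is in hand.
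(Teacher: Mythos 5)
The survey does not give a proof of this theorem; it simply cites Diekert--Gutierrez--Hagenah \cite{DGH}, so there is nothing in the paper to compare against line by line. That said, your reconstruction does follow the actual DGH strategy in its essential contours: reduce to word equations over a free monoid with involution, establish that minimal solutions admit polynomially compressed descriptions, and collapse NPSPACE to PSPACE via Savitch. Two remarks on where your sketch deviates in emphasis from the published argument. First, the standard Makanin/Diekert reduction from free-group equations to the monoid setting does not proceed by tracking explicit ``cancellation patterns'' in the way you describe; instead it introduces fresh unknowns for the pieces and imposes \emph{rational (regular) constraints} to enforce that values of unknowns are freely reduced words. This is why the DGH result is naturally formulated for equations \emph{with rational constraints}: the constraints are needed even if one only cares about plain equations, because they appear in the reduction. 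Your cancellation-pattern bookkeeping is a workable alternative, but your claim that there are only polynomially many such patterns needs care --- what is true, and sufficient since you are in NPSPACE anyway, is that a pattern has a polynomial-size \emph{description} that can be guessed; you should not rely on the cardinality of the set of patterns being polynomial. Second, the ``guess a polynomial-size SLP for a minimal solution and verify'' presentation is a clean modernization in the spirit of Je\.z's later recompression technique; the original Plandowski algorithm that DGH extend is an iterative nondeterministic compression search rather than a direct guess-and-check, and proving the polynomial SLP bound for minimal solutions in the presence of the involution (and in particular controlling the exponent of periodicity, as you correctly single out) is exactly where the technical work sits. The supporting ingredients you invoke --- polynomial-time SLP equality testing and polynomial-time free reduction of SLP-compressed words --- are indeed available in the literature, so with the structural bound in hand your outline closes.
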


Other results from \cite{DGH} show that PSPACE most probably cannot be improved because a very similar problem of deciding the existential theory of a free group (i.e. solving systems of equations and {\em \index{Inequation} inequations} of the form $w\ne1$) turned out to be PSPACE-complete (the solvability of systems of equations and inequations in a free group was proved to be decidable by Makanin in \cite{Mak1}). See also \ref{ss:tipfhg} below.

\subsubsection{The conjugacy problem and annular diagrams} \label{tcpaad} Note that the conjugacy problem is about solvability of the (quadratic) equations of the form $xux\iv=v$ where $u, v$ are coefficients and $x$ is an unknown. Thus by \ref{dos}, we need to consider diagrams on an annulus.
So let the surface $S$ be an annulus (i.e. the surface $S_{0,2}$), $\pp$ be a presentation of some group $G$, and $u,v$ be two group words over the generating set of this presentation. Then $u$ and $v$ are conjugate in $G$ if and only if there exists a diagram $\Delta$ on $S$ over a presentation $\pp$ with boundary labels $u,v$ (that was first noticed by Schupp and appeared in \cite{MS}, hence annular diagrams are sometimes called {\em \index{Schupp diagram}Schupp diagrams}).

\subsection{Isoperimetric and isodiametric functions of groups}\label{s:iaifog}

\subsubsection{The isoperimetric functions}\label{ss:tiff} Thus with every finite presentation ${\mathcal P}$ of group $G$ we associate the following important function: $f(n)$ is the smallest number such that every word $w$ that is equal to 1 in $G$, $|w|\le n$, labels the boundary of a van Kampen diagram over the presentation of $G$ with at most $f(n)$ cells. This function is called the {\em \index{Dehn function}Dehn function} of ${\mathcal P}$. It is easy to see that the Dehn functions of different finite presentations of the same group are equivalent, hence we can talk about the Dehn  function of a finitely presented group. Every function $g\colon \N\to\N$ with $f\prec g$ is called an {\em \index{Isoperimetric function}isoperimetric function} of $G$.
The Dehn functions of groups were introduced (under different names) by Computer Scientists Madlener and Otto \cite{MO} studying complexity of the word problem in groups, and by Gromov \cite{GrHyp} as a geometric invariant of groups (see also Gersten \cite{G2} where the name ``Dehn function" was introduced).

As a direct consequence of Proposition \ref{Dehn1}, we get

\begin{theorem}\label{2} \rm{(}See \cite[Theorem 1.1]{SBR}\rm{)}. Every Dehn function of
a finitely presented group $G$ is (equivalent to) the time function
of a Turing machine solving (non-deterministically) the ``yes" part of the word problem
in $G$.
\end{theorem}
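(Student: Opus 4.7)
The plan is to exhibit a specific nondeterministic Turing machine $M$ whose time function $t_M$ is equivalent to the Dehn function $f$ of $G$. Proposition \ref{Dehn1} already supplies one inequality $t_M\prec f$ for a suitable $M$; the job is to choose $M$ carefully so that additionally $f\prec t_M$. The machine $M$ will, on input a reduced word $w$ over $X$, nondeterministically guess a word
\[
\Omega\;=\;u_1\,r_1\,u_2\,r_2\,\cdots\,u_m\,r_m\,u_{m+1}
\]
with the factor boundaries marked on an auxiliary tape, and then verify deterministically in time linear in $|\Omega|$ that (a) each $r_i$ is a cyclic shift of a word in $R^{\pm 1}$ (a constant amount of work per symbol, since $R$ is finite), (b) the concatenation $u_1u_2\cdots u_{m+1}$ reduces to the empty word in $F(X)$, and (c) the free reduction of $\Omega$ equals $w$. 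Steps (b) and (c) are the standard linear-time stack-based free-group reductions; $M$ accepts iff all checks pass.

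\emph{Upper bound $t_M\prec f$.} For any $w$ with $|w|\le n$ and $\phi(w)=1$, Lemma \ref{lvk} supplies a \vk diagram $\Delta$ of area at most $f(n)$ with boundary label $w$. Proposition \ref{prop1} then yields a word $\Omega$ of the above form in which the factors $u_i$ have total length at most $4$ times the number of edges of $\Delta$; the latter is bounded by a constant (depending on $\max_{r\in R}|r|$) times $\area(\Delta)+|w|$. Hence $|\Omega|\le C(f(n)+n)$, and $M$ accepts $w$ in time $\le C'(f(n)+n)$. Since $n\prec f(n)$ for any group carrying a nontrivial defining relation (the free case being trivial), this is equivalent to $f$, giving $t_M\prec f$.

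\emph{Lower bound $f\prec t_M$.} Conversely, let $T\le t_M(|w|)$ be the length of a shortest accepting computation of $M$ on $w$; the guess $\Omega$ has length at most $T$ and contains at most $m\le T$ factors $r_i$. Setting $v_i:=u_1u_2\cdots u_i$, conditions (a) and (b) yield the identity
\[
\Omega\;=\;\prod_{i=1}^m\bigl(v_i\,r_i\,v_i^{-1}\bigr)\cdot\bigl(u_1u_2\cdots u_{m+1}\bigr),
\]
in which the last factor is trivial in $F(X)$, so $\Omega$ lies in the normal closure $N$; by (c) the same holds for $w$. More importantly, this representation has precisely the form (\ref{e1}), and the ``bouquet of lollipops'' construction of Section \ref{td} --- run exactly as in the derivation of Lemma \ref{lvk} --- assembles it into a \vk diagram for $w$ with at most $m\le T$ cells. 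Therefore $\area(w)\le t_M(|w|)$ for every trivial word $w$, whence $f(n)\le t_M(n)$.

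Combining the two bounds gives that $f$ and $t_M$ are equivalent. The main technical point on which everything rests is the tight linear control supplied by Proposition \ref{prop1}: without the bound $\sum|u_i|\le 4e$, the naive representation (\ref{e1}) could force conjugators $s_i$ as long as the diameter of the diagram, producing a witness of length on the order of $\area\times\text{diameter}$ and only the far weaker estimate $t_M\prec f^2$. Once that linear control is in hand, the remaining verifications --- linear-time free-group reduction and cyclic-shift membership in the finite set $R$ --- are routine, and the lollipop-to-diagram reconstruction simply reverses the procedure described in \S\ref{td}.
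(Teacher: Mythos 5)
Your proof is correct and takes essentially the same route the paper intends: the witness machine that nondeterministically guesses the cut-open diagram of Proposition~\ref{prop1} and verifies it in linear time is exactly what \S\ref{sub:witness} describes, and your reassembly argument supplies the reverse inequality $f\prec t_M$ that the paper leaves implicit in its reference to \cite[Theorem 1.1]{SBR}. One small slip in attribution: the area bound $\area(\Delta)\le f(n)$ is not given by Lemma~\ref{lvk} (which only asserts existence of some filling diagram) but by the definition of the Dehn function; this does not affect the argument.
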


Hence we have

\begin{theorem}[Madlener, Otto \cite{MO}, Gersten \cite{G2}]\label{MO} The word problem in a finitely presented group is solvable if and only if the Dehn function (or one of the isoperimetric functions) is recursive.
\end{theorem}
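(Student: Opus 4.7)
The plan is to prove both directions using the van Kampen machinery already at hand, in particular Lemma \ref{lvk} and Proposition \ref{prop1}, without invoking the finer equivalence of Theorem \ref{2}.

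For the direction ($\Leftarrow$), assume some isoperimetric function $f$ of $G$ is recursive. Given an input word $w$ with $|w|=n$, I would first compute $f(n)$ (possible since $f$ is recursive). By Lemma \ref{lvk} and the definition of an isoperimetric function, $w=1$ in $G$ if and only if there is a van Kampen diagram over $\la X\mid R\ra$ with boundary label $w$ and at most $f(n)$ cells. By Proposition \ref{prop1}, the existence of such a diagram is equivalent to the existence of a product $u_1 r_1 u_2 r_2\cdots u_m r_m u_{m+1}$ with $m\le f(n)$, each $r_i$ a cyclic shift of an element of $R^{\pm 1}$, and $\sum|u_i|\le 4e$ where $e$ is the total number of edges. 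Since every cell has at most $\max_{r\in R}|r|$ boundary edges, $e$ is bounded by a linear function of $f(n)$ and $n$, so the set of candidate products is finite and effectively enumerable. For each candidate, freely reducing and comparing to $w$ takes finite time. Outputting ``yes'' if some candidate works and ``no'' otherwise decides whether $w=1$.

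For the direction ($\Rightarrow$), assume we have an algorithm $A$ deciding the word problem of $G$. I would compute $f(n)$ as follows. Enumerate the finitely many reduced words $w$ over $X^{\pm 1}$ with $|w|\le n$. For each such $w$, run $A$ to decide whether $w=1$ in $G$. For each $w$ with $w=1$, compute the area of $w$, namely the minimal $m$ such that there is a product as in Proposition \ref{prop1} representing $w$: enumerate candidate products in order of increasing $m$ (and, for each $m$, over the finitely many choices of $r_i$ and $u_i$ within the bound provided by Proposition \ref{prop1}(3)); the existence part of Lemma \ref{lvk} guarantees the search terminates. The Dehn function value is then the maximum of the computed areas over all such $w$.

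The only delicate point is the finite-search argument in both directions: one must observe that for a diagram of area $\le m$ with boundary of length $\le n$, the total length of the tree-of-lollipops word $u_1 r_1\cdots u_m r_m u_{m+1}$ arising from Proposition \ref{prop1} is bounded effectively by a function of $m$ and $n$ (since the number of edges is at most $Cm+n$ with $C=\max_{r\in R}|r|$). Once this is in place, both directions reduce to a finite enumeration followed by free reduction, which is elementary. I expect no genuine obstacle; the content of the theorem is essentially the observation that the van Kampen lemma converts the word problem into a bounded search whose bound is precisely controlled by the Dehn function.
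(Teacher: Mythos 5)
Your strategy---decide the word problem by a bounded search over the ``tree-of-lollipops'' words of Proposition \ref{prop1}, and then compute the Dehn function by running that search for each short word once a word-problem oracle is available---is the right one, and it is essentially what the paper itself does: in the text, Theorem \ref{MO} is drawn from Theorem \ref{2}, which rests on Proposition \ref{Dehn1}, which rests on Proposition \ref{prop1}, i.e.\ on exactly this bounded van Kampen search. The edge bound you invoke is also correct (each edge of a planar diagram borders exactly two faces counted with multiplicity, so $2e \le Cm + n$ with $C=\max_{r\in R}|r|$), so the search at each $m$ is indeed finite.

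However, your decision procedure as written fails, because you drop condition (2) of Proposition \ref{prop1}: the requirement that $u_1u_2\cdots u_{m+1}=1$ in the free group. You enumerate length-bounded products $u_1r_1u_2r_2\cdots u_mr_mu_{m+1}$ and check only that a candidate ``freely reduces to $w$''. Without the constraint on the $u_i$, such a product witnesses only that $w$ and $u_1u_2\cdots u_{m+1}$ lie in the same coset of the normal closure $N$, not that $w\in N$; indeed $w=\bigl(\prod_{i=1}^{m}(u_1\cdots u_i)\,r_i\,(u_1\cdots u_i)^{-1}\bigr)\cdot(u_1\cdots u_{m+1})$. Concretely, taking $m=0$ and $u_1=w$ always produces a valid candidate, so the algorithm as described answers ``yes'' on every input. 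The same omission would corrupt the area computation in the $(\Rightarrow)$ direction, producing spurious small-$m$ witnesses. The fix is immediate---also check that $u_1u_2\cdots u_{m+1}$ freely reduces to the empty word, i.e.\ enforce part (2) of Proposition \ref{prop1}---and once that is added both directions go through as you describe.
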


Being a geometric invariant means, in particular, that if two finitely presented groups are {\em quasi-isometric}, then their Dehn functions are equivalent. Recall that if  $M_1$, $M_2$ are metric spaces with distance functions $d_1, d_2$, then a partial function $\phi\colon M_1\to M_2$ is called {\em \index{Quasi-isometry}quasi-isometry} between $M_1$, $M_2$ if there exist three constants $A\ge 1,B\ge 0,C\ge 0$ such that

\begin{equation}\label{e:L} \frac{1}{A}\; d_2(\phi(x),\phi(y))-B\leq d_1(x,y)\leq A\; d_2(\phi(x),\phi(y))+B \end{equation} for all $x,y\in M_1$
and every point in $M_1$ is at distance at most $C$ from the domain of $\phi$, while every point in $M_2$ is at distance at most $C$ from the range of $\phi$. In this case we call $\phi$ $(A,B,C)$-quasi-isometry. If $C=0$, we call $\phi$ an $(A,B)$-{\em quasi-isometry}, and if $B=C=0$, then $\phi$ is an $A$-{\em \index{Bi-Lipschitz map}bi-Lipschitz map}. Finally if $B=C=0$ and we only have the second of the two inequalities in (\ref{e:L}), then $\phi$ is an $A$-{\em \index{Lipschitz map}Lipschitz} map.

We say that $\phi\colon M_1\to M_2$ is a quasi-isometry (bi-Lipschitz map or Lipschitz map) from $M_1$ to $M_2$ if this map is a quasi-isometry (bi-Lipschitz map or Lipschitz map) between $M_1$ and $\phi(M_1)$.

The spaces $M_1$ and $M_2$ are called {\em \index{Quasi-isometric spaces}quasi-isometric} if there exists a quasi-isometry between $M_1$ and $M_2$.
For example, any two Cayley graphs (see \ref{ccms} for a definition) of a group with respect to two finite generating sets are quasi-isometric as metric spaces (the distance between two vertices is the length of a shortest path connecting them).  Moreover any isomorphism between any subgroup of finite index in one group and a subgroup of finite index of another group induces a quasi-isometry of the Cayley graphs of these groups (that was a starting point of a proof of the celebrated Mostow rigidity theorem). Two finitely generated groups are called {\em \index{Quasi-isometric groups}quasi-isometric} if their Cayley graphs (with respect to finite generating sets) are quasi-isometric. It is not difficult to observe that two quasi-isometric groups have equivalent Dehn functions. In particular, if one of the two quasi-isometric groups has solvable word problem, the other group must also have solvable word problem (by Theorem \ref{MO}).

\subsubsection{The conjugacy problem and quasi-isometry} \label{ss:tcpaq} Note that the analogous statement is not true for the conjugacy problem: there exist two pairs of groups $A_1>B_1$ and $B_2>A_2$, each extension is of index 2, such that $A_1,A_2$ have solvable conjugacy problem which $B_1, B_2$ have unsolvable conjugacy problem (see Gorjaga-Kirkinskii \cite{GK} and Collins-Miller \cite{CM}).

\subsubsection{The isodiametric function}\label{ss:tif}

Another important function associated with van Kampen diagrams is the {\em \index{Isodiametric function}isodiametric function}: for every word $w$ that is equal to 1 in the group $G=\la X\mid R\ra$, let $\mathrm{di}(w)$ be the minimal diameter of a van Kampen diagram with boundary label $w$. Then $\mathrm{di}(n)$ is the maximal value of $\mathrm{di}(w)$ for all $w$ with $|w|\le n$, $w=_G1$. The area of a van Kampen diagram with boundary label $w$ gives the number of factors in a representation of $w$ as a product of conjugates of elements of $R$ and their inverses (by Proposition \ref{Dehn1}). The diameter is roughly the
maximal size of a conjugator used in this representation. It is easy to see that the isodiametric function does not depend (up to equivalence) on the finite presentation of $G$. It is known (D. Cohen \cite{Coh}, Gersten \cite{Ger3}, Birget \cite{Bir3}, Gersten and Riley \cite{GeR}) that for every finitely presented group $G$ the Dehn function $f(n)$ and isodiametric function $g(n)$ satisfy the following double exponential inequality:

$$f(n)\le a^{b^{g(n)}}$$
for some constants $a,b$. This immediately implies that the word problem in $G$ is solvable if and only if the isodiametric function is recursive. It is still an open problem whether one can reduce the number of exponents to one.

\begin{prob} [D. Cohen \cite{Coh}, also attributed to Stallings] Is it true that for every finitely presented group $G$ there exists a constant $a>1$ such that $$f(n)\le a^{g(n)}$$ where $f, g$ are, respectively, the Dehn function and the isodiametric function of $G$? \end{prob}

\subsection{Filling Length functions and the space complexity}\label{s:flfatsc}

If $G=\la X\mid R\ra$, $w=1$ in $G$, then there is a sequence of transformations

\begin{equation} \label{eq1} w\to w_1\to\ldots\to 1\end{equation} where at every step we either insert a relation from $R^{\pm 1}$, insert or delete a subword of the form $xx\iv$, $x\in X^{\pm 1}$ \cite{MO}. The area of $w$ is equivalent to the minimal length of any such sequence. We can also estimate the space needed by a Turing machine in order to recognize words which are equal to 1 in $G$ by looking at the sizes of the intermediate words $w_i$. The minimum for all sequences (\ref{eq1}) of the maximal length of $w_i$ is called the {\em \index{Filling length of a word}filling length} of $w$. One can follow Gromov \cite{GrHyp} and define the {\em \index{Filling length (FL) function}filling length function} $\mathrm{FL}(l)$ of a group using the filling length of words in the usual way. Gromov \cite[Page 100]{Gr1} noticed that in many finitely presented groups $\mathrm{FL}(l)$ is at most a constant factor of the isodiametric function of $G$, and asked if it is so for all finitely presented groups.

FL is probably the most obvious way to define the space function for a group, but it turned out not to be the most natural function, or, more precisely, unlike in the case of Dehn functions, different natural definition of the space function give drastically different results. In particular, as suggested in Bridson-Riley \cite{BR},  one can allow taking cyclic shifts of words and splitting a word $w_i$ into a pair of words $(u_i,v_i)$ each equal to 1 in the group. The resulting function is called the {\em \index{Fragmenting free filling length (FFFL) function}fragmenting free filling length function} of a group denoted $\mathrm{FFFL}(l)$ \cite{BR} or the {\em \index{Space function of a group} space function} of a group \cite{OlFFFL}. Both Gromov's filling length function FL and Bridson-Riley fragmented filling length function FFFL have nice geometric interpretation in terms of the transformation of van Kampen diagrams. Computing the  filling length amounts to removing cells of the diagram one by one without changing the base point on the boundary and looking at the lengths of the boundaries of the resulting diagrams. When computing fragmenting free filling length function of a group, we are also allowed to change the base point, and divide a diagram into two subdiagrams (and then taking the sums of lengths of the boundaries of the pieces). It is proved in \cite{BR} that these functions behave differently for some finitely presented group  $G$, for instance, the function $\mathrm{FFFL}$ can grow linearly while $\mathrm{FL}$ has exponential growth. Since cutting van Kampen diagrams is the main tool in studying them, the function $\mathrm{FFFL}$ seems to be the most natural group theory analog of the space function of a Turing machine (see Olshanskii \cite{OlFFFL}). As in the case of Dehn functions, the function $\mathrm{FFFL}$ does not depend on the presentation of a group (up to equivalence), and for every finitely presented group $G$ there exists a non-deterministic Turing machine recognizing the word problem in $G$ whose space function is equivalent to $\mathrm{FFFL}$ (this ``diagram eating" Turing machine is essentially described in \cite[Section 3, Proof of Theorem 1.1]{SBR}). Thus the function $\mathrm{FFFL}$ is in general closer to the space complexity function of the word problem of a finitely presented group than $\mathrm{FL}$.

Note that similarly to Theorem \ref{2}, but easier, one can establish the following

\begin{prop}[Olshanskii \cite{OlFFFL}] \label{propos} The space function of a finitely presented group $G$ is equivalent to the space function of a non-deterministic Turing machine. The language accepted by this machine coincides with the set of words equal to $1$ in the group. \end{prop}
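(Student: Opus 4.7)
The plan is to construct an explicit non-deterministic Turing machine $M$ whose elementary steps are in one-to-one correspondence with the elementary moves that are allowed in the definition of $\mathrm{FFFL}$. I would take $M$ to have a single work tape whose content is a list of group words over $X\cup X^{-1}$, with consecutive words separated by a marker $\#$. On input $w$, the tape initially reads $\# w \#$. At each step $M$ nondeterministically performs one of the following moves on some segment between two chosen markers: (i) insert or delete a factor of the form $xx^{-1}$; (ii) insert a cyclic shift of a relator from $R^{\pm 1}$ between two consecutive letters; (iii) replace the segment by a cyclic shift of itself; (iv) split a segment $uv$ into two adjacent segments by writing a new $\#$ between $u$ and $v$; (v) if a segment is empty, remove one of the two markers bounding it. The machine enters its accept state once no non-marker symbols remain on the tape.

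Next I would verify that the language of $M$ is exactly $\{w\in F(X):\phi(w)=1\}$. If $\phi(w)=1$, then Lemma~\ref{lvk} supplies a van Kampen diagram $\Delta$ with boundary label $w$; a witness sequence for $\mathrm{FFFL}(w)$ corresponds exactly to an ``eating'' procedure on $\Delta$ (peel off a boundary cell, perform a free cancellation, change basepoint via a cyclic shift, or cut $\Delta$ along a pinch vertex into two sub-diagrams), and each such step is a legal move of $M$, yielding an accepting computation. Conversely, if $M$ accepts on $w$, I read the accepting computation in reverse: starting from the all-empty tape, the inverse moves (undeleting an $xx^{-1}$, deleting an inserted relator, reversing a cyclic shift, concatenating two adjacent segments, or re-inserting a removed marker) preserve the invariant that every segment on the tape lies in the normal closure $N$ of $R$. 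By backward induction the initial segment $w$ lies in $N$, so $\phi(w)=1$; this is just the argument used in the proof of the second half of the van Kampen lemma (Proposition~\ref{prop1}) applied to each segment separately.

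Finally I would compare the two space functions. By construction the space used by $M$ at any step is the sum of the lengths of the current segments plus one marker per segment; eagerly applying rule (v) keeps the number of non-empty segments at most the total segment length, so the marker overhead is absorbed by the $\prec$-equivalence. Matching the trace of an accepting computation with the corresponding $\mathrm{FFFL}$-eating of a van Kampen diagram then shows that the minimum, over all accepting branches on inputs of length $\le n$, of the maximum tape length along the branch coincides, up to $\prec$, with $\mathrm{FFFL}(n)$. The main obstacle is to make the correspondence between accepting $M$-computations and \emph{legal} $\mathrm{FFFL}$-sequences (whose splits are required to produce two individually trivial words) genuinely two-way: the TM commits to a split without knowing whether both halves are trivial, and it is only the fact that an accepting branch must later empty both halves that retrospectively certifies legality. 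Turning this \emph{a posteriori} certificate into a formal bijection is exactly the backward-induction step sketched above, and it is the technical heart of the proof.
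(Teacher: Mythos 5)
Your proposal is correct and follows essentially the same route the paper indicates: building the ``diagram-eating'' nondeterministic machine (the one the text points to in \cite[Section 3, proof of Theorem 1.1]{SBR}) whose elementary moves mirror the $\mathrm{FFFL}$ moves, so that tape length tracks the sum of boundary lengths. Your backward-induction observation—that on an accepting branch every split is automatically legal because each half must later be emptied, and each of the moves (i)--(v) preserves membership of every segment in the normal closure $N$—is exactly the right way to close the correspondence, and the marker overhead is indeed swallowed by the $\prec$-equivalence once empty segments are pruned eagerly.
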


\subsection{Two examples of Dehn functions}\label{te}

\subsubsection{Two concrete groups.}\label{ss:tcg} We shall give two standard examples of group presentations where the Dehn function is computed explicitly.

\begin{ex} \label{example1} The Dehn function of $\Z^2=\la a,b\mid ab =ba\ra$ is at most quadratic (in fact it is exactly quadratic by, say,  \ref{s:tdfogaoms} or \ref{ss:tacatcp}, but we shall prove only the upper bound).
\end{ex}

\begin{ex} \label{example2} The Dehn function of the {\em \index{Group!Baumslag-Solitar $\mathrm{BS}(1,2)$}Baumslag-Solitar group} $\mathrm{BS}(1,2)=\la a, b \mid bab\iv  =a^2\ra$ is at least exponential (in fact it is exactly exponential by, say, J. Groves and S. Hermiller \cite{GH}, but we shall prove only the lower bound).
\end{ex}

\subsubsection{HNN-extensions} \label{hnne} Both examples are HNN-extensions (of the cyclic group). Recall \cite{LS} that if $G$ is a group, $A,B$ are isomorphic subgroups of $G$, and $\phi\colon A\to B$ is an isomorphism, then the {\em HNN-\index{HNN-extension}extension} of $G$ with free letter $t$ and associated subgroups $A,B$ is the group $$\HNN(G;A,B,\phi)=\la G, t\mid tat\iv =\phi(a) \hbox{  for every } a\in A.\ra$$
That is the presentation of that group is obtained by adding letter $t$ to the generating set, keeping the defining relations of $G$, and adding the \index{HNN-relation}{\em HNN-relation} $tat\iv=\phi(a)$ for every $a\in A$. In fact it is enough to add HNN-relation for every generator of $A$. Hence if $G$ is finitely presented, $A, B$ are finitely generated, then $\HNN(G;A,b,\phi)$ is finitely presented. We shall also use {\em \index{HNN-extension!multiple}multiple HNN-extensions}. These correspond to a group $G$, and a collection of pairs of isomorphic subgroups $A_i,B_i$, $i=1,\ldots,n$ and isomorphisms $\phi_i\colon A_i\to B_i$. The corresponding multiple HNN-extension of $G$ has the following presentation
$$\begin{array}{ll}\HNN(G;(A_i),(B_i),(\phi_i))= & \la G, t_1\ldots,t_n\mid t_iat_i\iv = \phi_i(a) \\ & \hbox{ for every } a\in A_i, i=1,\ldots,n\ra.\end{array}$$

A cell corresponding to an HNN-relation looks like a quadrangle (see Figure \ref{quad}) with edges labeled by $t_i$ on the opposite sides pointing the same way; $a$ and $\phi_i(a)$ there are words representing the corresponding elements of $A_i$ and $B_i$ respectively.

\begin{figure}[H]

\unitlength .7mm 
\linethickness{0.4pt}
\ifx\plotpoint\undefined\newsavebox{\plotpoint}\fi 
\begin{picture}(75,34.75)(0,0)
\put(27.25,31.75){\line(1,0){47.75}}
\put(75,31.75){\line(-1,-3){7.5}}
\put(67.5,9.25){\line(-1,0){35}}
\multiput(33,9.25)(-.033707865,.127808989){178}{\line(0,1){.127808989}}
\put(26,19.75){\makebox(0,0)[cc]{$t_i$}}
\put(73,18.75){\makebox(0,0)[cc]{$t_i$}}
\put(49,6){\makebox(0,0)[cc]{$a$}}
\put(49.25,34.75){\makebox(0,0)[cc]{$\phi_i(a)$}}
\put(30.25,20.25){\vector(1,-4){.07}}\multiput(29.25,24.25)(.03333333,-.13333333){60}{\line(0,-1){.13333333}}
\put(71.375,20.375){\vector(-1,-3){.07}}\multiput(72.75,24.5)(-.03353659,-.10060976){82}{\line(0,-1){.10060976}}
\end{picture}

\caption{An HNN-cell} \label{quad}
\end{figure}
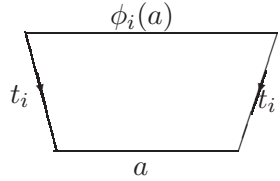

If $\Delta$ is a van Kampen diagram over this presentation, and $\pi$ is an HNN-cell inside it, then each edge of $\pi$ labeled by $t_i$ must be an edge of another HNN-cell, hence HNN-cells with $t_i$-edges form {\em \index{Band}$t_i$-bands} (also called {\em $t_i$-corridors}), that is sequences of cells $\pi_1,\ldots, \pi_k$ where every two consecutive cells share a $t_i$-edge. Bands are the main tools for studying van Kampen diagrams over multiple HNN-extensions.

The following basic lemma was essentially proved in \cite{MS}.

\begin{lemma} \label{ms} Let $\Delta$ be a diagram over an HNN-extension presentation. Suppose that $\Delta$ is {\em \index{Van Kampen diagram!minimal}minimal}, i.e. has the smallest number of HNN-cells among all diagrams with the same boundary label. Then no $t_i$-band is an annulus (i.e. one of the $t_i$-edges of the first cell in the band does not coincide with any other $t_i$-edge of the band).
\end{lemma}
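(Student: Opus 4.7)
The plan is a standard surgery argument by contradiction. Suppose that $\Delta$ is minimal yet some $t_i$-band $B$ in $\Delta$ is an annulus, with cells $\pi_1,\ldots,\pi_k$ in which every consecutive pair $\pi_j,\pi_{j+1}$ (indices mod $k$) shares a $t_i$-edge. Using $0$-cells as in \ref{t0c}, we may assume the diagram is realized so that $B$ is an embedded annulus in the plane; then $B$ separates the plane, and the bounded component minus $B$ is a disc subdiagram $\Sigma \subseteq \Delta$.

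First I would identify the labels of the two sides of $B$. Each HNN-cell $\pi_j$ has contour of the form $t_i a_j t_i^{-1} \phi_i(a_j)^{-1}$, and since the band closes on itself, the $t_i$-edges cancel pairwise between consecutive cells all the way around. Hence the inner side of $B$ is labeled by a cyclic word $u = a_1 a_2 \cdots a_k$ in the generators of $A_i$, while the outer side is labeled by $v = \phi_i(a_1)\phi_i(a_2)\cdots\phi_i(a_k) = \phi_i(u)$ in the generators of $B_i$ (the two sides may be swapped according to the orientation of $B$; the argument is symmetric). Because $\Sigma$ is a van Kampen diagram with boundary label $u$, we have $u = 1$ in the HNN-extension $\HNN(G;A_i,B_i,\phi_i)$.

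The crucial algebraic step is then to upgrade this to $u = 1$ already inside $A_i$. This is precisely the content of Britton's Lemma, equivalently the normal form theorem for HNN-extensions, which asserts that the associated subgroups embed in the HNN-extension. Consequently $v = \phi_i(u) = 1$ in $B_i \le G$. Invoking Lemma \ref{lvk}, there exists a van Kampen diagram $\Sigma'$ over the presentation of $G$ alone, with boundary label $v$; in particular $\Sigma'$ contains no HNN-cells. Now perform the surgery: excise from $\Delta$ the closed region $B \cup \Sigma$, and paste $\Sigma'$ into the resulting hole, whose boundary is labeled by $v$. The outer boundary of the resulting diagram $\Delta'$ coincides with that of $\Delta$.

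Finally, compare HNN-cell counts: in passing from $\Delta$ to $\Delta'$ we removed the $k \ge 1$ HNN-cells of $B$ together with every HNN-cell inside $\Sigma$, while adding none. Thus $\Delta'$ has strictly fewer HNN-cells than $\Delta$, contradicting minimality. I anticipate the only nontrivial step is the appeal to Britton's Lemma; once embeddability of the associated subgroups is accepted, the remainder is purely planar surgery on the diagram, and the extension from a single to a multiple HNN-extension is automatic since each letter $t_i$ is handled independently.
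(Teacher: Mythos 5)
Your surgery is exactly the right idea and, in isolation, the argument is sound. The problem is circularity within this paper's logical architecture. You appeal to Britton's Lemma (equivalently, the fact that $G$, and in particular the associated subgroup $A_i$, embeds in $\HNN(G;(A_i),(B_i),(\phi_i))$) to pass from ``$u=1$ in the HNN-extension'' to ``$u=1$ in $G$.'' But that embedding is precisely what the paper proves as Corollary \ref{hnn}, \emph{and the paper's proof of Corollary \ref{hnn} uses Lemma \ref{ms}}. So your argument, read inside the paper, derives Lemma \ref{ms} from a consequence of Lemma \ref{ms}. The lemma is meant to be the combinatorial engine behind the embedding theorem, not the other way around.

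The way the paper sidesteps this is the step you skipped: instead of trying to extract information about the word on the inner boundary of an arbitrary $t_i$-annulus, choose the annulus so that the subdiagram $\Delta'$ it encloses is \emph{smallest under inclusion} among all subdiagrams bounded by inner boundaries of $t_j$-annuli. A maximal $t_j$-band inside $\Delta'$ cannot end on $\partial\Delta'$ (no $t_j$-edges there) and cannot be an annulus (minimality of $\Delta'$), so $\Delta'$ contains no HNN-cells at all. Now Proposition \ref{prop1}, which is pure van Kampen combinatorics over the presentation of $G$ and uses nothing about HNN-extensions, directly gives that the inner boundary label is trivial in $G$; since $\phi_i$ is an isomorphism, so is the outer boundary label, and then your surgery goes through verbatim. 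If you want to keep the structure of your proof, replace the appeal to Britton's Lemma with this innermost-annulus argument, or else explicitly note that you are invoking an independent proof of Britton's Lemma (e.g.\ the normal-form argument in Lyndon--Schupp) rather than the paper's Corollary \ref{hnn}.
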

\proof  Indeed, suppose a $t_i$-band $\ttt$ is an annulus. Suppose for convenience that the $t_i$-edges of that band point inside the annulus (the other case is similar). We can also assume that the subdiagram $\Delta'$ bound by the inner boundary component of $\ttt$ is a smallest (under inclusion) among all subdiagrams bounded by inner boundaries of $t_j$-annuli. Then $\Delta'$ cannot contain any $t_j$-annuli. Since the boundary of $\Delta'$ does not contain any $t_j$-edges, it cannot contain any HNN-cells except for the cells from $\ttt$. Hence, by the van Kampen lemma (more precisely by Proposition \ref{prop1}) the boundary label of $\Delta'$ is equal to 1 in $G$. Since $\phi_i$ is an isomorphism, then the label of the outer boundary component of $\ttt$ is equal to 1 in $G$. Hence we can replace the annulus $\ttt$ together with $\Delta'$ by a van Kampen diagram over $G$ (by Lemma \ref{lvk}). This reduces the number of HNN-cells in the diagram without changing the boundary label, a contradiction. \endproof

Lemma \ref{ms} immediately implies the main property of HNN-extensions

\begin{cy}\label{hnn} The group $G$ naturally embeds into $\HNN(G; (A_i), (B_i), (t_i))$.
\end{cy}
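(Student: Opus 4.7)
The plan is to derive the corollary directly from Lemma \ref{ms} by showing that any word $w$ in the generators of $G$ which represents $1$ in $\HNN(G;(A_i),(B_i),(\phi_i))$ must already represent $1$ in $G$. The natural map $G\to \HNN(G;(A_i),(B_i),(\phi_i))$ sends each generator of $G$ to itself, so injectivity of this map is equivalent to the above statement, which is what I will prove.

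Suppose $w$ is a word in the generators of $G$ (no occurrences of the stable letters $t_i$) which equals $1$ in the HNN-extension. By Lemma \ref{lvk}, there exists a van Kampen diagram $\Delta$ over the presentation of $\HNN(G;(A_i),(B_i),(\phi_i))$ with boundary label $w$. Choose such a $\Delta$ that is minimal in the sense of Lemma \ref{ms}, i.e., with the smallest possible number of HNN-cells among all van Kampen diagrams with boundary label $w$.

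The key observation is that every $t_i$-edge of $\Delta$ either lies on $\partial\Delta$ or is shared by exactly two cells, both of which must be HNN-cells (because $t_i$ appears only in the HNN-relations). Hence every internal $t_i$-edge lies in a uniquely determined maximal $t_i$-band: starting from an HNN-cell containing this edge, one can traverse through the cell to its opposite $t_i$-edge and continue. The band terminates either by reaching the boundary at a $t_i$-edge of $\partial\Delta$, or by closing up to form an annulus. Since the boundary label $w$ contains no letters $t_i^{\pm 1}$, the boundary has no $t_i$-edges, so every $t_i$-band in $\Delta$ is forced to be an annulus. But by Lemma \ref{ms} applied to our minimal diagram, no $t_i$-band is an annulus. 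Therefore $\Delta$ contains no HNN-cells at all.

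Consequently, every cell of $\Delta$ has boundary labeled by a relator of $G$, so $\Delta$ is actually a van Kampen diagram over the presentation of $G$ with boundary label $w$. By Proposition \ref{prop1} applied to this diagram (or the van Kampen lemma for $G$ directly), $w=1$ in $G$, which completes the proof. The only nontrivial input is Lemma \ref{ms}, which has already been proved; the step requiring the most care in writing out is the band argument, specifically the dichotomy that each maximal $t_i$-band either exits through the boundary or closes up into an annulus, but this is a routine consequence of the local structure of HNN-cells and the planarity of $\Delta$.
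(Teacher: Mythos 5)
Your proof is correct and follows essentially the same route as the paper: pass to a minimal van Kampen diagram, invoke Lemma \ref{ms} to rule out $t_i$-annuli, observe that bands must then reach the boundary, conclude from the absence of $t_i$-edges on $\partial\Delta$ that there are no HNN-cells, and finish with Proposition \ref{prop1}. The only difference is that you spell out the band dichotomy (exit through the boundary versus closing into an annulus) slightly more explicitly than the paper does, which is fine.
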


\proof Suppose that the natural homomorphism from $G$ to the HNN-extension $$\HNN(G;(A_i), (B_i), (t_i))$$ has a word $w$ in the kernel. Then by Lemma \ref{lvk} there exists a van Kampen diagram $\Delta$ over the presentation of $\HNN(G;(A_i), (B_i), (t_i))$ with boundary label $w$. We can assume that $\Delta$ is minimal. By Lemma \ref{ms} it does not contain $t_i$-annuli. Hence every maximal $t_i$-band in $\Delta$ must start and end on the boundary of $\Delta$. But the boundary of $\Delta$ does not have $t_i$-edges. Hence $\Delta$ does not have HNN-cells, and it is a diagram over the presentation of $G$. By Proposition \ref{prop1}, $w$ is 1 in $G$. Thus the kernel of the natural homomorphism is trivial.
\endproof

\subsubsection{Proofs of Examples \ref{example1}, \ref{example2}}\label{ss:poe}
Now Examples \ref{example1}, \ref{example2} are easy to prove.

{\bf Example \ref{example1}.} The group $\Z^2$ can be considered as an HNN-extension in two different ways: we can consider $a$ or $b$ as a free letter (indeed $ab=ba$ is equivalent to $aba\iv =b$ and to $bab\iv =a$). Hence we can consider $a$-bands and $b$-bands. Let $\Delta$ be a van Kampen diagram over the presentation of $\Z^2$ with minimal number of cells among all diagrams with the same boundary label.  It is easy to see (the proof is similar to the proof of Lemma \ref{ms}) that every $a$-band can intersect every $b$-band in $\Delta$ only once.
Since there are no $a$- and $b$-annuli in $\Delta$ (by Lemma \ref{ms}), every maximal $a$- and $b$-band starts and ends on the boundary of $\Delta$. Hence if $l$ is the length of the boundary of $\Delta$, then there are at most $l/2$ maximal $a$-bands and at most $l/2$ maximal $b$-bands in $\Delta$. Since every cell in $\Delta$ is the intersection of an $a$-band and a $b$-band, the area of $\Delta$ is at most $l^2/4$ and the Dehn function is at most quadratic.

{\bf Example \ref{example2}.} We are going to use the fact that the cyclic subgroup $\la a \ra$ is exponentially distorted in the Baumslag-Solitar group $\mathrm{BS}(1,2)$: $b^nab^{-n}=a^{2^n}$ (so the element $a^{2^n}$ has linear in $n$ length in $\mathrm{BS}(1,2)$ and exponential length in $\la a \ra$).

Consider a diagram $\Delta$ over the presentation of the Baumslag-Solitar group with boundary label $ab^{-n}a\iv b^na\iv b^{-n} a\iv b^n$ as on Figure \ref{bsex}. For simplicity suppose that $n$ is odd. Again we can assume that $\Delta$ has the smallest number of cells among all diagrams with the same boundary label.

\begin{figure}[H]
\unitlength .7mm 
\linethickness{0.4pt}
\ifx\plotpoint\undefined\newsavebox{\plotpoint}\fi 
\begin{picture}(80.25,106.25)(0,0)
\put(36.625,28.125){\vector(1,-3){.07}}\multiput(29,53.25)(.0337389381,-.1111725664){452}{\line(0,-1){.1111725664}}
\put(50.125,3){\vector(1,0){.07}}\put(44.25,3){\line(1,0){11.75}}
\put(75.25,53.375){\vector(1,0){.07}}\multiput(70.25,53.5)(1.25,-.03125){8}{\line(1,0){1.25}}
\put(49.625,53.375){\vector(1,0){.07}}\multiput(29.25,53.25)(5.09375,.03125){8}{\line(1,0){5.09375}}
\put(45.5,78.125){\vector(1,3){.07}}\multiput(35.75,53.5)(.0337370242,.0852076125){578}{\line(0,1){.0852076125}}
\put(60.875,102.75){\vector(1,0){.07}}\put(55.25,102.75){\line(1,0){11.25}}
\put(73.25,78.125){\vector(-1,4){.07}}\multiput(80.25,53.25)(-.0337349398,.1198795181){415}{\line(0,1){.1198795181}}
\put(48.75,6.25){\makebox(0,0)[cc]{$a$}}
\put(31.5,25.5){\makebox(0,0)[cc]{$b^n$}}
\put(70.75,26){\makebox(0,0)[cc]{$b^n$}}
\put(50.25,49){\makebox(0,0)[cc]{$a^{2^n}$}}
\put(40.5,74.75){\makebox(0,0)[cc]{$b^n$}}
\put(76.5,79){\makebox(0,0)[cc]{$b^n$}}
\put(61.5,106.25){\makebox(0,0)[cc]{$a$}}
\put(63.25,28.25){\vector(-1,-3){.07}}\multiput(70.75,53.25)(-.0337078652,-.1123595506){445}{\line(0,-1){.1123595506}}
\put(32.5,56.75){\makebox(0,0)[cc]{$a$}}
\put(76,49.75){\makebox(0,0)[cc]{$a$}}
\qbezier(42.75,71.25)(47.5,56.25)(33.25,39.25)
\qbezier(44.25,74.75)(50.875,52.25)(34,37.75)
\put(35,34){\line(1,0){29.75}}
\put(35.75,31){\line(1,0){28.25}}
\put(39.25,34){\line(0,-1){3}}
\put(48,34){\line(0,-1){2.75}}
\put(57,34.25){\line(0,-1){3.25}}
\put(61.5,33.75){\line(0,-1){2.75}}
\put(43.5,34){\line(0,-1){3}}
\put(52.5,34){\line(0,-1){3.25}}
\multiput(36.25,43)(.03333333,-.04444444){45}{\line(0,-1){.04444444}}
\multiput(38.75,46.75)(.03846154,-.03365385){52}{\line(1,0){.03846154}}
\multiput(40.25,49.75)(.04326923,-.03365385){52}{\line(1,0){.04326923}}
\put(43.25,57){\line(1,0){2.5}}
\put(44,62){\line(1,0){2.25}}
\multiput(43.75,66.25)(.28125,.03125){8}{\line(1,0){.28125}}
\multiput(43.25,69)(.03333333,.03333333){60}{\line(0,1){.03333333}}
\end{picture}
\caption{A diagram with boundary label $ab^{-n}a\iv b^na\iv b^{-n} a\iv b^n$ over the presentation of the Baumslag-Solitar group $\mathrm{BS}(1,2)$.}\label{bsex}
\end{figure}
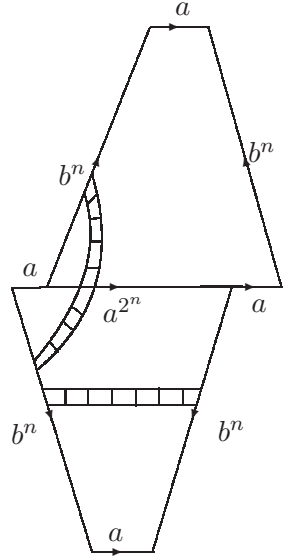

Since $b$ is a free letter, we can consider $b$-bands in $\Delta$. The length of the boundary of $\Delta$ is $4n+4$.
Consider the $b$-band that starts at a $b$-edge on one of the 4 sides labeled by $b^n$.  It must end on one of the other two sides labeled by $b^n$ where the direction of $b$-edges is opposite (we trace the boundary counterclockwise).  Since $b$-bands do not intersect, the $b$-band $\bbb$ staring at the middle $b$-edge (here we use the fact that $n$ is odd) must end at the middle $b$-edge of the other side of the diagram (see Figure \ref{bsex}).  It is not difficult to show that the $b$-band can only be horizontal (so only one of the two possibilities on Figure \ref{bsex} can occur). Let $u$ be the label of the shorter side $p$ of the $b$-band labeled by a power of $a$. Then the path $p$  cuts off a subdiagram $\Delta'$ with boundary label $$b^{(n-1)/2}ab^{-(n-1)/2}u\iv.$$ By Lemma \ref{lvk}, that word must be equal to 1 in $\mathrm{BS}(1,2)$. Note that $$b^{(n-1)/2}ab^{-(n-1)/2}=a^{2^{(n-1)/2}}$$ in that group. Hence $u=a^{2^{(n-1)/2}}$. By Corollary \ref{hnn}, then $u$ has length at least $2^{(n-1)/2}$. But that means that the length of $\bbb$ is at least $2^{(n-1)/2}$, hence the area of $\Delta$ is at least $2^{(n-1)/2}$ while the perimeter is linear in $n$. Since $\Delta$ is a minimal diagram, the Dehn function of the $\mathrm{BS}(1,2)$ is at least exponential.

\subsection{Isoperimetric functions of geodesic metric spaces. Different area functions.}\label{s:ifogmsdaf}

The notion of the area of a loop is well defined in the setting of Riemannian manifolds as well
as in finitely presented groups (see for instance \cite[Chapter 1, Section 8.1.4]{BH}). It also can be defined for geodesic metric spaces. In fact there are several kinds of definitions of area in that case. Below, we shall mention the definition of Bowditch \cite{Bow1}, \cite[Sections 2.3, 5]{Bow2}, Gromov's definition from of coarse area function from \cite[5.F]{Gr1} and a definition using metric currents \cite{AK,Wen1, Wen2}. In all these definitions $(X,\dist)$ is a geodesic metric space. By a {\em \index{Loop}loop} in $X$ we shall always assume a Lipschitz (hence rectifiable) map from the unit circle $S^1$ to $X$. Let $\Omega$ be the set of all loops in $X$. Each definition of an area defines a function $A\colon \Omega\to \R_+$. Then we define the corresponding {\em isoperimetric function} as the function $A(l)$ such that $A(l)=\max\{A(\gamma) \mid \gamma\in \Omega, |\gamma|\le l\}$. Sometimes we shall call any function $f\succ A$ also an {\em isoperimetric function} of $X$.

\subsubsection{The definition of Bowditch}\label{ss:tdob}

For every curve $\alpha\colon [0,1]\to X$, by $-\alpha$ we denote the curve $\alpha(1-t)$ (that is the same curve traced in the opposite direction). Let $\alpha_0, \alpha_1, \alpha_2$ be three Lipschitz curves $[0,1]\to X$ connecting points $A, B\in X$. Then we can form three loops $\alpha_i\cup(-\alpha_{i+1})$ where $i=0,1,2$, and $+1$ is understood modulo 3. Whenever three loops are obtained this way, one says that they form  a {\em \index{$\theta$-loop}$\theta$-loop}. Now let $\Omega$ be the set of all loops in $X$ \footnote{In fact it is enough to assume that $\Omega$ is a set of loops closed under the operation of cutting a loop into two loops by a geodesic segment connecting two points on the loop. Thus the area function can be defined even for some non-simply connected spaces.}. Let $A$ be a function from $\Omega$ to $\R_+$ satisfying the following two conditions

\begin{itemize}
\item[($B_1$)][The triangle inequality for $\theta$-loops.] For every three loops $\gamma_1$,$\gamma_2,$ $\gamma_3\in \Omega$ forming a $\theta$-loop, we have $A(\gamma_1)\le A(\gamma_2)+A(\gamma_3)$.

\item[($B_2$)][The quadrangle inequality.] Suppose that $\gamma\in\Omega$ is split into 4 subpaths $\alpha_1$, $\alpha_2$, $\alpha_3$, $\alpha_4$. Then $A(\gamma)\ge kd_1d_2$ where $d_1=\dist(\alpha_1, \alpha_3)$ and $d_2=\dist(\alpha_2, \alpha_4)$, $k$ is a constant.
\end{itemize}

\subsubsection{The coarse definition of area function by Gromov.}\label{ss:tcdoafbg}
A {\em \index{$\delta$-filling of a loop}$\delta$-filling} of a loop $\gamma$ is
a pair consisting of a triangulation of the planar unit disk $D^2$ and of an injective map $\psi$
from the set of vertices of the triangulation to $X$ such that the restriction of  $\psi$ to the points on $\partial D^2=S^1$ coincides with $\gamma$. The image of the map $\psi$ is called a {\em \index{Filling disk of a loop}filling disk of} $\gamma$. We can join the images of the vertices of each
triangle of the triangulation of $D^2$ by geodesics. If two of these vertices are on $\partial D^2$,  we replace the geodesic
by the arc of $\gamma(S^1)$ connecting these points. Thus we obtain a finite number of triangles (some sides are geodesics, some sides are arcs in $\gamma(S^1)$.  These triangles are called
{\em \index{Brick}bricks}. The {\em \index{Brick!length of}length of a brick} is the sum of the lengths of its sides.
The maximum of the lengths of the bricks in a partition
is called the {\em \index{Mesh of a partition}mesh} of the partition. The partition is called $\delta$-filling partition of $\gamma$ if its mesh is at
most $\delta$. The corresponding filling disk is called $\delta$-{\em filling disk} of $\gamma$. The  $\delta$-area of $\gamma$ is the
minimal number of triangles in a triangulation associated to a $\delta$-filling partition of $\gamma$. We denote
it by $A_\delta(\gamma)$. If no $\delta$-filling partition of the loop $\gamma$ exists, we put
$A_\delta(\gamma)=\infty$. Given the definition of a $\delta$-area of a loop, we can naturally define the $\delta$-isoperimetric function $A_\delta(l)$ as the supremum of $\delta$-areas of all loops of length at most $l$. Note that $A_\delta(l)$ may be equal $\infty$ even if the $A_\delta(\gamma)<\infty$ for every $\gamma$. The corresponding isoperimetric function will be denoted by $A_\delta(l)$. We shall always assume that our space $X$ is {\em \index{$\mu$-simply connected metric space}$\mu$-simply connected} for some $\mu>0$, that is $A_\mu(l)<\infty$ for every $l$. In this case all functions $A_\delta$, $\delta>\mu$ are equivalent (see \cite{DrutuIJAC}).
Note that if two metric spaces $X$ and $Y$ are quasi-isometric, and $\mu$-simply connected, then their $A_\delta$-functions are equivalent for $\delta>\mu$.

Note that the coarse area function $A_\delta$ satisfies the Bowditch conditions ($B_1$), ($B_2$).

\subsubsection{The metric currents definition}\label{ss:tdok}

This definition was used to obtain remarkable results in \cite{Wen1,Wen2}. Since it is more complicated than the other two definitions,  we present only some ideas it is based on, referring the reader to \cite{AK,BZ}.
First let $M$ be an $n$-dimensional Riemannian manifold. Let $D^2$ be a 2-dimensional disc in $\R^2$ equipped with the Euclidean coordinate system, and $f$ be a differentiable map from $D^2$ to $M$. Then for every $x\in D^2$ we have two derivatives $f_x=(a_1,\ldots,a_n)$ and $f_y=(b_1,\ldots,b_n)$, vectors in the tangent space $T_{f(x)}(M)$ corresponding to the two basic directions in $D^2$. Then the {\em \index{Jacobian of a map}Jacobian} of $f$ at $x$ is the number $$J(x)=\sum_{i<j} \det \left(\begin{array}{cc} a_i & a_j\\ b_i & b_j\end{array} \right)$$ (see \cite[Section 15]{BZ}) Integrating $J(x)$ over $D$ gives the {\em area} of the map $f$. If $f$ is not differentiable but only Lipschitz, then by the classical Rademacher's theorem (see \cite{BZ}),  $f$ is differentiable almost everywhere with respect to the Lebesques measure, so $J(x)$ is defined for almost all $x$, and we still can integrate $J(x)$ over $D$. So areas are defined for Lipschitz maps too. This allows one to define a {\em \index{Lipschitz area function}Lipschitz} area function $L(n)$ in every Riemannian manifold $M$.

Now if $M$ is not a Riemannian manifold but just a metric space, and $f$ is a Lipschitz map from $D^2$ to $M$ then in order to define an area of $f$, we need to use, for example, the Hausdorff measure (one can use other measures as well, such as the inner Hausdorff measure of Busemann \cite{BZ} or mass* measure of Gromov \cite[Section 4.1]{GrFil}). Recall, that if $E$ is a subset of $M$, then the 2-dimensional {\em \index{Hausdorff measure}Hausdorff} measure of $E$ is, up to a scalar multiple, the limit as $\varepsilon\to 0$ of the infimum of sums $H_2(E)=\sum \mathrm{diam}(E_i)^2$ for all covers of $E$ by countably many balls $E_i$ of diameters $\le \varepsilon$. Then the {\em area} of the map $f$ is the $H_2$-measure of $f(D^2)$. Note that for Riemannian manifolds $M$ this definition coincides (up to a constant factor) with the previous definition.

Unfortunately this definition behaves badly if we try to estimate area functions of limits of metric spaces, say, asymptotic cones: the area of a limit is not the limit of areas. In order to overcome that we need to further generalize the Lipschitz maps.

A {\em \index{Singular Lipschitz chain}singular Lipschitz chain} is a finite formal combination $C=\sum_i m_if_i$ where $f_i$ is a Lipschitz map from some region in $\R^2$ to  the metric space $X$. The {\em area} of a chain $C$ is $\sum m_i\area(f_i)$ (see \cite{GrFil}). The boundary of one Lipschitz map $f\colon D^2\to X$ is the restriction of $f$ to $\partial D^2$. The boundary of a linear combination is the linear combination of boundaries which can be viewed as an element of the first homology group of $X$. Thus we can define a {\em (homological) area} of a loop $\gamma$ as the smallest area of a singular Lipschitz chain with boundary $\gamma$.\footnote{Note that a homological version of the Dehn function of groups can be defined in a similar way \cite{G2}. The homological Dehn function is easier to compute, and it bounds the ordinary Dehn function from below. It has been studied, for example, in \cite{BMS}.}

One can introduce a metric on the space of all singular Lipschitz chains from $\R^2$ to $X$. It can be, say, the {\em flat metric} (the definition goes back to Witney \cite{Wit}), similar to the well-known Gromov-Hausdorff metric \cite{GrBook}. Then one can complete the space of singular Lipschitz chains with respect to this metric and obtain the space of {\em 2-dimensional \index{Integral metric current}integral metric currents}. The area of an integral metric current is the limit of areas of the corresponding singular Lipschitz chains. Thus one can view 2-dimensional integral metric currents as infinite linear combinations of Lipschitz maps from $D^2$ into $X$.

\subsection{Cayley complexes and metric spaces}\label{ccms} Let $\pp=\la X\mid R\ra$ be a presentation of a group $G$ with $X$ finite, $X=X\iv$. Consider the {\em \index{Cayley graph} Cayley graph} $\Gamma=\Gamma_X(G)$ (i.e. $G$ as the set of vertices and $\{(g,gx)\mid  x\in X, g\in G\}$ as the set of edges. If we label every edge of $\Gamma$ by the corresponding element $x\in X$, we get a {\em \index{Cayley graph!labeled}labeled Cayley graph}. The graph is directed but each edge $(g, gx)$ has the inverse $(gx,x)$. Thus when we travel from $g$ to $gx$, we read $x$, when we travel from $gx$ to $g$, we read $x\iv$.

Every group word $w$ in $X$ labels unique path in the labeled Cayley graph starting from any vertex $g\in G$. It ends at the vertex $gw$. In particular the equality $w=1$ is true in $G$ if and only if that path labels a loop starting from any vertex of $G$. For every $r\in R$, let us consider all loops $\gamma$ in $\Gamma$ labeled by $r$ and let us glue in a disc $D_\gamma$ to $\Gamma$ with $\partial D_\gamma=\gamma$. This way we obtain an (edge-labeled) CW-complex $C\Gamma(\pp)$, the {\em \index{Cayley complex}Cayley complex} of $G$ corresponding to the presentation $\pp$. Note that every simplicial loop in that complex (i.e. loop consisting of edges of $\Gamma$) is labeled by a group word $W$ in $X$ which is equal to 1 in $G$. By the van Kampen lemma, there exists a van Kampen diagram over $\pp$ with boundary label $W$. That van Kampen diagram is a tesselated disc $D^2$, and the labels of its edges give us a map from $D^2$ to the labeled graph $C\Gamma(\pp)$ such that the image of $\partial D^2$ is $\gamma$. This immediately implies that,

\begin{itemize}
\item if $\pp$ is finite then $A_\mu(l)$ for $C\Gamma(\pp)$ is always finite for, say, $\mu=\max\{|r|\mid r\in R\}$, i.e. $C\Gamma(\pp)$ $\mu$-simply connected;
\item the Dehn function of $G$ is equivalent to $A_\delta$ for every sufficiently large $\delta$ and satisfies Bowditch's conditions ($B_1$), ($B_2$) with $\frac1k$ equal to the square of the maximal length of a defining relator.
\end{itemize}

Notice the following important topological properties of the Cayley complex.

\begin{enumerate}
\item[($C_1$)] The Cayley complex is simply connected.

\item[($C_2$)] The group presentation is finite if and only if the corresponding Cayley complex is locally compact.
\end{enumerate}

\subsection{The Dehn functions of groups acting on metric spaces.}\label{s:tdfogaoms}

Geometric group theory studies groups acting by isometries on metric spaces. We say that the action of $G$ on $X$ is {\em \index{Action! geometric}geometric} if it is co-compact, and properly discontinuous. Here {\em \index{Action!co-compact} co-compact} means that there exists a compact subset $Y\subset X$ such that $X=G\cdot Y$; {\em \index{Action!properly discontinuous}properly discontinuous} means that for every point $x\in X$ there exists a neighborhood $U\ni x$ such that $y\cdot U\cap U=\emptyset$ for every $y\in G\setminus \{1\}$. For example the fundamental group of a compact Riemannian manifold $M$ acts geometrically on the universal cover of $M$.

It is easy to observe (see Gromov \cite{Gr1}) that if a finitely presented group acts geometrically on a simply connected metric space with the coarse space function $A_\delta$ (see \ref{ss:tcdoafbg}), then the Dehn function of $G$ is equivalent to $A_\delta$.

For Riemannian manifolds, the fact that the Riemannian area function (see \ref{ss:tdok}) is equivalent to $A_\delta$ is not obvious, although it is a part of ``folklore". It was proved in all details by Bridson in \cite{Br}.

\begin{theorem}[Bridson \cite{Br}] If a finitely presented group acts geometrically on a simply connected Riemannian manifold, then the Dehn function of the group and the corresponding filling function of the manifold are equivalent.
\end{theorem}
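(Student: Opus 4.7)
The plan is to factor through the coarse area function $A_\delta$ of the universal cover $\widetilde M$: once $G$ acts geometrically on $\widetilde M$, Gromov's observation recorded in Section \ref{s:tdfogaoms} identifies the Dehn function of $G$ with $A_\delta$ up to equivalence for any sufficiently large $\delta$, so it suffices to prove that the Riemannian filling area function $\mathrm{FA}_{\widetilde M}(\ell)$ is equivalent to $A_\delta(\ell)$. Cocompactness of the action together with compactness of a fundamental domain supplies two uniform constants: a radius $\rho>0$ such that every ball of radius $\le 3\rho$ in $\widetilde M$ is contractible and behaves like a Euclidean ball up to bi-Lipschitz distortion, and a uniform filling constant $C$ such that every loop of length $\le 3\rho$ bounds a Lipschitz disk of Riemannian area $\le C$. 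Fix $\delta\in(0,\rho)$ for both functions.

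For the inequality $A_\delta\preceq\mathrm{FA}$, start with a loop $\gamma$ of length $\le\ell$ and a Lipschitz filling $f\colon D^2\to\widetilde M$ with $\operatorname{Area}(f)\le\mathrm{FA}(\ell)+1$. A Vitali-type covering of $f(D^2)$ by balls of radius $\delta/4$ uses at most a constant multiple of $\mathrm{FA}(\ell)+\ell$ balls (by bounded geometry two-dimensional area in $\widetilde M$ grows at least quadratically in radius on scales $\le\rho$, and the boundary contribution along $\gamma$ is linear in $\ell$). Pulling this cover back to $D^2$ and refining it to a triangulation whose vertices map near the ball centers yields a $\delta$-filling partition of $\gamma$ with the required number of bricks. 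For the reverse inequality $\mathrm{FA}\preceq A_\delta$, start with a $\delta$-filling of $\gamma$: each brick has mesh $\le\delta$ and so bounds a loop of length $\le 3\delta$ in $\widetilde M$, which is fillable by a Lipschitz disk of area $\le C$. Gluing the fillings of all bricks along their common geodesic edges yields a Lipschitz filling of $\gamma$ of Riemannian area at most $C\cdot A_\delta(\gamma)$.

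The delicate step is the triangulation argument in the first direction: since $f$ is only Lipschitz, it can be highly non-injective, so a ball covering of the image $f(D^2)$ does not automatically produce a triangulation of the domain $D^2$ with simultaneously controlled combinatorics and controlled mesh of the composed map. Bridson's treatment in \cite{Br} handles exactly this point by first approximating $f$ by a sufficiently regular (piecewise smooth) Lipschitz map of essentially the same area, and then passing to the nerve of a cover of the domain adapted to $f$ rather than to its image. Once the triangulation is in hand, Proposition \ref{prop1} converts the resulting combinatorial disk into a genuine van Kampen diagram of the predicted area, closing the circle of equivalences with the Dehn function and proving the theorem.
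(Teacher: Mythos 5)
The paper does not actually prove this theorem: it cites Bridson \cite{Br} after sketching, in \ref{s:tdfogaoms}, the two-step strategy that your plan reproduces --- namely, that Gromov's observation identifies the Dehn function of $G$ with the coarse area function $A_\delta$ of $\widetilde M$, so that the remaining content is the equivalence of $A_\delta$ with the Riemannian filling area $\mathrm{FA}$. Your argument for the direction $\mathrm{FA}\preceq A_\delta$ is fine: fix geodesics on the internal edges of the $\delta$-filling partition, fill each mesh-$\le\delta$ brick by a Lipschitz disk of uniformly bounded area, and glue.

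The direction $A_\delta\preceq\mathrm{FA}$ contains a genuine gap, and it is a different one from the difficulty you flag. You bound the number of $\delta/4$-balls needed to cover $f(D^2)$ by $O(\mathrm{FA}(\ell)+\ell)$ using a quadratic lower bound for two-dimensional measure in small balls. That lower bound has no reason to hold at every point of $f(D^2)$: a near-minimal Lipschitz filling may have long, thin ``tentacles'' in its image --- pieces that are essentially one-dimensional, hence contribute nothing to $\operatorname{Area}(f)$ or to $H_2(f(D^2))$, yet require arbitrarily many $\delta$-balls to cover. (Filling a constant loop by a Lipschitz disk whose image is a segment of length $L$ has $\operatorname{Area}=0$ but covering number $\sim L/\delta$.) So the cardinality of the Vitali cover is not controlled by area plus boundary length, and the construction already fails before one reaches the domain-triangulation issue you single out as delicate. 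The device that actually closes this gap is not a nerve-of-a-cover argument but a Federer--Fleming type deformation: fix a $G$-equivariant, bounded-geometry triangulation of $\widetilde M$ (lifted from a triangulation of the compact quotient), and push the Lipschitz disk onto its $2$-skeleton by iterated radial projections from generically chosen centres so that the expected area increase per step is a bounded factor. This deformation simultaneously kills the low-dimensional garbage and produces a cellular map whose number of $2$-cells is $O(\mathrm{FA}(\ell)+\ell)$; from it one reads off a $\delta$-filling partition and then a van Kampen diagram. Your instinct that the Lipschitz-to-combinatorial conversion is the crux is correct, but mollifying $f$ and passing to a nerve does not substitute for this deformation lemma.
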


Since $\Z^2$ acts geometrically on the Euclidean plane $\R^2$ by translations, and, as we learned in Real Analysis (and ancient Greeks knew without any Real Analysis), the area function of $\R^2$ is $\frac{l^2}{4\pi}$, we finally deduce that the Dehn function of $\Z^2$ is quadratic.

\subsection{Asymptotic cones of metric spaces and groups}\label{s:acomsag}

\subsubsection{The definition}\label{ss:td2} Let $X$ be a metric space with distance function $\dist$, say, a Cayley graph of a finitely generated group. Consider a non-decreasing  sequence of numbers $d=(d_i)$ with $\lim d_i=\infty$, called {\em \index{Scaling constants}scaling constants} and the sequence of metric spaces $X/d_i$ where $W/n$ denotes the space $W$ with distance function $\frac{\dist}{n}$ ($\dist$ is the distance function in $W$). We want to define a limit of this sequence of spaces. One of the reasons  is that we want to ``forget" about local properties of $X$ in favor of global properties. In particular, we want to study properties of configurations of points in $X$ which are satisfied by configurations with arbitrary large pairwise distances between points, and do not want to be disturbed by properties which only hold for configurations of small diameter. The limit space is supposed to have the global properties of $X$ while the local properties may disappear. One way to define a limit was proposed by Gromov in \cite{Gromov}. The limit is called the Gromov-Hausdorff limit but that definition can only be applied when the spaces $X/d_i$ are uniformly locally compact which is true in the case of Cayley graphs of groups of polynomial growth but not true in general. A much more universal definition (which coincides with Gromov's when his definition applies) was given by van der Dries and Wilkie \cite{DW}. The limit is called an {\em \index{Asymptotic cone}asymptotic cone of} $X$ and is defined as follows. In fact we shall define a limit of any sequence of spaces $(X_i,\dist_i)$. As usual for definitions of boundaries and completions of different sorts, the limit will consist of sequences $(x_i)$, $x_i\in X_i$. Thus consider the direct product $Z=\Pi X_i$. The most obvious way to define a distance function on $Z$ is coordinate-wise:

$$\dist((x_i),(y_i))=\lim \dist_i(x_i,y_i).$$
The problem is that the limit may not exist (we ignore the issue that the limit can be $\infty$ for now). To circumvent this problem, let us pick a {\em \index{Non-principle ultrafilter}non-principle ultrafilter} on $\N$, that is a set $\omega$ of substsets of $\N$ which is closed under intersections, super-sets, does not contain finite subsets, and with every set $U\subset \N$ either $U$ or $\N\setminus U$ is in $\omega$. The sets from $\omega$ are called {\em \index{Big set}big}, the other sets are {\em \index{Small set}small}. We can also view $\omega$ as a finitely additive measure on the set of all subsets of $\N$ which has only two values 0, 1, and $\omega(\N)=1$ while $\omega(\emptyset)=0$. If $A_i$ is a sequence of statements and $A_i$ holds for all $i$ from a big subset, we say that $A_i$ holds {\em \index{$\omega$-almost surely}$\omega$-almost surely}.

To prove the existence of ultrafilters one needs the Axiom of Choice (or, more precisely, a slightly weaker hypothesis, the so called {\em Boolean prime ideal theorem} \cite{CN}), so although we believe that ultrafilters exist, one cannot actually define an explicit ultrafilter using only axioms of Zermelo and Frenkel. Anyway, as soon as we fix an ultrafilter $\omega$, we can modify the definition of the limit of a sequence of numbers, introducing the definition of the $\omega$-limit. It is almost exactly the same as the standard calculus definition: if $b_i$ is a sequence of real numbers, then

\medskip

$\lim^\omega(b_i)=b$ means that for every $\varepsilon>0$ the set of indices $i$ such that  $|b_i-b|<\varepsilon$ is big.

\medskip

So the only difference with the standard definition of a limit is that we replaced the words ``for almost all indices $i$" by ``the set of indices $i$ is big". The fact that $\omega$ is an ultrafilter immediately implies - exercise - that any sequence of numbers has unique $\omega$-limit (it may be equal to $\pm\infty$). For example the limit of the sequence $1,0,1,0,1,\ldots $ is 0 if the set of even numbers is big, and 1 otherwise.

Now we have a ``distance" function on the set $\Pi X_i$: $\dist((x_i), (y_i))=\lim^\omega\dist_i(x_i,y_i)$. Clearly the triangle inequality is satisfied. Since this function can take infinite values, we consider a ``connected component". Namely, pick a point $o=(o_i)$ in $\Pi X_i$, and let $\Pi_b X_i$ be the set of all points at finite distance from $o$. This restriction of the ``distance function" to $\Pi_b X_i$ is a quasi-distance, because different points can be at distance 0. For example, if two points $(x_i)$, $(y_i)$ are such that $x_i=y_i$ for all $i$ from a big set, then the distance between these points is 0. The relation $(x_i)\sim (y_i)$ iff $\dist((x_i), (y_i))=0$ is an equivalence relation and the quotient $\Pi_b X_i/\sim$ is a metric space with the induced metric. That metric space is called the {\em \index{$\omega$-limit}$\omega$-limit} of $X_i$ corresponding to the observation point $(o_i)$.

\begin{df} The {\em asymptotic cone} of a metric space $X$ corresponding to an ultrafilter $\omega$,  an observation point $o=(o_i)$, $o_i\in X$, and a sequence $d_i$ as above is the $\omega$-limit of spaces $X/d_i$ corresponding to the observation point $o$. It is denoted by $\Con^\omega(X, (d_n),o)$.
\end{df}

Asymptotic cones of a group endowed with some metric (say, a finitely generated group with the {\em \index{Word metric on a group}word metric}\footnote{I.e. the metric on the Cayley graph corresponding to a finite generating set.} or a Lie group with Riemanian metric) are asymptotic cones of the corresponding metric space.

\subsubsection{Some properties of asymptotic cones}\label{spoac} Here are some basic properties of asymptotic cones that can be found in \cite{DW}, \cite{Gr1}, \cite{DrutuIJAC} and other papers. Some of these properties are quite easy to prove, for other properties we provide references.

\begin{enumerate}
\item \label{1} If the metric space $X$ is homogeneous (say, $X$ is the Cayley graph of a group), then asymptotic cones of $X$ do not depend on the choice of the observation point (so we shall always assume, if $X$ is a Cayley graph, that the observation point is $(e)$ and we shall omit it from the notation for an asymptotic cone of a group).

\item\label{l2} If two metric spaces $X$ and $Y$ are quasi-isometric, then their asymptotic cones corresponding to the same ultrafilters, same scaling constants and appropriately chosen observation points are bi-Lipschitz equivalent, hence homeomorphic. In particular, the asymptotic cones (with the same parameters) corresponding to two Cayley graphs of the same finitely generated group but different finite generating sets are bi-Lipschitz equivalent.
\item\label{3} For every group $G$ the set $\Pi_b G$ is a group (with coordinate-wise multiplication) acting transitively on any asymptotic cone $\Con^\omega(G,(d_i))$ by left multiplication.
\item\label{4} Any asymptotic cone is a complete metric space.
\item\label{5} For every sequence of paths $p_i\colon [0,l_i]\to X$ parameterized by their lengths ($l_i>0$), the function $p\colon [0,\lim^\omega l_i]\to \Con^\omega(X,(d_i), (o_i))$ (where $\dist(o_i,p_i(0))=O(d_i)$) is a path parameterized by its length. Moreover if $p_i$ is an $(A,B_i)$-quasi-geodesic (i.e. it is a $(A,B_i)$-quasi-geodesic map from the interval onto its image), $B_i=o(d_i)$, or an $A$-Lipschitz path, then $p$ is an $A$-{\em \index{Bi-Lipschitz path}bi-Lipschitz path} or {\em \index{Lipschitz path}$A$-Lipschitz} path respectively (i.e. an $A$-bi-Lipschitz or an $A$-Lipschitz map from an interval to $X$).
\item\label{6} In particular, the $\omega$-limit of every sequence of geodesic paths in $X$ is a geodesic path in $\ccc=\Con^\omega(X; (d_i), (o_i))$ (it can be infinite or a point or empty). It is not true that every geodesic path in $\ccc$ is an $\omega$-limit of geodesic paths in $X$ but if $\g$ is a geodesic in $\ccc$, then for every $\varepsilon>0$ there exists a piecewise geodesic path $\g'$ with $k=k(\varepsilon)$ geodesic pieces that is an $\omega$-limit of $k$-piecewise geodesic paths in $X$, and $\g'$ (resp $\g$) is in $\varepsilon$-neighborhood of $\g$ (resp. $\g'$) \cite{DS}, i.e. the {\em Hausdorff distance} between $\g, \g'$ is at most $\varepsilon$.
\item\label{7} Suppose that $\ccc= \Con^\omega(X; (d_i), (o_i))$ contains a simple geodesic triangle $T$. Then for every $\varepsilon$, there exists a $k=k(\varepsilon)$ and a sequence of geodesic $k$-gons $\Pi_i$ in $X$ such that $\Pi=\lim^\omega \Pi_i$ is at Hausdorff distance at most $\varepsilon$ from $T$, contains the midpoints of the three sides of $T$, and each $\Pi_i$ is {\em thick}, that is the middle third of each side of $\Pi_i$ is far from the union of the other sides, and every vertex of $\Pi_i$ is far from the vertices not adjacent to it (a precise definition of thick geodesic polygons  can be found in \cite{DS}).

\item\label{8} An asymptotic cone of a finitely generated group may depend on the ultrafilter, and the scaling constants $(d_i)$ \cite{TV, KSTT, DS}. If the Continuum Hypothesis is false, then there exists a group (an uniform lattice in $\mathrm{SL}_n(\R)$) with the set of non-homeomorphic asymptotic cones $2^{2^{\aleph_0}}$. On the other hand if the Continuum Hypothesis is true then the set of all asymptotic cones of all countable metric spaces is of cardinality continuum \cite{KSTT}, and there exists one finitely generated (and recursively presented) group with continuum pairwise non-$\pi_1$-equivalent (hence non-homeomorphic) asymptotic cones \cite{DS}. There are also finitely presented groups with 2 non-$\pi_1$-equivalent asymptotic cones \cite{OS2}.
\end{enumerate}

\subsection{Asymptotic cones and Dehn functions}\label{s:acadf}

We have mentioned that asymptotic cones of fi\-nite\-ly generated groups capture asymptotic properties of groups, i.e. properties that manifest themselves on configurations of elements of arbitrary large diameters. For example, working with hyperbolic groups (that is groups all whose asymptotic cones are $\R$-trees (see \ref{ss:tdaac}) it is convenient to remember that configurations of elements with large diameter in a hyperbolic group behave like points on a tree with similar pairwise distances.

In particular, asymptotic cones of a group reflect the properties of the Dehn function.

\begin{theorem}[Gromov \cite{Gr1}] \label{Gro} Suppose that all asymptotic cones of a group $G$ are simply connected. Then $G$ is finitely presented, has  polynomial isoperimetric function and linear isodiametric function.
\end{theorem}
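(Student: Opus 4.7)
The plan is to argue by contradiction at each of the three claims, extracting from a sequence of ``hard-to-fill'' loops in $\Gamma(G)$ a limiting non-contractible loop in some asymptotic cone of $G$. Throughout, $\Gamma(G)$ denotes the Cayley graph with respect to a fixed finite generating set, and I work with the coarse area functions $A_\delta$ of \ref{ss:tcdoafbg}, which make sense for $\Gamma(G)$ viewed as a geodesic metric space.

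\textbf{Uniform filling scale.} I first show that there exists $\delta>0$ such that $A_\delta(l)<\infty$ for every $l$, i.e., every loop in $\Gamma(G)$ admits a filling by triangles of perimeter at most $\delta$. Suppose not: then for every $\delta$ there is a loop that cannot be $\delta$-filled, and a diagonal argument yields a sequence of loops $\gamma_i$ of lengths $l_i\to\infty$ such that $\gamma_i$ cannot be filled at scale $l_i/i$. Choose any non-principal ultrafilter $\omega$ and form the asymptotic cone $\ccc=\Con^\omega(G,(l_i))$, with observation point the basepoint of $\gamma_i$. By property (\ref{5}) of \ref{spoac}, the $\omega$-limit of the loops $\gamma_i$ (reparametrized by arc length and rescaled by $l_i$) is a Lipschitz loop $\gamma_\infty$ in $\ccc$ of length at most $1$. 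Since $\ccc$ is simply connected by hypothesis, $\gamma_\infty$ bounds a continuous disk $D\colon D^2\to\ccc$. Because $D(D^2)$ is compact and $\ccc$ is a length space, $D$ can be approximated by a finite triangulation whose vertices are $\omega$-limits of vertices in $\Gamma(G)$ and whose edges have $\ccc$-length at most some fixed $\eta>0$. Pulling this triangulation back through the ultralimit construction, for $\omega$-almost every $i$ the loop $\gamma_i$ is $\eta\,l_i$-filled. Choosing $i$ large enough that $\eta\,l_i<l_i/i$ contradicts the construction of $\gamma_i$. Finite presentation follows at once: every null-homotopic word labels a loop fillable by triangles of perimeter $\le\delta$, so the (finitely many) null-homotopic words of length at most $3\delta$ in the finite alphabet $X$ normally generate the kernel of $F(X)\to G$.

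\textbf{Polynomial Dehn function.} The same ultralimit argument, applied with more care, yields uniform constants $N\in\N$ and $\varepsilon\in(0,1)$ such that every loop $\gamma$ of sufficiently large length $l$ can be decomposed into $N$ sub-loops each of length at most $\varepsilon l$. Indeed, were such $(N,\varepsilon)$ to fail uniformly, one extracts, as before, loops $\gamma_i$ of lengths $l_i\to\infty$ whose cone limits bound disks but admit no bounded-complexity subdivision into proportionally shorter sub-loops; this contradicts the fact that any continuous filling disk of $\gamma_\infty$ in the cone, being the image of a compact space, admits a triangulation with a bounded number of small triangles (property (\ref{6}) of \ref{spoac} combined with completeness of $\ccc$). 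Iterating the decomposition bound on the Dehn function $f$ of $G$, one obtains $f(l)\le N\,f(\varepsilon l)+C$, which solves to $f(l)=O(l^{\log_{1/\varepsilon}N})$, a polynomial.

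\textbf{Linear isodiametric function and principal obstacle.} In the cone, the filling disk $D$ has diameter at most $\mathrm{diam}(\gamma_\infty)\le 1/2$, so the corresponding filling of $\gamma$ in $\Gamma(G)$ sits in a ball of radius $O(l)$ around the basepoint, yielding a linear isodiametric bound. The principal technical obstacle throughout this plan is the uniformity required for the polynomial bound: asymptotic cones need not be locally compact, and what a single cone disk provides is only a pointwise filling. To promote this to uniform constants $(N,\varepsilon)$ applicable to all loops in $G$, one must invoke the hypothesis that \emph{every} asymptotic cone of $G$ is simply connected---varying the ultrafilter, the scaling sequence, and the observation point simultaneously---and carry out an ultrafilter-compactness argument to conclude that the approximating triangulation data can be taken uniformly over all loops. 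This is the step where the full strength of Gromov's hypothesis is essential; relaxing it to simple connectedness of a single cone is known to be insufficient.
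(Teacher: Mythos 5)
Your overall device---extract a bad sequence of loops, rescale to get an asymptotic cone, fill the limit loop with a continuous disk, triangulate by uniform continuity, and push the triangulation back to $\omega$-almost all $\gamma_i$ via property (\ref{6}) of \ref{spoac}---is exactly the one the paper uses. But the structure differs, and matters. The paper never establishes uniform decomposition constants $(N,\varepsilon)$: instead it observes that if $f$ is superpolynomial then for every $k$ there are infinitely many $n$ with $f(n)\ge k\,f(n/2)$, chooses loops $\gamma_k$ with $\area(\gamma_k)\ge k\,f(|\gamma_k|/2)$, and then the single cone disk gives $\area(\gamma_m)\le n\,f(d_m/2)$ $\omega$-almost surely, contradicting $\area(\gamma_m)\ge m\,f(d_m/2)$ immediately. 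Your route---prove a uniform $N$-piece, $\varepsilon$-shrinking decomposition, then iterate $f(l)\le N\,f(\varepsilon l)$---also works (it is closer in spirit to Papasoglu's proof of Theorem \ref{pap2}), but it is strictly more machinery for the same conclusion, and your informal handling of the negation of ``$\exists (N,\varepsilon)$'' should be sharpened: fix $\varepsilon=1/3$, for each $i$ take a bad loop $\gamma_i$ that does not decompose into $i$ sub-loops of length $\le l_i/3$, and then the cone triangle count $M$ contradicts for $i>M$, which holds $\omega$-almost surely.

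Two parts of your sketch are genuinely broken. First, the finite-presentation paragraph: showing that $\gamma_i$ decomposes into $M$ sub-loops of length $\le \eta l_i+o(l_i)$ for $\omega$-almost all $i$ does \emph{not} contradict ``$\gamma_i$ is not $\delta$-fillable for $\delta=l_i/i$,'' since that would need $\eta l_i+o(l_i)<l_i/i$, i.e.\ $\eta<1/i$, which fails for large $i$; so the contradiction never arrives. To make the argument close, you must choose $\gamma_i$ to be minimal-length ``new'' relators---loops labelled by words $r_i=1$ in $G$ that are not products of conjugates of relations of length $<|r_i|$. Then a decomposition of $\gamma_i$ into $M$ sub-loops of length strictly less than $l_i$ is itself the contradiction, without any further filling step. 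Second, the isodiametric paragraph: the claim that the filling disk $D$ has diameter at most $\mathrm{diam}(\gamma_\infty)\le 1/2$ is simply false---there is no a priori bound on the diameter of a continuous filling disk in terms of the diameter of its boundary curve, even in nice spaces. What you do have is that $D(D^2)$ is compact and hence has \emph{some} finite diameter $M$; one then gets a linear bound by the same contradiction template as for the Dehn function (take loops $\gamma_k$ with $\mathrm{di}(\gamma_k)\ge k\,|\gamma_k|$ and compare to $M$). Finally, your closing worry about having to vary ultrafilter, scaling sequence, and observation point simultaneously is largely misplaced: the Cayley graph is homogeneous, so the observation point is irrelevant by property (\ref{1}) of \ref{spoac}, and the argument only needs to exhibit a single bad cone determined by the bad sequence---no uniformization across cones is required.
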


\begin{proof} The proof of this remarkable statement is very easy. Indeed, if, say, the Dehn function $f$ of  $G$ grows faster than any polynomial, then for every $k\ge 1$ $f$ satisfies inequality $f(n)\ge k f(n/2)$ for infinitely many $n$. For each $k$ let $\gamma_k$ be a loop in the Cayley complex of $G$ such that $\area(\gamma_k)\ge k f(|\gamma_k|/2)$. Let $d_k$ be the length of $\gamma_k$. For some ultrafilter $\omega$, consider the asymptotic cone $\ccc=\Con^\omega(G,(d_k))$, and the $\omega$-limit $\gamma$ of $\gamma_k$ in $\ccc$. Then $\gamma$ has length $1$. Since $\ccc$ is simply connected by assumption, there exists a continuous map $\phi\colon D^2\to\ccc$ with $\partial(g)=\gamma$. Since $\phi$ is continuous, and $D^2$ is compact, $\phi$ is uniformly continuous. Hence there exists an $\varepsilon>0$ and a decomposition of $D^2$ into, say, $n$ triangles of $\Delta_1,\ldots,\Delta_n$ such that the perimeter of each $\phi(\Delta_i)$ is at most $1/3$. By Property \ref{6} of asymptotic cones from \ref{spoac}, it follows, that we can assume that the sides of all curvy triangles $\phi(\Delta_i)$ except for the sides contained in $g(\partial D^2)$, are $\omega$-limits of geodesics from the Cayley graph of $G$. Therefore each $\phi(\Delta_i)$ is an $\omega$-limit of some sequence of loops $\delta_m^i$, $m=1,\ldots$, in the Cayley graph. But that means a disc bounded by the loop $\gamma_m$ can be decomposed into $n$ loops $\delta_m^i$ of length $\le d_m/3+o(d_m)$ $\omega$-almost surely for all $m\ge 1$. Hence $\area(\gamma_m)\le n f(d_m/2)$ $\omega$-almost surely for all $m$, a contradiction.
\end{proof}

Riley \cite{Rie} proved that under the assumptions of Theorem \ref{Gro}, the group also has linear FL function (and hence linear FFFL function as well).

The converse statement of Theorem \ref{Gro} is not true. In \cite{OS2c}, we showed that the group (a multiple HNN-extension of a free group) $$\la a,b,t_1,t_2\mid t_iat_i\iv =ab,  t_ib=bt_i, i=1,2\ra$$ has cubic Dehn function, linear isodiametric function and non-simply connected asymptotic cones. The ``cubic" in that statement can be improved  to as low as $n^2\log n$ by modifying the group using the method from \cite{OSnlogn}. It is impossible, though, to have an example with quadratic Dehn function.

\begin{theorem} [Papasoglu \cite{Pap1}]\label{pap1} All asymptotic cones of a finitely presented group having quadratic Dehn function are simply connected.
\end{theorem}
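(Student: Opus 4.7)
The plan is to show directly that any loop in an asymptotic cone $\ccc=\Con^\omega(G,(d_i))$ is null-homotopic, by extracting a continuous filling as an $\omega$-limit of the quadratic van Kampen fillings of approximating combinatorial loops in the Cayley complex of $G$. Since $\ccc$ is complete and its rectifiable loops are uniformly approximable by piecewise-geodesic loops which are themselves $\omega$-limits of loops in the Cayley graph $\Gamma$ of $G$ (see the properties of asymptotic cones listed in Section~\ref{spoac}), it suffices to treat a Lipschitz loop $\gamma=\lim^{\omega}\gamma_i$ where each $\gamma_i$ is a combinatorial loop in $\Gamma$ of length $L_i=O(d_i)$. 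By the quadratic Dehn bound $f(n)\le Cn^2$, each $\gamma_i$ bounds a minimal van Kampen diagram $\Delta_i$ of area at most $CL_i^2$.

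The heart of the argument is what I would call a \emph{balanced short-cut lemma}: there exist constants $K\ge 1$ and $\lambda\in(0,1)$, depending only on $C$, such that any minimal diagram $\Delta$ of sufficiently large perimeter $\ell$ admits an embedded interior arc $p$ joining two boundary vertices, with $|p|\le K\ell$, that separates $\Delta$ into subdiagrams $\Delta',\Delta''$ whose perimeters are each at most $\lambda\ell$. The intuition is that a diagram of area $\le C\ell^2$ cannot contain too many long disjoint arcs between boundary vertices, so some pair of nearly antipodal boundary vertices must be joined by a short interior path; one then optimizes jointly over the choice of cut-endpoints and cut-length to achieve shortness and perimeter balance simultaneously. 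This is where I expect the real work to live, and I anticipate having to iterate the construction inside a single cut to force $\lambda$ small enough that the resulting recursion is summable.

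Granting the lemma, iterating it on $\Delta_i$ yields a binary tree $T_i$ of cuts whose leaves have perimeter bounded by a universal constant $K_0$, and a geometric summation across levels shows that the total length of all cutting arcs is $O(L_i)$. Identifying $T_i$ with a standard dyadic CW decomposition of $D^2$, I would define $\psi_i\colon D^2\to \Gamma$ (extended to the $2$-cells of the Cayley complex) by sending combinatorial vertices of the decomposition to the corresponding Cayley-graph vertices, edges to the realized cut-paths parameterized proportionally to arc length, and leaf cells to explicit cone fillings of their bounded-perimeter boundaries. Rescaled by $1/d_i$, the maps $\psi_i$ are uniformly Lipschitz, so their $\omega$-limit is a Lipschitz (hence continuous) map $\psi\colon D^2\to\ccc$ with $\psi|_{\partial D^2}=\gamma$. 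The principal obstacle is thus the balanced short-cut lemma with constants admitting a summable recursion---this is precisely where the \emph{quadratic} (as opposed to merely polynomial) area bound is essential, since a superquadratic bound would force the accumulated cutting-length contribution to dominate and break continuity of the limiting map.
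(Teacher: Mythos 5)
Your scheme---fill approximating loops with quadratic-area diagrams, decompose recursively into pieces of smaller perimeter, and pass to an $\omega$-limit of the induced filling maps---is indeed the shape of Papasoglu's argument. But the bookkeeping you set up fails at a concrete numerical step, and the lemma you isolate as the crux is not quite the right one.

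The divergence: if an interior arc $p$ separates $\Delta$ into subdiagrams of perimeters $\ell_1,\ell_2$, then $\ell_1+\ell_2=\ell+2|p|$, so requiring both $\le\lambda\ell$ forces $\lambda>\tfrac12$ (and, incidentally, $|p|\le(\lambda-\tfrac12)\ell$, so your constant $K$ must be $<\tfrac12$, not $\ge1$). Already for $\mathbb{Z}^2$ and a round filling disc, the best balanced single cut is a diameter, giving $\lambda\approx(\pi+2)/(2\pi)\approx 0.82$. In your binary tree, level $k$ then has $2^k$ pieces with cuts of length up to $(\lambda-\tfrac12)\lambda^k\ell$, so the total cut length at level $k$ is of order $(2\lambda)^k\ell$, and $\sum_k(2\lambda)^k$ diverges precisely because $\lambda>\tfrac12$. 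Your claim that the total cutting length is $O(L_i)$ is therefore false, and with it the claim that the rescaled $\psi_i$ are uniformly Lipschitz. Note also that this is not the quadratic-versus-superquadratic distinction you point to at the end: the divergence already occurs under the quadratic hypothesis.

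What Papasoglu actually proves (by peeling one-cell-wide layers from $\partial\Delta$ inward, using the quadratic area bound to control the number and lengths of the layers) is that $\Delta$ decomposes into a bounded number $c$ of pieces, each of perimeter at most $\ell/2$, where $c$ depends only on the presentation and the Dehn constant. Allowing a bounded number of pieces rather than exactly two matters twice over: it puts $\lambda$ at exactly $\tfrac12$, so the recursion closes, and it accommodates the fact that van Kampen diagrams need not be topological discs (for $[a,b]^m$ over $\mathbb{Z}^2$ the minimal diagram is a wedge of $m$ squares at a single vertex; no single interior arc separates it with a universal $\lambda<1$, whereas splitting the petals at the wedge point into up to $c$ bunches is exactly what the $O(1)$-piece lemma allows). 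Finally, even with the corrected decomposition lemma, what you need from the limit map is equicontinuity, not a bound on total cut length: the correct mechanism is that the image pieces shrink geometrically in \emph{diameter}, which uses the linear isodiametric inequality that accompanies a quadratic Dehn function (Theorem~\ref{Gro}), in tandem with the geometric shrinkage of the corresponding regions of $D^2$. That should replace your Lipschitz/summation step.
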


As Theorem \ref{Gro}, this is a very important fact, but the idea of the  proof is not that difficult. We take a van Kampen diagram $\Delta$ over the presentation of our group $G$, and start removing layers of cells one by one from the outside in, peeling the diagram as an apple, so that each layer is 1-cell wide. The fact that the area is quadratic lets us estimate the number of layers and their lengths. This allows us to decompose the diagram into a constant, say, $c$, number of subdiagrams whose perimeters are at most half of the perimeter of $\Delta$. That, in turn, implies that every loop $\gamma$ in any asymptotic cone of $G$ can be decomposed into $O(1)$ loops of length $|\gamma/2|$. This implies simple connectivity of the asymptotic cone.

An analysis of Gromov's proof of Theorem \ref{Gro} allowed Papasoglu to obtain the following useful result.

\begin{theorem}[Papasoglu \cite{Drutuquadr}] \label{pap2} Suppose that all asymptotic cones of $G$ are simply connected and have Gromov coarse area function $A_\delta(l)$ bounded by $l^{c}$. Then the Dehn function of $G$ is bounded by $n^{c+\varepsilon}$ for every $\varepsilon>0$.
\end{theorem}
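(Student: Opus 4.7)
The plan is to quantify Gromov's argument from Theorem~\ref{Gro}, converting the polynomial bound $A_\delta(l)\le B\, l^c$ inside the cones into a polynomial bound of essentially the same exponent on the Dehn function of $G$. Concretely, I will establish the following \textbf{reduction principle}: for every $\varepsilon>0$ there exist constants $\rho\in(0,1)$ and $N\in\N$ satisfying $\log N/\log(1/\rho)<c+\varepsilon$ such that for every sufficiently large $L$, every null-homotopic loop in the Cayley graph of $G$ of length $\le L$ admits a van Kampen filling built from at most $N$ sub-loops each of length $\le\rho L$. This reduction principle yields $f(L)\le N\, f(\rho L)$ and, iterating $\lceil\log L/\log(1/\rho)\rceil$ times until the sub-loops shrink below a presentation-dependent constant, gives $f(L)=O(L^{c+\varepsilon})$, which is what is required.

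The parameters will be chosen as follows. Fix $\delta>0$ and an auxiliary scale $l$ (to be determined later), set $\rho=3\delta/l$ and $N=\lceil B\, l^c\rceil$, where $B$ is the constant in the hypothesis $A_\delta(l)\le B\, l^c$ for loops in the asymptotic cones of $G$. A direct computation gives
\[
\frac{\log N}{\log(1/\rho)}\;=\;\frac{c\log l+\log B+O(1)}{\log l-\log(3\delta)}\;\xrightarrow[l\to\infty]{}\; c,
\]
so for any $\varepsilon>0$ I first select $\delta$ (thereby fixing $B$) and then $l$ large enough that this ratio drops below $c+\varepsilon$.

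The reduction principle itself will be proved by contradiction, in the spirit of the proof of Theorem~\ref{Gro}. If it failed, there would be a sequence of loops $\gamma_k$ in the Cayley graph of lengths $L_k\to\infty$, none of which admits a decomposition into $\le N$ sub-loops of length $\le\rho L_k$. Form the asymptotic cone $\ccc=\Con^\omega(G,(d_k))$ with scaling $d_k=L_k/l$. By Property~\ref{5} of Section~\ref{spoac} the $\omega$-limit $\gamma$ of $\gamma_k$ is a Lipschitz loop of length $l$ in $\ccc$, and by hypothesis $\gamma$ admits a $\delta$-filling partition into at most $B\, l^c<N+1$ triangles of mesh $\le\delta$ and hence perimeter $\le 3\delta$. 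Each vertex of this triangulation lifts to a sequence in $G$, each interior edge is approximated by a limit of Cayley-graph geodesics via Property~\ref{6}, and the boundary edges come from arcs of $\gamma_k$. For $\omega$-almost every $k$ this reassembles into a van Kampen diagram for $\gamma_k$ built from $\le N$ sub-loops each of length $3\delta d_k+o(d_k)=\rho L_k+o(L_k)$, contradicting the choice of $\gamma_k$ once $k$ is large.

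The hard part will be the lifting step. Property~\ref{6} does not guarantee that geodesics in $\ccc$ are themselves $\omega$-limits of geodesics in $G$, only that they lie in an arbitrarily small Hausdorff neighborhood of such a limit of \emph{piecewise} geodesics, and the number of pieces per approximated edge introduces new vertices in the lifted triangulation. My plan is (i) to refine the $\delta$-filling in $\ccc$ to one whose interior edges are honest $\omega$-limits of geodesics, at the cost of a bounded multiplicative factor in the triangle count and a small additive increase in the mesh, both absorbable into $\varepsilon$; (ii) to cut the boundary loop $\gamma$ into finitely many Lipschitz arcs of controlled length so that the corresponding arcs of $\gamma_k$ can serve as boundary pieces of the Cayley-graph sub-loops; and (iii) to build a small safety margin into the choice of $\rho$ so that the $o(L_k)$ error from the metric $\omega$-limit does not spoil the strict inequality $\le\rho L_k$. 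After these technical adjustments, the contradiction closes and the reduction principle --- hence the desired bound on the Dehn function --- follows.
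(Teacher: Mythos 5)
Your proposal is essentially correct and follows the strategy the paper indicates (the paper supplies no proof, only the remark that Papasoglu obtained the statement by analyzing Gromov's proof of Theorem~\ref{Gro}): the reduction principle, the parameter choice $\rho\sim\delta/l$, $N\sim l^{c}$, the iteration $f(L)\le N\,f(\rho L)$, and the contradiction via an asymptotic cone at scale $d_k=L_k/l$ are the right ingredients, and the exponent computation $\log N/\log(1/\rho)\to c$ as $l\to\infty$ is correct.

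The main feedback is that the ``hard part'' is simpler than you fear, and steps (i) and (ii) can be dropped. You do not need the geodesic edges of the filling partition in $\ccc$ to be $\omega$-limits of Cayley graph geodesics. Just lift each vertex $\psi(u)$ of the partition to a representative sequence $(\psi_k(u))$ in $G$, and for each edge $(u,v)$ of the partition choose a fresh Cayley graph geodesic from $\psi_k(u)$ to $\psi_k(v)$. Since $d_\ccc(\psi(u),\psi(v))=\lim^\omega d(\psi_k(u),\psi_k(v))/d_k$ and the partition has only finitely many edges, $\omega$-almost surely every lifted triangle has perimeter at most $(\delta+o(1))d_k$; the boundary edges lift to the corresponding arcs of $\gamma_k$ (with exactly proportional length, since $\gamma$ is the $\omega$-limit of the arclength parametrizations). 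That is all you need for the sub-loop decomposition, so Property~\ref{6} never has to be invoked, and only the routine safety margin of (iii) remains (absorbable by enlarging $\rho$ slightly without affecting $\lim_{l\to\infty}\log N/\log(1/\rho)=c$). A small separate point: the paper defines the mesh as the maximum \emph{perimeter} of a brick, not the maximum side length, so a $\delta$-filling already has bricks of perimeter $\le\delta$ and you may take $\rho=\delta/l$; your extra factor of $3$ is harmless but not needed.
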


That result was used by Dru\c tu in \cite{Drutuquadr} (see also \ref{ss:tacatcp} and  \ref{Dru} below).

Note that Theorem \ref{pap1} does not give metric properties of asymptotic cones, only the topological property of being simply connected. In a simply connected asymptotic cone even of a nice nilpotent group, a Lipschitz loop does not necessarily bound a Lipschitz 2-disc or even an integral 2-current (it can be deduced from \cite{AK1} for the Heisenberg group $H_3$, for other examples see Wenger \cite{Wen2}, and \ref{ss:ocp}). But for groups with quadratic Dehn functions, the asymptotic cones are much nicer.

\begin{theorem}[Wenger \cite{Wen2}]\label{Wen2} Let $G$ be a finitely presented group with quadratic Dehn function. Then
every asymptotic cone of $G$ admits a quadratic isoperimetric inequality in terms of 2-currents, that is for some constant $C$, every Lipschitz loop $\gamma$ bounds an integral 2-current of area $\le C|\gamma|^2$.
\end{theorem}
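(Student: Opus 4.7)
The plan is to construct the filling integral 2-current as an ultralimit of Lipschitz fillings in the Cayley complex, with the quadratic Dehn function providing the mass bound. Fix an asymptotic cone $\ccc=\Con^\omega(G,(d_i))$ of $G=\la X\mid R\ra$, let $f(n)\le Kn^2$ be a Dehn function of $G$, and let $C\Gamma$ denote the Cayley complex with its natural length metric (edges of length $1$, $2$-cells attached along relator loops of bounded length). Let $\gamma\colon S^1\to\ccc$ be a Lipschitz loop of length at most $L$.

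The first step is to discretize $\gamma$ inside $\Gamma$. Sample points on $S^1$ at spacing $1/d_i$, represent each sampled value of $\gamma$ by a group element lying within rescaled distance $o(1)$ of it (possible because each point of $\ccc$ is an $\omega$-class of sequences in $G$), and join consecutive samples by geodesic words in $\Gamma$ of bounded length. This produces loops $\hat\gamma_i$ in $\Gamma$ of length at most $(L+o(1))d_i$ whose rescaled $\omega$-limit in $\ccc$ is precisely $\gamma$.

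The Dehn function hypothesis now supplies, for each $i$, a van Kampen diagram $\Delta_i$ with boundary label $\hat\gamma_i$ and at most $K(L+o(1))^2 d_i^2$ cells. The edge-labelling of $\Delta_i$ determines a Lipschitz map from the polygonal $2$-complex $\Delta_i$ (with unit edges) into $C\Gamma$, which, after summing cells with orientations induced by the planar embedding, yields a singular Lipschitz $2$-chain $S_i$ in $C\Gamma$ of $2$-dimensional mass $\le K'K(L+o(1))^2 d_i^2$ and boundary $\hat\gamma_i$. Rescaling distances by $1/d_i$ gives a chain $S_i'$ of mass $\le K'K(L+o(1))^2$ supported in a subset of $\Gamma/d_i$ whose diameter is uniformly bounded (using that the diameter of $\Delta_i$ is controlled by its perimeter plus its area).

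The final and most delicate step, which I regard as the main obstacle, is to pass to the $\omega$-limit of the currents $S_i'$ inside $\ccc$ and to verify that the limit is an integral current $T$ with $\partial T=\gamma$ and $\mathbf{M}(T)\le K'KL^2$. For this I would appeal to an ultralimit/closure theorem in the Ambrosio--Kirchheim theory of integral metric currents: a sequence of integral $2$-currents of uniformly bounded mass and uniformly bounded support in metric spaces $X_i$ produces a well-defined integral current $T$ in $\Con^\omega(X_i)$, with $\partial T=\lim^\omega \partial S_i'$ and $\mathbf{M}(T)\le \liminf^\omega \mathbf{M}(S_i')$. With this tool in hand, $T=\lim^\omega S_i'$ lies in $\ccc$, its boundary is the $\omega$-limit of $\hat\gamma_i$ (which is $\gamma$, after absorbing the vanishing discretization error), and its mass is at most $K'KL^2$, as required. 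The combinatorial filling produced by the Dehn function provides uniformly bounded input data, but the analytic step of extracting a limit current in the cone and checking that the boundary operator is continuous under the ultralimit is the technical heart of the argument, and requires a genuine compactness theorem for integral metric currents suited to this ultralimit framework.
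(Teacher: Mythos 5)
The survey cites Wenger's paper for this theorem without reproducing his argument, so there is no proof in the text to compare against; I will assess your proposal on its own merits. Your high-level strategy --- discretize the Lipschitz loop at scale $d_i$, fill the resulting loops in the Cayley graph by van Kampen diagrams of quadratic area, push the diagrams to singular Lipschitz $2$-chains in the Cayley complex, rescale, and pass to an ultralimit using a closure theorem for integral metric currents with controlled mass, boundary mass, and support --- is reasonable, and you are right to flag the ultralimit compactness theorem for integral currents as the analytic heart; a theorem of this type is indeed what Wenger develops and uses.

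There is, however, a genuine gap at the step where you assert that the rescaled chains $S_i'$ have uniformly bounded support. The parenthetical reason you give, that the diameter of $\Delta_i$ ``is controlled by its perimeter plus its area,'' is true but much too weak: the area of $\Delta_i$ is of order $L^2d_i^2$, so that control gives a diameter of order $d_i^2$, and after rescaling by $1/d_i$ the support diameter grows like $d_i$ and diverges. A quadratic Dehn function does not by itself furnish a single diagram with quadratic area \emph{and} linear intrinsic diameter, and without a uniform bound on the support the currents $S_i'$ escape the observation point, so the ultralimit compactness theorem does not apply. The estimate you need is provable but nontrivial: for a \emph{minimal-area} diagram $\Delta_i$, consider the concentric combinatorial spheres $S(v,j)$ around a deep interior vertex $v$ and use that the balls $B(v,j)$ they enclose are themselves area-minimal fillings of their own boundaries; this yields a recursive inequality of the form $\sum_{k\le j}|S(v,k)|\le C\,|S(v,j)|^2$, which forces $\sum_{k\le j}|S(v,k)|\gtrsim j^2$ and hence a linear bound on the inradius and diameter in terms of the perimeter. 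This is essentially the peeling argument Papasoglu uses in \cite{Pap1}. Alternatively, one can truncate $S_i'$ to a ball of rescaled radius $O(L)$, choose the radius by the coarea inequality so that the cut creates a boundary slice of mass $O(L)$, and cone that extra boundary off from the basepoint using geodesics in $\ccc$, absorbing a further $O(L^2)$ of mass. One of these repairs must be supplied before the ultralimit step is legitimate.
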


\section{Isoperimetric functions of important groups}

\subsection{Hyperbolic groups and asymptotic cones}\label{hgac}

\subsubsection{The definition and a characterization}\label{ss:tdaac}

There are several equivalent definitions of $\delta$-hyperbolic (in other terminology, word hyperbolic) metric spaces and groups \cite{GrHyp}. Rips' definition is the following.

\begin{df} A geodesic metric space $X$ is $\delta$-{\em \index{$\delta$-hyperbolic metric space}hyperbolic} for some $\delta\ge0$ if every \index{Geodesic triangle}geodesic triangle\footnote{That is a triangle where all three sides are geodesics.} in  $X$ is $\delta$-{\em \index{Geodesic triangle!thin}thin}, that is every side of it is in the closed $\delta$-neighborhood of the union of the other two sides.  A finitely generated group $G$ is $\delta$-{\em \index{Group!$\delta$-hyperbolic}hyperbolic} if its Cayley graph is $\delta$-hyperbolic. A geodesic metric space (a finitely generated group) is {\em hyperbolic} if it is $\delta$-hyperbolic for some $\delta\ge 0$.
\end{df}
 This property is obviously invariant under quasi-isometry, so, in particular, hyperbolicity of a finitely generated group does not depend on the choice of a (finite) generating set.

Note that if $X$ is $\delta$-hyperbolic, then for every real $d>0$ we have that $X/d$ is $\delta/d$-hyperbolic. This and Properties \ref{6}, \ref{7} from \ref{spoac} immediately imply that every asymptotic cone of a hyperbolic geodesic metric space is 0-hyperbolic. Hence in every geodesic triangle in the asymptotic cone every side is contained in the union of the other two sides. This means that every asymptotic cone of a hyperbolic geodesic metric space is an $\R$-{\em \index{$\R$-tree}tree}. Thus all asymptotic cones of a hyperbolic group are complete $\R$-trees.

The converse statement is true also. Thus we have

\begin{theorem}[Gromov \cite{GrHyp, Gr1}]\label{hypascon} A finitely generated group $G$ is hyperbolic if and only if every asymptotic cone of $G$ is an $\R$-tree.
\end{theorem}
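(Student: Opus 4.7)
For the forward direction, if $G$ is $\delta$-hyperbolic then the rescaled Cayley graph $X/d_n$ is $(\delta/d_n)$-hyperbolic. By Property~(\ref{6}) of~\ref{spoac}, every geodesic triangle in $\ccc := \Con^\omega(G,(d_n))$ is Hausdorff $\varepsilon$-close (for any $\varepsilon > 0$) to an $\omega$-limit of $k(\varepsilon)$-piecewise geodesic polygons $\Pi_n$ in $X/d_n$, each of which is $(\delta/d_n)$-thin. Since the $\omega$-limit of a sequence of $\varepsilon_n$-thin polygons with $\varepsilon_n \to 0$ is $0$-thin, every geodesic triangle in $\ccc$ is $0$-thin. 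Combined with completeness (Property~(\ref{4})), this shows $\ccc$ is a complete geodesic $0$-hyperbolic metric space, hence an $\R$-tree.

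For the converse, I argue by contradiction. Assume all asymptotic cones of $G$ are $\R$-trees but $G$ is not hyperbolic. Then for each $n \in \N$ there exist a geodesic triangle $T_n = [a_n,b_n,c_n]$ in the Cayley graph of $G$ and a point $p_n \in [a_n,b_n]$ with
\[
d_n \;:=\; \dist\bigl(p_n,\ [a_n,c_n]\cup[b_n,c_n]\bigr) \;\geq\; n.
\]
Pick $q_n$ (say on $[a_n,c_n]$) realizing this distance, so $|p_n - q_n| = d_n$, and form the asymptotic cone $\ccc := \Con^\omega(G,(d_n),(p_n))$. The image $q := \lim^\omega q_n$ lies at distance exactly $1$ from the basepoint $p := \lim^\omega p_n$. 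The plan is to produce a geodesic triangle in $\ccc$ that is not $0$-thin, contradicting the $\R$-tree hypothesis. To this end, truncate each side of $T_n$ at rescaled distance $R$ from $p_n$ (for a suitably large constant $R > 1$), using that $|p_n - a_n|,\ |p_n - b_n|,\ |p_n - c_n| \geq d_n$, to obtain a geodesic triangle $T'_n$ of perimeter $O(d_n)$. By Properties~(\ref{5})--(\ref{6}) of~\ref{spoac}, the rescaled $T'_n$ $\omega$-converges to a genuine geodesic triangle $T = [A,B,C]$ in $\ccc$ passing through $p$ on one side and through $q$ on another. The insize estimate survives the limit: the distance from $p$ to the union of the other two sides of $T$ is at least some positive constant. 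But in an $\R$-tree every geodesic triangle is a tripod, so each side is contained in the union of the other two, contradicting $\dist(p,[A,C]\cup[B,C]) > 0$.

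The main obstacle I anticipate is the truncation step, which requires care: the new $G$-geodesics joining the truncated endpoints are generally not subsegments of the original sides of $T_n$, and can in principle drift close to $p_n$, destroying the insize bound in the limit. Controlling this likely requires additionally assuming $T_n$ has minimal perimeter among all geodesic triangles in $G$ with insize $\geq n$ --- so that no freshly chosen geodesic can shortcut through $p_n$ without lowering the insize --- or, alternatively, reformulating the contradiction via the Gromov $4$-point condition, which depends only on pairwise distances and sidesteps the problem of manufacturing new geodesic sides in the cone.
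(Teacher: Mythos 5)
Your overall plan is right, and you have correctly located the hard part, but neither direction is fully closed as written, and the fixes you propose for the converse do not by themselves eliminate the gap.

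\textbf{Forward direction.} The idea (rescaled spaces are $(\delta/d_n)$-hyperbolic, thinness passes to the ultralimit) is correct, but there is a subtlety in the route you chose. Property~(\ref{6}) gives you piecewise-geodesic $k$-gons $\Pi_n$, not geodesic triangles, and a geodesic $k$-gon in a $\delta$-hyperbolic space is $(k-2)\delta$-slim, not $\delta$-thin; this still tends to $0$, so the limit $k$-gon $\Pi$ is $0$-slim. But a $0$-slim $k$-gon is not automatically a $0$-thin triangle: a point on the sub-arc $\g'_1$ approximating $[A,B]$ could, a priori, lie on another piece of $\g'_1$ rather than on a piece of $\g'_2\cup\g'_3$, and the Hausdorff-closeness to $T$ does not by itself rule this out. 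The cleanest repair is to avoid Rips-thinness entirely and use the Gromov four-point inequality: it involves only pairwise distances, is preserved under $\omega$-limits, so the cone satisfies the $0$-inequality, and a complete geodesic space satisfying the $0$-four-point condition is an $\R$-tree. (Note the paper in fact cites Property~(\ref{7}) here, not~(\ref{6}): a simple geodesic triangle in the cone would produce arbitrarily \emph{thick} geodesic polygons in $G$, which cannot exist in a $\delta$-hyperbolic space.)

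\textbf{Converse direction.} You have correctly identified the gap --- vertices of $T_n$ escaping to infinity in the cone, and new geodesic sides created by truncation possibly drifting near $p_n$ --- but the fixes you sketch are not sufficient. Minimality of perimeter does not obviously give the bound you need, namely $\mathrm{diam}(T_n)=O(d_n)$: nothing in the definition prevents the smallest-perimeter triangle with insize $\geq n$ from having one vertex at distance $\gg n$ from $p_n$, and the replacement triangle produced by cutting a corner has a new geodesic side whose distance to $p_n$ you cannot control without hyperbolicity, which is precisely what you are trying to prove. The four-point reformulation faces the same problem in a different guise: a quadruple with defect $\geq n$ may well have diameter $\gg n$, and rescaling by the defect does not keep all four points at bounded distance from the observation point. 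What is really needed is a \emph{renormalization lemma}: non-hyperbolicity can always be witnessed by triangles (or quadruples) whose diameter is comparable to their insize (or defect). This lemma is the heart of the matter, and it is equivalent to a gap theorem of Cartan--Hadamard type; proving it directly is not elementary.

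In the framework of this paper there are two non-circular routes that bypass the renormalization entirely, and you should invoke one of them. First, since an $\R$-tree is simply connected, Theorem~\ref{Gro} gives that $G$ is finitely presented, after which Theorem~\ref{t:KK} (which rests on the Cartan--Hadamard Theorem~\ref{t:CH}) immediately gives hyperbolicity. Second, and more in the spirit of your argument, the proof of Lemma~\ref{bump} uses only that the cone is an $\R$-tree, not hyperbolicity of $G$, and it has \emph{no} escape-to-infinity problem because the scaling constant is the length of the bump itself, so the limiting path automatically has length $1$ in the cone. From the bump condition one obtains a finite Dehn presentation (proof of Theorem~\ref{dehnp}), hence a linear Dehn function, hence hyperbolicity by Theorem~\ref{Bow1}. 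This is exactly the sort of argument you want: scale by a quantity intrinsic to the witnessing configuration that forces the whole configuration to have bounded rescaled size.
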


It turns out that the $\R$-trees that can appear as asymptotic cones of groups are of three kinds only because of the following recent theorem of Sisto.

\begin{theorem}\rm{(}Sisto \cite[Corollary 5.9]{Sis}\rm{)}. \label{sis}
If an $\R$-tree $T$ is an asymptotic cone of a group, then it is
a point, a line or an  $\R$-tree with valency $2^{\aleph_0}$ at each point (i.e. removing every point of the tree separates the tree into $2^{\aleph_0}$ connected components).
\end{theorem}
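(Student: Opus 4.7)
\smallskip

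The plan is to exploit the homogeneity of asymptotic cones together with the ultralimit structure. First, by Property (\ref{3}) of Section \ref{spoac}, the group $\Pi_b G$ acts transitively by isometries on any asymptotic cone $\ccc$ of $G$, and isometries preserve valency. Hence the valency is constant across $T$, and it suffices to show that the valency at the basepoint $p_0$ lies in $\{0,1,2,2^{\aleph_0}\}$. Note that $|G|=\aleph_0$ implies $|G^{\N}|=2^{\aleph_0}$, so $T$ itself has cardinality at most $2^{\aleph_0}$, and therefore valency is a priori bounded above by $2^{\aleph_0}$.

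The low-valency cases are immediate from homogeneity: constant valency $0$ or $1$ forces $T$ to be a single point (since a connected homogeneous tree whose point-removals produce at most one component is degenerate), and constant valency $2$ forces $T$ to be homeomorphic to $\R$ (a homogeneous $\R$-tree with no branching). So the real content is: if $p_0$ has at least three distinct directions, then it has $2^{\aleph_0}$.

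For the main step, my plan is to fix three distinct directions at $p_0$, represented by three geodesic rays $\alpha_1,\alpha_2,\alpha_3$ which are $\omega$-limits of sequences of geodesics $\alpha_j^{(n)}$ in the Cayley graph of $G$. The hypothesis that these rays determine \emph{distinct directions} at $p_0$ means that at every definite fractional scale $\e>0$ (small), the points $\alpha_j^{(n)}(\e d_n)$ are pairwise at distance $\geq c\e d_n$ in $G$ for $\omega$-almost all $n$ (for some universal constant $c>0$). I would then, for each binary sequence $s\in\{0,1\}^{\N}$, use a diagonal choice along a rapidly decreasing sequence of scales to build a sequence $(x_n^s)\in G$ whose $\omega$-limit is a point $X_s$ in $T$ at distance $1$ from $p_0$, with the defining property that the geodesic $[p_0,X_s]$ branches away from $[p_0,X_{s'}]$ at $p_0$ itself whenever $s\neq s'$. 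This produces $2^{\aleph_0}$ pairwise distinct directions at $p_0$, as required.

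The hard part is precisely the last sentence: to guarantee that distinct binary sequences yield points in \emph{distinct directions at $p_0$}, rather than merely in distinct points or in distinct directions at some deeper branch point. Two rays from $p_0$ give the same direction iff they agree on a nontrivial initial segment, which in ultralimit language is a condition forcing the underlying sequences to coincide up to $o(d_n)$ on an initial portion of length $\e d_n$ for some $\e>0$, $\omega$-a.s. Thus one has to arrange the diagonal choice so that the sequences $(x_n^s),(x_n^{s'})$ diverge at the smallest scales ($\ll d_n$) appearing. Conceptually, the cleanest way to handle this is to invoke the $\aleph_1$-saturation of metric ultralimits: if $p_0$ had only countably many directions $\{d_1,d_2,\ldots\}$, then the countable type ``$y$ lies at distance $1$ from $p_0$ and at distance $>1-1/k$ from $d_j(1)$ for each $j$ and each $k$'' is finitely satisfiable (using iterated homogeneity and valency $\geq 3$ to avoid any finite collection of directions at $p_0$ via branching at arbitrarily small scales), so by saturation it is realized, producing a direction outside the countable list and contradicting the assumption. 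Combined with the cardinality bound $2^{\aleph_0}$, this pins the valency at exactly $2^{\aleph_0}$.
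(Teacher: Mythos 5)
The paper states this result as an external citation to Sisto and gives no proof of its own, so there is no internal argument to compare against; I'll assess your plan on its own terms. The scaffolding is sound: homogeneity reduces everything to the valency at the basepoint, the cardinality bound $\le 2^{\aleph_0}$ follows from $|G^{\N}|=2^{\aleph_0}$, the valency $\le 2$ cases are handled correctly, and the countable-saturation argument ruling out $3\le\text{valency}\le\aleph_0$ is essentially right, modulo a slip in your type: the clause should read ``$d(y,d_j(1))>2-1/k$'', not ``$>1-1/k$''. In an $\R$-tree two unit segments from $p_0$ lie in different directions iff their endpoints are at distance exactly $2$, and it is precisely the weakened inequalities $>2-1/k$ that your finite-satisfiability step (branching off some $[p_0,d_j(1)]$ at a tiny scale, which exists because every point is a branch point) makes satisfiable even when the valency is temporarily assumed finite.

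The genuine gap is the final inference: ``valency $>\aleph_0$'' together with ``valency $\le 2^{\aleph_0}$'' yields ``valency $=2^{\aleph_0}$'' only under the Continuum Hypothesis, which is not a theorem of ZFC. You actually raised the right tool --- the diagonal construction over scales --- and then discarded it in favor of saturation, but the two steps are not alternatives; you need both. Concretely: once saturation tells you the valency is infinite, for each $k$ pick $2^k$ pairwise distinct directions at $p_0$, witnessed by sequences $(y_n^\sigma)_n$ in $G$, $\sigma\in\{0,1\}^k$, with $(y_n^\sigma\mid y_n^{\sigma'})_{o_n}=o(d_n)$ $\omega$-a.s.\ for $\sigma\neq\sigma'$. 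The sets $A_k=\{n:\ (y_n^\sigma\mid y_n^{\sigma'})_{o_n}<d_n/k\ \text{for all}\ \sigma\neq\sigma'\in\{0,1\}^k\}$ are $\omega$-big, and a standard diagonalization over the decreasing intersections $C_k=A_1\cap\cdots\cap A_k$ produces $g\colon\N\to\N$ with $\lim^\omega g(n)=\infty$ and $n\in C_{g(n)}$ $\omega$-a.s. Setting $z_n^s=y_n^{s|_{g(n)}}$ for $s\in\{0,1\}^{\N}$: if $s\neq s'$ first differ at position $k_0$, then $\omega$-a.s.\ $g(n)>k_0$, so $s|_{g(n)}\neq s'|_{g(n)}$ and $(z_n^s\mid z_n^{s'})_{o_n}<d_n/g(n)=o(d_n)$. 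Hence the limits $Z_s$, $Z_{s'}$ branch at $p_0$, giving $2^{\aleph_0}$ distinct directions. Keep both the saturation lemma and this diagonal construction; together they close the argument.
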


\begin{rk}\label{rk3} In view of  \cite{N, MNO, EP}, by Theorem \ref{sis}
there are 3 possible isometry types of $\R$-trees appearing as asymptotic cones
of groups: the point, the line, the tree with valency $2^{\aleph_0}$ at every point.
\end{rk}

\begin{rk}\label{rk4} By \cite[Proposition 6.1]{DS}, an asymptotic cone of a group is a line if and only if  the group is \index{Group!virtually cyclic}virtually cyclic\footnote{Recall that a group possesses a property $P$ {\em virtually} if it has a finite index subgroup possessing $P$.}. It is obvious that an asymptotic cone is a point if and only if the group is finite.
\end{rk}

\subsubsection{First examples of hyperbolic groups}\label{ehg} Finite and free groups, and more generally free products of finite and cyclic groups are examples of hyperbolic groups.

The classical (motivating) examples of hyperbolic groups are fundamental groups of compact negatively curved Riemannian manifolds. In particular co-compact lattices in Lie groups $SO_{n,1}(\R)$ are hyperbolic because these are fundamental groups of compact Riemannian manifolds of constant negative curvature. Hence if the genus of a closed oriented surface $S_g$ is at least 2, then the fundamental group of $S_g$ is hyperbolic. It has the presentation $$\la x_1,y_1,\ldots,x_g, y_g\mid [x_1,y_1]\ldots[x_g,y_g]=1\ra$$ found by Dehn.  It is easy to see that no two cyclic shifts of the relator or its inverse have a common prefix of length more than 1. Thus this presentation satisfies the {\em \index{Small cancelation condition!$C'(\lambda)$}small cancelation} condition $C'(\frac1{4g-1})$.

\subsubsection{The small cancelation conditions}\label{ss:tscc}
\begin{df}\label{sc} Let $\lambda>0$, and let $\pp=\la X\mid R\ra$ be a group presentation. We say that $\pp$ satisfies the condition $C'(\lambda)$ if the length of any common prefix of any two different cyclic shifts of words $r_1,r_2\in R^{\pm1}$ (called a {\em \index{Piece}piece}) is strictly less than $\lambda\min(|r_1|, |r_2|)$ (recall that all words in $R$ are cyclically reduced by our assumption (see \ref{td}), so cyclic shifts of these words and their inverses are reduced words).
\end{df}

The main property of $C'(\lambda)$-presentations is the following \index{Greendlinger lemma}Greendlinger Lemma \cite{LS}, proved by Dehn in the case of $\pi_1(S_g)$, $g\ge 2$.

\begin{theorem}[See Lyndon, Schupp \cite{LS}] \label{greend} Let $\pp=\la X\mid R\ra$ be a group presentation satisfying $C'(\lambda)$, $\lambda\le 1/6$, then every minimal van Kampen diagram $\Delta$ over $\pp$ either has no cells or has a cell $\pi$ with $\partial(\pi)=uv$ with $|u|>|v|$, and $u$ is a subpath of $\partial\Delta$ (this cell is called a {\em \index{Greendlinger cell}Greendlinger cell}).
\end{theorem}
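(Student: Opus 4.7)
My plan follows the classical Dehn--Greendlinger proof via combinatorial planar analysis of $\Delta$. It proceeds in three stages.

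First, I use the minimality of $\Delta$ to conclude that $\Delta$ is \emph{reduced}: no two cells $\pi_1,\pi_2$ share a common edge along which the two complementary boundary labels are mutually inverse, because otherwise the cells $\pi_1$ and $\pi_2$ could be canceled (gluing together the remainders of their boundaries after deleting the pair) to produce a diagram with strictly fewer cells and the same boundary label, contradicting minimality. Using the $0$-cell construction of \ref{t0c} I may further assume that $\Delta$ is an embedded topological disc with every cell an embedded polygon.

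Second, I establish the following piece observation: in a reduced diagram, any maximal common subpath of $\partial\pi$ and $\partial\pi'$ for distinct cells $\pi,\pi'$ is labeled by a common prefix of two distinct cyclic shifts of words from $R^{\pm 1}$, and is therefore a piece in the sense of Definition \ref{sc}. The hypothesis $C'(\lambda)$ with $\lambda\le 1/6$ then forces the length of every such subpath on $\partial\pi$ to be strictly less than $|\partial\pi|/6$.

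Third, I run the combinatorial Euler-characteristic argument on the ``piece decomposition'' $\widehat\Delta$ of $\Delta$: the $2$-cells of $\widehat\Delta$ are the cells of $\Delta$ together with the exterior face $\pi_{\infty}$, the $1$-cells are the maximal common subpaths of adjacent faces (each a piece), and the $0$-cells are the endpoints of these $1$-cells; by maximality every $0$-cell has valence at least $3$. Combining Euler's formula $V-E+F=2$ on the sphere with the handshake inequalities $2E\ge 3V$ and $\sum_{\pi} s(\pi)=2E$, I would show that some inner cell $\pi$ of $\widehat\Delta$ carries at most three pieces on its boundary, and that $\pi$ may be chosen so that the complement of these pieces in $\partial\pi$ forms a single connected arc $u\subseteq\partial\Delta$. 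For such a $\pi$, the total piece length $|v|$ on $\partial\pi$ satisfies $|v|< 3\cdot |\partial\pi|/6 = |\partial\pi|/2$, so $|u|>|v|$ and $\partial\pi=uv$ as required.

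The main obstacle lies in the last step: the naive ``degree $\ge 3$, face-size $\ge 4$'' inequality alone does not quite close, because it does not control the exterior face $\pi_\infty$, nor does it rule out the possibility that $\partial\pi$ meets $\partial\Delta$ in several disjoint subarcs. To extract both the bound of three pieces \emph{and} the single-arc conclusion, I would either invoke Strebel's classification of $C'(1/6)$-boundary cells or run an inductive ``peeling'' argument that passes to an outermost cell of $\Delta$ and handles cells touching $\partial\Delta$ along several disjoint arcs by a short case analysis on the position of the interior pieces.
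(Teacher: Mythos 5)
The paper does not prove this theorem---it cites Lyndon--Schupp---so I can only assess your proposal on its own terms. Your first two steps are correct and standard: minimality implies the diagram is reduced, and $0$-cells let you assume $\Delta$ is a disc with embedded cells; and the observation that maximal common boundary arcs of distinct cells (in a reduced diagram) are pieces, hence of length $<|\partial\pi|/6$, is exactly right. The gap lies in your third step, and you have honestly flagged it, but the two proposed repairs do not actually close it. Invoking ``Strebel's classification'' of $C'(1/6)$ boundary cells would be circular: that classification is precisely the strong form of Greendlinger's lemma, so you would be assuming what you set out to prove. The ``peeling'' alternative is named but not carried out; peeling off an outermost cell changes the boundary label and the set of pieces, and it is not clear what inductive invariant you would maintain.

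More importantly, the Euler-characteristic setup as you have written it is missing the ingredient that actually makes the count close. From $V-E+F=2$, $2E\ge 3V$, and a face-size lower bound of $4$ you get no contradiction at all (such planar graphs exist in abundance), so there is no forcing toward a face with $\le 3$ interior pieces. The classical argument (Lyndon--Schupp V.3--V.4) works because $C'(1/6)$ gives a much stronger face bound for cells without a boundary arc: if every side of $\partial\pi$ is a piece of length $<|\partial\pi|/6$, then $\pi$ must have at least $7$ pieces, and this high degree on interior cells is what feeds the refined ``curvature'' inequality
\[
\sum_{D\ \text{boundary region}} \bigl(i(D)+2e(D)-6\bigr) \;+\; \sum_{D\ \text{interior region}}\bigl(d(D)-6\bigr)\ \ge\ 6,
\]
which in turn forces the existence of (in fact at least two) boundary cells $\pi$ with $i(\pi)\le 3$ interior pieces and a single exterior arc. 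Without the $\ge 7$ bound on interior/covered cells your inequalities are too weak, and without the careful tally that distinguishes interior from boundary regions and accounts for the exterior face $\pi_\infty$, the ``single arc'' conclusion cannot be extracted. So the proposal captures the right skeleton but is not yet a proof: the decisive combinatorial estimate is asserted but not established.
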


Theorem \ref{greend} implies that in every group where the (finite) presentation $\pp$ satisfies $C'(\frac16)$ the word problem can be solved by the following {\em \index{Dehn algorithm}Dehn algorithm}. Take a reduced word $w$. If a cyclic shift $w'$ of $w$ contains more than a half of  a cyclic shift of a defining relation $r$ or its inverse ($r'=uv$, $w'=uw''$, $|u|>|v|$), then replace $uw''$ by $v\iv w''$ and reduce the resulting word. The new words is equal to 1 modulo $\pp$ if and only if $w=1$ modulo $\pp$. Since $v\iv w''$ is shorter than $w$, we can continue until we either get the empty word (in which case we conclude that $w=1$ modulo $\pp$) or we get a non-empty word for which the shortening is no longer possible (in which case we conclude that $w\ne 1$ modulo $\pp$).

Note that every group possessing a finite presentation for which Dehn's algorithm works (we shall call this a {\em \index{Dehn presentation}Dehn presentation}) has linear Dehn function because removing Greend\-lin\-ger cells from a minimal van Kampen diagram reduces the length of the boundary (hence the area of the diagram cannot exceed the perimeter).

\subsubsection{The linear isoperimeric inequality}\label{tlii} Gromov proved that every hyperbolic group has linear Dehn function and in fact possesses a finite Dehn presentation. Here is a relatively short proof of this fact using asymptotic cones.

\begin{theorem}[Gromov \cite{GrHyp}]\label{dehnp} Every hyperbolic group is finitely presented and has a finite Dehn presentation, and hence linear Dehn function. (Thus a group is hyperbolic if and only if it has a finite Dehn presentation.)
\end{theorem}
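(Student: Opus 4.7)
The plan is to exhibit a finite Dehn presentation for $G$: finite presentability is then automatic, and the Dehn function is forced to be linear because each Dehn-reduction strictly decreases cyclic word length, so any $w=_G 1$ admits a filling of area at most $|w|$. The backbone of the argument is Theorem \ref{hypascon}: every asymptotic cone of $G$ is an $\R$-tree.

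I will argue by contradiction. Assume no finite Dehn presentation exists. Then for each $N$ there is a non-trivial cyclically reduced word $w_N=_G 1$ such that no cyclic shift of $w_N$ contains more than half of any relator of length $\le N$; equivalently, for every subword $u$ of a cyclic shift of $w_N$ and every $v$ with $u=_G v$ and $|v|<|u|$, one has $|u|+|v|>N$. Applying this to $u=w_N$ with $v$ empty yields $|w_N|>N$, while applying it to any $u$ with $|u|\le N/2$ forces $u$ to be a geodesic in the Cayley graph $\Gamma$ (otherwise the inequalities $|v|<|u|\le N/2$ and $|u|+|v|\le N$ would be simultaneously solvable, producing a Dehn reduction). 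Hence $w_N$ labels a non-trivial closed $(N/2)$-local geodesic loop in $\Gamma$.

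The contradiction will come from the \emph{local-to-global principle}: there exist constants $K_0=K_0(\delta)$ and $L=L(\delta)$ such that every closed $K_0$-local geodesic in $\Gamma$ has length $\le L$. Granting this, $w_N$ with $N\ge 2K_0$ satisfies $|w_N|\le L$, which contradicts $|w_N|>N$ once $N>L$; thus $\{r:r=_G 1,\ |r|\le 2K_0\}$ is a finite Dehn presentation. To derive the local-to-global principle from Theorem \ref{hypascon} by asymptotic cones, I will again argue by contradiction: a hypothetical sequence of closed $K_n$-local geodesic loops $\gamma_n$ of lengths $d_n$ violating any uniform bound can be pruned (by a minimal-counterexample selection on $d_n$) so that $d_n=O(K_n)$; rescaling $\Gamma$ by $d_n$ and passing to the $\omega$-limit then produces, by Property (6) of \ref{spoac}, a non-trivial loop $\gamma\colon S^1\to\Con^\omega(\Gamma,(d_n))$ that is locally geodesic on a uniform positive scale. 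Since $\Con^\omega(\Gamma,(d_n))$ is an $\R$-tree and local geodesics in a tree are globally geodesic (by uniqueness of reduced paths), hence injective paths, no such loop can close up --- the desired contradiction. The hardest step I anticipate is this minimal-counterexample pruning that forces $d_n=O(K_n)$; once it is in place the $\R$-tree contradiction is immediate, and the parenthetical converse ``Dehn presentation implies hyperbolic'' follows from the standard characterisation of hyperbolicity by linear Dehn function.
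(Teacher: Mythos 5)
Your first reduction and final deduction are correct, and the overall plan --- pass from the non-existence of a finite Dehn presentation to loops $w_N$ that are $(N/2)$-locally geodesic and of length $>N$, then kill them using the $\R$-tree asymptotic cone --- is the same as the paper's. The gap is the step you flag, and it is a real one. ``$K$-local geodesic'' is not a scale-invariant condition: after rescaling $\Gamma$ by $d_n$ the loop is locally geodesic only at scale $K_n/d_n$, and if this ratio tends to $0$ along $\omega$ the limit path in the cone carries no local-geodesic structure at any positive scale, so ``local geodesics in a tree are global geodesics'' has nothing to act on. Your ``minimal-counterexample selection on $d_n$'' is supposed to force $d_n=O(K_n)$, but choosing the shortest closed $K_n$-local geodesic loop gives no a priori control of that minimum by $K_n$; ruling out super-linear growth of the minimum is essentially the very local-to-global statement you want, and surgery by shortcutting along a chord and re-closing does not obviously preserve the local-geodesic condition at the cut points, so there is no cheap pruning.

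The paper avoids this by using a scale-invariant substitute for ``$K$-local geodesic'': a \emph{bump}, a path whose length is at least three times the distance between its endpoints. Lemma~\ref{bump} (every bump of length $>C$ contains a non-trivial sub-bump of length $\le\frac13$ of its own) is proved by exactly your cone argument, but rescaled only by $d_n$ --- the bump condition being a ratio, there is a single scale in the picture and nothing to prune --- and one finds on the limit path in the $\R$-tree a double point with arbitrarily small return time, whose preimage is the forbidden short sub-bump. Iterating the lemma (a loop is a bump) produces in any loop a non-trivial sub-bump of length $\le C$; joining its endpoints by a geodesic gives a relator of length $<2C$ of which the bump is more than half. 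For $N\ge 2C$ this contradicts your $w_N$: a sub-bump of length $\le C\le N/2$ would be geodesic and hence trivial. If you replace ``closed $K_n$-local geodesic loop of length $d_n$'' by ``bump of length $d_n$ with no non-trivial sub-bump of length $\le d_n/3$'', your scaling problem disappears and your argument becomes the paper's.
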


\proof We present here a proof that illustrates the usefullness of asymptotic cones. Let $G$ be a hyperbolic group. We need the following statement first. A path $\gamma\colon [0,l]\to \Gamma$ in the Cayley graph $\Gamma$ of a group $G$ is called a {\em \index{Bump}bump} if the the length $l$ of the path is at least 3 times bigger than the distance between its end points.

\begin{lemma}\label{bump} There exists a constant $C$ such that every bump of length $l>C$ contains a sub-bump of length at most $\frac13l$.
\end{lemma}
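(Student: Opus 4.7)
The plan is to argue by contradiction using asymptotic cones, exploiting the fact that asymptotic cones of hyperbolic groups are $\R$-trees (Theorem \ref{hypascon}). First I would assume the conclusion fails: there is a sequence of bumps $\gamma_n:[0,l_n]\to \Gamma$, parameterized by arc length, with $l_n\to\infty$, such that every subpath of $\gamma_n$ of length in $(0,l_n/3]$ fails to be a bump. Unpacking, for every $[a,b]\subset[0,l_n]$ with $0<b-a\le l_n/3$ we have $d_\Gamma(\gamma_n(a),\gamma_n(b))>(b-a)/3$, while the bump hypothesis on $\gamma_n$ itself gives $d_\Gamma(\gamma_n(0),\gamma_n(l_n))\le l_n/3$.

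Next I would fix a non-principal ultrafilter $\omega$ and pass to the asymptotic cone $\ccc=\Con^\omega(G,(l_n),(\gamma_n(0)))$. By Property \ref{5} of Section \ref{spoac}, the paths $\gamma_n$ yield a limit path $\bar\gamma:[0,1]\to\ccc$ parameterized by its length, which equals $1$; in particular $\bar\gamma$ is a non-trivial $1$-Lipschitz path. Taking $\omega$-limits of the inequalities above gives (i) $d_\ccc(\bar\gamma(0),\bar\gamma(1))\le 1/3$, and (ii) $d_\ccc(\bar\gamma(a),\bar\gamma(b))\ge (b-a)/3$ whenever $[a,b]\subset[0,1]$ and $b-a\le 1/3$. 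Since $G$ is hyperbolic, Theorem \ref{hypascon} gives that $\ccc$ is an $\R$-tree.

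The main step is to extract a contradiction from the existence of such a $\bar\gamma$ in an $\R$-tree. Because $\bar\gamma$ has length $1$ while its endpoints are at distance at most $1/3<1$, $\bar\gamma$ cannot be injective: in an $\R$-tree any injective path is a topological arc, hence coincides with the unique geodesic joining its endpoints and has length equal to that distance. I would then upgrade this global non-injectivity to arbitrarily short self-intersections. Let $m=\inf\{\,t-s:0\le s<t\le 1,\ \bar\gamma(s)=\bar\gamma(t)\,\}$. If $m>0$, a standard compactness argument shows the infimum is attained at some pair $(s_*,t_*)$; by minimality $\bar\gamma|_{(s_*,t_*)}$ is injective and avoids the common value $\bar\gamma(s_*)=\bar\gamma(t_*)$, so $\bar\gamma([s_*,t_*])$ is a simple closed curve in $\ccc$, which is impossible in an $\R$-tree (uniqueness of geodesics between two points of the loop would be violated by the two disjoint topological arcs joining them along the loop). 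Hence $m=0$, so one can choose $s<t$ with $\bar\gamma(s)=\bar\gamma(t)$ and $0<t-s<1/3$; but then (ii) forces $0=d_\ccc(\bar\gamma(s),\bar\gamma(t))\ge(t-s)/3>0$, a contradiction. The only non-routine point of the argument is precisely this last step in the $\R$-tree; the rest is a direct application of Theorem \ref{hypascon} together with the standard properties of asymptotic cones recalled in \ref{spoac}.
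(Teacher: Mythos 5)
Your argument follows the paper's: pass to an asymptotic cone, which is an $\R$-tree by hyperbolicity, and show that the $\omega$-limit of a would-be sequence of counterexample bumps must have arbitrarily short self-intersections, contradicting the ``no short sub-bumps'' hypothesis. You spell out the ``arbitrarily short loops'' step in more detail (via the minimal self-intersection gap and the non-existence of embedded circles in an $\R$-tree) and close by applying the limiting inequality $d_\ccc(\bar\gamma(a),\bar\gamma(b))\ge (b-a)/3$ directly in the cone, whereas the paper instead returns to the original paths $\gamma_i$ and exhibits there a forbidden sub-bump of length at most $\frac13 d_i$ --- cosmetically different routes to the same contradiction.
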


\proof Suppose that such a constant does not exist. Then for every $n=1,2,\ldots $ there exists a bump $\gamma_n$ of length $d_n>n$ which does not contain sub-bumps of length $\le d_n/3$. Pick an ultrafilter $\omega$ and consider the asymptotic cone $\ccc=\Con^\omega(G,(d_i))$. The space $\ccc$ contains the $\omega$-limit $\gamma$ of paths $\gamma_i$. Then by Property (\ref{5}) of asymptotic cones from \ref{spoac} $\gamma$ is a path of length 1 and the distance between its endpoints is at most 1/3. Recall that $\ccc$ is an $\R$-tree (Theorem \ref{hypascon}). A path of length 1 on a tree with enpoints at distance $\le 1/3$ must have a subpath of arbitrary small (but non-zero!) length which is a loop. Let $\alpha$ be that loop of length at most $\frac14$, $a$ be the beginning (= the end) point of that loop.  Then $a$ is a double point on $\gamma$: $a=\gamma(p)=\gamma(p+\varepsilon)$ where $\varepsilon\le \frac14$. That means the distance between $\gamma_i(p)$ and $\gamma_i(p+\varepsilon)$ is $o(d_i)$ while the length of the subpath $\gamma_i$ between $p$ and $p+\varepsilon$ is $O(d_i)$ and at most $\frac14 d_i +o(d_i)< \frac13 d_i$ $\omega$-almost surely. Therefore for infinitely many values of $i$ the subpath $\gamma_i(p,p+\varepsilon)$ is a bump of length at most $\frac13 d_i$, a contradiction. \endproof

Now Theorem \ref{dehnp} can be deduced as follows. Let $G$ be a hyperbolic group. Let $C$ be the constant from Lemma \ref{bump}. Let $R$ be the (finite) set of all words in the generators $X$ of $G$ which are of length $\le 2C$ and equal to 1 in $G$. Lemma \ref{bump} immediately implies that every loop in the Cayley graph of $G$ has a subpath that is a bump of length at most $C$ (note that a loop is a bump too). The bump together with a geodesic connecting its endpoints is a loop of length at most $C+\frac13 C<2C$. Therefore every group word in the alphabet $X$ that is equal to 1 in $G$ contains more than a half (in fact, at least $\frac34$) of a relator from $R$. This implies that $\la X\mid R\ra$ is a finite presentation of $G$ satisfying the Dehn property. \endproof

The converse statement for Theorem \ref{dehnp} is also true and well-known:

\begin{theorem}[Gromov \cite{GrHyp}]\label{t:lin} A finitely generated group is hyperbolic if and only if it has linear isoperimetric inequality.
\end{theorem}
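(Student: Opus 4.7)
The $\Rightarrow$ direction is Theorem~\ref{dehnp}. For the converse, I would assume $G$ is finitely generated with isoperimetric function bounded by $Kn$ and fix an arbitrary asymptotic cone $\ccc=\Con^\omega(G,(d_i))$. By Theorem~\ref{hypascon} it suffices to show $\ccc$ is an $\R$-tree.

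Theorem~\ref{Gro} already gives that $\ccc$ is simply connected, since $Kn$ is polynomial. Aiming at a contradiction, suppose $\ccc$ is not an $\R$-tree, so it contains a non-degenerate simple geodesic triangle $T$ with vertices $A,B,C$ and sides $a=BC$, $b=CA$, $c=AB$. Simplicity together with non-degeneracy supplies a constant $\eta>0$ such that the middle third of each side of $T$ is at distance $\ge\eta$ from the union of the other two sides. Split $\partial T$ into four consecutive arcs $\beta_1,\beta_2,\beta_3,\beta_4$ going around $T$, with $\beta_2$ the middle third of $a$ and $\beta_4$ the middle third of $c$, while $\beta_1,\beta_3$ absorb the remaining complementary pieces. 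A short triangle-inequality computation (comparing distances to the vertices $B$, $C$ on the geodesic sides) shows $\dist(\beta_1,\beta_3),\,\dist(\beta_2,\beta_4)\ge\eta':=\min(\eta,\tfrac13)>0$.

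Next I would transport this configuration to the Cayley graph $\Gamma$ via Property~(\ref{7}) of \ref{spoac}: for any $\varepsilon<\eta'/10$ there exists an integer $k=k(\varepsilon)$ and, $\omega$-almost surely, a thick geodesic $k$-gon $\Pi_i$ in $\Gamma$ whose $\omega$-limit lies within Hausdorff distance $\varepsilon$ of $T$. In particular $|\partial\Pi_i|=O(d_i)$, and $\partial\Pi_i$ inherits a four-arc decomposition $\alpha_1^i,\alpha_2^i,\alpha_3^i,\alpha_4^i$ for which $\dist(\alpha_1^i,\alpha_3^i)\ge c\,d_i$ and $\dist(\alpha_2^i,\alpha_4^i)\ge c\,d_i$ for some constant $c>0$ independent of $i$. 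Now Bowditch's quadrangle inequality~$(B_2)$ of \ref{ss:tdob}, which the Dehn-function area satisfies by \ref{ccms}, yields $\area(\partial\Pi_i)\ge k_0 c^2 d_i^2$ for some $k_0>0$. But the linear hypothesis gives $\area(\partial\Pi_i)\le K\cdot|\partial\Pi_i|=O(d_i)$, contradicting the quadratic lower bound for $i$ large enough. Hence $\ccc$ must be an $\R$-tree, and $G$ is hyperbolic by Theorem~\ref{hypascon}.

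The main obstacle I expect is the four-arc decomposition step: $\partial T$ has only three geodesic sides, yet one must form four arcs so that \emph{both} pairs of opposite arcs are macroscopically separated. The choice above (middle thirds of two sides for $\beta_2,\beta_4$; the complementary ``vertex arcs'' for $\beta_1,\beta_3$) works, but justifying $\dist(\beta_1,\beta_3)\ge\eta'$ requires the triangle inequality to force points in different outer thirds of the same geodesic side to stay $\tfrac13$ apart, combined with the $\eta$-separation of middle thirds from the remaining sides. Once established in $\ccc$, this property transfers to $\Pi_i$ with additive Hausdorff error $O(\varepsilon d_i)$, which is absorbed into the constant $c$.
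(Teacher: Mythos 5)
Since the paper defers the ``if'' direction to Exercise~\ref{exe1}, with exactly the hint to use Property~\ref{7} of \ref{spoac}, your argument is essentially a solution to that exercise along the intended lines: reduce to showing every asymptotic cone is an $\R$-tree, pass to a thick geodesic $k$-gon $\Pi_i$ via Property~\ref{7}, and play Bowditch's quadrangle inequality $(B_2)$ against the assumed linear isoperimetric bound. The overall structure is correct and complete.

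One step you flag yourself is where you actually have a soft spot: the explicit bound $\dist(\beta_1,\beta_3)\ge\min(\eta,\tfrac13)$ does \emph{not} follow from the triangle inequality together with the $\eta$-separation of middle thirds. For example, take $x$ on the last third of $c$ (near $B$) and $y$ on the last third of $a$ (near $C$); the triangle inequality only gives $\dist(x,y)\ge|a|-|c|/3-|a|/3$, which is vacuous once $|c|>2|a|$, and neither $x$ nor $y$ lies in a middle third so $\eta$ does not apply. Fortunately the explicit value of the constant is irrelevant: $\beta_1$ and $\beta_3$ are disjoint compact subarcs of a simple loop, so $\dist(\beta_1,\beta_3)>0$ automatically, and you can simply set $\eta'$ to be that (unknown but positive) distance. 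Any fixed positive constant yields the required $\Omega(d_i^2)$ lower bound, so the contradiction goes through. A cleaner way to sidestep the transfer of a four-arc decomposition from $T$ to $\Pi_i$ altogether is to work directly with the thick $k$-gon: since $\Pi_i$ has at most $k$ sides and perimeter $\Theta(d_i)$, some side $s$ of $\Pi_i$ has length $\Omega(d_i)$; take $\alpha_2$ to be the middle third of $s$, $\alpha_1$ and $\alpha_3$ the first and last thirds of $s$, and $\alpha_4$ the rest of $\partial\Pi_i$. Then $\dist(\alpha_1,\alpha_3)\ge|s|/3=\Omega(d_i)$ because $s$ is a geodesic, and $\dist(\alpha_2,\alpha_4)=\Omega(d_i)$ is precisely what thickness of $\Pi_i$ guarantees, so $(B_2)$ applies directly with no Hausdorff-approximation bookkeeping.
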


\begin{exercise}\label{exe1}
Prove the ``if" part of Theorem \ref{t:lin} using asymptotic cones. (Hint: use Property \ref{7} from \ref{spoac}.)
\end{exercise}

\begin{exercise}\label{exe2} Prove all other basic properties of hyperbolic groups from \cite{GrHyp} using asymptotic cones. (Asymptotic cones do not give precise values of constants so you should replace estimates like $400\sqrt{500}$ by ``some constant".)
\end{exercise}

\subsubsection{Equations in hyperbolic groups}\label{ss:oapihg} Several other algorithmic problems have nice solutions in hyperbolic groups too. For example, Theorem \ref{Olsurf} about solvability of quadratic equations is true if we replace there ``free" by ``hyperbolic" \cite{Olsurf}. All solutions of quadratic equations without coefficients in a hyperbolic group can be described by the following

\begin{theorem}[Lys\"enok \cite{L1}] Let $S$ be a closed surface. For every hyperbolic group $G$ there exist only finitely many homomorphisms of $\pi_1(S)$ to $G$ up to the action by the mapping class group of $\pi_1(S)$ and the action of ${\mathrm{Aut}}(G)$ on $G$.
\end{theorem}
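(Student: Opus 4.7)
The plan is to translate the statement into one about minimal van Kampen diagrams on the surface $S$. Fix a finite Dehn presentation $\langle X\mid R\rangle$ of the hyperbolic group $G$ (Theorem \ref{dehnp}) and, for concreteness, assume $S$ is closed orientable of genus $g\ge 1$, so that $\pi_1(S)=\langle x_1,y_1,\ldots,x_g,y_g\mid [x_1,y_1]\cdots[x_g,y_g]\rangle$. A homomorphism $\phi:\pi_1(S)\to G$ is a $2g$-tuple $(\phi(x_i),\phi(y_i))$ with $\prod_i[\phi(x_i),\phi(y_i)]=1$ in $G$; choosing geodesic words $w_{x_i},w_{y_i}$ over $X$ representing these elements, this identity is witnessed, via Lemma \ref{lvk}, by a van Kampen diagram $\Delta_\phi$ over $\langle X\mid R\rangle$ whose boundary spells $\prod_i[w_{x_i},w_{y_i}]$; equivalently (by identifying sides of the standard $4g$-gon) $\Delta_\phi$ is a diagram on $S$ in the sense of Section \ref{dos}. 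By Dehn--Nielsen--Baer, $\mathrm{Mod}(S)=\mathrm{Out}^+(\pi_1(S))$ acts on $\phi$ by precomposition, which geometrically amounts to reparameterizing $S$, while $\mathrm{Aut}(G)$ acts by relabeling edges of $\Delta_\phi$.

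I would then pick, in each orbit under $\mathrm{Mod}(S)\times\mathrm{Aut}(G)$, a representative minimizing the potential $\sigma(\phi):=\sum_{i=1}^g(|w_{x_i}|+|w_{y_i}|)$, and argue that $\sigma$ is uniformly bounded by a constant $C=C(g,G)$. The two key inputs are: (i) the Dehn property of $\langle X\mid R\rangle$, which guarantees that any minimal disc diagram with non-trivial boundary contains a Greendlinger cell covering more than half of its boundary (Theorem \ref{greend}); (ii) the ability to absorb every such shortening either into $\mathrm{Aut}(G)$, when it reduces a single $w_{x_i}$ or $w_{y_i}$ as a word in $G$, or into $\mathrm{Mod}(S)$, when it straddles a commutator boundary or an identification edge of the $4g$-gon — the latter being implemented by Dehn twists along the corresponding essential simple closed curves in $S$ (and, for non-orientable $S$, by crosscap slides, with the relator $x_1^2\cdots x_k^2$ playing the role of $\prod[x_i,y_i]$). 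Iterating these reductions strictly decreases $\sigma$ until the boundary word of $\Delta_\phi$ freely cancels on the $4g$-gon; at this stage $\sigma(\phi)\le C$, and since there are only finitely many $2g$-tuples of words in $X$ of total length at most $C$, there are only finitely many orbits.

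The main obstacle will be step (ii): classifying which Greendlinger shortenings can be realized by elements of $\mathrm{Mod}(S)$ without simultaneously inflating other contributions to $\sigma$, and verifying termination (no infinite cycles of moves between orbit representatives). This requires a careful analysis of how Dehn twists along each of the $3g-1$ standard curves act on the tuple $(\phi(x_i),\phi(y_i))$, together with a bookkeeping argument showing that every ``long'' configuration in a surface diagram over a hyperbolic Dehn presentation admits at least one strictly shortening move. The natural model for this analysis is Olshanskii's treatment of quadratic equations over hyperbolic groups \cite{Olsurf}, upgraded from disc diagrams to diagrams on $S$, and supplemented by the $\theta$-loop triangle inequality of Subsection \ref{ss:tdob} to control the geometry of cuts of $\Delta_\phi$ by simple closed curves.
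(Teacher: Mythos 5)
The paper does not actually prove this theorem: it is stated with a citation to Lys\"enok's thesis \cite{L1}, together with the remark that the argument is ``relatively elementary'' and, in context, that it belongs to the circle of ideas around Olshanskii's treatment of quadratic equations via diagrams on surfaces \cite{Olsurf}. So there is no proof in the text to compare against, and I can only assess your outline on its own terms. Its general architecture --- surface diagrams, length minimization within a $\mathrm{Mod}(S)\times\mathrm{Aut}(G)$-orbit, and an appeal to \cite{Olsurf} --- is indeed the right family of ideas. But the part of the argument that you flag as ``the main obstacle'' is in fact the entire content of the theorem, and the Greendlinger-cell mechanism you propose to drive the shortening is the wrong engine for it.

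Here is the concrete problem. Take $G$ free. A free group is hyperbolic, and its obvious Dehn presentation $\la X\mid\emptyset\ra$ has no relators at all, hence no cells and no Greendlinger cells. Your reduction loop therefore never fires, yet the theorem is already genuinely non-trivial in this case: it is the classification, up to the surface mapping class group and $\mathrm{Aut}(F)$, of tuples $(w_{x_i},w_{y_i})$ whose product of commutators is freely trivial --- essentially the finiteness of Wicks forms for orientable genus-$g$ products of commutators. That analysis lives entirely in the free-cancellation matching of the sides of the $4g$-gon, not in cells of a van Kampen diagram. In a general hyperbolic $G$ the same phenomenon persists: over a Dehn presentation, the diagram witnessing $\prod[w_{x_i},w_{y_i}]=1$ can be very ``thin'', with the number of cells bounded linearly by the boundary length while the geodesics $w_{x_i},w_{y_i}$ are long; the combinatorics you need to control are those of the cell-free coincidence pattern on the $4g$-gon (plus a bounded perturbation by cells), and nothing in your sketch addresses this. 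Relatedly, your shortening step is not automatic: a Dehn twist such as $y_i\mapsto x_iy_i$ replaces $\phi(y_i)$ by $\phi(x_i)\phi(y_i)$, whose geodesic length can exceed $|w_{x_i}|+|w_{y_i}|$; a decrease in $\sigma$ occurs only when there is large cancellation, and establishing that some allowed move always achieves this when $\sigma$ is large is exactly the missing combinatorial lemma. Saying the move ``is absorbed into $\mathrm{Aut}(G)$'' when it reduces a single $w_{x_i}$ is also a misfire: replacing $w_{x_i}$ by a shorter representative of the same element of $G$ uses no automorphism, and was already unavailable because you chose $w_{x_i}$ geodesic.

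One further caution on hypotheses: as written, the statement is false for $S$ the torus and $G=\Z$ (both hyperbolic): $\mathrm{Hom}(\Z^2,\Z)\cong\Z^2$, the $\mathrm{Mod}(T^2)=\GL_2(\Z)$-action reduces $(m,n)$ to $(\gcd(m,n),0)$, and $\mathrm{Aut}(\Z)=\{\pm1\}$, leaving infinitely many orbits. So $\chi(S)<0$ (genus $\ge 2$ in the orientable case) is an implicit hypothesis; your ``$g\ge1$'' should be ``$g\ge2$'', and in the non-orientable case one must likewise exclude $\mathbb{RP}^2$ and the Klein bottle.
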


The proof of this theorem is relatively elementary comparing with the similar looking Theorem \ref{t:hp}.

An analog of Makanin's Theorem \ref{Mak} for free groups is also true for torsion-free hyperbolic groups in general.

\begin{theorem}[Rips, Sela \cite{Sela1}] \label{Sela1} There exists an algorithm to check whether a system of equations over a given torsion-free hyperbolic group has a solution.
\end{theorem}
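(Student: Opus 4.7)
The plan is to build a \emph{Makanin--Razborov diagram} for the system $\Sigma(x_1,\dots,x_n)=1$ over $G$: a finite rooted tree of proper epimorphisms between finitely generated groups which parametrises, up to modular automorphisms, every solution of $\Sigma=1$ in $G$. Existence of a solution can then be read off from the leaves of this tree. First I would encode the problem as follows: a solution of $\Sigma=1$ in $G$ is precisely a $G$-homomorphism $H\to G$ (one restricting to the identity on $G$), where $H=(G*F(x_1,\dots,x_n))/\langle\langle\Sigma\rangle\rangle$. The task becomes to decide, given a finite presentation of such an $H$, whether $\mathrm{Hom}_G(H,G)\neq\emptyset$.

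The core construction is a \emph{shortening procedure}. Pick a word metric on $G$ and, for any $\phi\in\mathrm{Hom}_G(H,G)$, define its length as the maximum $\phi$-length of a fixed finite generating set of $H$. Given $\phi$, one pre-composes with \emph{modular automorphisms} of $H$ (Dehn twists along the abelian and surface pieces of a JSJ decomposition of $H$ relative to $G$) to reach a representative of minimal length in its modular orbit. Short homomorphisms can be enumerated by bounded search, so the only difficulty is ruling out an infinite family of pairwise modularly inequivalent short homomorphisms. Assume such a family $(\phi_n)$ exists; rescaling the Cayley graph of $G$ by the lengths of the $\phi_n$ and passing to an asymptotic cone produces, by Theorem~\ref{hypascon}, an isometric action of $H$ on a complete $\R$-tree $T$ with trivial arc stabilisers. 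The Rips machine (in the Bestvina--Feighn form) then decomposes $T$ into simplicial, axial, and surface components, and one shows that the modular automorphisms associated to this decomposition were available to shorten $\phi_n$ further --- a contradiction unless the $\phi_n$ ultimately factor through a proper quotient $H\twoheadrightarrow H'$. Recursion on $H'$ yields the Makanin--Razborov diagram.

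Termination of the recursion is the algorithmic backbone. Sela's descending chain condition for torsion-free hyperbolic groups says that any sequence $H\twoheadrightarrow H_1\twoheadrightarrow H_2\twoheadrightarrow\cdots$ of proper quotients arising in this process stabilises, so the tree of quotients is finite. Moreover the whole process can be carried out effectively: the JSJ decomposition of a torsion-free hyperbolic group relative to a finite subgroup is computable (following Rips--Sela's effective version of the Rips machine), the modular group associated to it is then explicitly presentable, and the bound beyond which "short" implies "factoring through a proper quotient" can be made explicit from the constants of hyperbolicity of $G$ together with a Makanin-style length bound for solutions of the associated systems of equations with rational constraints. Running the construction outputs a finite list of leaf groups $H_i$ together with explicit families of $G$-homomorphisms $H_i\to G$; the original system has a solution if and only if at least one such family is non-empty, which is checked by inspection.

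The main obstacle will be the effectiveness of the Rips-machine step: one must extract from an abstract action on an $\R$-tree --- itself obtained as an asymptotic limit --- an explicit finite list of proper quotients of $H$ and an explicit finite generating set for the modular group used in the shortening. This is where the bulk of Rips and Sela's work lies, and it forces one to upgrade every limiting argument (asymptotic cones, convergence of actions on $\R$-trees, decomposition into simplicial/surface/axial pieces) to an effective combinatorial procedure involving only finite data derivable from the presentation of $H$ and the hyperbolicity constant of $G$. Once this effectivisation is in place, the algorithm answering the question reduces to the finite check described above.
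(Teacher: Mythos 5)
The paper only cites this result; it offers no proof, so the comparison has to be against the approach of the cited source, Rips--Sela's \emph{Canonical representatives and equations in hyperbolic groups}. Your sketch takes a genuinely different route. The cited paper does not build a Makanin--Razborov diagram at all. Instead it constructs \emph{canonical representatives}: for a torsion-free hyperbolic group $G$ with generating set $X$, one produces for each group element a distinguished word in $F(X)$ in such a way that, for any finite collection of triangle relations $abc=1$ in $G$, the representatives satisfy a coarse cancellation property (each representative decomposes into a central portion and two wings, with the wings of $a,b,c$ matching up in pairs up to bounded error controlled by the hyperbolicity constant). This turns a system of equations over $G$ into a \emph{computable finite disjunction} of systems of equations (with constants and, in refinements, rational constraints) over the free group $F(X)$, after which one invokes Makanin's Theorem~\ref{Mak} directly. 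The reduction is explicit and combinatorial; there is no limiting argument and no need to build an effective JSJ/Rips-machine pipeline.

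Your proposal, by contrast, is the shortening-argument/Makanin--Razborov-diagram strategy from Sela's later Diophantine-geometry program. It is correct in outline and does eventually yield decidability, but the effectiveness concern you raise at the end is not a side issue -- it is the whole problem, and it postdates the result being cited. As you note, one needs a computable JSJ decomposition relative to the parameter subgroup, a computable generating set for the modular group, an effective bound on ``short'', and an effective descending chain condition to make the tree recursion terminate verifiably. Each of these is a substantial theorem in its own right (much of it carried out considerably later, and in full generality only by Dahmani--Groves and Dahmani--Guirardel, which the paper cites separately for the isomorphism problem). The canonical-representatives route avoids all of this: it buys immediate effectiveness and a short reduction to Makanin, at the cost of giving essentially no structural information about the solution set. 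The MR-diagram route buys a rich parametrisation of all solutions, which is needed for Tarski-type problems, at the cost of a much heavier effectivisation burden. For the decidability statement alone, the cited paper's canonical-representatives reduction is the economical choice, and you should be aware that reproducing the theorem by your route requires importing several effectivisation results that are logically downstream of Rips--Sela's original argument rather than upstream of it.
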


\subsubsection{The language theory point of view.}\label{ss:tlpov} A {\em \index{Language}language} is a set of words in a finite alphabet $X$.
An {\em \index{Automaton}automaton} is a finite labeled directed graph (labels are taken from a finite alphabet $X$) with two distinguished subsets of vertices $S$ (start) and $F$ (finish) \cite{Ch}. For example, any finite subgraph of a labeled Cayley graph becomes an automaton if we choose the start and finish vertices. A language $L$ is {\em recognized} by an automaton $M$ if it consists of all words that can be read on $M$ starting from a start vertex and ending at a finish vertex. Such languages are called {\em \index{Language!regular}regular}.

It is easy to see that the language of all group words in generators of a finitely generated group that are equal to 1 in that group (let us denote it by $W(G)$) is regular if and only if the group is finite. For a bigger class of {\em \index{Language!context-free}context-free} languages, i.e. languages recognized by more advanced {\em pushdown} automata (for the definition see \cite{HU}), the situation is already different as shown by the following non-trivial theorem.

\begin{theorem}[Muller, Schupp \cite{MuSc}] \label{musc} Let $G$ be a finitely generated group. $G$ is free if and only if $W(G)$ is
context-free and $G$ is torsion-free. $G$ is virtually free if and only if $W(G)$ is context-free and
accessible.
\end{theorem}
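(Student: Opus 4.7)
The plan is to establish the easy direction first and then develop the geometric characterization that powers the hard direction. The easy direction says that if $G$ is virtually free, then $W(G)$ is context-free. For a free group $F_k$, one builds a pushdown automaton that simulates free reduction: push each generator onto the stack, pop whenever a letter and its inverse become adjacent at the top, and accept on the empty stack. To handle virtual freeness, I would use Bass-Serre theory (or simply a transversal for the finite-index free subgroup together with a closure argument: the class of groups with context-free word problem is closed under passage to and from finite-index subgroups, which can be checked via a coset-tracking automaton whose stack records elements of the free subgroup while its finite control tracks the coset representative).

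For the hard direction, the central idea is to translate context-freeness of $W(G)$ into a geometric property of the Cayley graph $\Gamma = \Gamma_X(G)$. The key lemma (essentially an Ogden/pumping-type argument applied to loops) is a \emph{$K$-triangulation property}: there is a constant $K$ such that every geodesic triangle in $\Gamma$ can be filled by triangles of perimeter at most $K$, with each intermediate side being a subpath of a geodesic shifted by an element of a finite set of ``cone types''. Equivalently, the set of cone types at infinity is finite up to the action of $G$. From this I would derive two consequences: first, $G$ has linear Dehn function (so in particular $G$ is hyperbolic, by Theorem \ref{t:lin}), and second, if $G$ is infinite then $G$ must have more than one end. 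The ``more than one end'' part is the crucial output: finitely many cone types forces the complement of a sufficiently large ball in $\Gamma$ to have a structure that, together with infiniteness, precludes one-endedness.

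Once $G$ has more than one end, Stallings' ends theorem gives a nontrivial splitting of $G$ as an amalgamated free product or HNN extension over a finite subgroup. The property of having context-free word problem is preserved (and accessibility is likewise preserved) when passing to the vertex groups of such a splitting, so I would iterate: each vertex group is either finite or itself splits further over a finite group. Accessibility of $G$ is precisely what guarantees that this iteration terminates after finitely many steps, leaving a finite graph of groups with all vertex groups finite. The fundamental group of such a graph of groups is virtually free (it acts on the Bass-Serre tree with finite vertex stabilizers; applying Bass-Serre theory together with the classical theorem of Karrass-Pietrowski-Solitar, or passing to the kernel of the action on the tree and using finiteness of vertex stabilizers to produce a free subgroup of finite index). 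This yields the second statement.

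For the torsion-free part of the theorem, I would invoke the classical fact (a consequence of Stallings' theorem, or of the Bass-Serre description above) that a torsion-free virtually free group is free. Since torsion-free groups are automatically accessible (Linnell's theorem, or more simply because the process above cannot create torsion from nothing and one controls ranks), the hypothesis ``context-free and torsion-free'' lets the same iterated-splitting argument run, and the torsion-free conclusion forces freeness. The main obstacle is the first step of the hard direction, namely extracting the $K$-triangulation / bounded-cone-type property of $\Gamma$ from the combinatorial context-free grammar for $W(G)$; everything downstream is geometric group theory applied to that geometric conclusion.
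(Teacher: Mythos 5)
Your overall architecture for the hard direction (a pumping-type lemma giving a geometric triangulability property, then deducing more than one end, then Stallings plus accessibility to iterate, then Karrass--Pietrowski--Solitar for the last step, and Grushko in the torsion-free case) is the right shape, and the easy direction via a reduction PDA together with a normal-core coset-tracking automaton is correct. But the key lemma as you stated it is too weak, and the step that is supposed to produce more than one end does not follow from it.

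You state the lemma as: \emph{every geodesic triangle can be filled by triangles of perimeter at most $K$, equivalently the group has finitely many cone types}. Both of these are consequences of hyperbolicity alone and therefore hold for closed hyperbolic surface groups, which are one-ended. So this version of the lemma cannot possibly force more than one end, and the assertion that ``finitely many cone types forces the complement of a sufficiently large ball to preclude one-endedness'' is false as it stands. What the context-free grammar actually gives, via the structure of derivation trees / a pumping argument, is $K$-triangulability of \emph{every} closed path $p$ in the Cayley graph: the polygon whose vertices are the vertices along $p$ can be triangulated by non-crossing diagonals each of which is realised by a path of length at most $K$ in $\Gamma$ (no interior vertices allowed). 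This is genuinely stronger than hyperbolicity. For instance in $\HH^2$ it fails for a large geodesic circle: the dual tree of any diagonal triangulation of an $n$-gon has maximum degree $3$, so a centroid-edge argument produces a diagonal joining two boundary vertices at combinatorial distance $\gtrsim n/3$, and in $\HH^2$ those two points are at distance on the order of $\log n$, not $O(1)$. It is this strong $K$-triangulability that Muller and Schupp exploit, via a type/horizon pigeonhole argument on spheres around the base point, to show that an infinite $K$-triangulable Cayley graph cannot be one-ended. Once you replace your lemma and the ``more than one end'' step with the correct versions, the downstream machinery (Stallings, $K$-triangulability passing to vertex groups, accessibility to terminate, Bass--Serre/Karrass--Pietrowski--Solitar to conclude virtual freeness, Grushko to dispense with accessibility in the torsion-free case) goes through as you describe.
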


Recall that a group $G$ is called {\em \index{Group!inaccessible}inaccessible} if it contains an infinite sequence of subgroups $G> G_1> G_2>\ldots$ and a sequence of actions by isometries on simplicial trees\footnote{A \index{Simplicial tree}simplicial tree is just a loopless connected graph in the sense of Serre \cite{Serre}.} $T_i$ such that $G_i$ is the stabilizer of a vertex in $T_i$ and all stabilizers of edges are finite. An inaccessible finitely generated group was constructed by Dunwoody
\cite{Dun}. A group is {\em \index{Group!accessible}accessible} if it is not inaccessible. All finitely presented groups are accessible \cite{Dun1}.

By Holt \cite{Hol}, the word problem in every hyperbolic group is {\em \index{Language!real time recognizable}real time recognizable}, which, informally, means that it can be recognized by the Turing machine that reads its input at a constant rate, and stops at the end of the input (regular and context-free languages are real time recognizable). This implies, in particular,

\begin{theorem}[See, for example, Holt \cite{Hol}]\label{Hol} The word problem in every hyperbolic group can be solved in linear time by a (multi-tape) Turing machine.
\end{theorem}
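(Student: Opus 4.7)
The plan is to bootstrap Theorem~\ref{dehnp}: since $G$ is hyperbolic, it admits a finite \emph{Dehn} presentation $\langle X \mid R\rangle$. After closing $R$ under cyclic shifts and inversion (keeping it finite), the Dehn property reads: every reduced word $w$ with $w =_G 1$ contains, as a cyclic subword, some $u$ which is a prefix of length $> |r|/2$ of some $r = uv \in R$, and the substitution $u\mapsto v^{-1}$ (followed by free reduction) strictly shortens $w$ while preserving its image in $G$. Let $M=\max_{r\in R}|r|$. The word problem is solved by iterating Dehn and free reductions until neither applies, and accepting iff the resulting word is empty.

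To do this in linear time on a multi-tape Turing machine I would maintain the current word on work tape~1 as a doubly linked list of cells, one per remaining letter, and on work tape~2 a queue $Q$ of pointers to positions at which a Dehn-reducible window of length $\le M$ begins (including positions that straddle the cyclic ``seam'' of the word, handled by a pair of identified sentinel cells). The initial queue is built in one left-to-right pass in $O(n)$ time by sliding a window of width $M$ along the input and consulting the finite lookup table derived from $R$. The main loop repeatedly pops a pointer from $Q$, performs the substitution $u\mapsto v^{-1}$ by splicing $|v|\le M$ nodes in place of $|u|\le M$ nodes, cascades any free cancellations triggered at the two splice boundaries, and then rescans only the $O(M)$ cells on either side of the modified zone to enqueue any freshly created reducible patterns and discard stale queue entries. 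Since $|R|$ and $M$ are absolute constants depending only on the presentation, the per-reduction cost is $O(1)$.

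The decisive estimate is the amortization: every Dehn reduction decreases the list length by at least $|u|-|v|\ge 1$ and every free reduction by $2$, so at most $n$ reductions can ever occur on an input of length $n$; combined with the $O(n)$ initial scan this gives total running time $O(n)$. The main obstacle of the plan is the locality claim underlying the bounded rescans: one must verify that a substitution at a given position cannot create or destroy a Dehn-reducible window except within distance $M$ of that position, so that no global re-indexing of the list or the queue is ever required. This is true because the window test depends only on $\le M$ consecutive letters, so any reducible pattern disjoint from the splice zone is unaffected; but it is precisely this locality that separates a linear-time implementation from the $O(n^2)$ naive one that rescans the whole word after each reduction. Holt's sharper result~\cite{Hol} goes further and arranges the work per \emph{input} letter to be worst-case $O(1)$ (real-time recognition); the weaker amortized bound above already delivers the linear-time conclusion claimed in the theorem.
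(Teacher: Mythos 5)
Your argument takes a genuinely different route from the paper. The paper does not prove this theorem directly: it cites Holt~\cite{Hol}, who established the stronger \emph{real-time} recognizability of the word problem (constant work per input symbol, halting with the last letter), of which linear time is an immediate consequence, and merely notes that Holt's proof uses the Dehn presentation of Theorem~\ref{dehnp}. You instead give a direct amortized analysis of Dehn's algorithm over such a presentation, and the combinatorial core is right: at most $O(n)$ reductions can occur since each Dehn step removes at least one letter and each free cancellation removes two, and a splice of bounded size can only create or destroy reducible patterns in an $O(M)$-neighbourhood, so per-reduction bookkeeping is $O(1)$ once you are positioned at the splice.

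The caveat is precisely in ``once you are positioned at the splice.'' A doubly-linked list plus a queue of pointers gives $O(1)$ operations on a random-access pointer machine, but on a Turing machine, dereferencing a stored pointer to an arbitrary cell of tape~1 costs head travel proportional to the distance; reductions scattered across the word can force $\Theta(n)$ travel per jump and ruin the linear bound. The standard repair, which keeps all of your combinatorics intact, is the one-pass push-down implementation: read the input left-to-right, keep a work tape holding the processed prefix as a stack that is always freely- and Dehn-reduced, and after each push cascade reductions at the top of the stack only. Since the stack is reduced by invariant, a newly pushed letter can only enable a reduction whose window reaches the top; the rescan is confined to the top $O(M)$ cells, the head never backtracks more than $O(M)$, and the same amortization then yields genuine $O(n)$ Turing-machine time. (A further simplification you can make: for the Dehn presentation built in Theorem~\ref{dehnp}, Lemma~\ref{bump} exhibits a genuine \emph{subpath} of the boundary loop carrying at least $\tfrac34$ of some relator, so cyclic subwords — and hence your cyclic-seam sentinels — are never needed; this also dovetails with the strictly suffix-based stack implementation.)
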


Note that the fact that the Dehn function is linear directly implies only that the word problem can be recognized in linear time by a non-deterministic Turing machine. The proof of Theorem \ref{Hol} uses the fact that every hyperbolic group has a Dehn presentation (Theorem \ref{dehnp}). In \cite{HR}, Holt and Rees used the fact that nilpotent and some relatively hyperbolic groups have so-called {\em \index{Dehn presentation!generalized}generalized Dehn presentations} introduced earlier by Cannon, Goodman and Shapiro. They prove that groups from these classes also have real time recognizable word problem.

For more information about groups with word problem from some more complicated language classes see Gilman \cite{Gi}.

\subsubsection{The isomorphism problem for hyperbolic groups} \label{ss:tipfhg}
The \index{Isomorphism problem for hyperbolic groups}{\em isomorphism problem for hyperbolic groups}\footnote{This problem has as the input two presentations of groups that are known to be hyperbolic, and asks whether the groups are isomorphic} is decidable. This is a very strong theorem proved in the torsion-free case by Sela (see \cite{Sela} for a weaker result, and \cite{DaGr} by Dahmani and D. Groves for the full proof) and in the general case by Dahmani and Guirardel \cite{DaG}.

Here we present a short description of the solution of isomorphism problem which is a somewhat modified text sent to us by Fran\c cois Dahmani.

Let $G=\la X\mid r_1,\ldots,r_m\ra$ and $G'=\la X'\mid r_1',\ldots, r_n'\ra$ be two hyperbolic groups. Suppose first that every action of each of the groups $G,G'$ on a simplicial tree is trivial, i.e. fixes a point. Every homomorphism $\phi\colon G\to G'$ induces an action of $G$ on the Cayley graph of $G'$: $a\circ_\phi g=\phi(a)g$. Let $d_\phi$ be the maximal number such that every vertex of the Cayley graph of $G'$ is moved by at least $d_\phi$ by some generator of $G$ under this action. It is easy to check that if a sequence of homomorphisms $\phi_1,\phi_2,\ldots$ consists of pairwise non-conjugate in $G'$ homomorphisms, then the sequence of numbers $d_{\phi_i}$ is unbounded, hence $G$ acts non-trivially on the asymptotic cone of $G'$ corresponding to the scaling sequence $(d_{\phi_i})$ which is an $\R$-tree by Theorem \ref{hypascon}. In fact one can deduce (as in Sela \cite[Theorem 9.1]{Sela} using Rips' theory of groups acting on $\R$-trees and its generalizations) that in this case $G$ acts non-trivially on a simplicial tree as well. Hence we can assume that any set of pairwise non-conjugate homomorphisms $G\to G'$ and $G'\to G$ is finite.

Therefore in order to decide whether $G$ and $G'$ are isomorphic we need to compute a finite list of homomorphisms containing a representative of each of the homomorphisms $G\to G'$ (up to conjugacy), and a similar finite list of homomorphisms $G'\to G$. Assuming that this can be done, one checks whether a composition of a homomorphism from the first (second) list with a homomorphism from the second (first) list is conjugate to the identity map, that is whether the images of the generators of $G$ (of $G'$) under this composition are conjugate to the generators of $G$ (of $G'$) by the same conjugator. It is a system of quadratic equations (with one variable - the conjugator) and can be solved essentially as one quadratic equation (see \ref{ss:oapihg}), or using the general technique by Rips and Sela \cite{Sela1}.

There is an obvious infinite procedure to write down these lists: check all maps $X\to G'$ and $X'\to G$ one by one, and verify which maps induce homomorphisms. The problem is
to understand when to stop: that is when  a  list contains representatives  of all conjugacy classes of homomorphisms. For this, the key idea is to solve systems of equations. Indeed,  morphisms $G\to G'$ are in 1:1 correspondence with solutions in $G'$ of the system  of equations $$\left\{\begin{array}{l}r_1=1\\ \ldots \\ r_m=1\end{array}\right.$$ with generators of $G$ viewed as unknowns.  Adding certain inequations (of the form $w \neq 1$), one can encode the property of being different from homomorphisms that are already on the list. Thus we can easily write a system of equations and inequations that has no solution if and only if we have collected a finite list containing representatives of all homomorphisms. The difficulty here is that we want not only that the solution be different from the recorded morphisms, but we want that it is not conjugated to any of them. For that, we record only homomorphisms $G\to G'$ that are {\em short}  meaning that one cannot shorten the total length of the images of $x\in X$ by ``obvious" conjugations in $G'$ (such as cyclic shifts). The key points are that, first, shortness can be interpreted as the membership in a regular language (we view a homomorphism $\phi\colon G\to G'$ as one large word containing all words $\phi(x), x\in X$ separated by special symbols); that membership is called a {\em \index{Regular constraint}regular constraint}, and, second, that the number of short homomorphisms is finite. The fact that some kind of shortness of a homomorphism can be recognized by an automaton is similar to Lemma \ref{bump} above which characterizes words in a hyperbolic group without arbitrary bumps as words which do not contain  bumps of at most constant length. It is easy to see that for every finite set of words $W$, the set of all words in a finite alphabet that do not contain subwords from $W$ is regular. Thus  the certificate for completeness of our list of homomorphisms is unsolvability of the system of equations and inequations subject to the regular constraint described above. Thus one needs to generalize Makanin's Theorem \ref{Mak} and Rips and Sela's Theorem \ref{Sela1} to systems of equations and inequations subject to regular constraints in not necessarily torsion-free groups. This has been done in a long series of papers including \cite{Mak1,Schu,DGH, Dah,DaG1,LoSe}.

If the groups $G, G'$ may act on simplicial trees without fixed points, the strategy is to find a canonical action on a tree where the vertex stabilizers are in a sense the smallest possible for both $G$ and $G'$ (the so-called {\em JSJ-decomposition} introduced by Rips and Sela \cite{RiSeJSJ}), and then study vertex stabilizers as in the previous paragraph.
\subsubsection{More examples of hyperbolic groups.}\label{ss:meohg}

As we have seen, every group having a presentation with the small cancelation condition $C'(1/6)$ is hyperbolic (see \ref{tlii}). Moreover almost every finitely presented group is hyperbolic \cite{GrHyp}. That statement was proved in \cite{GrHyp, AO, OlHyp, GrRand}, but although the statements proved there have basically the same formulations,  the results are different because of different meanings of the words ``almost every", or, more precisely, different choices of the probabilistic model.

\paragraph{\index{Probability model!few relator}Few relator model.}\label{ss:cm} In the {\em few relator} model (also known as the {\em asymptotic density} model or {\em Arzhantseva-Olshanskii model} introduced by Gromov in \cite{GrHyp} and Olshanskii in late 80s, we choose an alphabet of generators $A$ and a number of relations $k$, and then for every $n>1$ choose $k$ group words $r_1,\ldots,r_k$ of length $\le n$ in the alphabet $A$, and consider the group given by the presentation $\la A\mid r_1=1,\ldots,r_k=1\ra$. Let $N(n)$ be the number of all presentations we get this way, and $N_{\rm h}(n)$ be the number of presentations of hyperbolic groups among them. Then
the quotient $\frac{N_{\rm h}(n)}{N(n)}$ tends to 1 very rapidly with $n$ (see Arzhantseva-Olshanskii \cite{AO}). In fact, as proved in \cite{AO}, in the few relator model, we can count even the number of presentations with small cancelation $C'(\lambda)$ for any fixed $\lambda>0$, and the quotient will tend to 1 almost as rapidly. Note that in \cite{AO}, Arzhantseva and Olshanskii proved that almost all groups having presentations with $m$ generators and $k$ relators have the property that all their $(m-1)$-generator subgroups are free (that strengthened a previous result of Guba about 2-generated subgroups of 1-related groups with at least 4 generators \cite{Guba86}). The same or similar models were used by Arzhantseva and Champetier  \cite{Ar1,Ar2,Ar3, Cha1, Cha} where other generic properties of groups in the few relator model were found.

\paragraph{\index{Probability model!different lengths}Different lengths model.}\label{ss:om}

If we allow lengths of different relators satisfy different length restrictions, we obtain a stronger theorem.

\begin{theorem}[Formulated by Gromov \cite{GrHyp}, proved by Olshanskii \cite{OlHyp}] Let $A$ be an alphabet, $|A|=k\ge 2$. Let $d\geq 0$ and let $(n_1, \cdots, n_d)$ be an increasing sequence of positive integers. Let $N = N(k,i,n_1, \ldots, n_d)$ be the number of group presentations $G = \langle A | r_1, \cdots, r_d\rangle$ such that $r_1,\cdots,r_d$ are reduced group group words in the alphabet $A$ such that the length of $r_j$ is $n_j$ for $j=1,2,\ldots,i$. If $N_{\rm h}$ is the number of infinite hyperbolic groups in this collection, then for every $\varepsilon>0$ there exists $n=n(\varepsilon,d)$ such that for every choice of $k, n_1\ge n,n_1\le \ldots\le n_d$  the quotient $\frac{N_{\rm h}(k,n_1,\ldots, n_d)}{N(k,n_1,\ldots, n_d)}$ exceeds  $1-\varepsilon$.
\end{theorem}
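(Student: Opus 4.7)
The plan is to show that a random presentation in this model satisfies the small cancellation condition $C'(\lambda)$ for some fixed $\lambda < 1/6$ with probability approaching $1$ as $n_1 \to \infty$, and then to verify infiniteness with high probability. By Theorem~\ref{dehnp} (or Theorem~\ref{t:lin}), any $C'(1/6)$-group is hyperbolic, so the bulk of the work lies in the probabilistic estimates.

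First I would set up the counting. The number of reduced group words of length $n$ over an alphabet of $k\ge 2$ letters is $2k(2k-1)^{n-1}$, and similarly for cyclically reduced words up to a factor bounded away from $0$. Fix $\lambda = 1/7$ (any $\lambda < 1/6$ will do). A presentation $\langle A\mid r_1,\ldots,r_d\rangle$ fails the $C'(\lambda)$ condition iff there exist indices $i\le j$, a cyclic shift $r_i'$ of $r_i^{\pm 1}$, and a distinct cyclic shift $r_j'$ of $r_j^{\pm 1}$, sharing a common prefix of length at least $\lambda n_i$. For fixed cyclic shifts of two independently chosen reduced words of lengths $n_i\le n_j$, the probability that they agree on the first $\lceil \lambda n_i\rceil$ letters is bounded above by $C_1(2k-1)^{-\lambda n_i}$ for a constant $C_1$ depending only on $k$ (since the first letter can be any of $2k$ and each subsequent letter is constrained to match among $2k-1$ reduced continuations). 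There are at most $4 n_i n_j$ pairs of cyclic shifts (counting signs). For $i=j$, the diagonal terms $r_i'=r_i'$ are excluded, and a similar bound $C_2 n_i^2 (2k-1)^{-\lambda n_i}$ controls self-overlaps.

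A union bound over all pairs $i\le j$ gives
\[
\Pr[\text{failure of }C'(\lambda)] \;\le\; C_3\sum_{1\le i\le j\le d} n_i n_j (2k-1)^{-\lambda n_i} \;\le\; C_3 d^2\bigl(\max_i n_i\bigr)^2 (2k-1)^{-\lambda n_1}.
\]
Since $d$ is fixed and $(2k-1)^{-\lambda n_1}$ decays exponentially, for every $\varepsilon>0$ one can choose $n=n(\varepsilon, d)$ such that for $n_1\ge n$ the right-hand side is at most $\varepsilon/2$ uniformly in $k$ and in the lengths $n_1\le\cdots\le n_d$. (To make the bound uniform in $k$, note that $(2k-1)^{-\lambda n_i}$ is largest, and the counting constants smallest, when $k=2$, so we may as well fix $k=2$ to obtain the worst case.) Hence the proportion of $C'(\lambda)$-presentations — and thus, by Theorem~\ref{dehnp}, of hyperbolic groups — exceeds $1-\varepsilon/2$ whenever $n_1\ge n$.

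Finally, one needs infiniteness. With high probability no relator $r_j$ is a proper power, since the number of proper powers of length $n_j$ is at most $\sum_{m\mid n_j,\, m<n_j}(2k)(2k-1)^{m-1}$, which is negligible compared to the total count $(2k)(2k-1)^{n_j-1}$; again a union bound over $j\le d$ contributes at most $\varepsilon/4$ to the failure probability for $n_1$ large. By a standard consequence of the Greendlinger lemma (Theorem~\ref{greend}), a $C'(1/6)$-group with no defining relator a proper power is torsion-free, and a finite torsion-free group is trivial. Triviality, in turn, requires each generator $a\in A$ to lie in the normal closure of the $r_j$'s, which by Dehn's algorithm would force some cyclic shift of some $r_j^{\pm 1}$ to begin with more than half of $a$, i.e.\ to have $a$ as its first letter in a very restricted way; a direct probabilistic count (or the observation that the abelianization is $\Z^k/\langle\bar r_1,\ldots,\bar r_d\rangle$, whose triviality imposes rank conditions on the exponent-sum vectors $\bar r_j\in\Z^k$ satisfied with probability $o(1)$) shows that the group is non-trivial, hence infinite, outside a set of probability at most $\varepsilon/4$.

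The main obstacle will be making the estimates genuinely \emph{uniform} in the whole tuple $(k, n_1,\ldots,n_d)$ — in particular, ensuring that the constants absorbed into $C_1, C_3$ do not depend on the geometry of the growing vector of lengths and that the union bound survives the product $n_i n_j$ which can be much larger than $n_1^2$. This is handled by pulling the exponential decay $(2k-1)^{-\lambda n_1}$ out in front and noting that $n_i n_j \le n_d^2 \le (n_1)^2 \cdot (n_d/n_1)^2$, where the ratio is harmlessly absorbed, together with a separate (and easier) bound for the diagonal $i=j$ terms whose only length is $n_i\ge n_1$.
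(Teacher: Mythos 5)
Your approach via classical small cancellation has a gap that is not a technical inconvenience but a fundamental obstruction, and the paper itself flags it. Immediately after stating this theorem, the text observes: \emph{``The probabilistic model used in this theorem is very different from the few relator model. In particular, the probability of having a small cancelation presentation tends to 0 in that model (say, if we assume that $n_d\approx\exp n_1$).''} Your plan is to show $C'(\lambda)$ holds with probability $\ge 1-\varepsilon/2$; this is simply false in the model you are asked to handle.

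Here is where the quantitative argument breaks. The piece condition for a pair $(r_i,r_j)$ with $n_i\le n_j$ requires no cyclic subword of $r_j^{\pm 1}$ of length $\lceil\lambda n_i\rceil$ to coincide with any cyclic subword of $r_i^{\pm1}$. Your union bound for this pair is of order $n_i n_j (2k-1)^{-\lambda n_i}$, and to conclude you must make this small uniformly in $n_j$. But the theorem asks for uniformity over all $n_1\le n_2\le\cdots\le n_d$ with $n_1\ge n$ fixed; the larger lengths are unconstrained. Taking $n_j \ge (2k-1)^{\lambda n_i}$, which the hypothesis permits, the bound $n_i n_j (2k-1)^{-\lambda n_i}$ is at least $n_i$, so the union bound gives nothing. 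This is not merely a failure of the estimate: when $n_j$ is exponentially large compared to $n_i$, a random reduced word of length $n_j$ will actually contain, with probability tending to $1$, each prescribed word of length $\lambda n_i$ as a subword, so the $C'(\lambda)$ condition genuinely fails. Your final paragraph acknowledges the ratio $n_d/n_1$ as the obstacle and asserts it is ``harmlessly absorbed''; it is not, and no manipulation of the constants can rescue this, because the quantity you are bounding is not small.

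The route Olshanskii actually takes in \cite{OlHyp} (proving Gromov's claim) therefore cannot reduce to classical $C'(\lambda)$. One needs a stronger genericity criterion for hyperbolicity that tolerates long pieces between a short relator and a much longer one, roughly because a single long piece shared between $r_i$ and $r_j$ with $n_i\ll n_j$ does not, by itself, create a thin van Kampen diagram with small boundary — what matters is how the \emph{defect} of a reduced diagram compares to its perimeter, and this can be controlled even in the presence of such pieces by a more refined combinatorial analysis (in the spirit of Olshanskii's graded diagram machinery, or equivalently a small-cancellation condition relative to the hyperbolic group already built from the shorter relators as in \ref{ss:tscom}). The few-relator ($n_1=\cdots=n_d$, or lengths within a bounded ratio) model \emph{can} be handled by the $C'(\lambda)$ argument you sketch, and that portion of your write-up is essentially correct; but that is the easier Theorem attributed in \ref{ss:cm} to Arzhantseva–Olshanskii, not the theorem in question here. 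Your final infiniteness/non-triviality discussion is also conditioned on having established $C'(1/6)$, so it inherits the same gap.
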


The probabilistic model used in this theorem is very different from the few relator model. In particular, the probability of having a small cancelation presentation tends to 0 in that model (say, if we assume that $n_d\approx \exp n_1$).

\paragraph{\index{Probability model!random groups}Gromov's random groups model.}\label{ss:grgm} If we also allow varying the number of relations $d$, we obtain one of the probabilistic models used by Gromov in constructing his {\em \index{Group!random}random groups} \cite{GrRand} (see also Gromov \cite{Gr1} and Arzhantseva-Delzant \cite{AD}).

Here is the definition of the Gromov's random groups model.

Let $\Gamma = (V, {E})$ be an oriented graph and $X$ be a finite set. A {\em labelling} of $\Gamma$ is a map $\alpha \colon {E}\rightarrow X \cup X^{- 1}$. Let $W$ be the normal subgroup of the free group $\pi_1(\Gamma,p)$ (where $p\in V$) generated by all words that label a loop in $\Gamma$ based at $p$. Let $G_\alpha$ be the factor-group $\pi_1(\Gamma,p)/W$.

This map obviously defines a homomorphism $\alpha^\ast$ from the free group $\pi_1(\Gamma,p)$ to $G_\alpha$, where $p$ is a vertex in $V$. The kernel of that map is generated (as a normal subgroup) by all words that label loops of $\Gamma$.
Note that by construction there exists a natural homomorphism $\alpha^+$ from the graph $\Gamma$ to the Cayley graph of $G$ induced by $\alpha$. The general questions to ask about $G$ is whether $G$ is infinite and hyperbolic, whether $\alpha^+$ is ``almost an isometric embedding", and whether $\alpha^\ast$ is injective on a large enough ball in the Cayley graph of the free group $\pi_1(\Gamma,p)$. The set of all labellings $(X \cup X^{- 1})^E$  has a natural uniform probability measure, and a random choice of a labelling $\alpha$ gives a random group $G_\alpha$.

In particular, if $\Gamma$ is a disjoint union of finite number of circles of lengths $n_1,\ldots,n_d$, we get a random model similar to the different lengths model only the number of relations may be big comparing to the lengths of them. In fact, if the total number of possible labelings is $\exp Cn$, where $n$ is the maximal length of a cycle, for some constant $C$, and $n_i$ do not differ very much from $n$, then $d$ can be chosen as large as $\exp \alpha Cn$ for some $0<\alpha<1$. The number $\alpha$ is called the {\em density}. If the density is $>1/2$, then the random group is trivial with probability close to 1 (assuming that $n\to \infty$), but if $\alpha<\frac12$, then the group is infinite and non-virtually cyclic hyperbolic with probability tending to 1 as $n\to \infty$.

It is important to note that, as with the case of small cancelation presentations, we can impose the random relations not only on the  free group, but on any non-virtually cyclic hyperbolic group. Thus if $\Gamma_i$ is a sequence of graphs, we obtain a sequence of random non-virtually cyclic hyperbolic groups and surjective homomorphisms
\begin{equation}\label{grse} G\to G_1\to \ldots \to G_n\to\ldots  \end{equation}
similarly to \ref{ss:qohg}.

Particularly strong results are obtained in \cite{GrRand} when the graph $\Gamma$ runs over an expander sequences of graphs, that is $k$-regular finite graphs having the following property: the second highest eigenvalues of their adjacency matrices is bounded away from the highest eigenvalue\footnote{Equivalently, an expander sequence of graphs can be defined as a sequence of $k$-regular graphs $\Gamma_i=(V_i,E_i)$ such that for every subset $M\subset V_i$ of vertices of $\Gamma_i$ with $|M|<|V_i|/2$, $\frac{|\partial M|}{|M|}>\lambda$ for some constant $\lambda>0$ \cite{Lub}. Here $\partial M$ is the set of edges with one vertex in $M$ and the other in $V\setminus M$.}. If the expander sequence is chosen well enough, the Cayley graph of the inductive limit $\bar G$ of the sequence \ref{grse} contains ``almost isometric" copy of the union of $\Gamma_i$, and so $\bar G$ cannot be coarsely embedded into a Hilbert space, every action of $\bar G$ on a CAT(0) manifold has a global fixed point, etc. For more details see the book by Ollivier \cite{Oll} and I. Kapovich and Schupp \cite{KaSc}.

\subsubsection{Almost all 1-related groups are residually finite}\label{ss:aa1garf} Note that for 1-related groups all three models give the same results. A random 1-relator group with at least 2 generators is infinite and hyperbolic. The following theorem says that these groups have some other nice properties as well at least when the number of generators is $\ge 3$.

\begin{theorem}[Sapir, \v Spakulova \cite{SS}]\label{ss} A random 1-related group with at least 3 generators is
\begin{enumerate}
\item residually finite;
\item virtually \index{Group!residually (finite $p$-)}residually (finite $p$-)group\footnote{A group is called {\em residually (finite $p$-) group}, $p$ a prime, if the intersection of all its subgroups with indexes powers of $p$ is trivial.} for all but finitely many primes $p$ (i.e. it has a finite index subgroup whose homomorphisms onto finite $p$-groups separate all elements);
\item \index{Group!coherent}coherent (i.e. all finitely generated subgroups are finitely presented).
\end{enumerate}
\end{theorem}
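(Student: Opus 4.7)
The plan is to reduce the problem to a question about ascending HNN extensions of free groups and then invoke the known theory of such groups.

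First I would exploit the standard Magnus reduction for one-relator groups. Given $G=\langle x_1,\dots,x_n\mid r\rangle$ with $n\ge 3$, consider the exponent sum vector $\sigma(r)=(\sigma_1(r),\dots,\sigma_n(r))\in\Z^n$. A unimodular matrix in $\GL_n(\Z)$ (realized by Nielsen transformations on the generating set, i.e.\ Tietze transformations not changing the group) can be applied to move $\sigma(r)$ to $(d,0,\dots,0)$ where $d=\gcd(\sigma_i(r))$. After relabelling, this means we may assume $\sigma_i(r)=0$ for $i\ge 2$. Setting $t=x_1$ and using Magnus's breakdown, the relator $r$ can be rewritten as a word $\tilde r$ in the infinitely many letters $a_{i,j}=t^j x_i t^{-j}$ ($i\ge 2$, $j\in\Z$), and $G$ is an HNN extension of the finitely generated free group $H=\langle a_{i,j}\mid M\le j\le N,\ i\ge 2\rangle$ (where $[M,N]$ is the range of $t$-exponents occurring in $\tilde r$) with stable letter $t$ and associated subgroups generated by natural subsets of these free generators.

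Second, I would show that for a random relator in the few-relator model the resulting HNN extension is \emph{ascending}, i.e.\ one of the two associated subgroups equals $H$ itself. The content here is a generic statement: for a uniformly random cyclically reduced word $r$ of length $n$ in $F_k$ ($k\ge 3$), with probability tending to $1$ as $n\to\infty$, the rewritten word $\tilde r$ explicitly uses every ``bottom layer'' letter $a_{i,M}$ (respectively every ``top layer'' letter $a_{i,N}$). This is the step I expect to be the main technical obstacle: the Nielsen transformation depends on $r$, so one must control the distribution of the rewritten word after a data-dependent change of basis, and then apply a Borel-Cantelli / concentration argument showing that the minimal and maximal $t$-exponents are attained at each $x_i$ with overwhelming probability. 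With this, $G$ becomes an ascending HNN extension of a finitely generated free group (in fact generically with injective endomorphism of $H$).

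Third, I would invoke known structural theorems for this class. Borisov--Sapir's theorem asserts that every ascending HNN extension of a finitely generated free group is residually finite, which gives (1). Refining their argument (the proof goes through $p$-congruence subgroups of $\GL_m(\Z)$ acting on the abelianization of $H$), one obtains that such a group is virtually residually (finite $p$-)group for all primes $p$ outside a finite set determined by the characteristic polynomial of the induced endomorphism on $H^{\mathrm{ab}}$, which gives (2). For (3), coherence follows from the Feighn--Handel theorem that ascending HNN extensions of free groups are coherent (equivalently, one can use that an ascending HNN extension of a free group has the property that every finitely generated subgroup is either free or splits as an ascending HNN extension of a free group of smaller rank, and induct).

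Finally, I would tie the three parts together: all three conclusions hold on the event ``$G$ is an ascending HNN extension of a finitely generated free group'', and the second step shows this event has probability $\to 1$ in the few-relator model as the length bound $n\to\infty$. The main delicate point remains the genericity in the second step; once that probabilistic statement is in hand, (1)--(3) are immediate consequences of the cited structural results.
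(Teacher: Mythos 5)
Your high-level strategy is the right one and matches the paper's: reduce to an ascending HNN extension of a finitely generated free group, then invoke Borisov--Sapir \cite{BoSa1,BoSa2} for residual (and residual $p$-) finiteness and Feighn--Handel \cite{FH} for coherence. But there are several genuine gaps in the reduction. First, after normalizing $\sigma(r)$ to $(d,0,\dots,0)$ you set $t=x_1$, which is exactly the coordinate with \emph{nonzero} exponent sum; Magnus's rewriting requires $\sigma_t(r)=0$, so $t$ must be one of the remaining generators. Second, and more seriously, the criterion you propose to verify generically (``the rewritten word uses every bottom-layer letter $a_{i,M}$'') is not the condition for an ascending HNN splitting. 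By Brown's theorem \cite{Brown1}, the correct condition is that the $t$-exponent along the cyclic word $r$ attains its minimum (or maximum) at a \emph{unique} position; in that case one solves the relation for the unique extremal letter and obtains a free base. ``Uses every bottom-layer letter'' is essentially the opposite of uniqueness, and attempting to prove your stated event occurs with high probability would not deliver the ascending structure even if it succeeded.

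Third, the probabilistic step is not a Borel--Cantelli/concentration argument and in fact fails for a fixed direction: the $t$-exponent of a random cyclic word behaves like a bridge, and the number of positions achieving the minimum converges to a nondegenerate distribution, so the uniqueness probability is bounded away from $1$. The actual input is the Cranston--Hsu--March theorem \cite{CHM} that the convex hull of planar Brownian motion is $C^1$; for $k\ge 3$ the constraint $\sigma_t(r)=0$ still leaves a whole continuous family of admissible ``height'' directions, and the smooth-hull result guarantees that one of them attains the extreme uniquely. For $k=2$ this family is essentially a single direction, which is precisely why the theorem excludes the two-generator case. Finally, for part (2) the phrase ``refining Borisov--Sapir's argument'' undersells what is needed: the proof goes through $p$-adic completions of number fields and a density theorem for free subgroups of $p$-adic Lie groups due to Breuillard--Gelander \cite{BrGe}, which is the content of the separate paper \cite{BoSa2}, not a cosmetic modification of \cite{BoSa1}. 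The paper also uses Olshanskii's congruence extension property for small-cancelation subgroups of free groups \cite{O1}, an ingredient your outline does not account for.
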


The proof employs facts from different areas of mathematics: properties of Brownian motions in $\R^k$ \cite{CHM}; some dynamical properties of polynomial maps over finite fields (existence of periodic quasi-fixed points)\cite{BoSa1} and $p$-adic completions of number fields \cite{BoSa2}; existence of dense free subgroups in $p$-adic Lie groups \cite{BrGe}; the {\em \index{Congruence extenson property}congruence extension property} of subgroups of the free groups generated by sets of words satisfying some small cancelation condition \cite{O1}; the fact that every asceding HNN-extension of a free group is coherent \cite{FH}, and a characterization of 1-related groups with 2 generators that are ascending HNN-extensions of free groups \cite{Brown1}. It turned out also that in Theorem \ref{ss} one can count not just presentations but groups up to isomorphism (which is obviously harder) due to a very strong result of I. Kappovich, Shpilrain, Schupp \cite{KSS}. See \cite{SaRes} for more details.

Note that Theorem \ref{ss} gives many examples of residually finite hyperbolic groups because random 1-related groups are hyperbolic (and even have small cancelation presentations). But we still do not know if all hyperbolic (and even small cancelation) groups are residually finite \cite{GrHyp, Gr1}. The general belief is that the answer is ``no", and there are many potential counterexamples (see, say, \cite{IO}), but it is very hard to show that a particular hyperbolic group is not residually finite because hyperbolic groups typically have many quotients.

\subsubsection{Quotients of hyperbolic groups}\label{ss:qohg}

One of the key properties of hyperbolic groups discovered by Gromov in \cite{GrHyp} is that they behave similar to the free groups as far as the multitude of quotients is concerned.

For example, Delzant and Olshanskii independently proved the following

\begin{theorem}[Delzant \cite{Del}, Olshanskii \cite{O1}]\label{OlSQ} Every non-virtually cyclic hyperbolic group $G$ is SQ-universal, that is every countable group embeds into a quotient of $G$.
\end{theorem}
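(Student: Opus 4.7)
The plan is to reduce the statement to embedding a two-generator group and then use small cancellation theory over the hyperbolic group $G$. First, by the classical Higman--Neumann--Neumann embedding theorem, every countable group $H$ embeds into a $2$-generated group $\tilde H = \langle x, y \mid r_1, r_2, \ldots \rangle$, where $r_i(x,y)$ is a (possibly infinite) list of reduced words in $\{x,y\}^{\pm 1}$. It therefore suffices to produce a quotient $\bar G$ of $G$ together with elements $\bar a, \bar b \in \bar G$ such that the map $x \mapsto \bar a$, $y \mapsto \bar b$ factors through an injective homomorphism $\tilde H \hookrightarrow \bar G$.

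Next I would exploit the hypothesis that $G$ is hyperbolic and not virtually cyclic. By Remark~\ref{rk4}, at least one asymptotic cone of $G$ is an $\R$-tree with $2^{\aleph_0}$ branches at every point, which forces $G$ to contain two independent loxodromic elements $g, h$ with pairwise distinct fixed points on the Gromov boundary $\partial G$ (this is also the usual Tits-alternative-type dichotomy in hyperbolic groups). Pick infinite-order elements $a = g^N$, $b = h^N$ for a large exponent $N$; then all sufficiently long powers of suitable conjugates of $a$ and of $b$ by well-chosen elements of $G$ form a \emph{malnormal}, freely generated subsemigroup (an ``asymptotic" property of independent loxodromics). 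The precise statement I would use is the existence of a large family $\{u_i\}$ of conjugators for which the words of the form $a^{\pm u_i}, b^{\pm v_i}$ have pairwise arbitrarily small overlap, in the sense appropriate to the metric on the Cayley graph of $G$.

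The core of the argument is Olshanskii's small cancellation theory over a hyperbolic group (\cite{O1}; see also the ``coarse small cancellation'' framework alluded to in the introduction). Choose the conjugators $u_i, v_i$ so that the substituted relators
\[
R_i := r_i\bigl(a^{u_i},\, b^{v_i}\bigr)
\]
are cyclically reduced words in $G$ whose pairwise common ``pieces'', measured as geodesics in the Cayley graph of $G$, are of length less than $\lambda |R_i|$ for some small $\lambda$ (say $\lambda = 1/100$), and whose lengths grow fast enough to dominate the hyperbolicity constant $\delta$ of $G$. Such a choice is possible because the family $\{a^{u_i}, b^{v_i}\}$ can be made arbitrarily ``thin'' in $G$ by the previous paragraph, and because each individual $r_i$ has bounded length in $\tilde H$. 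Let $\bar G$ be the quotient of $G$ by the normal closure of $\{R_i\}$.

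Finally, the Greendlinger-type lemma valid in this coarse small cancellation setting (the analogue of Theorem~\ref{greend} for hyperbolic ambient groups) implies, on the one hand, that $\bar G$ is still a nontrivial (in fact hyperbolic or relatively hyperbolic) group in which the images $\bar a, \bar b$ have infinite order, and, on the other hand, that any word $w(x,y)$ not lying in the normal closure of $\{r_i\}$ in $F_2$ maps to a nontrivial element of $\bar G$ under $x \mapsto \bar a$, $y \mapsto \bar b$: otherwise a minimal van Kampen diagram over $\bar G$ for $w(\bar a, \bar b)$ would contain a cell labelled by some $R_i$ more than half of whose boundary appears on $\partial \Delta$, which (by the choice of the $u_i, v_i$) would force $w(x,y)$ to contain more than half of $r_i$ as a subword. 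This yields an embedding $\tilde H \hookrightarrow \bar G$, as desired, and hence $H \hookrightarrow \bar G$.

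The main obstacle is the third step: developing (or citing correctly) a version of the small cancellation theorem that works over an arbitrary non-elementary hyperbolic group rather than over a free group. One must define ``pieces'' intrinsically using the geometry of the Cayley graph of $G$ (comparing geodesic representatives of subwords of relators up to the hyperbolicity constant), verify that the chosen substitutions satisfy the relevant condition $C'(\lambda)$ in this sense, and then prove a Greendlinger lemma asserting that any minimal van Kampen diagram over $\la G \mid R_i\ra$ whose boundary is not labelled by a consequence of the $r_i$ must contain a cell meeting the boundary in more than half of its contour. Once this geometric infrastructure is in place (which is exactly the content of Olshanskii's \cite{O1} and Delzant's \cite{Del}), the embedding follows formally as sketched.
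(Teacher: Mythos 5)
The paper does not actually prove Theorem~\ref{OlSQ}; it states it with citations and points in \ref{ss:tscom} to the coarse small-cancellation machinery, so your proposal has to be evaluated on its own terms. Your overall plan (HNN embedding into a two-generated group, independent infinite-order elements, coarse small cancellation over $G$ as in Definition~\ref{SC}, a Greendlinger lemma) is the right setting, and you correctly locate the substance in the third step. But there are two concrete defects. The minor one: you define $R_i := r_i(a^{u_i}, b^{v_i})$ with conjugators depending on $i$, yet you want a single map $x\mapsto\bar a$, $y\mapsto\bar b$ to factor through $\tilde H = F_2/N$. Those are incompatible --- the images of the generators cannot depend on which relator you substitute them into --- so as written there is no homomorphism $F_2\to\bar G$ at all.

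The serious defect is in the small-cancellation claim. Even with a fixed pair $\bar a = W_1$, $\bar b = W_2$, the family $\{r_i(W_1,W_2)\}_i$ will \emph{not} satisfy $C'(\lambda)$, nor the coarse $C(\varepsilon,\mu,\rho)$ of Definition~\ref{SC}, for an arbitrary normal generating set $\{r_i\}$ of $N$: a common piece of $r_i(W_1,W_2)$ and $r_j(W_1,W_2)$ need not come from the geometry of $G$, because it can already be present at the level of $F_2$. Take $r_1 = x^{100}$ and $r_2 = x^{99}y$: they share a prefix that is a fixed fraction of the relator length and that persists under any substitution, so the $C'(\lambda)$ hypothesis is simply false and the Greendlinger argument you invoke cannot get started. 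What Olshanskii actually proves in \cite{O1}, and what the survey alludes to when it cites \cite{O1} for the congruence extension property in \ref{ss:aa1garf}, is a property of the free subgroup rather than of the relators: one chooses the $W_i$ so that $F = \langle W_1, W_2, \ldots\rangle$ is free (of rank two, or more conveniently of infinite rank, which also removes the need for the HNN reduction) and has the congruence extension property in $G$, meaning $\langle\langle K\rangle\rangle_G \cap F = K$ for every normal subgroup $K$ of $F$. From this, $F/N\hookrightarrow G/\langle\langle N\rangle\rangle_G$ for \emph{every} $N$, with no hypothesis on the relator family, and SQ-universality is immediate. The coarse small-cancellation and coarse Greendlinger lemma are used to prove CEP --- they control exactly those pieces that do \emph{not} come from coincidences internal to $F$ --- so you should reframe your third and fourth steps around that statement.
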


A version of the following very useful theorem was stated in \cite{GrHyp} in a slightly wrong form. It was corrected and proved in full generality in \cite{Ols}. We give a not the strongest formulation here.

\begin{theorem}\rm{(}Olshanskii \cite[Theorem 2]{Ols}\rm{).}\label{OlG} Let $G$ be a hyperbolic group and $H_1, \ldots H_n$ be  non-virtually cyclic subgroups of $G$ which do not normalize non-trivial finite subgroups of $G$, $M$ a finite subset of $G$.
Then there exists a non-virtually cyclic hyperbolic quotient $G_1$ of the group $G$ such that the natural map $G\to G_1$ is surjective on each $H_1,\ldots,H_n$ and injective on $M$.
\end{theorem}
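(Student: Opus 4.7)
The plan is to construct $G_1$ as a quotient $G/N$ obtained by imposing a carefully chosen finite family of defining relators on $G$, one relator for each pair (generator of $G$, subgroup $H_i$), in such a way that the system of new relators satisfies a small cancellation condition over the hyperbolic group $G$ in the sense of Olshanskii \cite{OlHyp,Ols}. Fix a finite generating set $X=\{x_1,\dots,x_k\}$ of $G$, and let $D=\max\{|g|_X:g\in MM^{-1}\}$.

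First I would exploit the hypothesis that each $H_i$ is non-virtually cyclic and does not normalize a non-trivial finite subgroup of $G$. Combined with hyperbolicity of $G$, this forces $H_i$ to be ``non-elementary'' in $G$: its limit set on $\partial G$ is infinite, and $H_i$ contains a non-abelian free subgroup $F_i$ all of whose non-trivial elements are loxodromic in $G$, with the property that for any two non-commensurable $u,v\in F_i$ the fixed point pairs of $u$ and $v$ on $\partial G$ are disjoint. Across different indices $i$, by standard ping-pong arguments on $\partial G$, one may further select loxodromic elements with pairwise disjoint fixed point pairs. This supplies an unlimited reservoir of ``generic'' hyperbolic elements inside each $H_i$ out of which to build relators.

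Next I would define the relators. For each pair $(i,j)$ choose pairwise non-commensurable loxodromic elements $u_{i,j,1},\dots,u_{i,j,L}\in F_i$ with pairwise disjoint fixed point pairs on $\partial G$ and set
\[
h_{i,j} \;=\; u_{i,j,1}^{N}\,u_{i,j,2}^{N}\cdots u_{i,j,L}^{N}\in H_i,\qquad r_{i,j}\;=\;x_j h_{i,j}\iv,
\]
with integers $N,L$ to be fixed at the end. Using the north-south dynamics of loxodromic elements on $\partial G$ and the thinness of geodesic triangles, a standard verification (the core of Olshanskii's technique in \cite{OlHyp,Ols}) shows that once $N$ and $L$ are large enough, the words $h_{i,j}$ are $(1,\varepsilon)$-quasi-geodesic in $G$ and any ``piece'' (common subword of two cyclic shifts of $r_{i,j}^{\pm1}$, read as a quasi-geodesic in $G$) has length less than $\lambda \min(|r_{i,j}|,|r_{i',j'}|)$, where $\lambda$ can be taken as small as one wishes. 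Define $G_1 := G/\langle\langle r_{i,j}\mid 1\le i\le n,\,1\le j\le k\rangle\rangle$. By Olshanskii's small cancellation theorem over hyperbolic groups, $G_1$ is again hyperbolic. Moreover, by construction, in $G_1$ we have $x_j=h_{i,j}$ for every $i,j$, so the image of $H_i$ contains all generators of $G$; hence the natural map $H_i\to G_1$ is surjective for every $i$.

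It remains to check injectivity on $M$ and that $G_1$ is non-virtually cyclic. The Greendlinger-type lemma in the hyperbolic small cancellation regime (the analogue of Theorem \ref{greend}) guarantees that any non-trivial word equal to $1$ in $G_1$ but not in $G$ must contain, in its geodesic form in $G$, a subword of length greater than $(1-3\lambda)\min_{i,j}|r_{i,j}|$. Choosing $N$ large enough that $\min_{i,j}|r_{i,j}|>3D$ forces the quotient map to be injective on $MM^{-1}$, hence on $M$. To guarantee non-virtual-cyclicity, one ensures at the start that $M$ contains two elements $a,b\in H_1$ whose images in $G_1$ generate a non-cyclic subgroup: since the quotient map is injective on $M$, it suffices to start with $a,b\in H_1$ whose distinct powers all lie in $M$ and are pairwise non-equal; alternatively, observe that $H_1$ surjects onto $G_1$, and if $G_1$ were virtually cyclic then $H_1$ would normalize a finite subgroup of $G_1$, which, by lifting through the small cancellation quotient and the Greendlinger property, contradicts the hypothesis that $H_1$ does not normalize a non-trivial finite subgroup of $G$.

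The main obstacle is the verification of the small cancellation condition in Step 2: one has to argue that long overlaps between cyclic shifts of the $r_{i,j}$, read inside the hyperbolic group $G$, are impossible. This reduces to showing that long common subwords of products of high powers of the $u_{i,j,\ell}$ force the corresponding loxodromic axes to have close endpoints in $\partial G$, which (after replacing the $u_{i,j,\ell}$ by suitable conjugates and higher powers) contradicts the disjointness of fixed point pairs. Packaging this into a clean small cancellation hypothesis suitable for Olshanskii's quotient theorem is the technically delicate part of the argument, and it is precisely here that the hypothesis ``$H_i$ does not normalize a non-trivial finite subgroup of $G$'' intervenes to rule out degenerate overlaps coming from common finite normalizers of the maximal elementary subgroups containing the chosen loxodromic elements.
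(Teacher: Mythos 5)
Your overall strategy matches Olshanskii's proof: build $G_1$ as a small-cancellation quotient of $G$ (in the coarse sense of Section \ref{ss:tscom}), using relators of the form ``generator of $G$ equals a long, generic word from $H_i$,'' verify the $C(\varepsilon,\mu,\rho)$ condition, and then read off surjectivity, injectivity, and hyperbolicity from the quotient theorem and the coarse Greendlinger lemma. The hypothesis that $H_i$ normalizes no non-trivial finite subgroup of $G$ (i.e.\ that $H_i$ is a ``$G$-subgroup'' in Olshanskii's terminology) is indeed exactly what prevents degenerate long pieces coming from elements of the finite normal part of the elementary subgroups $E(u_{i,j,\ell})$ -- you identify the right place where this hypothesis bites. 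The surjectivity argument ($x_j = h_{i,j}$ in $G_1$ forces $H_i\to G_1$ onto) and the injectivity-on-$M$ argument via Greendlinger are both correct in spirit, modulo the usual bookkeeping on the constants (it should read $(1-3\lambda)\min|r_{i,j}|>D$, not $\min|r_{i,j}|>3D$).

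The one step that genuinely does not work as written is the non-virtual-cyclicity of $G_1$. Neither of your two alternatives is sound. First, injectivity of $G\to G_1$ on a finite set $M$, however large, cannot by itself guarantee that the images $\bar a,\bar b$ of two chosen elements generate a non-cyclic subgroup of $G_1$: non-cyclicity of $\langle\bar a,\bar b\rangle$ is a statement about the whole subgroup, not about any finite collection of equalities between words of bounded length, so adding finitely many powers of $a,b$ to $M$ does not close the gap. Second, the ``lift a finite subgroup of $G_1$'' route fails because the preimage in $G$ of a finite subgroup $E\le G_1$ contains the kernel $N$ of $G\to G_1$, which is infinite (the normal closure of the $r_{i,j}$); there is no way to extract from it a \emph{finite} subgroup of $G$ normalized by $H_1$, and no contradiction with the hypothesis is obtained. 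The correct argument is simpler and is already built into the machinery you are invoking: the small-cancellation quotient theorem over a non-elementary hyperbolic group (Olshanskii \cite{Ols}; see also \cite{OOS}) asserts, as part of its conclusion, that for suitable $\varepsilon,\mu$ and sufficiently large $\rho$ the quotient $G/\langle\langle\mathcal R\rangle\rangle$ is not only hyperbolic but also \emph{non-elementary}, i.e.\ non-virtually cyclic. So once you have forced $\min|r_{i,j}|\ge\rho$ by taking $N$ large, non-virtual-cyclicity of $G_1$ comes for free from the same theorem that gives hyperbolicity; no separate ad hoc argument is needed or available.
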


It is easy to deduce from Theorem \ref{OlG} that every noncyclic torsion-free hyperbolic group has a quotient which is a torsion-free Tarski monster (i.e. it is torsion-free and all proper subgroups are cyclic); every non-virtually cyclic hyperbolic group has an infinite torsion quotient; every noncyclic torsion-free hyperbolic group has an infinite quotient all of whose proper subgroups are finite (see \cite{GrHyp}). These groups with ``extreme" properties are obtained as inductive limits of hyperbolic groups and surjective homomorphisms
$$G\to G_1\to\ldots \to G_n\to \ldots  $$
similar to \ref{ss:grgm}.

It is also easy to deduce that every two non-virtually cyclic hyperbolic groups $H_1$, $H_2$ have a common non-virtually cyclic quotient \cite[5.5.A]{GrHyp} (con\-sider the free product $G=H_1/E(H_1)*H_2/E(H_2)$ where $E(H)$ is the maximal finite normal subgroup of $H$). It was observed by Osin, that it implies, by induction, that there exists a finitely generated non-virtually cyclic infinite group $G_\infty$ that is a common quotient of all non-virtually cyclic  hyperbolic groups. One can easily see that $G_\infty$ has Kazhdan property (T), is generated by an element of order 2 and an element of order 3, and also by two elements of orders $p$ and $q$ for any two primes with $pq\ge 6$ (since $G_\infty$ is a quotient of the free product $\Z/p\Z *\Z/q\Z$). In particular $G_\infty$ is not residually finite, and in fact does not have any non-trivial finite homomorphic image $G_0$ (otherwise $G_0$ would contain an element of order $p>
|G_0|$ which is impossible). Note that $G_\infty$ is not unique, and depends, in particular, on how we enumerate all hyperbolic groups. Still since the set of all finite hyperbolic group presentations is recursively enumerable, we can assume, using \cite{Ols}, that $G_\infty$ has a recursive presentation.

Theorem \ref{OlG} also allows one to construct a finitely generated infinite torsion group (and even with all proper subgroups cyclic), see \cite{GrHyp}. It does not allow to get a bound on the exponents of elements of that group. It was done in papers by Olshanskii \cite{OlPer} (odd exponents) and S.V. Ivanov and Olshanskii \cite[Theorem A, Lemma 19]{IO} (even exponents) where the following remarkable theorem was proved.

\begin{theorem}[Olshanskii \cite{OlPer}, S.V.Ivanov, Olshanskii \cite{IO}]\label{io} For every non-cyclic tor\-sion-free hyperbolic group $G$ there exists an odd integer $m=m(G)>0$ and a integer $n=n(G)>0$, such that the following hold. (a) The quotient groups $G/G^m$, $G/G^{2^n}$ are infinite\footnote{Recall that $G^k$ denotes the (normal) subgroup of $G$ generated by all $k$th powers of elements of $G$; hence $G/G^k$ has exponent dividing $k$.}. (b) The word and conjugacy problems are solvable in $G/G^{m}, G/G^{2^n}$.
\end{theorem}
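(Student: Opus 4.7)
\medskip

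\noindent\textbf{Proof proposal for Theorem \ref{io}.}
The plan is to follow the strategy of Olshanskii's geometric solution of the Burnside problem, adapted from the free group to an arbitrary non-cyclic torsion-free hyperbolic group $G$, and to build $G/G^{m}$ (resp.\ $G/G^{2^n}$) as a direct limit
\[
G=G(0)\twoheadrightarrow G(1)\twoheadrightarrow G(2)\twoheadrightarrow\cdots
\]
of non-elementary hyperbolic groups, with explicit control on the van Kampen diagrams at every stage. First I would fix, once and for all, a Dehn presentation of $G$ provided by Theorem \ref{dehnp}, together with a hyperbolicity constant $\delta$; the numbers $m$ and $n$ will be chosen later in terms of $\delta$ and the generating set. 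I would then list all cyclically reduced elements of $G$ in order of increasing length and, at step $i\ge 1$, form $G(i)$ from $G(i-1)$ by adjoining the relation $A_i^{m}=1$, where $A_i$ is a so-called \emph{period of rank $i$}: a shortest representative, in $G(i-1)$, of a conjugacy class of elements of infinite order which is not a proper power of any shorter element modulo $G(i-1)$. The group $G/G^{m}$ is the direct limit; infiniteness amounts to showing that this limit does not collapse.

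Next I would introduce Olshanskii's \emph{graded diagrams} over this presentation: each 2-cell is assigned a rank equal to the step at which its boundary label $A_i^{\pm m}$ was added (cells from the original presentation of $G$ have rank $0$). The technical heart of the argument is to prove, by simultaneous induction on the rank, that every minimal graded diagram satisfies a graded small cancellation condition of type $C'(\lambda)$ for suitable small $\lambda=\lambda(\delta)$: the length of any common subpath of the boundaries of two cells of positive rank is less than $\lambda m|A_i|$. This is where hyperbolicity of $G$ enters in an essential way --- thin triangles and the linear isoperimetric inequality of Theorem \ref{t:lin} are used repeatedly to estimate how much two relators of the form $A^{\pm m}$, $B^{\pm m}$ can share, because any long common subpath would force $A$ and $B$ to have a common conjugate power, contradicting the minimality with which $A_i$ was chosen. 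Provided $m$ is chosen large enough (odd, larger than an explicit function of $\delta$ and the number of generators), a Greendlinger-type lemma in the style of Theorem \ref{greend} persists in all ranks, so each $G(i)$ remains non-elementary hyperbolic and no element of $G$ shorter than the current period can become trivial in $G(i)$. Taking the limit yields (a) for odd $m$: infinitely many conjugacy classes of periods survive, so $G/G^{m}$ is infinite.

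For (b) with odd $m$, the graded Greendlinger lemma gives a Dehn-type algorithm: a nonempty reduced word $w$ equals $1$ in $G/G^{m}$ iff it contains, in some cyclic shift, more than half of a cyclic shift of either an original relator of $G$ or some $A^{\pm m}$ with $|A|\le |w|$. One enumerates finitely many candidate periods of length $\le |w|$ using the solvability of the word problem in $G$ and a bounded-search for periods, giving a decision procedure. The conjugacy problem is handled the same way on \emph{annular} (Schupp) diagrams as in Section \ref{tcpaad}: an analogous graded Greendlinger lemma for annular diagrams shows that if $u$ and $v$ are conjugate in $G/G^{m}$, then short conjugators can be found, reducing conjugacy in $G/G^{m}$ to a finite number of conjugacy checks in $G$, which are decidable because $G$ is hyperbolic.

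The hard part, and where I would expect the main obstacle, is the even-exponent case $G/G^{2^n}$. In the Burnside setting this is the difference between Novikov--Adian and the much later Ivanov theorem, and the reason is that one cannot simply impose relations $A^{2^n}=1$ on periods of infinite order: involutions created along the way interact nontrivially (for example, two involutions with a common fixed structure generate a dihedral-type subgroup that forces unexpected relations), and the clean induction used for odd $m$ breaks down. Following \cite{IO}, I would replace the single family of relators $\{A_i^{2^n}\}$ by a more elaborate system that, at each rank, adds both $A_i^{2^n}=1$ and certain commutation or conjugacy relations between specially chosen involutions, so that the graded small cancellation condition can still be verified. The delicate point is to track, inductively, the centralizers of 2-elements in $G(i)$ --- showing they remain locally finite of bounded structure --- so that no catastrophic collapse occurs; once this is done, the same Greendlinger-style analysis gives infiniteness and decision algorithms for the word and conjugacy problems in $G/G^{2^n}$ exactly as in the odd case.
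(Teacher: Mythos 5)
The survey states Theorem \ref{io} without proof; it only cites \cite{OlPer} and \cite{IO}, so there is no in-paper argument to compare against, and your text is necessarily a reconstruction of those two references. Your skeleton matches theirs: build the quotient as a direct limit $G=G(0)\twoheadrightarrow G(1)\twoheadrightarrow\cdots$ by imposing $A^{n}=1$ on periods $A$ of successive ranks, develop a graded small cancellation theory over the hyperbolic group $G$ rather than over a free group, maintain by induction that each $G(i)$ stays non-elementary hyperbolic and that a Greendlinger-type lemma persists, and then read off infiniteness and a Dehn-type decision procedure, with annular diagrams and a conjugator-length bound handling conjugacy.

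Several of your statements are imprecise or simply guessed, though. The condition preserved inductively is not literally $C'(\lambda)$ in the sense of Definition \ref{sc}: over a hyperbolic group, pieces must be measured coarsely (in the spirit of Definition \ref{piece}, or Olshanskii's contiguity submaps), and what propagates from rank to rank is a package of diagram estimates, not a single inequality on literal common subwords. The Dehn-type claim that a trivial word contains ``more than half'' of a relator is wrong in the graded setting; the Greendlinger arc occupies a fraction close to $1$ of the cell boundary, and the bound on the length or rank of periods one must enumerate is itself a nontrivial inductive lemma, requiring decidability of the word problem in each intermediate $G(i)$ to carry out the enumeration. Your closing step, reducing conjugacy in $G/G^{m}$ ``to a finite number of conjugacy checks in $G$,'' is incorrect as stated: the annular Greendlinger lemma bounds the conjugator and reduces the problem to finitely many word-problem instances in $G/G^{m}$ itself, not in $G$. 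Finally, the even-exponent paragraph is essentially a placeholder. The suggestion that one ``adds both $A_i^{2^n}=1$ and certain commutation or conjugacy relations between specially chosen involutions'' is a guess that does not reflect the mechanism of \cite{IO}: the relators there remain of the form $A^{n}=1$, and the additional work goes into classifying the (no longer cyclic) finite subgroups of the intermediate groups and redoing the entire inductive diagram analysis in their presence --- a structural overhaul of the induction, not an extra family of relations. As a top-level road map your proposal is serviceable; as a proof it states none of the inductive lemmas that actually carry the argument.
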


That result was the strongest in the very important series of results on the bounded Burnside problem including the celebrated Novikov-Adian's solution for odd exponents \cite{NA, Ad}, and extremely involved S.V.Ivanov's solution for even exponents \cite{Iv}.

\subsubsection{The small cancelation once more.}\label{ss:tscom} The main ingredient in the proofs of Theorems \ref{OlSQ}, \ref{OlG} and the results in \cite{GrRand} is small cancelation over hyperbolic groups. The following coarse version of small cancelation from Olshanskii-Osin-Sapir \cite{OOS} is close to the ones used in papers \cite{Del, Ols, O1}.

\begin{df}\label{piece}
Let $H$ be a group generated by a set $X$. Let $\mathcal R$ be a
symmetrized set of reduced words in $X^{\pm 1}$ (that is a set of words closed under taking cyclic shifts and inverses). For $\e
>0$, a subword $U$ of a word $R\in \mathcal R$ is called a {\it
$\e $-\index{Piece!$\e$-piece}piece}  if there exists a word $R^\prime \in \mathcal R$
such that:

\begin{enumerate}
\item[(1)] $R\equiv UV$, $R^\prime \equiv U^\prime V^\prime $, for
some $V, U^\prime , V^\prime $; \item[(2)] $U^\prime = YUZ$ in $H$
for some words $Y,Z$ such that $\max \{ |Y|, \,|Z|\} \le \e $;
\item[(3)] $YRY^{-1}\ne R^\prime $ in the group $H$.
\end{enumerate}
Note that if $U$ is an $\e$-piece, then $U'$ is an $\e$-piece as
well.
\end{df}

\begin{df}\label{SC} Let $\varepsilon \ge 0$, $\mu\in (0,1)$, and $\rho
>0$.
We say that a symmetrized set $\mathcal R$ of words over the
alphabet $X^{\pm 1}$ satisfies the {\em \index{Small cancelation condition!$C(\varepsilon , \mu ,\rho
)$} small cancelation condition $C(\varepsilon , \mu ,\rho
)$} over a hyperbolic group $H=\la X\ra$, if
\begin{enumerate}
\item[($C_1$)] All words from $\mathcal R$ label geodesics in the Cayley graph of $H$ corresponding to $X$;
\item[($C_2$)] $|R|\ge \rho $ for any $R\in \mathcal R$;
\item[($C_3$)] The length of any $\varepsilon$-piece contained in any word $R\in \mathcal R$ is smaller than $\mu |R|$.
\end{enumerate}
\end{df}

Figure \ref{scc} shows the difference between the usual small cancelation (see Definition \ref{sc}) and the coarse version of it. The key feature of the coarse version of small cancelation is the coarse \index{Greendlinger lemma!coarse}Greendlinger Lemma (see \cite[Lemma 4.6]{OOS}). The role of that lemma is very similar to the role of the usual Greendlinger Lemma (Theorem \ref{greend}) - see \cite{Ols, O1, OOS} for more details.

\begin{figure}[H]

\unitlength .55mm 
\linethickness{0.4pt}
\ifx\plotpoint\undefined\newsavebox{\plotpoint}\fi 
\begin{picture}(246.625,56.375)(10,0)
\qbezier(47.75,38)(47.125,55.625)(27,55.75)
\qbezier(169.25,38.5)(168.625,56.125)(148.5,56.25)
\qbezier(27,55.75)(9.625,55.875)(7.75,35.5)
\qbezier(148.5,56.25)(131.125,56.375)(129.25,36)
\qbezier(7.75,35.5)(5.75,18)(25.75,15.5)
\qbezier(129.25,36)(127.25,18.5)(147.25,16)
\qbezier(25.75,15.5)(46.625,13)(47,24.5)
\qbezier(147.25,16)(168.125,13.5)(168.5,25)
\multiput(47.25,24)(.03125,1.84375){8}{\line(0,1){1.84375}}
\multiput(168.75,24.5)(.03125,1.84375){8}{\line(0,1){1.84375}}
\qbezier(47.75,38.5)(48.25,46.875)(88.75,52.75)
\qbezier(174.75,39)(175.25,47.375)(215.75,53.25)
\qbezier(88.75,52.75)(109.375,54.125)(114.5,32)
\qbezier(215.75,53.25)(236.375,54.625)(241.5,32.5)
\qbezier(114.5,32)(119.625,8.875)(80.25,9.25)
\qbezier(241.5,32.5)(246.625,9.375)(207.25,9.75)
\qbezier(80.25,9.25)(56.75,9.625)(47.25,24.5)
\qbezier(207.25,9.75)(183.75,10.125)(174.25,25)
\multiput(174.75,39)(-.0333333,-1){15}{\line(0,-1){1}}
\multiput(174.25,24)(-.3666667,.0333333){15}{\line(-1,0){.3666667}}
\put(168.75,39){\line(1,0){5.75}}
\put(44.75,31){\line(0,-1){.25}}
\put(47.375,31.75){\vector(0,1){.07}}\multiput(47.25,28.5)(.03125,.8125){8}{\line(0,1){.8125}}
\put(32.375,55.25){\vector(-1,0){.07}}\multiput(34.75,55)(-.3166667,.0333333){15}{\line(-1,0){.3166667}}
\put(72.25,49.75){\vector(4,1){.07}}\multiput(68.5,49)(.16666667,.03333333){45}{\line(1,0){.16666667}}
\put(168.75,31.5){\vector(0,1){.07}}\put(168.75,28.5){\line(0,1){6}}
\put(148.5,56){\vector(-1,0){.07}}\multiput(152.5,55.75)(-.5333333,.0333333){15}{\line(-1,0){.5333333}}
\put(174.5,31.75){\vector(0,1){.07}}\put(174.5,29){\line(0,1){5.5}}
\put(199.5,50.375){\vector(4,1){.07}}\multiput(194.75,49.5)(.18269231,.03365385){52}{\line(1,0){.18269231}}
\put(43,31.25){\makebox(0,0)[cc]{$U$}}
\put(26.75,52.5){\makebox(0,0)[cc]{$V$}}
\put(74.5,47.5){\makebox(0,0)[cc]{$V'$}}
\put(43.5,6.25){\makebox(0,0)[cc]{Standard piece}}
\put(170.5,8.5){\makebox(0,0)[cc]{$\e$-piece}}
\put(165.75,31.5){\makebox(0,0)[cc]{$U$}}
\put(148.75,52.75){\makebox(0,0)[cc]{$V$}}
\put(179.25,32.5){\makebox(0,0)[cc]{$U'$}}
\put(201.5,45.5){\makebox(0,0)[cc]{$V'$}}
\put(171.75,42){\makebox(0,0)[cc]{$Y$}}
\put(171.25,20.5){\makebox(0,0)[cc]{$Z$}}
\end{picture}

\caption{The standard and coarse definitions of a piece.}\label{scc}
\end{figure}
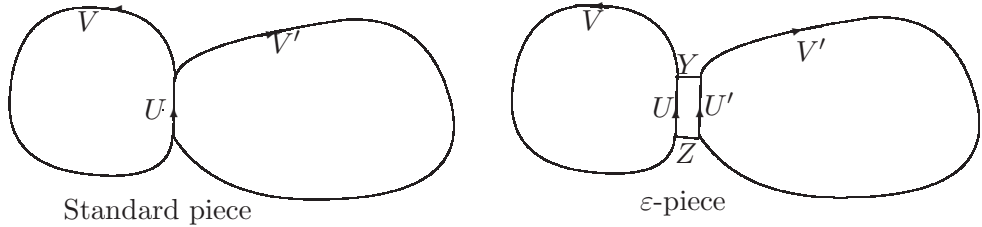

\subsubsection{Combination theorems for hyperbolic groups.}\label{ss:ctfhg}

Another way to construct new hyperbolic groups is to use various combination theorems. Suppose that a finitely generated group $G$ acts by automorphisms  on a simplicial tree $T$  and there are no invariant proper subtrees for this action. From the Bass-Serre theory \cite{Serre}, the group $G$ is a multiple HNN-extension of the stabilizer of a vertex with associated subgroups isomorphic to stabilizers of edges if and only if the action is transitive on vertices; and $G$ is an {\em \index{Amalgamated product}amalgamated product} of stabilizers of two vertices with the stabilizer of an edge as an  amalgamated subgroup if and only if the action has two orbits of vertices and two orbits of edges. In general, $G$ can be composed from the stabilizers of vertices using HNN-extensions and amalgamated products. A statement saying that some property $P$ holds for $G$ provided it holds for stabilizers of vertices of the action of $G$ on $T$ (plus some condition on the stabilizers of edges) is called a {\em \index{Combination theorem}combination theorem}. For example, the statement ``if all stabilizers of vertices are torsion-free, then $G$ is torsion-free" is such a theorem.

A very general combination theorem for hyperbolic groups is proved by Bestvina and Feighn \cite{BF1,BF2}. There are several other versions proved in \cite{Git,KM1, MiOl} and other papers. Here we present the ``greatest common divisor" of all these statements. It was proved in each of these papers, and reflects the nature of all combination theorems.

\begin{theorem}\label{ct} If $G$ is a hyperbolic group, $A$ and $B$ isomorphic virtually cyclic subgroups, $\phi\colon A\to B$ is an isomorphism between these subgroups, then the HNN-extension $H =\HNN(G; A,B, \phi)$ is hyperbolic if and only if for every $g\in G$ the set $A\cap gBg\iv$ is finite.
\end{theorem}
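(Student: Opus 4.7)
The plan is to prove the two directions separately: a direct subgroup-construction argument for the ``only if'' direction and a van Kampen diagram / band-counting argument (equivalent to invoking the Bestvina--Feighn combination theorem) for the ``if'' direction.

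For the ``only if'' direction, I would argue contrapositively. Suppose $A\cap gBg\iv$ is infinite for some $g\in G$. Since $A$ is virtually cyclic, this intersection contains an infinite cyclic subgroup, so I can choose an infinite-order element $a\in A$ with $b:=g\iv a g\in B$. In $H$, both $\phi(a)=tat\iv$ and $b$ lie in the virtually cyclic group $B$, so some nonzero powers $\phi(a)^m$ and $b^n$ lie in a common infinite cyclic subgroup of $B$, giving a relation of the form $\phi(a)^m=b^{\pm n}$. Rewriting yields $(gt)a^m(gt)\iv = a^{\pm n}$ inside $H$, and by Corollary \ref{hnn} the element $a$ retains infinite order in $H$. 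If $|m|\ne |n|$, this already contradicts hyperbolicity, since in a hyperbolic group conjugate infinite-order elements have equal (nonzero) translation length. If $|m|=|n|$, then $(gt)^2$ centralizes $a^m$, and by Britton's normal form lemma $(gt)^{2k}\ne a^{m\ell}$ for any $k\ne 0$ and any $\ell$ (because $(gt)^{2k}$ has nonzero $t$-length while $a^{m\ell}\in G$ has $t$-length zero); hence $\langle (gt)^2, a^m\rangle\cong\Z^2$. This contradicts hyperbolicity since $\Z^2$ has quadratic Dehn function (Example \ref{example1}) while every hyperbolic group has a linear Dehn function (Theorem \ref{t:lin}).

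For the ``if'' direction, assume the pair $(A,B)$ is \emph{almost malnormal}, i.e.\ $A\cap gBg\iv$ is finite for every $g\in G$. I would take the natural finite presentation of $H$ obtained by adjoining the stable letter $t$ and the HNN-relators $tat\iv\phi(a)\iv$ (one for each generator of $A$) to a finite presentation of $G$, and try to prove a linear isoperimetric inequality, which yields hyperbolicity by Theorem \ref{t:lin}. Given a minimal van Kampen diagram $\Delta$ with boundary of length $n$, Lemma \ref{ms} rules out $t$-annuli, so every maximal $t$-band starts and ends on $\partial\Delta$; hence there are at most $n/2$ maximal $t$-bands. The crux is to bound the total length of these bands by $O(n)$. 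If two $t$-bands ran parallel through many consecutive HNN-cells, the words labeling the two $G$-paths between them would be elements $u\in G$ conjugating arbitrarily long elements of $A$ into $B$; almost malnormality, together with the quasi-convexity of the virtually cyclic subgroups $A,B$ in the hyperbolic group $G$, forces a uniform bound on how long such conjugators may be, preventing the bands from running parallel indefinitely. Once all the $t$-bands are controlled, cutting them out of $\Delta$ leaves subdiagrams over $G$ whose total perimeter is $O(n)$; the linear isoperimetric inequality for $G$ (Theorem \ref{t:lin}) then bounds their total area by $O(n)$, and hence $\area(\Delta)=O(n)$.

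The main obstacle will be the quantitative flare estimate underlying the control of $t$-bands: producing a uniform constant $C$ and a (superlinear but subexponential) function $f$ such that whenever $a\in A$, $a'\in B$, and $u\in G$ satisfy $uau\iv=a'$ with $|a|\ge C$ (word length in $G$), one has $|u|\le f(|a|)$. This is essentially the ``annuli flare'' condition of Bestvina--Feighn, and its validity follows from the finiteness of $A\cap uBu\iv$ by a compactness/contradiction argument in the Cayley graph of $G$: a sequence of counterexamples, rescaled, would limit to a configuration witnessing an infinite intersection $A\cap u_\infty B u_\infty\iv$ (using the Morse/stability lemma for quasi-geodesics in hyperbolic groups and the quasi-convexity of virtually cyclic subgroups). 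Rather than carry out this delicate geometric estimate from scratch, the cleanest route is to verify that the almost-malnormality hypothesis is exactly the Bestvina--Feighn hypothesis for this particular HNN-extension, and then invoke \cite{BF1,BF2}.
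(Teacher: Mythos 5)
The paper does not itself prove Theorem \ref{ct}: it states it as the ``greatest common divisor'' of the combination theorems of \cite{BF1,BF2,Git,KM1,MiOl} and cites those papers for the proofs. Your deferral to Bestvina--Feighn in the ``if'' direction is therefore the same move the paper makes. (One caveat on your van Kampen sketch: a linear isoperimetric inequality requires bounding the \emph{total} length of the $t$-bands, not merely their number --- in $\Z^2=\HNN(\Z;\Z,\Z,\mathrm{id})$ there are likewise only $O(n)$ bands, and it is their lengths that force quadratic area --- so the flare/annuli estimate really is the whole content.)

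Your ``only if'' direction, however, has a genuine flaw at its final step. Having built a subgroup $\langle (gt)^2, a^m\rangle\cong\Z^2$ inside $H$, you conclude that $H$ cannot be hyperbolic because ``$\Z^2$ has quadratic Dehn function (Example \ref{example1}) while every hyperbolic group has linear Dehn function (Theorem \ref{t:lin}).'' That inference is invalid: the Dehn function of a subgroup is not controlled by the Dehn function of the ambient group. A finitely generated subgroup of a hyperbolic group need not even be finitely presented, and when it is, its Dehn function need not be linear; the comparison would only be meaningful if the copy of $\Z^2$ were known to be undistorted in $H$, which is precisely what is at issue. What you actually need is the standard fact (due to Gromov, see \cite{GrHyp}, and implicit in Exercise \ref{exe2}) that a hyperbolic group contains no $\Z^2$ subgroup, because the centralizer of every infinite-order element of a hyperbolic group is virtually cyclic; since your $\Z^2$ centralizes the infinite-order element $a^m$, this yields the contradiction directly. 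With that substitution the rest of the direction --- passing to commensurating powers $\phi(a)^m=b^{\pm n}$, the translation-length argument in $H$ that forces $|m|=|n|$, and the $t$-exponent-sum homomorphism that certifies $\langle (gt)^2,a^m\rangle\cong\Z^2$ --- is sound.
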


\subsubsection{The subquadratic isoperimetric inequality}\label{ss:tsii}

The ``if part" of Theorem \ref{t:lin} can be strengthened significantly: one can prove that even a quadratic isoperimetric inequality with small enough coefficient of the square implies hyperbolicity \cite{GrHyp}.

\begin{theorem}\rm{(}Gromov \cite[6.8]{GrHyp}, Olshanskii \cite{OlHyp1}, Bowditch \cite{Bow1}, Papasoglu \cite{P0}\rm{)}.\label{Bow1} There exists a universal constant $\upsilon$ such that the following holds. Let $G=\la X\mid R\ra$ be a group presentation, $p=\max(|r|, r\in R)$. If the Dehn function $f$ of $G$ satisfies $$f(n)< \frac{\upsilon}{p^2}n^2$$ for all sufficiently large $n$, then $G$ is hyperbolic.
\end{theorem}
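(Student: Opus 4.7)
The plan is to show that the subquadratic bound forces the Dehn function to be linear, whence hyperbolicity follows from Theorem \ref{t:lin}. The key engine is a bisection lemma for minimal van Kampen diagrams: there exist constants $\lambda\in(0,\tfrac12)$ and $N_0$, and a choice of $\upsilon>0$ small, such that any minimal diagram $\Delta$ of perimeter $\ell\geq N_0$ admits an interior path $\alpha$ of length at most $\lambda\ell$ whose endpoints lie on $\partial\Delta$ and divide $\Delta$ into two subdiagrams of perimeter at most $(1-\tfrac{1}{4}\lambda)\ell$ each. Once this is established, iterating the bisection decomposes $\Delta$ into $O(1)$-perimeter pieces (each of constant area) while the total length of cutting paths forms a convergent geometric series bounded by $C\ell$; hence $\area(\Delta)\leq C'\ell$, and $f(n)\preceq n$.

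To produce the bisecting path, I would argue by contradiction. Parameterize $\partial\Delta$ by arclength and, for each pair of boundary points $x(s),x(t)$ with $|t-s|$ and $\ell-|t-s|$ both of order $\ell/2$, consider a shortest path $\beta_{s,t}$ in the 1-skeleton of $\Delta$ connecting them. Suppose no candidate $\beta_{s,t}$ has length $<\lambda\ell$. Then for every such pair the shortest interior path is ``long'' relative to $\ell$, and Bowditch's quadrangle axiom ($B_2$) from \ref{ss:tdob}, applied to the subdiagrams cut out by $\beta_{s,t}$ (whose sides are controlled by $p$ times the number of edges), yields a lower bound on $\area(\Delta)$ of the form $c\,\ell^2/p^2$ for some absolute $c>0$. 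Choosing $\upsilon<c$ contradicts the hypothesis $f(\ell)<(\upsilon/p^2)\ell^2$. The precise form of the lower bound uses that each 1-cell contributes at most $p$ to the boundary length of any subdiagram, so the rescaled combinatorial area inequality is uniform in $p$.

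The most technical step is the one above, because the quadrangle inequality must be applied simultaneously to all pairs $(s,t)$ on a net in the boundary and combined by an averaging (or ``filling-by-strips'') argument to produce a truly quadratic lower bound on the area; this is exactly where the universal constant $\upsilon$ is forced to be small. A cleaner alternative, which I would adopt if the combinatorics of the bisection become unwieldy, is the asymptotic-cones route of Papasoglu: first invoke Theorem~\ref{pap1} to conclude that (a sufficiently small subquadratic bound implies, in particular, a quadratic one, so) every asymptotic cone $\ccc$ of $G$ is simply connected; then use Property~\ref{7} in \ref{spoac} to lift any geodesic triangle in $\ccc$ to a sequence of thick geodesic triangles in the Cayley complex whose areas can be bounded below in terms of the product of their inradii via ($B_2$). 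The subquadratic-with-small-constant hypothesis forces these inradii to be zero in the limit, so triangles in $\ccc$ are degenerate (tripods), i.e.\ $\ccc$ is an $\R$-tree; then Theorem \ref{hypascon} gives hyperbolicity.

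Finally, after obtaining a linear isoperimetric inequality, I would invoke Theorem~\ref{t:lin} of Gromov to conclude that $G$ is hyperbolic; alternatively, Theorem~\ref{dehnp} shows $G$ even admits a finite Dehn presentation. The constant $\upsilon$ does not depend on $|X|$ or on the group but only on the universal constants in ($B_1$),($B_2$) of \ref{ss:tdob} and in the pigeonhole step, which is exactly what ``universal'' means in the statement; the factor $1/p^2$ appears because rescaling the diagram so that each defining cell has unit diameter multiplies combinatorial area by $p^2$.
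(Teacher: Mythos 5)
The paper cites this theorem without proof (attributing it to Gromov, Olshanskii, Bowditch, Papasoglu), so there is no "paper's own proof" to compare against; I can only judge your argument on its own terms and against what I know of the published proofs. Your identification of the key tools — Bowditch's quadrangle inequality $(B_2)$ with $k\sim 1/p^2$, a short bisecting arc, iterated decomposition — is the right one, and your explanation of why $\upsilon$ is universal (only $p$ is group-dependent, and it is factored out explicitly) is correct. Both of your routes, however, have a concrete quantitative gap at the step where you try to conclude.

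In the bisection route, the claim that "iterating the bisection decomposes $\Delta$ into $O(1)$-perimeter pieces … while the total length of cutting paths forms a convergent geometric series bounded by $C\ell$; hence $\area(\Delta)\le C'\ell$" is false with the stated constants. If one bisects into two pieces of perimeter $\le (1-\mu)\ell$ by a cut of length $\le\lambda\ell$, then at generation $g$ there are up to $2^g$ active pieces of perimeter $\le(1-\mu)^g\ell$ and the total cut length added at generation $g$ is $\le\lambda\bigl(2(1-\mu)\bigr)^g\ell$; this is a divergent geometric series unless $\mu>1/2$, which no cut along opposite quarter-arcs can achieve (you get $\mu\approx 1/4$). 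More structurally, the recursion $f(\ell)\le f(\ell_1)+f(\ell_2)$ with $\ell_1+\ell_2>\ell$ (the cut is counted twice) cannot close on a linear ansatz $f(n)\le Kn$, because $K\ell_1+K\ell_2>K\ell$; the most this gives is $f(n)=O(n^{1+\varepsilon})$ with $\varepsilon=\varepsilon(\upsilon)>0$. The missing step is a bootstrap: once you know $f(n)=O(n^{1+\varepsilon})$, the quadrangle inequality gives a bisecting arc of strictly sublinear length $O(n^{(1+\varepsilon)/2})$, and with sublinear cuts the recursion does close on a linear bound (via an ansatz like $f(n)\le Kn-K'n^{1-\beta}$). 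Without that second stage, or some equally effective replacement, the primary route does not reach a linear Dehn function.

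In the asymptotic cone route, the assertion that "the subquadratic-with-small-constant hypothesis forces these inradii to be zero in the limit" is also not justified. What the quadrangle inequality applied to the lifted thick $k$-gons actually gives, after rescaling by $d_i$, is $\mathrm{inradius}(T)\le C\sqrt\upsilon\cdot\mathrm{perimeter}(T)$ for every geodesic triangle $T$ in the cone — a relative, not absolute, thinness. A simple geodesic triangle with positive perimeter satisfies this without contradiction; you would still need a scaling/self-similarity argument (or the same two-stage bootstrap as above, pushing $\upsilon$ down before passing to the cone) to conclude that all triangles are tripods. A smaller but real issue in the same route: Property~\ref{7} produces thick geodesic $k$-gons, not triangles, with $k$ depending on the approximation parameter $\varepsilon$, so the quadrisection of $\partial\Pi_i$ into arcs with both pairs of opposite arcs far apart needs to be set up carefully for a polygon rather than a triangle. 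In short: right ingredients, but the passage from "relative thinness $O(\sqrt\upsilon)$" to hyperbolicity is exactly the hard part of Bowditch's and Olshanskii's proofs, and it is missing here.
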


One can actually estimate the universal constant $\upsilon$ by analyzing the proof of Bowditch \cite{Bow2}, for example. The estimate would be far from optimal, though. Stefan Wenger informed us that one can take any $\upsilon < \frac{1}{2}$: it follows from his paper \cite{Wen1}. It is an interesting question to find exact value of $\upsilon$ or at least a good approximation. Note that for the free Abelian group $\Z^2$ with the standard presentation $\la x,y\mid [x,y]=1\ra$, the Dehn function is $f(n)=\frac 1{16}n^2$ (it essentially follows from \ref{ss:poe}) and $p=4$.  So $\upsilon$ cannot be bigger than 1.

Bowditch  \cite{Bow1} proved actually a much more general result about all geodesic metric spaces with area functions satisfying Conditions ($B_1$) and ($B_2$) from \ref{ss:tdob} (the role of $1/p^2$ is played by the constant $k$ from ($B_2$)). Wenger's results in \cite{Wen1} are also much more general and apply to arbitrary geodesic metric spaces as well. In fact he proves that the isoperimetric inequality of the Euclidean plane, $$\mathrm{Area}(l)\le \frac1{4\pi}l^2$$ is optimal in the sense that every geodesic metric space with stronger isoperimetric inequality (for some natural area function) is hyperbolic. The crucial tool in his proof is metric currents as in \ref{ss:tdok}.

\subsubsection{The Cartan-Hadamard type theorem}\label{ss:tchtt}

Cartan-Hadamard type theorems are ``local-to-global" statements in group theory: we deduce information about the global structure of a group $G$ by looking at one (sufficiently large) ball in the Cayley graph of $G$.\footnote{Recall that the original \index{Cartan-Hadamard theorem}Cartan-Hadamard theorem states that the universal cover of a Riemannian maniford of non-negative sectional curvature (a local property) is diffeomorphic to a Euclidean space (a global property).}

\begin{theorem}
\rm{(}See Gromov \cite[6.8M]{GrHyp},  Bowditch \cite[Theorem
8.1.2]{Bow2}\rm{)}.\label{t:CH} There are  constants $C_1$, $C_2$, and
$C_3$ with the following property. Let $\Gamma$ be the Cayley graph of a finitely presented group where the length of every defining relation is at most $d$ and
every ball of radius $C_2d$ in $\Gamma$ is $C_1d$-hyperbolic.
Then $\Gamma$ is $C_3d$-hyperbolic.
\end{theorem}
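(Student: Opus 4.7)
The plan is to deduce global hyperbolicity of $\Gamma$ from the local hyperbolicity hypothesis by bootstrapping the thin-triangle condition from small to large scales. By Theorem~\ref{t:lin}, it is enough to show that every geodesic triangle in $\Gamma$ is $C_3 d$-thin for some constant $C_3$ depending only on $C_1, C_2$. Let $\delta(L)$ denote the supremum of insizes (maximal distance from a point on a side of a triangle to the union of the other two sides) over all geodesic triangles of perimeter at most $L$. The plan is to prove by induction on $L$ that $\delta(L) \le C_3 d$.

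The base case is immediate: if $L \le 3 C_2 d$, then any geodesic triangle of perimeter $\le L$ is contained in a ball of radius $C_2 d$ about one of its vertices, and the hypothesis that this ball is $C_1 d$-hyperbolic gives $\delta(L) \le C_1 d$. So one takes $C_3 \ge C_1$. For the inductive step, given a triangle $T = [x, y] \cup [y, z] \cup [z, x]$ of perimeter $L > 3 C_2 d$, I would subdivide it into two smaller geodesic triangles by choosing a point $p$ on the longest side (say $[x,y]$) at controlled distance $\asymp C_2 d$ from a vertex, selecting any geodesic $[p,z]$, and forming triangles $T_1 = [x, p, z]$ and $T_2 = [p, y, z]$ of strictly smaller perimeter. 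The inductive hypothesis yields a thinness bound for each, and combining them via the triangle inequality gives a thinness bound for $T$.

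The hard part is to prevent the thinness constant from accumulating through the recursion, since the naive concatenation roughly doubles the constant at each subdivision, leading to a blowup of order $\log L$. This is the essential difficulty overcome in the proofs of Gromov \cite[6.8M]{GrHyp} and Bowditch \cite[Theorem 8.1.2]{Bow2}. The resolution uses a ``fellow-traveling'' argument applied to the newly introduced geodesic $[p,z]$: by covering a neighborhood of the old sides by overlapping balls of radius $C_2 d$ and applying local $C_1 d$-hyperbolicity inside each ball, one shows that $[p,z]$ stays within $O(C_1 d)$ of the union $[x,z] \cup [z,y]$ over a definite stretch. Provided $C_2$ is chosen sufficiently larger than $C_1$ so that these overlapping local estimates actually piece together (and the parameter $d$ is used to ensure that every bigon/small quadrilateral of length $\le C_2 d$ fits inside one such ball, together with a van Kampen diagram of cells of perimeter $\le d$), the local thinness transfers across subdivisions without loss. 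One then selects $C_3$ as a suitable linear function of $C_1$ that absorbs the fixed additive error at every inductive step, and the induction closes with $\delta(L) \le C_3 d$ uniformly in $L$. This implies $\Gamma$ is $C_3 d$-hyperbolic, completing the proof.
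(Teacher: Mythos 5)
The paper does not actually prove Theorem~\ref{t:CH}; it cites Gromov and Bowditch and remarks that Gromov's proof goes through the local form of the subquadratic isoperimetric inequality (Theorem~\ref{Bow1} restricted to relatively short loops). Your proposal takes a genuinely different route — a direct bootstrap on the thin-triangle condition by induction on perimeter — but as written it has a gap precisely at the step you yourself flag as ``the essential difficulty.''

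The problem is circularity. After subdividing $T=[x,y,z]$ at a point $p\in[x,y]$ with $\dist(x,p)\asymp C_2d$, a point $w$ on $[y,z]$ is, by the inductive hypothesis applied to $T_2$, within $C_3d$ of $[p,y]\cup[p,z]$; to conclude, you must show the auxiliary geodesic $[p,z]$ lies in a uniform $O(C_1d)$-neighborhood of the original sides. But this is exactly a global thin-triangle/fellow-traveling statement for the triangle $[x,p,z]$ (whose side $[x,z]$ may be as long as $T$ itself), so you are invoking the conclusion to prove the conclusion. Applying the inductive hypothesis to $T_1$ only gives a $C_3d$-bound, and then the combined estimate becomes $2C_3d$, which is the $\log$-blowup you wanted to avoid. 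The ``overlapping balls of radius $C_2d$'' idea does not break this: to chain local hyperbolicity estimates along the length of $[p,z]$ and $[x,z]$ you must already know these two geodesics stay within $O(C_2d)$ of one another over each chunk, which you have no a priori bound for. The standard proofs of this theorem (via local isoperimetric inequalities as the paper indicates, or via showing that $k$-local geodesics are global quasigeodesics) do nontrivial work exactly at this point, and that work is missing here.

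Two smaller points. First, the base case as stated is slightly off: a triangle of perimeter $\le 3C_2d$ has diameter up to $\tfrac32 C_2d$ and so need not lie in a ball of radius $C_2d$ about a vertex; you want perimeter $\le 2C_2d$ (or enlarge the ball). Second, the reduction to $T_1$ and $T_2$ does not strictly decrease perimeter: $\operatorname{perim}(T_2)=\dist(p,y)+\dist(y,z)+\dist(z,p)$ can equal $\operatorname{perim}(T)$ when the triangle inequality $\dist(z,p)\le\dist(z,y)+\dist(y,p)$ is tight, so the induction on $L$ needs a different measure of size (or an additional argument that strict inequality may be assumed).
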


An analogous result for metric spaces is also true (see \cite{GrHyp}). The proof in \cite{GrHyp} is based on the fact that Theorem \ref{Bow1} remains true even if restrict the assumption to relatively short loops: if every relatively short loop in $\Gamma$ has quadratic area with sufficiently small coefficient, then the group is hyperbolic.

This theorem and its variations are very important in the theory of hyperbolic groups. A version of it from \cite{GrMes} is used in the paper by Gromov and Delzant \cite{DG} (see also Coulon \cite{Cou}) to provide possibly the most conceptually easy proof of the theorem of Novikov-Adian \cite{NA} and Olshanskii \cite{OlPer}. Another version of Theorem \ref{t:CH} for CAT(0) spaces is proved in Bridson and Haefliger \cite{BH}. A yet another version was recently proved by Shalom and Tao \cite{ST}: it significantly strengthens the polynomial growth result of Gromov \cite{Gromov} (mentioned in \ref{ss:tacatcp}).

\begin{theorem} [Shalom, Tao \cite{ST}] For some constant $C$, the following holds for every finitely generated group $G$, and all $d >0$.  If there is some $R_0 > \exp(\exp(Cd^C))$ for which the number of elements in a ball of radius $R_0$ in a Cayley graph of $G$ is bounded by $R_0^d$, then $G$ has a finite index subgroup which is nilpotent (of nilpotency class $<C^d$).
\end{theorem}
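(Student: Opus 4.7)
The plan is to develop a fully finitary version of Kleiner's harmonic function proof of Gromov's polynomial growth theorem, where the classical compactness/ultralimit arguments must be replaced by explicit quantitative estimates that work from a single-scale hypothesis. Recall Kleiner's strategy: from asymptotic polynomial growth one builds a nontrivial finite-dimensional space of Lipschitz harmonic functions on $G$, hence a nontrivial homomorphism from $G$ to $\R^k$, whose kernel has smaller growth exponent, permitting induction. The task here is to carry out each of these steps using only information about a single ball $B_{R_0}$.

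First I would extract from the hypothesis $|B_{R_0}| \le R_0^d$ a ``doubling at many scales'' statement: by a pigeonhole argument on the sequence $|B_1|, |B_2|, |B_4|, \ldots, |B_{R_0}|$, there is a large family of intermediate scales $r$ at which $|B_{2r}| \le 2^{O(d)} |B_r|$. At such scales the ball $B_r$ behaves like an approximate group in the sense of Tao's additive combinatorics, and standard Plunnecke-Ruzsa-type inequalities give control over products $B_r \cdot B_r$, $B_r \cdot B_r \cdot B_r$, etc. This is the substitute for the asymptotic doubling that Gromov-Kleiner get for free.

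Second, at a scale $r$ with polynomial doubling, I would construct a nontrivial Lipschitz harmonic function $h$ on $B_{r^{1/2}}$. The key input is a finitary Colding-Minicozzi type bound: the space of harmonic functions on $B_r$ with Lipschitz constant $1$ has dimension at most polynomial in the doubling constant, while a dimension count on the space of ``almost-harmonic'' functions obtained by averaging coordinate functions gives a lower bound that is nontrivial once the scale is large compared to $\exp(Cd^C)$. The image $h(B_r)$ is an approximate subgroup of $\R$ and so is close to an arithmetic progression, yielding an approximate homomorphism $\phi \colon B_r \to \Z$ whose kernel is itself an approximate subgroup with polynomial growth of exponent $\le d-1$, at a scale reduced by one exponential.

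Third, iterating this ``peeling of one $\Z$ factor'' at most $d$ times reduces to a finite approximate subgroup, at which point a Hrushovski/Breuillard-Green-Tao style result on finite approximate groups produces a finite-index nilpotent subgroup of nilpotency class bounded by the number of iterations, which is $\le C^d$. The double exponential in the hypothesis is precisely the budget needed to absorb $d$ rounds of scale-shrinking at each inductive step. The main obstacle, and the hardest technical part, is the quantitative harmonic function step: without ultralimits one must run $L^2$ and random-walk estimates on Cayley balls with explicit constants, and one must show that the approximate homomorphism $\phi$ constructed from $h$ is genuinely compatible with the (approximate) group structure on $B_r$ so that its ``kernel'' inherits both an effective polynomial growth bound and a slightly smaller exponent; closing this induction is exactly what forces the awkward $\exp(\exp(Cd^C))$ threshold rather than something polynomial in $d$.
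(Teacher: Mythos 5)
The survey does not prove this theorem; it only cites Shalom and Tao \cite{ST}, so the comparison must be against the argument in that paper. Your outline correctly identifies the high-level strategy Shalom and Tao actually follow: a pigeonhole argument to extract doubling at many intermediate scales from the single-scale growth hypothesis, a finitary version of Kleiner's harmonic-function argument (quantitative Colding--Minicozzi, finite-dimensionality of Lipschitz harmonic functions at a good scale), extraction of an abelian quotient, and an induction that peels off one $\Z$ factor at a time, with the double exponential absorbing the scale loss at each step. That much is faithful.

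There are, however, two genuine problems in your sketch. First, the crux of Shalom--Tao's argument is precisely the step you gloss over: turning the harmonic function $h$ into a \emph{genuine} finite-dimensional representation of $G$ (via the action of $G$ on the finite-dimensional space of Lipschitz harmonic functions), and from there into a \emph{genuine} homomorphism from a finite-index subgroup onto $\Z$. You replace this with ``an approximate homomorphism $\phi\colon B_r\to\Z$ whose kernel is an approximate subgroup,'' but an approximate homomorphism defined only on a ball does not hand you a subgroup to recurse on, and the quantitative heart of \cite{ST} is exactly the upgrade from approximate to exact structure; asserting it without a mechanism is where the proof would collapse. Second, your proposed base case --- invoking a Hrushovski or Breuillard--Green--Tao structure theorem for finite approximate groups --- is both anachronistic and self-defeating: those results are not effective in the constants, whereas the entire point of the Shalom--Tao theorem is to produce an explicit threshold $\exp(\exp(Cd^C))$ and an explicit class bound $C^d$. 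The actual base case in \cite{ST} is elementary: once the growth exponent has been driven to zero the group under consideration is finite, and no approximate-group machinery is needed. So the skeleton is right, but the two load-bearing joints --- exactification of the homomorphism and the base of the induction --- are either missing or replaced by tools that cannot deliver the stated effective bounds.
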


\begin{prob} It would be interesting to find a common cause of all these (and other) theorems of Cartan-Hadamard type. Perhaps a recent model theoretic approach of Hrushovski will give such a common cause (see \cite[Section 7]{Hru}).
\end{prob}

\subsubsection{Lacunary hyperbolic groups}\label{ss:lhg} It was observed by M. Kapovich and Kleiner (see the Appendix to \cite{OOS}) that Theorem \ref{t:CH} almost immediately implies the following stronger version of Theorem \ref{hypascon}.

\begin{theorem}[See \cite{OOS}]\label{t:KK} Let $G$ be a finitely presented group with one asymptotic cone an $\R$-tree. Then $G$ is hyperbolic (and hence all asymptotic cones of $G$ are $\R$-trees).
\end{theorem}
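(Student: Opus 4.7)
The plan is to argue by contradiction. Assume an asymptotic cone $\ccc=\Con^\omega(G,(d_i))$ is an $\R$-tree while $G$ is not hyperbolic, and derive a contradiction by applying the Cartan--Hadamard Theorem~\ref{t:CH} with a scale parameter that grows linearly in $d_i$. Roughly speaking, tree-likeness of $\ccc$ at the fixed unit scale translates, via the scaling by $d_i$, into approximate tree-likeness of the Cayley graph $\Gamma$ in balls of radius proportional to $d_i$; this local hyperbolicity is exactly what Cartan--Hadamard needs to force global hyperbolicity.

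Fix a finite presentation $\la X\mid R\ra$ of $G$ with maximal relator length $d_0$, let $\Gamma$ be its Cayley graph, and let $C_1,C_2,C_3$ be the constants of Theorem~\ref{t:CH}. Set $\varepsilon:=C_1/(2C_2)$ and, for each $i$, $r_i:=d_i/(2C_2)$. For $\omega$-almost all $i$ we have $r_i\geq d_0$, so $\la X\mid R\ra$ is a finite presentation whose relators have length at most $r_i$, and hence Theorem~\ref{t:CH} is applicable with parameter $d=r_i$. If we can show that, $\omega$-almost surely, every ball of radius $C_2 r_i = d_i/2$ in $\Gamma$ is $C_1 r_i=\varepsilon d_i$-hyperbolic, then Theorem~\ref{t:CH} yields $C_3 r_i$-hyperbolicity of $\Gamma$ for at least one such $i$, so $G$ is hyperbolic --- a contradiction.

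The crux is therefore the following claim: for $\omega$-almost every $i$, every geodesic triangle in $\Gamma$ contained in the ball of radius $d_i/2$ is $\varepsilon d_i$-thin. Assume the complementary set is in $\omega$; then for each such $i$ there is a geodesic triangle $T_i$ in that ball together with a point $p_i$ on one of its sides whose distance to the union of the other two sides exceeds $\varepsilon d_i$. Using homogeneity (Property~\ref{1} of~\ref{spoac}) one may translate each $T_i$ so that $p_i$ sits within bounded distance of the basepoint after rescaling. Taking $\omega$-limits (with scaling constants $d_i$) and using Property~\ref{6} of~\ref{spoac} --- which guarantees that the $\omega$-limit of a sequence of geodesic paths is a geodesic path --- the triangles $T_i$ converge to a geodesic triangle $T_\infty\subset\ccc$, and $p_i$ converges to a point $p_\infty$ on one side of $T_\infty$ with $\dist(p_\infty,\text{union of other two sides})\geq\varepsilon>0$. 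But every geodesic triangle in an $\R$-tree is a tripod, so every point on any side lies on the union of the other two sides, a contradiction. This proves the claim and hence the theorem.

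The main obstacle, and the heart of the argument, is the calibration of the two linear-in-$d$ quantities appearing in Theorem~\ref{t:CH}: the radius $C_2 d$ and the hyperbolicity bound $C_1 d$. The choice $r_i=d_i/(2C_2)$ makes their ratio equal to the universal constant $C_1/C_2$, independent of $i$, which is precisely why a single $\varepsilon=C_1/(2C_2)$ extracted from the tree structure of $\ccc$ suffices to verify the hypothesis of Cartan--Hadamard for $\omega$-almost all $i$. A secondary technical point is verifying that the sides of $T_i$, which are honest geodesics of $\Gamma$ rather than of a subspace, lie within bounded distance of the basepoint after rescaling so that their $\omega$-limits remain inside $\ccc$; since the vertices of $T_i$ lie in a ball of radius $d_i/2$, the sides lie in a ball of radius at most $d_i$, so this is automatic.
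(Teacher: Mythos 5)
Your argument is correct and follows essentially the same route as the paper (attributed there to M.~Kapovich and Kleiner): show by contradiction, via an ultralimit of supposed bad triangles, that balls of radius proportional to $d_i$ in the Cayley graph are $o(d_i)$-hyperbolic for $\omega$-almost all $i$, then invoke the Cartan--Hadamard Theorem~\ref{t:CH}. You have merely made explicit the calibration of the constants $C_1,C_2$ and the translation-by-homogeneity step that the paper's one-sentence sketch leaves implicit.
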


Indeed, if one of the asymptotic cones $\Con^\omega(G,(d_i))$ is an $\R$-tree, then the ball in the Cayley graph of $G$ or radius $d_i$ is $o(d_i)$-hyperbolic for $\omega$-almost all $i$, and it remains to apply Theorem \ref{t:CH} since $d_i\to \infty$.

Theorem \ref{t:KK} is far from true for infinitely presented groups. The first example of a finitely generated infinitely presented (hence non-hyperbolic) group was constructed by Thomas and Velickovic in \cite{TV}. Their group is given by an infinite small cancelation presentation such that the lengths of relators form a very lacunary (sparse) sequence of numbers. As an almost direct application of Greendlinger's Lemma (see Theorem \ref{greend}), one can prove that any cone $\Con^\omega(G, (d_i))$ is an $\R$-tree if $d_i$ is far from the length of relators, and is not an  $\R$-tree if $d_i$ is the length of the relator number $i$.

Accordingly, in \cite{OOS}, we introduced a class of groups some of whose asymptotic cones are $\R$-trees, and called these {\em \index{Group!lacunary hyperbolic}lacunary hyperbolic groups}. It is easy to deduce from Theorem \ref{t:CH}, that every lacunary hyperbolic groups is an inductive limit of hyperbolic groups and surjective homomorphisms. A converse statement is also true if one imposes certain restrictions on the ``injectivity radia" of these surjective homomorphisms. Tarski monsters, infinite finitely generated torsion groups, etc. can be constructed as lacunary hyperbolic groups. Surprisingly, some amenable (see \ref{ss:shotp}) non-virtually cyclic groups turned out to be lacunary hyperbolic as  well. For more details and results see \cite{OOS}.

\subsection{Nilpotent groups}\label{ng}

\subsubsection{The asymptotic cones and the $c+1$-problem}\label{ss:tacatcp} Let $G$ be a {\em \index{Group!nilpotent}nilpotent group} of class $c$ (that is the $c$th member of its \index{Lower central series}lower central series, $G_c$, is the identity; $G_n$ is defined by $G_0=G, G_{n+1}=[G_n,G]$, $n=0,1,\ldots$). Every finitely generated nilpotent group is finitely presented. Algorithmic properties of nilpotent groups have been studied extensively, and this class can be considered relatively ``tame". Note that the word, conjugacy and even the isomorphism problem are solvable for nilpotent groups (Grunewald, Segal \cite{GrS80,GrS81}). Still some natural algorithmic problems are known to be undecidable in some nilpotent groups. For example, there exists a finitely generated nilpotent group with undecidable {\em \index{Endomorphic reducibility problem}endomorphic reducibility} problem: given two elements of a group find out if one of them is an endomorphic image of another (Romankov \cite{Rv77}). In particular, for some finitely generated nilpotent group, the problem of solvability of systems of equations in that group is undecidable. Indeed, $G=\la x_1,\ldots,x_m\mid r_1=1,\ldots,r_n=1\ra$, is a finitely presented group, then the endomorphic reducibility of $u$ to $v$ is equivalent to the solvability of the following system of equations with respect to unknowns $w_1,...,w_m$:

$$\left\{\begin{array}{l} r_1(w_1,\ldots, w_m)=1\\
\ldots\\
r_n(w_1,\ldots,w_m)=1\\
u(w_1,\ldots,w_m)=v
\end{array}\right.
$$

Among nilpotent groups, particularly nice ones are graded nilpotent groups (sometimes also called {\em homogeneous}). Recall that a nilpotent group is called {\em \index{Group!graded nilpotent}graded} if for every $x\in G_k\setminus G_{k+1}, y\in G_l\setminus G_{l+1}$, the commutator $[x,y]$ belongs to $G_{k+l}\setminus G_{k+l+1}$ or is equal to 1  (in a non-graded nilpotent group, the commutator $[x,y]$ can belong to any $G_n$, $n\ge k+l$). With every nilpotent group $G$, one can associate a graded nilpotent group $G^{gr}$ as follows $G^{gr}$ is generated by the union of all sets $G_k/G_{k+1}$ subject to  all possible  relations of the form $[xG_k,yG_l]=[x,y]G_{k+l}$.

The growth function of a nilpotent group is always equivalent to a polynomial and its degree can be computed explicitly using the following formula:

\begin{theorem}[Guivarc'h \cite{Gui1}, Bass \cite{Bass}]\label{Bass} If $G$ is a finitely
generated nilpotent group, and $\gamma_S(n)$ its growth function for
some generating set $S$,  then there are constants $A,B>0$ such that
$An^d\leq\gamma_S(n)\leq Bn^d$ for all $n\geq 1$, where
$d=d(G)=\sum_{h\geq 1}hr_h$, $r_h$ is the torsion-free rank of the Abelian group
$G_h/G_{h+1}$.
\end{theorem}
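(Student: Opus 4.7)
The plan is to prove both an upper bound $\gamma_S(n)\le Bn^d$ and a lower bound $\gamma_S(n)\ge An^d$ using an explicit normal form adapted to the lower central series. First I would choose a \emph{Mal'cev basis} $x_1,\ldots,x_k$ for $G$: after replacing $S$ by an equivalent finite generating set (which only affects $\gamma_S$ up to the equivalence $\prec$), pick elements $x_i\in G_{w(i)}\setminus G_{w(i)+1}$ so that, modulo torsion in the abelian group $G_{w(i)}/G_{w(i)+1}$, the projections of those $x_i$ with a fixed weight $w(i)=h$ form a basis of the torsion-free part of $G_h/G_{h+1}$. In particular the number of basis elements of weight $h$ equals $r_h$. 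Additional basis elements covering torsion quotients contribute only bounded factors to the count and can be ignored. By standard nilpotent theory, every $g\in G$ has a unique expression $g=x_1^{a_1}x_2^{a_2}\cdots x_k^{a_k}$ with $a_i\in \mathbb Z$ (up to torsion).

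For the upper bound I would use P.~Hall's \emph{collection process}. Given a word $w$ of length $\le n$ in the original generators $S$, apply the identities $[xy,z]=[x,z]^y[y,z]$ and $[x,yz]=[x,z][x,y]^z$ to rewrite $w$ in the normal form $x_1^{a_1}\cdots x_k^{a_k}$. The key combinatorial estimate, proved by induction on $h=w(i)$, is that $|a_i|\le Cn^{w(i)}$: each copy of $x_i$ produced during collection arises as a basic commutator of weight $w(i)$ in the original letters, and the number of such commutators collectible from a word of length $n$ is at most of order $n^{w(i)}$. Consequently,
\[
\gamma_S(n)\le \prod_{i=1}^{k}(2Cn^{w(i)}+1)=O\!\left(n^{\sum_i w(i)}\right)=O(n^d),
\]
since $\sum_i w(i)=\sum_{h\ge 1}h r_h = d$ (torsion generators contributing $O(1)$ factors).

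For the lower bound I would exhibit explicitly $\asymp n^d$ distinct elements of word length $\le n$. For each $i$, since $x_i$ lies in $G_{w(i)}$, it can be written as (a product of boundedly many) basic commutators of weight $w(i)$ in the original generators; hence the power $x_i^{m}$ has length at most $C'|m|^{1/w(i)}$ in $S$ — this is the ``Heisenberg phenomenon'' that $[a,b]^{n^2}$ admits a word of length $O(n)$, and it reduces via the commutator identity $[a,b^n]=[a,b]^n\cdot(\text{higher weight})$ to nesting commutators of the appropriate depth. Choose a small constant $c>0$ and consider all tuples $(a_1,\ldots,a_k)$ with $|a_i|\le cn^{w(i)}$; the product $x_1^{a_1}\cdots x_k^{a_k}$ then has word length at most $\sum_i C'(cn^{w(i)})^{1/w(i)}\le kC'c^{1/c}n\le n$ provided $c$ is small enough. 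By uniqueness of Mal'cev coordinates, distinct tuples give distinct group elements (at least modulo the torsion of the successive quotients, which changes counts by only a bounded factor). The number of tuples is $\asymp n^{\sum_i w(i)}=n^d$.

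The main obstacle is the collection-process estimate $|a_i|\le Cn^{w(i)}$: one must track carefully how many basic commutators of each weight are produced when repeatedly applying the identities above to a word of length $n$, and verify that the polynomial bound survives through $c$ iterations of the lower central series. The lower-bound companion lemma — that $x_i^{m}$ genuinely has length $\Theta(|m|^{1/w(i)})$ and that the ``packed'' normal forms remain pairwise distinct — is technically delicate because one must rule out cancellations coming from the relations of $G$; this is handled by passing to the associated graded Lie ring $G^{\mathrm{gr}}$ (introduced in the excerpt) where the weights are genuine gradings and Mal'cev coordinates are injective on the nose, and then lifting the estimate back to $G$ via the fact that $G$ and $G^{\mathrm{gr}}$ have equivalent growth functions.
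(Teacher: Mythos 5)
The paper contains no proof of this theorem; it is quoted from Guivarc'h and Bass as a known result, so there is nothing in the paper to compare against. Your proposal follows the classical Bass/Wolf strategy, and the upper bound via the collection process, the choice of a Mal'cev basis adapted to the lower central series, and the plan to produce $\asymp n^d$ packed normal forms for the lower bound are all the right ingredients.

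However, the last paragraph has a genuine circularity. You propose to prove the length estimate $\ell_S(x_i^m)\asymp |m|^{1/w(i)}$ and the distinctness of the packed normal forms inside the associated graded group $G^{\mathrm{gr}}$ and then ``lift back to $G$ via the fact that $G$ and $G^{\mathrm{gr}}$ have equivalent growth functions.'' That equivalence is not available for free: $G^{\mathrm{gr}}$ has the same ranks $r_h$ by construction, so the statement $\gamma_G\asymp\gamma_{G^{\mathrm{gr}}}$ is, up to constants, precisely what the Bass--Guivarc'h formula says when applied to both groups. There is no a priori quasi-isometry between $G$ and $G^{\mathrm{gr}}$ (they need not be isomorphic), and the one clean way to compare them geometrically --- Pansu's theorem that they share the same Carnot--Carath\'eodory asymptotic cone --- is a substantially deeper result that you cannot use as an input here. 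The fix is to run the lower bound entirely inside $G$: by induction on the nilpotency class, if $x$ is a weight-$h$ basic commutator, pick $n$ with $n^h\asymp|m|$ and replace each generator in the commutator expression by its $n$-th power to produce a word of length $O(n)$ equal to $x^{n^h}e$ with $e\in G_{h+1}$; the error $e$ has controlled Mal'cev coordinates of weights $>h$ and is absorbed by the inductive hypothesis. Distinctness of the products $x_1^{a_1}\cdots x_k^{a_k}$ is simply uniqueness of Mal'cev coordinates in $G/\mathrm{tor}(G)$ and needs no passage to $G^{\mathrm{gr}}$. Finally, the upper-bound estimate $|a_i|\le Cn^{w(i)}$ is usually cleanest as an induction on class (pass to $G/G_c$, peel off, bound the residual in $G_c$ using the known distortion of $G_c$ in $G$), rather than by a one-shot count of basic commutators, since the collection process can blow up the raw word length at intermediate stages.
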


In fact by the famous result of Gromov \cite{Gromov}, groups having nilpotent subgroups of finite index are precisely the finitely generated groups whose growth functions are bounded by polynomial. The paper \cite{Gromov} was the first paper where asymptotic cones of nilpotent groups were studied. It was proved later by Pansu \cite{Pa}
that all asymptotic cones of any nilpotent group are isometric to a graded Lie group canonically associated to $G$. Here is the description. Let $\mathrm{tor}(G)$ be the finite normal subgroup of $G$ generated by all elements of finite order (the fact that $\mathrm{tor} (G)$ is finite can be found, say, in \cite{Hall}). The nilpotent group $\bar{G}=G/\mathrm{tor}(G)$ is without torsion, hence it can be embedded, according to Malcev \cite{Ma55}, as a uniform (co-compact) lattice in a nilpotent Lie group. The corresponding graded nilpotent group $\hat{G}$ is also a Lie group. That group equipped with the so-called  {\em \index{Carnot-Carath\' eodory metric}Carnot-Carath\' eodory} metric\footnote{In order to define that metric, consider the tangent space $T_x$ at every point $x$ of $\hat{G}$. It is isomorphic to the Lie algebra $\mathfrak{g}=\hat G/\hat G_1\oplus \hat G_1/\hat G_2\oplus\ldots$ (with the natural commutator product) corresponding to $\hat G$, and we can identify $T_x$ with $\mathfrak{g}$ in a smooth way with respect to $x$. The elements of the first summand, $\hat G/\hat G_1$, generate $\mathfrak{g}$. The corresponding directions in $\hat G$ are called {\em \index{Horizontal path}horizontal}. A {\em horizontal path} on $\hat G$ is any path that has horizontal direction at every point. Every two points of $\hat G$ are connected by a horizontal path, and the length of a shortest such path connecting two points is the Carnot-Carath\' eodory distance between these two points. }(see, for example, \cite{GrCarnot}) is isometric to all asymptotic cones of $G$. In particular, all asymptotic cones of a nilpotent group are simply connected, and by Theorem \ref{Gro}, every nilpotent group has polynomial isoperimetric function. That result was strengthened by Gersten  \cite{Ger1}: he proved  that
$G$ admits a polynomial isoperimetric inequality of degree $2^h$, where $h$ is the
{\em \index{Hirsch length of a nilpotent group}Hirsch length of $G$} i.e. the sum of torsion-free ranks of all Abelian groups $G_i/G_{i+1}$. The degree was improved to $2\cdot 3^c$ by Conner \cite{Conner},
and then to $2c$ by Hidber \cite{Hid}. One can deduce the bound $c+1+\varepsilon$ for every $\varepsilon>0$ by the following argument.  Pittet \cite{Pi}, proved that a lattice in a simply
connected graded nilpotent Lie group of class $c$ admits a polynomial
coarse area function $A_\delta$ of degree $c + 1$.  Hence by the result of Pansu \cite{Pa}, every asymptotic cone of any nilpotent group of class $c$ has isoperimetric function $n^{c+1}$. It remains to apply Theorem \ref{pap2} of Papasoglu. It is not clear how to remove $\varepsilon$ from the estimate. The methods of Dru\c tu \cite{Drutuquadr} are based on the fact that the asymptotic cones in her situation were buildings. The first proof of the polynomial bound of degree $c+1$ was outlined by Gromov in \cite[5.A5]{Gr1}. A complete combinatorial proof was found in \cite{GHR}.

\begin{theorem}[Gersten, Holt, Riley \cite{GHR}]\label{GHS} Every finitely generated nilpotent group of class $c$ has isoperimetric function that is polynomial of degree $c+1$.
\end{theorem}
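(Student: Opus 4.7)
The plan is to prove Theorem \ref{GHS} by induction on the nilpotency class $c$, working with the short exact sequence $1 \to G_c \to G \to G/G_c \to 1$, where $G_c$ is the last non-trivial term of the lower central series. The base case $c=1$ is the virtually abelian case: a finitely generated virtually abelian group has quadratic Dehn function ($= n^{c+1}$ for $c=1$), which I would verify either by the band argument from Example \ref{example1} or by observing that the asymptotic cone is a finite-dimensional normed space and invoking Theorem \ref{pap2}. For the inductive step, I would assume that $G/G_c$, which is nilpotent of class $c-1$, has Dehn function $\preceq n^{c}$, and seek to bound the Dehn function of $G$ by $n^{c+1}$. It suffices to work in a free nilpotent group of class $c$ of finite rank, since Dehn functions pass to quotients at the cost of a linear factor and every finitely generated nilpotent group of class $c$ is a quotient of such a free nilpotent group.

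The main construction will take a word $w$ of length $n$ that is trivial in $G$ and assemble a van Kampen diagram for $w$ in two conceptual phases. First, by the inductive hypothesis, there is a van Kampen diagram $\bar\Delta$ over the presentation of $G/G_c$ with at most $C n^{c}$ cells whose boundary label is $w$. Lifting $\bar\Delta$ to $G$ replaces each cell by a copy of a relation of $G$ that differs from the $G/G_c$-relation by a bounded-length central element of $G_c$; consequently the ``lifted'' diagram bounds not $w$ but rather $w$ multiplied by a word $w'$ of length $O(n^{c})$ over the generators of the free abelian group $G_c$. In the second phase, one must trivialize $w'$ using only abelian relations together with relations of the form $[x,z]=1$ for $z\in G_c$, since $G_c$ is central.

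The key obstacle, and the heart of the argument, is that a naive accounting of the second phase is catastrophic. Treating $w'$ as a word in a free abelian group gives area $O(|w'|^{2}) = O(n^{2c})$, which is too large. The resolution, which I would borrow from Gromov's outline in \cite[5.A5]{Gr1} and the combinatorial execution in \cite{GHR}, is not to accumulate all of $w'$ on one side of the diagram before cancelling, but rather to push each central generator of $G_c$ that appears in the inductive filling directly to the boundary of $w$ as soon as it is created. Because $G_c$ is central, moving a single $G_c$-letter past the $O(n)$ letters of the current perimeter costs $O(n)$ cells (one commuting relation per letter), and there are $O(n^{c})$ such $G_c$-letters to move, giving the total bound $O(n^{c+1})$. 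Making this precise requires organizing the diagram as an ``onion'' of nested corridors, one per central generator, and bookkeeping that uses the distortion estimate $|g|_{G_c} \preceq |g|_G^{c}$ for $g\in G_c$ together with a normal form expressing elements in terms of basic commutators.

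The delicate part will be the simultaneous control of the perimeter length during the interleaved filling, because each corridor inserted to transport a central error through the boundary itself increases the perimeter by a bounded amount and may later need to be transported past. I would handle this by an amortized argument, grouping the $O(n^{c})$ central corrections by their depth in the basic-commutator normal form and bounding the transport cost by a geometric series that still sums to $O(n^{c+1})$. As a consistency check, the case $c=2$ (e.g.\ the discrete Heisenberg group) should recover the well-known cubic Dehn function, and for $c=1$ it reproduces the quadratic bound. An alternative, less hands-on route would be to combine Pansu's identification of the asymptotic cones of $G$ with a graded Carnot Lie group, Pittet's degree-$(c+1)$ coarse isoperimetric inequality for such cones, and Papasoglu's Theorem \ref{pap2}; but this only yields $n^{c+1+\varepsilon}$, so the combinatorial approach above is needed to remove the $\varepsilon$.
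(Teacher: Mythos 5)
The paper does not prove this theorem; it only cites \cite{GHR} (following Gromov's outline in \cite[5.A5]{Gr1}), so there is no internal argument to compare against. Your scheme---induct on class, lift a filling of $\bar w$ over $G/G_c$ to $G$, collect the $O(n^c)$ bounded-length central errors coming from the lifted cells, and absorb them with corridors of commuting relations---is the right skeleton for the Gromov/GHR argument, and your remark that the Pansu--Pittet--Papasoglu route only yields $n^{c+1+\varepsilon}$ correctly matches the paper's discussion of why a direct combinatorial argument is needed to get the sharp exponent.

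There is, however, a genuine gap in the cost accounting for the second phase. You price each transport of a $G_c$-letter at $O(n)$ cells on the grounds that the ``current perimeter'' has $O(n)$ letters, but what governs the length of a corridor is the distance from the cell where the error is created to the place where it is annihilated---in the worst case the \emph{diameter} of the lifted filling, not the length of $w$. For an arbitrary filling over $G/G_c$ of area $O(n^{c})$, that diameter can a priori be as large as $O(n^{c})$; then $O(n^{c})$ transports at cost $O(n^{c})$ each gives $O(n^{2c})$, exactly the blowup you flag as fatal. To close this you must strengthen the inductive hypothesis to a \emph{simultaneous} bound---fillings of $G/G_c$ of area $O(n^{c})$ \emph{and} diameter (or, for the same purpose, filling length) $O(n)$---and check that the construction you describe returns a filling of $G$ satisfying both bounds, so the induction actually closes. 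Your sketch does not carry the diameter through the induction, and the ``amortized / geometric-series'' argument, stated at this level of vagueness, does not by itself tame the sum $\sum_{k\le n^{c}}(n+k)$, which is of order $n^{2c}$ once the boundary is allowed to grow by a letter per transport. This is precisely the bookkeeping where the ``complete combinatorial proof'' of \cite{GHR} does real work beyond Gromov's outline; without the diameter control, your argument establishes only a polynomial isoperimetric bound, not degree $c+1$.
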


The isoperimetric inequality $n^{c+1}$ is the best possible
bound for some nilpotent groups of class $c$. For example if $G$ is a free nilpotent
group of class $c$ with at least 2 generators, then its Dehn
function is polynomial of degree $c + 1$ (see \cite{BMS} or \cite{G3} for the lower
bound and \cite{Pi} for the upper bound, see also \cite{BP} for other examples of nilpotent groups with maximal possible Dehn functions). Moreover, the following general theorem is proved in \cite{BMS, Gr1}.

\begin{theorem}[Gromov \cite{Gr1}, Baumslag, Miller, Short \cite{BMS}] Let $G$ be a nilpotent group of class $c$ and let $G_{c-1}$ be the $(c-1)$st
term of its lower central series.
If $G_{c-1}$ contains an infinite cyclic subgroup; then the quotient group $G/G_{c-1}$ is a nilpotent group of class $c-1$ and has the Dehn function $n^{c}$.
\end{theorem}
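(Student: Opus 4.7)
The upper bound --- that the Dehn function $f(n)$ of $H := G/G_{c-1}$ satisfies $f(n)\prec n^c$ --- is immediate from Theorem~\ref{GHS} applied to the nilpotent group $H$ of class $c-1$. I will establish the matching lower bound by a variant of the Baumslag--Miller--Short strategy, exhibiting a family $\{w_n\}$ of words of length $O(n)$ representing the identity in $H$ whose area in $H$ is at least $C n^c$ for some constant $C>0$.

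Since $G$ is finitely generated and nilpotent, $G_{c-1}$ is finitely generated abelian and is central in $G$ (because $[G_{c-1},G]=G_c=1$). As an abelian group it is generated by $c$-fold iterated commutators in a fixed finite generating set of $G$, so the hypothesis that $G_{c-1}$ contains an infinite cyclic subgroup produces such a commutator of infinite order. Choose $x_1,\dots,x_c$ in a finite generating set of $G$ with $z := [x_1,x_2,\dots,x_c]$ of infinite order, and set $w_n := [x_1^n,x_2^n,\dots,x_c^n]$. Each $w_n$ has length $O(n)$, lies in $G_{c-1}$, and hence represents the identity in $H$. Passing to the associated graded Lie ring of $G$, whose top grade $G_{c-1}/G_c$ coincides with $G_{c-1}$, iterated multilinearity of the Lie bracket (built by induction on commutator depth from $[u^n,v]\equiv n[u,v]\pmod{G_2}$) yields the exact identity $w_n = z^{n^c}$ in $G$; exactness is automatic because any would-be lower-order correction would have to lie in $G_c=1$. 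In the finitely generated abelian group $G_{c-1}$ the element $w_n = z^{n^c}$ therefore has intrinsic word length at least $C n^c$ for some $C>0$, its component along the $\mathbb{Z}$-summand containing $z$ having absolute value at least $n^c$.

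Now fix a finite presentation $G=\langle X\mid R\rangle$ and enlarge it to $H=\langle X\mid R\cup R'\rangle$, where $R'=\{\rho_1,\dots,\rho_m\}$ is a finite collection of words in $X$ representing a generating set of $G_{c-1}$. Let $\Delta$ be any van Kampen diagram over $H$ with boundary label $w_n$, and let $N$ be the number of its cells labeled by relators from $R'$; clearly $N\le\mathrm{Area}(\Delta)$. Proposition~\ref{prop1} expresses $w_n$ in $G$ as a product of conjugates of the relators associated with cells of $\Delta$, one factor per cell; since the $R$-factors are identities in $G$, we are left with $w_n = \prod_{j=1}^{N} s_j\,\rho_{i_j}^{\epsilon_j}\,s_j^{-1}$ in $G$. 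Centrality of $G_{c-1}$ collapses the conjugations, and so $w_n = \prod_{j=1}^{N} \rho_{i_j}^{\epsilon_j}$ as an equation inside the abelian group $G_{c-1}$. Comparing with the lower bound from the previous paragraph forces $N\ge C'n^c$ for a constant $C'>0$ depending only on the fixed presentation; therefore $\mathrm{Area}(\Delta)\ge N\ge C'n^c$, which establishes the lower bound on $f$ and completes the proof.

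The main technical kernel is the exact identity $w_n = z^{n^c}$ in $G$: equivalently, the projection of the iterated commutator $[x_1^n,\dots,x_c^n]$ onto the infinite-cyclic direction $\langle z\rangle$ inside $G_{c-1}$ grows as $n^c$ with no cancellation. This is a multilinearity assertion at the top grade of the graded Lie ring of $G$, and in a class-$c$ group it is \emph{exact} precisely because every would-be correction term lives in $G_c=1$. Once this is in hand, the van Kampen lifting step is purely formal: it trades geometric area in $H$ for the abelian word-length cost of $w_n$ in the central subgroup $G_{c-1}$, and the latter is controlled by the polynomial identity just produced.
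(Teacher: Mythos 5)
Your proof is correct and follows the standard Baumslag--Miller--Short/Gromov centralization argument that the paper cites; the paper itself states the theorem without proof, referring to \cite{Gr1} and \cite{BMS}. Both key steps --- the exact identity $[x_1^n,\dots,x_c^n]=z^{n^c}$ (exact because the iterated-bilinearity corrections land in $G_c=1$) and the van Kampen lifting step that uses centrality of $G_{c-1}$ to convert cell count into abelian word length --- are precisely the tools used in those references.
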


In particular, $\Z^2$ is the factor-group of the Heisenberg group $H_3$ by its derived subgroup which contains an infinite cyclic group. Hence we get another proof that  the Dehn function of $\Z^2$ is $n^2$.

\subsubsection{The Heisenberg groups}\label{thg} Nevertheless $n^{c+1}$ is not the smallest isoperimetric function for many
nilpotent groups:

\begin{theorem}[Allcock \cite{Alc}, Olshanskii-Sapir \cite{OSHeis}]\label{alc}
The
$2n+1$-dimen\-sion\-al integral Heisenberg groups admit quadratic isoperimetric
functions for $n > 1$; these groups are all nilpotent of class 2.
\end{theorem}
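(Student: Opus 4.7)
The plan is to prove that the Dehn function of $H_{2n+1}$ is $O(\ell^2)$ for $n\ge 2$ by exploiting the fact that, unlike in $H_3$, the central generator $z$ is simultaneously presented by two independent commutator relations $[x_1,y_1]=z$ and $[x_2,y_2]=z$.

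Let $w$ be a word of length $\ell$ equal to $1$ in $H_{2n+1}$. First I would project $w$ to the quotient $\Z^{2n}=H_{2n+1}/\langle z\rangle$ and apply the fact that $\Z^{2n}$ has quadratic Dehn function (a direct generalization of Example \ref{example1}, or via the geometric action of $\Z^{2n}$ on $\R^{2n}$ and \ref{s:tdfogaoms}) to produce a van Kampen diagram $\bar\Delta$ of area $\le C\ell^2$ filling the projected word $\bar w$. Its cells are squares of the form $[a,b]=1$ for $a,b\in\{x_1^{\pm1},\ldots,y_n^{\pm1}\}$. Lifting $\bar\Delta$ cellwise back to $H_{2n+1}$, every cell except those of type $[x_i,y_i]=1$ lifts to a relation still valid in $H_{2n+1}$; each $[x_i,y_i]$-cell must instead be replaced by a Heisenberg cell whose boundary reads $x_iy_ix_i^{-1}y_i^{-1}z^{-1}$, reflecting the true relation $[x_i,y_i]=z$. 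The lifted figure therefore has boundary $w\cdot z^N$, and because $w=1$ in $H_{2n+1}$ while $z$ has infinite order, the total signed count $N$ of the defective cells automatically vanishes.

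The main obstacle is the repair step: one must eliminate the stray $z^{\pm 1}$-strands emitted by the defective cells without spending more than $O(\ell^2)$ additional area. The naive strategy of pairing each $z^{+1}$ strand with a $z^{-1}$ strand and joining them by a chain of commutation cells $[z,x_k]=[z,y_k]=1$ may require each pairing to travel distance $\Theta(\ell)$, and with up to $\Theta(\ell^2)$ defects the total cost becomes $\Theta(\ell^3)$; this is precisely the mechanism that forces $H_3$ to have cubic Dehn function, and no alternative exists there. For $n\ge 2$ the extra identity $z=[x_2,y_2]$ provides the way out: the plan is to use the relation $[x_1,y_1][x_2,y_2]^{-1}=1$ (a word of constant length valid in $H_{2n+1}$) to locally absorb each $[x_1,y_1]$-defect by a neighboring $[x_2,y_2]^{\mp 1}$ cell, whose $z$-strand cancels the offending one immediately; the auxiliary $x_2,y_2$ edges so introduced can be pushed through the surrounding abelian structure of $\bar\Delta$ using only the free commutation relations between distinct generator pairs, which are already present in the lift.

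The hard part is the bookkeeping. One must first rearrange $\bar\Delta$, at quadratic cost, so that the $[x_i,y_i]$-cells are grouped into matched pairs of opposite sign separated only by constant combinatorial distance, then insert the compensating $[x_j,y_j]^{\pm 1}$ cells in place, and finally verify that the added $x_j,y_j$-edges can be rerouted to the boundary at $O(1)$ amortized cost per defect, so that the global area remains $O(\ell^2)$. The essential use of $n\ge 2$ enters exactly here: with only one commutator pair available, no such local absorption is possible and the cubic lower bound of $H_3$ becomes unavoidable, but with $n\ge 2$ the local rearrangement reduces a global routing problem of cubic size to a collection of independent constant-cost moves. Once this is carried out, the upper bound $O(\ell^2)$ for the Dehn function of $H_{2n+1}$ follows directly, matching the universal quadratic lower bound given by Theorem \ref{Bow1}.
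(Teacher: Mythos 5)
Your framework---project $w$ to $\Z^{2n}$, take a quadratic filling $\bar\Delta$ there, lift cellwise, and repair the resulting $z^{\pm1}$ defects---is a genuinely different route from the paper's, which never touches the abelianization: the paper works directly with word rewriting, introduces the symplectic area of a two-variable subword, reduces to ``rectangular'' normal forms $[x_1^p,x_2^q]$, and closes the induction with a divide-in-halves recursion. The common ground is that both exploit the redundancy of having at least two commutator pairs presenting the same central element $z$; in the paper this appears as Reduction~1 (transform $u(x_1,x_2)$ into $u(y_1,y_2)$ at cost $O(|u|^2)$, using the spare pair as ``temporary storage'' so that products of commutators can be commuted past each other cheaply).

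The gap in your proposal is the sentence asserting that one can ``rearrange $\bar\Delta$, at quadratic cost, so that the $[x_i,y_i]$-cells are grouped into matched pairs of opposite sign separated only by constant combinatorial distance.'' That is not an operation on a fixed van Kampen diagram; you would have to exhibit an alternative quadratic-area filling of $\bar w$ in $\Z^{2n}$ with the defects interleaved, and it is exactly this that the paper's Reduction~1 and the recursion are designed to replace. Your own diagnosis shows the difficulty: with $\Theta(\ell^2)$ defects possibly living in two disjoint $\Theta(\ell)\times\Theta(\ell)$ blocks (take $w=[x_1^m,y_1^m]\,[x_2^m,y_2^m]^{-1}$ with $m\sim\ell/8$, whose natural $\Z^4$-filling puts all the $[x_1,y_1]$-cells in one square and all the $[x_2,y_2]^{-1}$-cells in another, $\Theta(\ell)$ apart), the pairing is a global transport problem of cubic size, and declaring that ``local absorption'' reduces it to constant-cost moves is to assume the conclusion. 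To make your strategy rigorous you would need a lemma of the form: every loop of length $\ell$ in $\Z^{2n}$ ($n\ge 2$) that is trivial in $H_{2n+1}$ admits a filling of area $O(\ell^2)$ in which, for each $2$-plane, the positive and negative symplectic cells are locally matched---and proving that is essentially as hard as the original statement. The paper sidesteps this entirely by never fixing a diagram to rearrange; instead it rewrites the word, using the $(y_1,y_2)$-copy of every commutator word as a movable token, so the ``routing'' is done once per recursion level rather than once per cell.
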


\begin{df} Recall that the 3-dimensional Heisenberg group has presentation $H_3=\la x,y,c\mid [x,y]=c, [c,x]=[c,y]=1\ra$. Its center is generated by $c$. The $2n+1$-dimensional Heisenberg group $H_{2n+1}$ is the $n$th central power of $H_3$. In general, the {\em \index{Central power of a group}central power} of a group $G$ is the factor-group of the $n$th direct power $G\times \ldots\times G$ by the normal (moreover, central) subgroup consisting of all vectors $(c_1,\ldots, c_n)$ with $c_1c_2\ldots c_n=1$, where all $c_i$ belong to the center of $G$.
\end{df}

The proof of Theorem \ref{alc} from \cite{OSHeis} is relatively simple but it has some features in common with very complicated computations of Dehn functions (say, \cite{GubaF, Young2, Young}), so we provide some details here.

In order to describe the proof of Theorem \ref{alc}, let $n=2$, that is let us consider the group $$\begin{array}{ll}H_5= & \la x_1, x_2, y_1, y_2, c\mid [x_1,x_2]=[y_1,y_2]=c, [x_i, y_j]=1, \\ \, & [c,x_i]=[c,y_i]=1, i,j\in \{1,2\}\ra.\end{array}$$

Consider a word $w$ in the generators $x_1,x_2, y_1, y_2$. Suppose that $w=1$ in $H_5$. We need to show that applying the relations of $H_5$ we can reduce the word to 1 so that the number of steps is $O(|w|^2)$.  First notice that we can use the commutativity relations $[x_i,y_j]=1$ and rewrite
$w$ as $w_xw_y$ where $w_x$ is a word in $x_1, x_2$, $w_y$ is a word in $y_1,y_2$. We need $O(|w|^2)$ steps to do that. Now the proof consists of several reductions.

{\bf Reduction 1.} Note that if $u(x_1,x_2)$ is a product of commutators, then $w(x_1,x_2)=w(y_1,y_2)$ in $H_5$. We prove that using at most $O(|u|^2)$ steps we can transform $u(x_1,x_2)$ to $u(y_1,y_2)$. Therefore we can restrict ourselves to words $u(x_1,x_2)$ in two variables only (but we use the other two generators for auxiliary transformations).

{\bf Reduction 2.} Every word $u(x_1,x_2)$ can be drawn on the $(x_1,x_2)$-planar square grid: $x_1$ labels horizontal edges, $x_2$ labels vertical edges (similar to \ref{vkat}). A word $u$ is a product of commutators if and only if it labels a loop. Then it is easy to see that in $H_5$, $u$ is equal to $c^a$ for some $a$ called the {\em \index{Symplectic area of a loop on the square grid}symplectic area of the region spanned by the curve}. In particular $u$ equals $1$ in $H_5$ if and only if its symplectic area is $0$.

{\bf Reduction 3.} Now let us define a symplectic area of any word $u(x_1,x_2)$, not necessarily a product of commutators. Let $k_i$, $i=1,2$ be the sum of exponents of $x_i$ in $u$. Then the word $C(u)=ux_1^{-k_1}x_2^{-k_2}$ is a product of commutators. Then the  {\em symplectic area} of $u$ is, by definition, the symplectic area of $C(u)$.

{\bf Reduction 4.} A {\em rectangular word} $R(p,q)$ where $p, q>0$ are integers is, by definition $[x_1^p,x_2^q]$. Its symplectic area is $pq$. Our goal is for every word $u(x_1,x_2)$, reduce the word $C(u)$ to a rectangular word of the same area in number of steps quadratic in $|u|$. If $u=1$ in $H_5$, then the rectangular word is empty, and we are done.

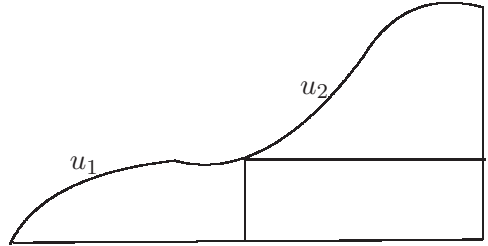
\begin{figure}[H]
\unitlength .6mm 
\linethickness{0.4pt}
\ifx\plotpoint\undefined\newsavebox{\plotpoint}\fi 
\begin{picture}(109.75,58.75)(0,0)
\qbezier(4.5,1.75)(11.75,17)(41,20.25)
\qbezier(41,20.25)(61.625,14.75)(82.75,43.25)
\qbezier(82.75,43.25)(92,58.75)(109.25,54.25)
\put(109.25,54.25){\line(0,-1){.5}}
\put(109.25,53.75){\line(0,-1){51}}
\multiput(109.25,2.75)(-4.5217391,-.0326087){23}{\line(-1,0){4.5217391}}
\multiput(5.25,2)(-.0333333,.0333333){15}{\line(0,1){.0333333}}
\put(56.5,20.75){\line(0,-1){18.5}}
\put(56.5,20.5){\line(1,0){53.25}}
\put(21,19.25){\makebox(0,0)[cc]{$u_1$}}
\put(72,36){\makebox(0,0)[cc]{$u_2$}}
\end{picture}

\caption{The division in halves trick} \label{p12}
\end{figure}

{\bf Reduction 5.} We are using the ``division in halves" trick shown on Figure \ref{p12}. The word $u$ is represented as a product of two subwords $u_1, u_2$ with $|u_i|\approx |u|/2$. Note that the loop corresponding to the word $C(u) $ on Figure \ref{p12} is decomposed into three loops: the loops corresponding to $C(u_1), C(u_2)$ and a rectangular word $R$ with sides $<|u|$. Then in order to reduce $C(u)$ to a rectangular word, we need to be able to reduce $C(u_1)$ to a rectangular word $R_1$, reduce $C(u_2)$ to a rectangular word $R_2$, and then reduce the product of three rectangular words $R_1RR_2$ to a rectangular words. The first two tasks can be done arguing by induction on the length of the words. Hence it remains to consider products of rectangular words. That is done by cutting and pasting (using Reduction 1).

Note that Reduction 1 is crucial in this proof. Here we use the fact that $H_5$ has two extra generators $y_1, y_2$ which we use as a temporary storage. Say, in order to transform $u(x_1,x_2)v(x_1,x_2)$ to $v(x_1,x_2)u(x_1,x_2)$ where $u,v$ are products of commutators, we first transform $u(x_1,x_2)$ into $u(y_1,y_2)$ then use commutativity of generators and transform $u(x_1,x_2)v(y_1,y_2)$ to $v(y_1,y_2)u(x_1,x_2)$ and then transform $v(y_1,y_2)$ back into $v(x_1,x_2)$. This transformation is used in many places in the proof.

Note that $H_3$ has cubic Dehn function (proved in \cite{Eautomatic}).

\subsubsection{Other central products.}\label{ss:ocp}
The central powers of nilpotent groups of class 2 have very small Dehn functions. The following result is mentioned in our paper with A. Olshanskii \cite{OSHeis}. Its proof is essentially the same as the proof for the Heisenberg group $H_{2n+1}$. Another proof is given in R. Young \cite{Young1}. Young's approach is in a sense a combination of approaches from \cite{Alc} and \cite{OSHeis}.

\begin{theorem}\rm{(}\cite[Remark on page 927]{OSHeis}\rm{)}. Let $G$ be any finitely generated nilpotent group of class 2. Then
\begin{enumerate}
\item[(a)] The central square of $G$ admits isoperimetric function $n^2\log n$.
\item[(b)] Some central power of $G$ has quadratic isoperimetric function.
\end{enumerate}

\end{theorem}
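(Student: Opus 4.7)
The plan is to extend the argument sketched for the Heisenberg group $H_{2n+1}$ in \ref{thg} to a general finitely generated nilpotent group $G$ of class $2$. Write $G = \langle x_1,\ldots,x_m \mid R\rangle$ with $R$ containing the centrality relations $[[x_i,x_j],x_k]=1$. In the central square $G\times_c G$ (resp.\ the $N$-th central power $G^{(N)}$) we have $N=2$ (resp.\ $N$) commuting copies $x_i^{(s)}$ of each generator $x_i$, subject to the extra identifications $[x_i^{(s)},x_j^{(s)}]=[x_i^{(s')},x_j^{(s')}]$ for all $s,s'$, so the derived subgroup $[G,G]$ is canonically embedded in the centre and is shared by all $N$ copies. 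Given a word $w$ that is trivial in $G^{(N)}$, we first use the commutation relations $[x_i^{(s)},x_j^{(s')}]=1$ ($s\ne s'$) to split $w$ in $O(|w|^2)$ steps as a concatenation $w_1w_2\cdots w_N$ where $w_s$ involves only the $s$-th copy.

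The main step is the analogue of Reduction~1 from \ref{thg}: if $u(x^{(s)})$ is a product of commutators in the $s$-th copy, then $u(x^{(s)})=u(x^{(s')})$ in $G^{(N)}$, and this equality can be realised by an explicit filling of quadratic area using the other copies as scratch space. Granting this, the proof becomes essentially combinatorial. For a single copy, every word $u$ in $x_1,\ldots,x_m$ can, modulo the abelianisation, be analysed via a vector-valued ``symplectic area'' in $[G,G]/\mathrm{tor}$: if $C(u)$ is the commutator part of $u$ obtained by pushing all $x_i$'s to a normal form, then $C(u)$ is a product of commutators whose class in $[G,G]$ is computed from the pairwise ``signed areas'' of the projections of $u$ onto each coordinate plane in $G^{ab}/\mathrm{tor}\cong\mathbb{Z}^k$. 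The goal is to reduce $C(u)$ to a product of basic rectangular commutators $[x_i^p,x_j^q]$ of bounded number (one per coordinate plane), with quadratic cost.

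The reduction is by the ``division in halves'' trick: split $u=u_1u_2$ with $|u_i|\le|u|/2$, recurse on each half to obtain rectangular words $R_1,R_2$, and then rewrite the product $R_1\cdot R\cdot R_2$ (where $R$ is the extra rectangular loop from the splitting picture) as a single rectangular word using cut-and-paste manipulations that rely crucially on Reduction~1 (which lets us move products of commutators freely past other products of commutators using the other copies as temporary storage). For the central square, the recursion $f(n)\le 2f(n/2)+Cn$ gives the Dehn function bound $n^2\log n$ claimed in (a); this is because each level of the recursion contributes linearly and there are $\log n$ levels.

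The main obstacle, and the point where part (b) improves over (a), is precisely the parallelisation of the recursion. For a general class-$2$ nilpotent group, the commutator part $[G,G]/\mathrm{tor}$ can have rank $k>1$, so manipulating multiple rectangular words simultaneously requires several independent copies of the generators playing the role that $y_1,y_2$ played for $H_5$; a single extra copy suffices only to handle one swap at a time, forcing the $\log n$ loss. For $N$ sufficiently large (depending on $k$), one can arrange the recursion so that at every level all sibling halves are reduced in parallel, using disjoint copies as scratch storage, and the final assembly of the $O(k)$ rectangular words is performed in constant depth. The recursion then collapses to $f(n)\le 2f(n/2)+Cn$ with the constant-depth assembly, which by a standard ``parallel packing'' argument (as in the proof of Theorem~\ref{alc}) yields a genuinely quadratic bound. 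The hard part will be proving uniform quadratic bounds for the cut-and-paste manipulations in the presence of torsion in $[G,G]$ and of non-trivial relations on the abelianisation, since these complicate both the definition of ``symplectic area'' and the canonical form of rectangular words; this is where the careful bookkeeping of the Heisenberg argument has to be generalised.
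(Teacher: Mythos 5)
The paper does not actually prove this theorem; it only states that ``its proof is essentially the same as the proof for the Heisenberg group $H_{2n+1}$'' and refers to \cite{OSHeis} and \cite{Young1}, so what I can check you against is the $H_5$ sketch in \ref{thg} and internal consistency. Your overall template is the right one — split $w$ into the $N$ copies by the cross-commutation relations, generalize Reduction~1 to use other copies as scratch space, set up a vector-valued symplectic/commutator invariant, and run a division-in-halves recursion. That is indeed how one would extend the $H_5$ argument.

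The genuine gap is in the recursion accounting, which is both internally inconsistent and incompatible with the claimed bounds. You write that ``the recursion $f(n)\le 2f(n/2)+Cn$ gives the Dehn function bound $n^2\log n$'' and justify it by ``each level contributes linearly and there are $\log n$ levels'' — but that reasoning yields $n\log n$, and the Master theorem confirms $2f(n/2)+Cn = O(n\log n)$, not $n^2\log n$. Worse, for part~(b) you invoke \emph{the same} recurrence $f(n)\le 2f(n/2)+Cn$ and claim it now yields a quadratic bound; identical recursions cannot produce two different asymptotics. To get $n^2\log n$ you need a recursion at the ``critical'' rate (for instance $4f(n/2)+Cn^2$, or an assembly step whose cost is $\Theta(n^2)$ at \emph{every} level of the tree rather than shrinking with the subproblem), and to drop to $n^2$ in part~(b) you need the extra copies to make the per-level assembly cost telescope geometrically (so the total over levels is bounded by the top-level $Cn^2$). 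Compare Guba's $8f(n/3)+O(n^2)$ for Thompson's $F$ in \ref{ss:F}, where the whole point is that $8<3^2$ makes the recursion subcritical; your write-up never identifies what parameter is being made subcritical when passing from the central square to a higher central power, and the phrase ``constant-depth assembly'' is used without a definition. Until that quantitative step is supplied, neither bound in (a) nor in (b) actually follows from what you wrote. You are right that torsion in $[G,G]$ and relations in $G^{\mathrm{ab}}$ are additional issues to handle, but the accounting error above precedes those.
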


We were unable to show that the central square of every nilpotent group of class 2 has quadratic isoperimetric function. More concretely,  let $FN(2,10)$ be the \index{Group!free nilpotent of class 2}free nilpotent group of class $2$ generated by $a_1,b_1,\ldots, a_5,b_5$ (it is given by all the defining relations of the form $[[x,y],z]=1$, where $x,y,z\in \{a_1,\ldots, a_5, b_1,\ldots, b_5\}$). Let $G_{10}$ be the factor-group of $FN(2,10)$ by the central cyclic subgroup generated by $[a_1,b_1]\ldots [a_5,b_5]$. Then we were not able to prove that the central square of $G_{10}$ has quadratic Dehn function (this example and the question can be found in \cite[Section 5]{Young1}).

\subsubsection{Wenger's result}\label{ss:wr} Recently S. Wenger \cite{Wen2} showed that in fact central squares of nilpotent groups of class 2 often do not admit  quadratic isoperimetric inequality. In particular, he proved

\begin{theorem}[Wenger \cite{Wen2}]\label{Wen} The Dehn function of the central square of $G_{10}$ is strictly greater than quadratic.
\end{theorem}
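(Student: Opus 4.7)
The plan is to argue by contradiction using Theorem \ref{Wen2}. Let $H$ denote the central square of $G_{10}$, and suppose that the Dehn function of $H$ is bounded above by $C n^{2}$. Then Theorem \ref{Wen2} would guarantee that every asymptotic cone of $H$ admits a quadratic isoperimetric inequality with respect to integral $2$-currents: there exists a constant $K$ such that every Lipschitz loop $\gamma$ in any such cone bounds an integral $2$-current of mass at most $K |\gamma|^{2}$. The goal is to contradict this by producing a family of Lipschitz loops in the asymptotic cone whose filling mass grows superquadratically.

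First I would identify the asymptotic cone explicitly. Since $H$ is finitely generated nilpotent of class $2$, Pansu's theorem (see \ref{ss:tacatcp}) gives that every asymptotic cone of $H$ is isometric to a single graded Carnot group $\mathcal{C} = \exp(\mathfrak{g})$ equipped with the Carnot--Carath\'eodory metric. Here $\mathfrak{g} = V \oplus W$, the abelianization $V = V_{1} \oplus V_{2}$ has the $20$ generators $a_{i}, b_{i}, a_{i}', b_{i}'$ as a basis, and $W = [\mathfrak{g}, \mathfrak{g}]$ is the quotient of $\Lambda^{2} V$ by: the cross-terms $V_{1} \wedge V_{2}$ (the two copies commute), the identifications $a_{i} \wedge a_{j} = a_{i}' \wedge a_{j}'$, $b_{i} \wedge b_{j} = b_{i}' \wedge b_{j}'$, $a_{i} \wedge b_{j} = a_{i}' \wedge b_{j}'$ coming from the central-square construction, and the single extra relation $\sum_{k=1}^{5} a_{k} \wedge b_{k} = 0$ inherited from $G_{10}$.

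The heart of the argument is then to produce a sequence of Lipschitz loops $\gamma_{n}$ in $\mathcal{C}$ of length $O(n)$ whose filling mass in the sense of integral $2$-currents exceeds any fixed multiple of $n^{2}$. The natural candidates are loops whose ``central coordinate'' in $W$ equals a large multiple of the class $\sum_{k} a_{k} \wedge b_{k} \in W$; crucially this class is nontrivial only if we refuse to use the relation, and any integral $2$-current filling such a loop must, by Pansu differentiation, consist almost everywhere of horizontal $2$-planes in $V$ whose $\Lambda^{2} V$-content (read modulo the defining relations of $W$) must sum to the central coordinate of the loop. The fact that the relation involves five distinct commuting pairs across two exchangeable copies means that any such distribution of horizontal mass must be spread across the five pairs, and a pigeonhole/orthogonality analysis on $\Lambda^{2} V$ modulo the relation should give a genuinely superquadratic lower bound on the total mass.

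The main obstacle, and the technical heart of Wenger's proof in \cite{Wen2}, is precisely this last lower bound: translating the algebraic obstruction (the single relation $\sum a_{k} \wedge b_{k} = 0$ sitting alongside the central identification of the two copies) into a geometric lower bound on the mass of any integral $2$-chain filling $\gamma_{n}$. The appropriate tool is the theory of metric currents of Ambrosio--Kirchheim combined with Pansu differentiation in Carnot groups: the differential of a Lipschitz map into $\mathcal{C}$ is almost everywhere a graded homomorphism, so every integral $2$-current in $\mathcal{C}$ has horizontal tangent $2$-planes, and its boundary pairs with the $\Lambda^{2} V / R$-valued ``area $2$-form'' of the Carnot group. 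Once a superquadratic lower bound for filling $\gamma_{n}$ is obtained in $\mathcal{C}$ by this route, it directly contradicts the quadratic $2$-current isoperimetric inequality forced by Theorem \ref{Wen2} under the assumption of quadratic Dehn function, and the theorem follows.
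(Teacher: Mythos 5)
Your setup is correct and matches the paper's: argue by contradiction via Theorem \ref{Wen2}, identify the asymptotic cone as a Carnot group by Pansu, and use Pansu differentiability of Lipschitz maps into Carnot groups as the analytic engine. Your description of the graded Lie algebra of the cone (two copies of the $G_{10}$-algebra with identified centers, cross-terms killed, the extra relation $\sum a_k\wedge b_k=0$) is also the right picture.

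However, the crucial step diverges from the paper's account of Wenger's argument, and the replacement you propose is likely unworkable as stated. You set out to produce a sequence of loops $\gamma_n$ of length $O(n)$ whose integral-current filling mass grows superquadratically, and then hope to prove this lower bound by a ``pigeonhole/orthogonality analysis'' on $\Lambda^2 V$ modulo the relation. The paper says something sharper and categorically different: Wenger exhibits a single Lipschitz loop that bounds \emph{no} integral $2$-current at all. That is a qualitative obstruction, not a quantitative growth estimate, and it yields the contradiction immediately (a loop with no filling certainly has no filling of mass $\le C|\gamma|^2$). The mechanism is that any integral $2$-current with boundary $\gamma$, approximated by singular Lipschitz chains and differentiated \`a la Pansu, would have a.e.\ horizontal tangent $2$-planes whose $\Lambda^2 V / R$-valued integral must equal the central period of $\gamma$; the algebra of $R$ (the relation $\sum a_k\wedge b_k=0$ sitting across the identified centers) makes that period unattainable. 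There is no counting or mass-spreading estimate involved, and it is not clear that one exists: once a loop is fillable in the cone, nothing in your sketch prevents the filling mass from being quadratic, so the ``superquadratic lower bound'' route would need a genuinely new idea you have not supplied. Replacing your pigeonhole step by the non-fillability argument is the missing ingredient.
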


The idea of the proof is the following. Let $H$ be the central square of $G_{10}$. Suppose that $H$ has quadratic Dehn function. Then by Theorem \ref{Wen2}, in the asymptotic cone of $H$, every Lipschitz loop $\gamma$ must bound an integral 2-current of area $c|\gamma|^2$ for some constant $c$. Wenger gets a contradiction by finding a loop that does not bound an integral 2-current at all. In the proof, he essentially uses Pansu's description of asymptotic cones of nilpotent groups as nilpotent Lie groups with a Carnot-Carat\'eodory metric (see \cite{Pa} and \ref{ss:tacatcp}). Assuming that his loop bounds an integral 2-current $\zeta$, he approximates $\zeta$ by singular Lipschitz chains. He then uses the fact that every  Lipschitz map from a disc $D^2$ to a Carnot-Carath\'eodory space is differentiable almost everywhere (Pansu \cite{Pa}).

Therefore Wenger obtained the first example of a finitely generated nilpotent group whose Dehn function is not equivalent to a polynomial (compare with Guivarc'h-Bass' Theorem \ref{Bass} about the growth functions).

\begin{prob} Find a combinatorial proof of Theorem \ref{Wen}. Is the Dehn function of the central square of $G_{10}$ equivalent to $n^2\log n$?
\end{prob}

\subsubsection{Other nilpotent groups}\label{ss:yr} Note also that R. Young \cite{Young2} constructed nilpotent groups of arbitrary class with quadratic Dehn functions.

\subsection{Semihyperbolic groups}\label{s:sg}

Semihyperbolic groups were defined by Gromov \cite{GrHyp} and with more precision by Alonso and Bridson  \cite{AB}. Let $X$ be a metric space. Let $A\ge 1,B\ge 0$ be two numbers. A {\em \index{Discrete path}discrete $(A,B)$-path} in $X$ is any $(A,B)$-quasi-isometry $f$ from an interval of natural numbers $[0,T_f]$ into $X$. We shall extend $f$ to the  whole $\N$ by $f(n)=f(T_f)$ for $n\ge T_f$. Suppose that for some $A,B$ we can choose a discrete $(A,B)$-path $p_{a,b}$ between any pair of points $a,b\in X$ in such a way that for every four points $a,b, a', b'$ and every $t\in \N$ we have the {\em \index{Combing condition}combing condition}: \begin{equation}\label{cco}\dist(p_{a,b}(t),p_{a',b'}(t))\le C_1\max(\dist(a,a'),\dist(b,b'))+C_2\end{equation} for some constants $C_1, C_2$. Then we say that $X$ is {\em \index{Semihyperbolic metric space}semihyperbolic}. It is easy to prove that every hyperbolic metric space is semihyperbolic ({\bf exercise:} prove that using asymptotic cones) and that a metric space that is quasi-isometric to a semihyperbolic space is semihyperbolic space itself. A finitely generated group $G$ is called {\em \index{Group!semihyperbolic} semihyperbolic} if its Cayley graph with respect to some (equivalently, any) finite generating set is semihyperbolic and $p_{ga,gb}=gp_{a,b}$ for every $g,a,b\in G$. In that case we can consider the language of words labeling the paths $p_{1,b}$ for all $b\in G$. If that language is regular, then the semihyperbolic group is called {\em \index{Group!bi-automatic}bi-automatic}. If the language is regular, but we only assume that  the combing property (\ref{cco}) holds provided $a=a'$, then the group is called {\em \index{Group!automatic}automatic} \cite{Eautomatic}.

\subsubsection{CAT(0)-spaces} \label{cat0} Let $X$ be a geodesic metric space, $ABC$ be a geodesic triangle in $X$ with side lengths $p,q,r$. We can construct the triangle $A'B'C'$ in the Euclidean space $\R^2$ with the same side lengths. If $a,b$ are points on the sides $AB$, $BC$ of the triangle $ABC$, let $a',b'$ be the corresponding points on the sides $A'B'$ and $B'C'$ of $A'B'C'$. If for every choice of the triangle $ABC$ and points $a,b$ we have $\dist(a,b)\le \dist(a',b')$, then $X$ is called a {\em \index{CAT(0) metric space}CAT(0) space} \cite{Gr1,BH}. In every CAT(0) space there exists unique geodesic $p_{x,y}$connecting any two points $x,y$. These geodesics satisfy the combing property (\ref{cco}) (see \cite{BH}). Hence every CAT(0) metric space is semihyperbolic.  The class of CAT(0) spaces includes
all hyperbolic spaces, complete simply connected Riemannian manifolds all of
whose sectional curvatures are non-positive (that is, {\em \index{Hadamard manifold}Hadamard manifolds}), including symmetric spaces of non-compact type,  Euclidean buildings (see \cite{Brown2}).

Every group acting properly discretely and co-compactly on a CAT(0)-space is semihyperbolic. This implies that many non-hyperbolic groups are semihyperbolic. For example:

\begin{enumerate}
\item All Coxeter groups \cite{AB} (the fact that these groups are automatic was proved in \cite{BrHo}).
\item \index{Group!right angled Artin}Right angled Artin groups\footnote{A right angled Artin group is a group given by a presentation  of the form $\la a_1,\ldots,a_n\mid a_ia_j=a_ja_i \hbox{ for some } i,j \in \{1,\ldots,n\}\ra$} and many other Artin groups (see \cite{Char}) including the direct product of two free groups (see \ref{ss:qeatpc}).
\item The mapping class group\footnote{The \index{Mapping class group}mapping class group of a surface $S$ is the group of all isotopy classes of orientation preserving self-homeomorphisms
of $S$. This group is finitely presented for every $S$.} of a surface with genus $g$ and $n$ punctures provided $3g-3+n\ge 2$ (the fact that these groups are automatic was proved by Mosher \cite{Mosh}, the bi-automaticity was proved by Hamenst\"adt \cite{Ham}).
\item All co-compact lattices in $\SL_n(\R)$ (and many other Lie groups) because the corresponding symmetric space $\SL_n(\R)/\SO_n(\R)$ is CAT(0).
\item All lattices in $\SO_{n,1}(\R)$ \cite{BH}.
\end{enumerate}

\subsubsection{Semihyperbolic metric spaces have quadratic isoperimetric inequality}\label{ss:smshqii} Indeed, consider every loop $\gamma$ of length $l$ and point $a$ on that loop. Subdivide the loop into a sequence of subarcs of length $\delta$ (for some constant $\delta$) by points $b_1,\ldots, b_n$, and connect $a$ with $b_1,\ldots, b_n$ by the $(A,B)$-quasi-geodesics $p_{a,b_i}$. The length of each of these quasi-geodesics is at most $O(l)$. Then by the combing property (\ref{cco}) for every $t\in \N$ the distance between $p_{a,b_i}(t)$ and $p_{a,b_{i+1}}(t)$ is bounded by some constant $C=C(\delta)$. Thus we obtain a collection of points $p_{a,b_i}(t)$, $t\in \N$, containing the points $b_1,\ldots, b_n$, having mesh $O(1)$. The number of points in that set is at most $O(l^2)$. Hence the Gromov coarse area function $A_\delta(l)$ is bounded by $O(l^2)$ for a sufficiently large $\delta$.

A very similar proof shows that automatic groups have quadratic isoperimetric functions too.

\subsubsection{The conjugacy problem in semihyperbolic groups}\label{ss:tcpisg} Another key property of semihyperbolic group is the following

\begin{theorem}[Alonso-Bridson \cite{AB}]\label{ab} Every semihyperbolic group has solvable  conjugacy problem.
\end{theorem}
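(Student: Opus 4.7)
The plan is to reduce the conjugacy question to a reachability problem in an explicit finite directed graph, using the fellow-traveler condition (\ref{cco}) to confine the ``conjugacy trace'' along the combing to a bounded ball. Throughout I shall use that $G$ has solvable word problem: this follows from \ref{ss:smshqii}, where semihyperbolic groups are shown to satisfy a quadratic isoperimetric inequality, combined with Theorem \ref{MO}.

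Fix a semihyperbolic structure $\{p_{a,b}\}$ on $G = \la X\ra$ with constants $A, B, C_1, C_2$. Suppose $t$ is a conjugator with $t^{-1} u t = v$ in $G$, and let $c_0 = 1, c_1, \ldots, c_T = t$ be the successive vertices of the discrete combing path $p_{1,t}$. Left equivariance $p_{u,ut} = u \cdot p_{1,t}$ together with the identity $ut = tv$ (whence $d(t, ut) = |v|$) lets the combing condition (\ref{cco}) applied to the pair $(p_{1,t}, p_{u,ut})$ deliver
\[
d(c_s, u c_s) \;\le\; C_1 \max(|u|,|v|) + C_2 \;=:\; M
\]
for every $s \in \{0, 1, \ldots, T\}$; equivalently, the conjugate $w_s := c_s^{-1} u c_s$ has length at most $M$ in $G$, with $w_0 = u$ and $w_T = v$. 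Since the combing is an $(A,B)$-quasi-isometry from an interval of integers into $G$, one also has $d(c_s, c_{s+1}) \le A + B =: K$, so $w_{s+1} = g_s^{-1} w_s g_s$ where $g_s := c_s^{-1} c_{s+1}$ has length at most $K$.

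Next I would introduce the finite directed graph $\Gamma = \Gamma_{M, K}$ whose vertices are the elements of $G$ of length at most $M$ and whose edges are those pairs $(w, w')$ for which $g^{-1} w g = w'$ in $G$ for some word $g$ over $X^{\pm 1}$ of length at most $K$. The previous paragraph shows that whenever $u, v$ are conjugate in $G$ the trace $(w_0, w_1, \ldots, w_T)$ is a directed path in $\Gamma$ from $u$ to $v$; conversely, reading off the labels $g_0, g_1, \ldots, g_{T-1}$ of a directed path from $u$ to $v$ in $\Gamma$ produces a conjugator $t = g_0 g_1\cdots g_{T-1}$. Therefore conjugacy of $u$ and $v$ in $G$ is equivalent to reachability of $v$ from $u$ in $\Gamma$.

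The algorithm now reads: given the input $(u, v)$, compute $M$ and $K$, enumerate all words of length $\le M$ in $X^{\pm 1}$, and partition them into equality classes using the word problem in $G$ to extract the vertices of $\Gamma$; for each ordered pair of vertices $(w, w')$ and each word $g$ of length $\le K$ test whether $g^{-1} w g (w')^{-1} = 1$ in $G$ to extract the edges of $\Gamma$; finally run breadth-first search in $\Gamma$ from $u$ and answer ``yes'' iff $v$ is visited. The main place where the semihyperbolic hypothesis is actually doing work is the bound $|w_s|\le M$ that makes $\Gamma$ finite; after that everything is mechanical given decidability of the word problem, which is furnished by the quadratic isoperimetric inequality of \ref{ss:smshqii}.
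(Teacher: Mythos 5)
Your proof is correct and uses essentially the same strategy as the paper: the fellow-traveling condition (\ref{cco}) confines the intermediate conjugates $c_s^{-1}uc_s$ to a ball of radius $O(|u|+|v|)$, and a finiteness/pigeonhole argument then gives decidability. Your packaging --- reachability in an explicit finite graph, with (\ref{cco}) applied to the actual combing paths $p_{1,t}$ and $p_{u,ut}=u\cdot p_{1,t}$ via left-equivariance --- is equivalent to, and arguably a bit more careful than, the paper's formulation as an exponential bound on the length of a minimal conjugator.
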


\proof Let $G$ be a semihyperbolic group, $u,v\in G$. Suppose that $x\iv ux=v$ for some $x\in G$, and the length $|x|$ of $x$ is the smallest possible. Clearly it is enough to bound $|x|$ as some recursive function in $|u|+|v|$. We show that one can take an exponential function $\exp (c(|u|+|v|)$ for some constant $c$. Indeed, let $x=a_1a_2\ldots a_m$ be a product of generators and their inverses. Note that in the Cayley graph of $G$ we have two paths $p_1, p_2$ labeled by $x$, $p_1$ connecting $1$ and $x$, and $p_2$ connecting $u$ and $ux$. Since $xv=ux$, the initial points of $p_1$ and $p_2$ are at distance $|u|$ from each other and the terminal points are at distance $|v|$ from each other. By the combing property (\ref{cco}), then there exist paths $q_i$, $i=1,\ldots,m$ connecting $a_1\ldots a_i$ with $ua_1\ldots a_i$ of length $\le C(|u|+|v|)$ for some constant $C$. If $m$ is larger than the number of different words in the generators of $G$ of length $C(|u|+|v|)$ (which is $\exp (c(|u|+|v|)$ for some constant $c$), then two of these paths, say $q_i, q_{i+j}$ have the same labels. Then let $y=a_1\ldots a_ia_{i+j+1}\ldots a_m$. It is easy to see that $y\iv uy=v$ which contradicts the minimality of $x$. \endproof

Geometrically this proof can be described as follows. Cut a mimimal annular diagram $\Delta$ by a simple path $p$ connecting the boundary components. Using the combing property (\ref{cco}), for every vertex of $p$ we can find a (non-null-homotopic) loop in $\Delta$ based at that point and having length at most $C$ times the sum of lengths of the boundary components.  We can assume that these loops do not cross each other.  If $p$ is too long, two of these loops will have the same label. Cutting off the annular subdiagram bounded by these two loops from $\Delta$, we obtain another, smaller, annular van Kampen diagram with the same boundary labels (see also the proof of Lemma \ref{lmnlogn} below).

\section{Dehn functions and algorithmic properties of groups}\label{dfaapog}

\subsection{The zoo of groups with quadratic Dehn functions.}\label{s:tzogwqdf}

We have already mentioned that automatic and semihyperbolic groups have quadratic isoperimetric functions. In fact the set  of finitely presented groups with quadratic isoperimetric functions is quite diverse.

\begin{theorem}\label{th:qdf} The following groups have quadratic Dehn functions:
\begin{itemize}
\item[(1)] Dru\c tu's examples of {\em \index{Group!polycyclic}polycyclic}\footnote{A group $G$ is called {\em polycyclic} if it has a series of subnormal subgroups $G=G_0\unrhd G_1\unrhd\ldots \unrhd G_n=\{1\}$ with cyclic factors $G_i/G_{i+1}$.}non-nilpotent groups from \cite{Drutuquadr} (see also
\cite[Section 5.A.9]{Gr1}),
\item[(2)] Cornulier-Tessera examples of non-polyciclic metabelian groups with quadratic Dehn functions (some of these groups contain the Baumslag-Solitar group $\mathrm{BS}(1,2)$) \cite{CT}.
\item[(3)]
The Heisenberg groups of rank $\ge 5$, sufficiently large central power of any (infinite) finitely generated nilpotent group of class 2, some finitely generated  nilpotent groups of arbitrary nilpotency class  (Alcock \cite{Alc}, Olshanskii-Sapir \cite{OSHeis}, Young \cite{Young1, Young2}).
\item[(4)] All extensions of finitely generated free groups by cyclic groups (Bridson, Groves \cite{BrGr}).
\item[(5)] The Stallings group \cite{Stal} (the kernel of the homomorphism from the direct product of three free groups of rank 2 to $\Z$ that sends all generators to the generator of $\Z$) (Dison, Elder, Riley, Young \cite{DERY})\footnote{Note that Stallings' group was the first example of a finitely presented group which is not of type $FP_3$. Thus by \cite{DERY} and a previous results of Riley \cite{Rie} and Papasoglu \cite{Pap1} this group is the first example of a finitely presented group  with simply  connected but not 2-connected asymptotic cones.}
\item[(6)] The groups ${\mathrm SL}(n,\Z)$, $n\ge 5$ (Young \cite{Young}).
\item[(7)] The R. Thompson group $F$ (Guba \cite{GubaF}) \footnote{The R.Thompson group $F$ consists of all piecewise-linear increasing bijective functions $[0,1]\to [0,1]$ whose derivatives have finitely many dyadic break points and the slopes of linear pieces are powers of 2. The operation: composition of functions. That remarkable group has a presentation with two generators and two defining relations \cite{CFP}.}.
\end{itemize}
\end{theorem}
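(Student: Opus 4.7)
The plan is to handle the seven items by separate arguments, organized around two recurring strategies: \textbf{(A)} analyze the asymptotic cones well enough to invoke Gromov's simple--connectivity criterion (Theorem~\ref{Gro}) together with Papasoglu's upgrade from quadratic cone--area to quadratic Dehn function (Theorem~\ref{pap2}); and \textbf{(B)} construct van Kampen fillings explicitly by a divide--and--conquer scheme in which a loop of length $n$ is split in half by a short ``bridge,'' the two halves are filled recursively, and the rectangular region left over is filled using an abundance of commuting subgroups, exactly as in the Heisenberg computation sketched in~\ref{thg}. The extra generators of the ambient group typically serve as scratch space for rerouting subwords without area cost.

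For (1), Dru\c tu's polycyclic examples, I would verify that every asymptotic cone is a Euclidean building, show that loops in such a building admit homological fillings of quadratic coarse area (which is essentially local), and then apply Theorem~\ref{pap2} to get a Dehn bound $n^{2+\varepsilon}$; removal of the $\varepsilon$ is then done by a direct combinatorial reinspection, as in \cite{Drutuquadr}. For (2), the Cornulier--Tessera metabelian groups, the same framework applies with horocyclic products replacing buildings; the key point is that although these groups may contain $\mathrm{BS}(1,2)$, the exponentially distorted element $a$ admits short conjugate representatives in the enveloping metabelian group, so the obstruction to quadratic filling witnessed in Example~\ref{example2} disappears.

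For (3) and (6), the plan is to deploy strategy~(B) in its most geometric form. In the Heisenberg case the tools are those of \ref{thg}: reduce a null--homotopic word to a symplectic area on the integer grid, halve it, and glue via rectangular correctors, exploiting the auxiliary generators $y_i$ as storage (this extends from $H_{2n+1}$ to arbitrary sufficiently large central powers of class-$2$ nilpotent groups, and by Young's more elaborate inductive scaling argument to some higher--class nilpotent groups). For $\SL_n(\Z)$, $n\ge 5$, Young's filling is the same idea one dimension higher: one uses the abundance of commuting unipotent subgroups in $\SL_n$ to cut a loop in the symmetric space $\SL_n(\R)/\SO_n(\R)$ into quadratically many elementary rectangles, the condition $n\ge 5$ being what guarantees enough room to perform the cuts without crossing the fillings. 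For the extensions of free groups by cyclic groups~(4), strategy~(B) is adapted to the Bestvina--Feighn--Handel (relatively) improved train-track representatives of the defining automorphism: a diagram splits into horizontal $t$-strips along the $t$-corridors of \ref{hnne}, and the train-track structure gives quasi--geodesic normal forms on each strip so that areas add quadratically. For Stallings' group~(5), one uses the description as the kernel of $F_2^{\,3}\to\Z$ to fill a loop fiberwise, reducing to quadratic filling in a product of trees.

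The main obstacle will be~(7), Guba's theorem for Thompson's group $F$: the asymptotic cones of $F$ are not (yet) understood finely enough for strategy~(A), and the absence of a distinguished ``storage'' subgroup makes the naive strategy~(B) yield only a cubic bound. Here the plan is to follow Guba by replacing van Kampen diagrams over the two-relator presentation of $F$ with \emph{diagrams of forests}, whose $2$-cells correspond to pairs of finite rooted binary trees and whose area is the number of carets. The quadratic bound then comes from a delicate induction on the forest structure: one normalizes so that carets on opposite sides of the diagram appear in a ``mirror'' pattern, bounds the number of carets in each row by the perimeter, and bounds the number of rows by the perimeter as well. Expecting this to be the hardest component, I would allocate most of the effort there, treating the remaining six items as essentially independent applications of the general principles above.
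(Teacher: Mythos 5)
The statement you are trying to prove is itself a survey-style compilation: each of the seven items is a theorem with its own long paper (Dru\c tu, Cornulier--Tessera, Allcock/Olshanskii--Sapir/Young, Bridson--Groves, Dison--Elder--Riley--Young, Young, Guba), and the paper's own ``proof'' consists of proof \emph{sketches} with citations (Sections~\ref{Dru} onward), not a unified argument.  Your proposal correctly recognizes this and correctly identifies the two recurring strategies (asymptotic cones + Papasoglu's upgrade on the one hand, explicit divide-and-conquer fillings on the other).  Your summaries of (2), (3) and (5) are broadly consistent with the cited works.  However, several of your sketches misidentify the key mechanism in a way that would derail an actual write-up, so I flag the main gaps.

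For (1), Dru\c tu's asymptotic cones are \emph{not} Euclidean buildings; they are \emph{horoballs inside direct products} of Euclidean buildings.  This distinction matters: horoballs are non-convex subsets with quite different filling behavior, and proving the quadratic cone-area bound for them is the real content of \cite{Drutuquadr}.  Quoting the filling property of a building directly would be circular.  For (4), the Bridson--Groves argument does not reduce to ``quasi-geodesic normal forms on $t$-strips''; the crucial second ingredient, which your sketch omits, is an entirely separate transverse structure (tracing 2-cells backward to fixed boundary edges, the ``colors''), and the hard part is the interaction between $t$-bands and colors controlled by the dynamics of $\phi$ on conjugacy classes.  For (6), the threshold $n\ge 5$ is not about ``enough room to perform cuts without crossings''; the content is that unipotent elements must be replaced by \emph{shortcuts} lying in solvable subgroups that themselves have quadratic Dehn functions (Dru\c tu/Cornulier--Tessera type subgroups), and one needs several \emph{commuting} such subgroups --- which is exactly what $n=4$ fails to supply and why it remains open.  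For (7), Guba's proof in \cite{GubaF} does not work with ``diagrams of forests'' (that is the diagram-group picture of \cite{GSDG}, used there for the conjugacy problem); instead it uses the infinite presentation $\langle x_0,x_1,\ldots\mid x_i^{-1}x_jx_i=x_{j+1},\ j-i\le 5\rangle$, a modified complexity (length plus index-spread), and a recursion $f(n)\le 8f(n/3)+O(n^2)$ in which the fact that $8<9=3^2$ is crucial to absorb the occasional increase in complexity.  Your ``mirror pattern'' heuristic does not capture this; the delicate point is the unequal three-way split of triangular words.  None of these gaps is fatal to a survey exposition, but as stated your sketches of (1), (4), (6), (7) would not assemble into correct proofs.
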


The proofs of the quadratic isoperimetric inequality for each of these classes of groups is quite complicated and quite different.

\subsubsection{}\label{Dru} Dru\c tu's groups from Part (1) of the theorem are uniform lattices in solvable Lie groups of class 2. She proves quadratic isoperimetric inequality for these Lie groups using asymptotic cones (these are horoballs in direct products of Euclidean buildings) and Theorem \ref{pap2}.

\subsubsection{} Nilpotent groups from part (2) have been discussed already in Section \ref{ng}.

\subsubsection{} The proof for free-by-cyclic groups is very long and compicated. Here is a short description of that proof  sent to us by Daniel Groves.

Let $G = F_n \rtimes_{\phi} \mathbb Z$ be a free-by-cyclic group, with
natural HNN-presentation
\[ \mathcal P = \langle x_1, \ldots,  x_n , t \mid tx_i t\iv =
\phi(x_i) \rangle   .  \] Let $\Delta$ be a minimal diagram over this presentation. Consider the maximal $t$-bands in $\Delta$. By Lemma \ref{ms}, every maximal $t$-band starts and ends on the boundary of $\Delta$. Every HNN-cell
has a top side (corresponding to a letter $x_i$) and a bottom side (corresponding to
$\phi(x_i)$).  Passing from the bottom to the top corresponds to
applying the automorphism $\phi$.  It is easy to see that a pair of HNN-cells cannot be
glued along their bottoms in a minimal $\Delta$, so each 2-cell
can be traced backwards through $\Delta$ via the edge on the bottom sides of HNN-cells
until it hits an edge on the boundary of $\Delta$.  If such an edge $e \in
\partial \Delta$ is fixed, then all the 2-cells corresponding to $e$
form a single {\em color} (and there are fewer than $| \partial \Delta |$
colors). The proof involves a detailed investigation of the two
transverse structures coming from bands and colors.  The
evolution of a color and of different colors is closely related to
the dynamics of words in $F_n$ under iterates of $\phi$, although the
interaction of bands complicate this substantially.

\subsubsection{}\label{ss:dery} The proof of quadratic isoperimetric inequality of the Stallings group is purely combinatorial. The authors in \cite{DERY} present an algorithm of creating a van Kampen diagram with quadratic area and a given boundary label.

\subsubsection{} \label{emp}The problem of whether the Dehn function of $SL_n(\Z)$ is quadratic for large enough $n$ was open for quite some time. The group $\SL_2(\Z)$ is hyperbolic (in fact virtually free), so its Dehn function is linear. It was proved in \cite{Eautomatic} that $SL_3(\Z)$ has exponential Dehn function. The fact that the Dehn function of $\SL_n(\Z)$ is at most exponential for every $n$ was stated in Gromov \cite[5.A7]{Gr1} and fully proved by
Leuzinger \cite{Leu}. The book \cite{Eautomatic} contains a statement attributed to Thurston that for $n\ge 4$ the Dehn function is quadratic. Recently Robert Young proved in \cite{Young} that the Dehn function is quadratic for $n\ge 5$. The case $n=4$ is still open.

To explain Young's proof, we need some preliminaries. Note that $SL_n(\R)$ is quasi-isometric to the {\em \index{Symmetric space}symmetric space} $\SL_n(\R)/\SO_n(R)$, and the word metric in $\SL_n(\Z)$ is quasi-isometric to the restriction of the Riemannian metric on $\SL_n(\R)$ to $\SL_n(\Z)$ by Lubotzky-Mozes-Raghunathan \cite{LMR}. The action of $\SL_n(\Z)$ on the symmetric space $\SL_n(\R)/\SO_n(R)$ is not co-compact. The typical case is when $n=2$. In that case the symmetric space $\SL_2(\R)/\SO_2(\R)$ is just the Hyperbolic plane $\HH^2$, say the upper half of the complex plane with the hyperbolic metric. The group $\SL_2(\R)$ acts on $\HH^2$ by the M\"obius transformations: $$\left(\begin{array}{cc} a &b \\ c& d\end{array}\right)\cdot z=\frac{az+b}{cz+d}.$$ Then the action $\SL_2(\Z)$ on $\HH^2$ has a fundamental domain which is an ideal  triangle (with one vertex at infinity). Consider the set $H_\infty$ of all $z\in \HH^2$ with imaginary part at least $1$ (the number 1 can be replaced by any real number $> 0$). It is a {\em \index{Horoball}horoball} in $\HH^2$. Note that $H_\infty$ is stable under the action of the subgroup of $\SL_2(\R)$ consisting of upper uni-uppertriangular matrices from $\SL_2(\R)$ of the form $\left(\begin{array}{cc} 1 &b \\ 0& 1\end{array}\right)$. The orbit of $H_\infty$ under the action of the group $\SL_2(\Z)$ consists of horoballs too. The union of these horoballs is the {\em \index{Symmetric space!thin part of}thin} part of the symmetric space $\HH^2$. The complement of the thin part is the {\em \index{Symmetric space!thick part of}thick part}. The thick part is clearly invariant under the action of $\SL_2(\Z)$ and the action is co-compact. A similar picture exists if $n\ge 3$. The symmetric space $\SL_n(\R)/\SO_n(\R)$ has a horoball $H_\infty$. The union of the horoballs $\SL_n(\Z)\cdot H_\infty$ is called the {\em thin part} of the symmetric space, its complement is the {\em thick part}, and the action of $\SL_n(\Z)$ on the thick part is co-compact. Thus it is natural to consider ``equivalent objects": the group  $\SL_n(\Z)$ and the thick part of the corresponding symmetric space.

These preliminaries should be enough to understand a short description of the proof from \cite{Young} sent to us by Robert Young. Every word $w$ that is equal to 1 in $\SL_n(\Z)$ corresponds to a loop in
the symmetric space $\SL_n(\R)/\SO_n(\R)$. The basic idea of the proof is to use fillings of loops in the
symmetric space $\SL(p;\R)/\SO(p)$ (which has quadratic area function being semihyperbolic) as templates for fillings loops in the Cayley graph of
$\SL_n(\Z)$.  Fillings which lie in the thick part of
$\SL_n(\R)/\SO_n(\R)$ correspond directly to fillings in $\SL_n(\Z)$, but in
general, an optimal filling of a curve which is contained in the thick part may have to
go deep into the thin part. The proof uses both geometric and algebraic points of view.
Different horoballs in the thin part correspond to different parabolic
subgroups of $\SL_n(\Z)$ (each of these  subgroups is conjugate to some group of block upper triangular matrices), so one key step was to develop geometric techniques to cut the quadratic filling of a loop in the symmetric space into pieces (subdiscs) with each piece lying in one horoball. The word corresponding to the boundary of each piece is a product of three words, each of which represents an
element of the parabolic subgroup (these elements are determined by
the geometry of the filling).  Each of these three words can be written as a
product of a block diagonal matrix and a unitriangular matrix.
The uni-triangular
matrix can be written as a product of boundedly many elementary matrices (one non-zero off-diagonal entry) where the off-diagonal entry may be of exponential size. The elementary matrices  can be replaced by relatively short (linear length) words in $\SL_n(\Z)$ ({\em shortcuts}) using the technique
from Lubotzky-Mozes-Raghunathan \cite{LMR}. For this purpose Young includes the elementary matrices in some solvable subgroups  with quadratic Dehn functions where the cyclic subgroups generated by these elements are exponentially distorted. This is not very mysterious. We have already seen (see \ref{ss:poe}) that the cyclic group generated by $a$ is exponentially distorted in $\mathrm{BS}(1,2)$. The group $\mathrm{BS}(1,2)$ is metabelian and has the following matrix representation $b=\left(\begin{array}{ll}2 &0\\ 0& 1\end{array}\right)$, $a=\left(\begin{array}{ll}1 &1\\ 0 & 1\end{array}\right)$. Note that $a$ is an elementary matrix in this representation. Of course as we have seen (Example \ref{example2}), the Baumslag-Solitar group has exponential Dehn function. But Dru\c tu \cite{Drutuquadr}, and later Cornulier and Tessera \cite{CT} have shown that metabelian groups of higher ranks can have quadratic Dehn functions. The key property of Young's solvable subgroups is the freedom of choosing them. This is similar to having two commuting copies of the Heisenberg group $H_3$ in $H_5$ in the proof from \ref{thg} and this is why Young's proof does not work in $\SL_4(\Z)$: there is simply not enough room to place several commuting solvable subgroups with quadratic Dehn functions. The shortcuts allow Young to reduce the problem of filling the original loop to the filling problem for loops in some subgroups of block-diagonal matrices (which are just direct products of matrix groups of smaller sizes). Repeatedly applying these geometric and combinatorial techniques breaks the original word into pieces which lie in smaller and smaller subgroups of $\SL_n(\Z)$ and ultimately to a quadratic filling.

\subsubsection{}\label{ss:F} The result that the R.Thompson group $F$ has quadratic Dehn function \cite{GubaF} was quite unexpected. It was conjectured by Gersten \cite{GerF} that the Dehn function of $F$ is exponential. It was proved to be at most $\exp(\log^2n)$ in our paper with Guba \cite{GSF}. For some time we thought that $\exp(\log^2{n})$ is equivalent to the Dehn function of $F$ because we had a concrete sequence of loops in the Cayley graph of $F$ for which we could not find fillings with smaller areas. A big breakthrough was achieved by Guba in \cite{GubaF0} where a polynomial isoperimetric function was found. Still it was hard to believe that the Dehn  function is quadratic because the polynomial fillings used in \cite{GubaF0} seemed optimal.

Recall that the R. Thompson group $F$ has infinite presentation \begin{equation}\label{eq6} \la x_0,x_1,\ldots\mid x_i\iv x_jx_i =x_{j+1} \forall i<j\ra.\end{equation}  It is also generated by $x_0,x_1$ subject to 2 defining relations. It turned out that instead of dealing with a finite presentation of $F$ one is better off working with the infinite presentation obtained from (\ref{eq6}) by considering only those relations where $j-i\le 5$. It is not difficult to prove that this (infinite) presentation defines $F$ and the quadratic isoperimetric inequality for this presentation is equivalent to a quadratic inequality for a finite presentation of $F$. (In defining an isoperimetric inequality for the infinite presentation, one needs to modify the notion of length of a word; Guba uses a notion of {\em complexity} of a word which is the length plus the difference between the highest and the lowest indices of letters occurring in the word.)  Guba's proof is in a sense similar to the proof from \cite{OSHeis} outlined in \ref{thg}. He also introduces normal forms of elements in $F$ (in the case of $H_5$ these were the rectangular words $R(p,q)$), and considers the process of reducing a word to its normal form rather than reducing a word that is equal to 1 in $F$ to the empty word. Using a trick similar to the ``division in halves trick" from \ref{thg}, Guba reduces the problem to finding the filling area of words of the form $uv$ where $u,v$ are normal forms. These are called the {\em triangular words} in \cite{GubaF}. He then transforms these words to words of different shapes (rectangular, vertical, horizontal). In order to do that  Guba uses various elaborate cutting and pasting techniques. Some of these  amount to dividing a word into three parts which are of approximately (but not quite) equal lengths. Instead of three pieces as on Figure \ref{p12}, Guba gets 8 pieces, and the following recurrent formula for the isoperimetric function $f(n)$:
$$f(n)\le 8f(n/3)+O(n^2). $$ The fact that the coefficient of $f(n/3)$ in that formula is smaller than $9=3^2$ plays crucial role. It turns out that the reduction procedure described in \cite{GubaF} not always decreases the complexity of a word.  In order to make increases rare enough, one needs to cut the words into three parts of not quite equal lengths and the required freedom is allowed by the fact that $8<9$.

\subsubsection{Rips' question}\label{ss:rc} Note that the following problem formulated by Rips in 1994 is still open.

\begin{prob}[Rips] Is $F$ automatic?
\end{prob}

It is known \cite{GSF} that $F$ admits a regular language of (unique) normal forms. But that set of normal forms does not satisfy the combing property (\ref{cco}).

Also note that in \cite{GubaFTV}, Guba proved that the simple R.Thompson groups $T$ and $V$ \cite{CFP} have polynomial isoperimetric functions (these groups are finitely presented by \cite{Hig}). Exact Dehn functions of these groups are unknown. Simple finitely presented groups with quadratic Dehn functions (and even bi-automatic) are known (see Burger and Mozes \cite{BM1,BM2}).

\begin{prob} Is $F$ semihyperbolic?
\end{prob}

\subsection{The conjugacy problem for groups with quadratic Dehn functions}\label{tcp}

Semihyperbolic groups and all the groups mentioned in Theorem \ref{th:qdf} have one thing in common: the conjugacy problem in any of these groups is solvable. For semihyperbolic groups the proof is easy (see Theorem \ref{ab}).  The proofs for the different classes of groups mentioned in Theorem \ref{th:qdf} are quite different, and sometimes very complicated.

\subsubsection{}\label{ss:for} For polycyclic (in particular, nilpotent) groups, solvability of the conjugacy problem was proved by Formanek \cite{For} using representation of these groups by matrices.

\subsubsection{}\label{ss:nos} For finitely presented metabelian groups, the solvability of conjugacy problem was proved by Noskov \cite{Nos} using the Magnus representation of metabelian groups by matrices over commutative rings, and some very non-trivial commutative algebra (finding groups of units of commutative rings, in particular).

\subsubsection{}\label{ss:fb} For free-by-cyclic groups it was proved by Bogopolski, Martino, Maslakova and Ventura \cite{BMMV} using some deep properties of the dynamics of free group automorphisms, in particular, Bestvina-Feighn-Handel train tracks \cite{BFH}.

\subsubsection{}\label{ss:br3} For the Stallings group, the solvability of  conjugacy problem is proved by Bridson in \cite{Br3}. He applies the following nice result (we weaken the formulation a little bit in order not to introduce new terminology).

\begin{theorem}[Bridson \cite{Br3}] Let $G$ be a bi-automatic group and let $N\lhd G$ be a normal subgroup. If the membership problem for finitely generated subgroups of $G/N$ is solvable, then $N$ has a solvable conjugacy problem.
\end{theorem}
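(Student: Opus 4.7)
The plan is to reduce conjugacy in $N$ to conjugacy in $G$ together with a membership test in the quotient $G/N$. Given $u,v\in N$, first use the (solvable) conjugacy problem of the bi-automatic group $G$ to decide whether $u$ and $v$ are conjugate in $G$. If they are not, then they are certainly not conjugate in $N$, and we answer ``no''. Otherwise, compute some explicit element $g\in G$ with $g\iv u g=v$; this can be extracted from the bi-automatic structure (its word is of length linear in $|u|+|v|$ by the combing arguments underlying Theorem~\ref{ab}, so a bounded search succeeds).

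Now observe the key reformulation: the set of all conjugators of $u$ to $v$ in $G$ is precisely the coset $g\cdot C_G(u)$, where $C_G(u)$ is the centralizer of $u$ in $G$. Therefore $u$ and $v$ are conjugate in $N$ if and only if $g\cdot C_G(u)\cap N\ne\emptyset$, equivalently $g\in N\cdot C_G(u)$, equivalently the image $\bar g\in G/N$ lies in the image $\overline{C_G(u)}$ of $C_G(u)$ in $G/N$. So the conjugacy problem for $N$ has been reduced to the membership problem for a particular finitely generated subgroup of $G/N$ — provided we can effectively produce a finite generating set for that subgroup.

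This is where we invoke the deep structural properties of bi-automatic groups due to Gersten and Short: centralizers of elements in bi-automatic groups are themselves bi-automatic, hence finitely generated, and a finite generating set can be computed algorithmically from the bi-automatic structure of $G$ and the element $u$. Pushing these generators forward through the natural surjection $G\twoheadrightarrow G/N$ yields an explicit finite generating set for $\overline{C_G(u)}\le G/N$. Now the hypothesis of the theorem — solvability of the membership problem for finitely generated subgroups of $G/N$ — applies directly, and we can decide whether $\bar g\in \overline{C_G(u)}$. This answer is the answer to conjugacy of $u,v$ in $N$.

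The main obstacle is the algorithmic computability of generators of $C_G(u)$, not merely their existence; this rests on the effective Gersten--Short theory of centralizers in bi-automatic groups, including the fact that one can extract the bi-automatic structure of the centralizer from that of $G$ (and that the bi-automatic structure of $G$ itself is computable from a finite presentation together with the combing). Once that ingredient is in place, the rest of the argument is the short coset-theoretic reduction outlined above; note that no assumption on $N$ beyond normality is needed, since $N$ appears only as the kernel of the quotient map used in the membership test.
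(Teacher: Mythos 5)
The survey cites this theorem to Bridson's paper \cite{Br3} without reproducing a proof, so there is no in-text argument to compare against line by line. Your reduction is nevertheless correct, and it is the natural route to the theorem (and, I believe, Bridson's own): decide conjugacy of $u,v$ in the ambient group $G$ (bi-automatic groups are semihyperbolic in the sense of \ref{s:sg}, so Theorem~\ref{ab} applies), extract a conjugator $g_0$ by a bounded search, observe that the full conjugator set is a coset of $C_G(u)$, and translate ``some conjugator lies in $N$'' into ``$\bar{g_0}$ lies in the finitely generated subgroup $\overline{C_G(u)}\le G/N$'', which the hypothesis renders decidable. You correctly flag the load-bearing ingredient: the Gersten--Short theorem that centralizers in bi-automatic groups are rational, hence finitely generated and themselves bi-automatic, \emph{with} a finite generating set effectively extractable from the bi-automatic structure of $G$ and the word $u$. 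Without that effectivity the argument would give only a non-uniform decidability statement, and the theorem would not follow.

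Two small corrections. The conjugator-length bound supplied by the combing argument in the proof of Theorem~\ref{ab} is exponential, of the form $\exp(c(|u|+|v|))$, not linear; the exponential bound already makes the bounded search terminate, and a linear bound is neither established in the survey nor needed for the argument. Also, with your convention $g^{-1}ug=v$ the conjugator set is the right coset $C_G(u)\,g_0$ rather than the left coset $g_0\,C_G(u)$; this is immaterial, since by normality of $N$ both cosets meet $N$ exactly when $\bar{g_0}\in\overline{C_G(u)}$.
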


Recall that the {\bf input} of the \index{Membership problem for finitely generated subgroups}membership problem for finitely generated subgroups of a group $G$ is a tuple of elements of $G$ (given as words in generators) $u_1,\ldots, u_n,v$. The {\bf output} is ``yes" if $v$ belongs to the subgroup generated by $u_1,\ldots, u_n$.

\subsubsection{}\label{ss:gru} For the group ${\mathrm SL}(n,\Z)$, the solvability of conjugacy problem was proved independently by Grunewald \cite{Gru} and Sarkisyan \cite{Sar}. The diversity of methods used in these papers is reflected by the following paragraph taken from Roger Lyndon's review of \cite{Gru} in Math Reviews: ``The arguments, given explicitly, are long and intricate but appear for the most part to use hardly more than standard ideas from the theory of linear groups and their modules, from number theory, and, from the theory of groups, coset representations and the Reidemeister-Schreier theorem."

\subsubsection{}\label{ss:t9} The conjugacy problem in the R.Thompson group $F$ was  first solved by Guba and myself in \cite{GSDG} using the diagram group representation of $F$. More recently a different solution was found by Belk and Matucci \cite{BeMa}.

\subsubsection{Rips' problem}\label{ss:rp2} These results justify the following general problem which was formulated in 1994 as a conjecture (before most of the above results confirming this conjecture appeared):

\begin{prob}[E. Rips] Is the conjugacy problem solvable in every finitely presented group with quadratic Dehn function?
\end{prob}

The fact that this problem is difficult  is shown by the diversity of methods used in proving solvability of the conjugacy problem mentioned above, and the following ``quasi-proof" from our paper with A. Olshanskii \cite{OSnlogn}.

In order to formulate the statement, we shall need the well known constructive version of a limit of a
sequence of numbers. In fact we are going to use that definition
only in the case when the limit is $0$.

\begin{df} {\rm Let $g\colon \N\to \R$ be a function. We say that the
{\em \index{Constructive limit of a sequence}constructive limit} of $g(n)$ as $n\to \infty$ is $0$ if for every
integer $A>0$ there exists $N=N(A)$ such that for every $n>N$,
$|f(n)|\le 1/A$, and the function $N(A)$ is recursive. In that case
we shall write $\lim_{n\to\infty}^c g(n)=0.$ It is easy to see that
$\lim_{n\to\infty}^c g(n)=0$ if and only if there exists an
increasing recursive function $f(n)$ such that $g(k)\le \frac{1}{n}$
for every $k\ge f(n)$.} \label{dfcon}
\end{df}

\begin{ctt} \label{ct1}Let $d(n)$ be the Dehn
function of a finite group presentation $P$. Suppose that
$\lim_{n\to\infty}^c \frac{d(n)}{n^2\log n}=0$. Then $P$ has
decidable conjugacy problem.
\end{ctt}

\proof We shall need the following lemma.

\begin{lemma}\label{lmnlogn} Let $\Delta$ be a minimal
annular diagram with boundary components $p, p'$ over a finite group
presentation $P$. Let $x$ be a shortest path connecting $p$ and
$p'$. Then the area of $\Delta$ is at least $C|x|\log |x|$ for some
constant $C$ depending on $P$.
\end{lemma}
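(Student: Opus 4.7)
The aim is to exhibit inside $\Delta$ roughly $|x|$ loops around the hole, each of length comparable to $\log|x|$, arranged so that their total length is bounded above (up to a constant depending on the presentation) by $\area(\Delta)$. The lower bound on individual loop lengths will come from minimality via a counting argument; the upper bound on total length in terms of area will come from planarity.

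Write $k=|x|$ and $x=(v_0,\dots,v_k)$ with $v_0\in p$, $v_k\in p'$. Cut $\Delta$ along $x$ to obtain a simply connected disc diagram $\Delta_0$ whose boundary contains two copies $x_L,x_R$ of $x$; let $v_j^L,v_j^R$ be the corresponding vertices on these copies. For each $j\in\{1,\dots,k-1\}$ fix a geodesic $\alpha_j$ from $v_j^L$ to $v_j^R$ in $\Delta_0$; regluing $x_L$ to $x_R$ turns $\alpha_j$ into a simple loop $\beta_j$ in $\Delta$, based at $v_j$, that winds once around the hole, with $|\beta_j|=|\alpha_j|$. Minimality of $\Delta$ forces the cyclic labels of $\beta_i$ and $\beta_j$ to be distinct whenever $i\ne j$: otherwise the annular subdiagram $\Delta'\subset\Delta$ bounded by $\beta_i$ and $\beta_j$ has matching boundary labels, and deleting $\Delta'$ and identifying its two boundary components produces an annular diagram with the same outer boundary as $\Delta$ but strictly fewer cells, contradicting minimality. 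Since the number of cyclically reduced words of length at most $L$ over $X^{\pm1}$ is bounded by $(2|X|)^L$, choosing $L_0=\lfloor\tfrac12\log_{2|X|}k\rfloor$ gives at most $\sqrt k$ values of $j$ with $|\beta_j|\le L_0$. Hence, once $k$ is large, at least $k/2$ of the loops satisfy $|\beta_j|\ge c_1\log k$ with $c_1=1/(2\log(2|X|))$.

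To convert these individual lower bounds into a lower bound for $\area(\Delta)$, I would replace the $\alpha_j$'s by an edge-disjoint family $\alpha_j^*$ of paths in $\Delta_0$ from $v_j^L$ to $v_j^R$. Because all $2(k-1)$ endpoints lie on the single face $\partial\Delta_0$ and the pairing is nested along that boundary, such a family exists by the Okamura--Seymour theorem on edge-disjoint planar multi-commodity flows (alternatively, by iterated applications of the standard planar double-swap, which preserves endpoints and the sum of path lengths while strictly decreasing the total number of pairwise crossings). Each $\alpha_j^*$ is still a path from $v_j^L$ to $v_j^R$, hence $|\alpha_j^*|\ge|\beta_j|$. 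The edge-disjoint family partitions $\Delta_0$ into $k$ disc subdiagrams $\Sigma_1,\dots,\Sigma_k$, with $\partial\Sigma_j$ containing $\alpha^*_{j-1}$ and $\alpha^*_j$ as non-shared portions. Applying the elementary inequality $|\partial\Sigma|\le m\cdot\area(\Sigma)$ (where $m=\max\{|r|:r\in\rr\}$; each cell contributes at most $m$ to the boundary-incidence count) to each strip,
\[
\area(\Delta)=\sum_{j=1}^{k}\area(\Sigma_j)\;\ge\;\frac1{m}\sum_{j=1}^{k}|\partial\Sigma_j|\;\ge\;\frac{2}{m}\sum_{j=1}^{k-1}|\alpha_j^*|\;\ge\;\frac{2}{m}\cdot\frac{k}{2}\cdot c_1\log k,
\]
which is $C|x|\log|x|$ with $C=c_1/m$, as claimed.

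The delicate part of the argument is the edge-disjoint reorganisation. The planar double-swap easily produces a \emph{non-crossing} family, but allowing shared edges between consecutive $\alpha_j^*$'s can pinch the strips $\Sigma_j$ and reduce $|\partial\Sigma_j|$ by twice the length of each shared portion, which must then be absorbed as an error term in the inequality above. Verifying the hypotheses of Okamura--Seymour (or, equivalently, building a genuinely edge-disjoint family by an extra uncrossing step that still keeps each $\alpha_j^*$ a path from $v_j^L$ to $v_j^R$) in the setting of a planar $2$-complex rather than an abstract planar graph is where I expect most of the technical work to sit. A fallback, if edge-disjointness proves too rigid, is to carry out the estimate with non-crossing paths and show that the shared portions contribute only a definite fraction of $\sum|\alpha_j^*|$, so that the final bound on $\area(\Delta)$ survives with a worse but still positive constant $C$.
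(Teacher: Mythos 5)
Your key structural idea is the right one and it does match the heart of the paper's argument: produce on the order of $|x|$ loops around the hole of $\Delta$, observe that minimality of $\Delta$ forces their (cyclic) labels to be pairwise distinct (else one could cut out and reglue the annular piece between two of them), and then apply a pigeonhole count to conclude that at least half of the loops have length $\gtrsim \log|x|$. This is exactly the counting step in the paper, which even explicitly compares it to the analogous trick in the proof of Theorem~\ref{ab}.

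Where your proposal genuinely diverges, and where it has a gap, is in converting those length lower bounds into a lower bound on $\area(\Delta)$. The paper sidesteps the issue entirely: rather than cutting $\Delta$ open along $x$ and routing paths through the disc, it builds a sequence of nested annular subdiagrams $K_0\subset K_1\subset\cdots$ around the inner boundary, whose outer contours $p_i$ are the loops to be counted, and whose incremental ``rings'' of cells $M_{i+1}=K_{i+1}\setminus K_i$ are pairwise disjoint by construction. Because $K_{i+1}$ is chosen minimally with simple contours, every edge of $p_{i+1}$ lies on a cell of $M_{i+1}$, so $|M_{i+1}|\ge |p_{i+1}|/m$ for free; summing the disjoint $M_i$'s gives the $|x|\log|x|$ lower bound directly. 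No edge-disjoint routing, no perimeter-versus-area inequality, and no crossing issues arise, because the $p_i$ are concentric by construction.

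Your version, by contrast, hinges on the inequality $|\partial\Sigma_j|\le m\cdot\area(\Sigma_j)$ applied to the strips between consecutive paths, and this inequality is false for van~Kampen subdiagrams in general: a cell contributes at most $m$ to the count of incidences between boundary edges and cells, but nothing prevents boundary edges of a strip from lying on no cell of that strip at all. Concretely, if two consecutive $\alpha^*_j$'s run close together (sharing vertices but not edges, or if one of them runs along $\partial\Delta_0$), the strip between them can have very long boundary and very few, even zero, cells, so the displayed chain of inequalities breaks down at the step $\area(\Sigma_j)\ge \tfrac1m|\partial\Sigma_j|$. This is the central gap, and it is independent of, and logically prior to, the edge-disjointness concern you flag. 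In addition: (i) your distinct-label argument is applied to the loops $\beta_j$ coming from \emph{geodesics}, which may cross, so there need not be a well-defined annular subdiagram between $\beta_i$ and $\beta_j$; the uncrossing step must come \emph{before} the pigeonhole argument, and the counting should be applied to the uncrossed loops rather than the geodesic ones. (ii) The appeal to Okamura--Seymour is not justified here --- that theorem needs the Euler parity condition and the cut condition, and neither is checked; the planar uncrossing argument you mention as a fallback yields non-crossing paths, not edge-disjoint ones, and non-crossing paths can share long segments. To salvage your route one would essentially have to re-prove that each strip adjacent to a long path contains proportionally many cells, at which point one has rediscovered the paper's concentric-layer construction. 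I would recommend adopting the layered construction directly rather than routing paths through the cut-open disc.
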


\proof Consider the following construction. Let $p_0=p$ (considered
as a cyclic path) be the inner contour of the diagram $\Delta$.
Suppose that we have constructed a cyclic path $p_i$ surrounding the
hole of the diagram in $\Delta$ such that $p_i$ does not have common
vertices with $p'$. Let $K_i$ be the annulus bounded by $p_0$ and
$p_i$. Let $M_{i+1}$ be the set of cells of $\Delta$ outside $K_i$
that have common vertices with $p_i$. Then let $K_{i+1}$ be the
minimal annular subdiagram of $\Delta$ with simple contours that
contains $K_i$ and all cells from $M_{i+1}$. Let $p_{i+1}$ be the
outer contour of $K_i$ (the inner contour of $K_i$ is $p=p_0$).

It follows that every edge of the path $p_{i+1}$ belongs to the
contour of one of the cells of $M_{i+1}$. Hence every vertex of
$p_{i+1}$ can be connected with a vertex of $p_i$ by a path such
that

(0) the length of the path is bounded by a constant,

(1) it can be connected with $p_0$ by a path of length at most
$O(i)$ and

(2) the number of cells in $M_{i+1}$ is at least $O(|p_{i+1}|)$.

From (1), it follows that the number of subdiagrams $K_i$ is
$O(|x|)$. Furthermore, more than a half of the paths $p_i$ have
length at most $\log_c |x|$ where $c$ is, say, four times the number
of letters in the alphabet of the presentation $P$. Indeed,
otherwise we would have two paths $p_i$ and $p_j$, $i\ne j$ with the
same labels, and we could remove the annular subdiagram between
$p_i$ and $p_j$ reducing the area of $\Delta$ (that would contradict
the minimality of $\Delta$), see an analogous trick in the proof of Theorem \ref{ab} above.

From (2), it follows that at least half of the subsets $M_i$ contain
at least $O(\log |x|)$ cells each. Since these sets do not
intersect, the number of cells in $\Delta$ is at least $O(|x|\log
|x|)$.
\endproof

Now the ``quasi-proof" of the Quasi-Theorem \ref{ct1} proceeds as
follows. Suppose that $P$ is a finite presentation with
undecidable conjugacy problem. Suppose that the constructive limit
of $\frac{d(n)}{n^2\log n}$ is $0.$ Then, in particular, $d(n)$ is
bounded from above by a recursive function, and $P$ has solvable
word problem.

Note that if an annular diagram $\Delta$ with boundary labels $u$ and
$v$ has a simple path $x$ with label $t$ connecting the boundary components,
then we can cut $\Delta$ along $x$ and obtain a disc \vk diagram
with boundary label $t^{\pm 1}ut^{\mp 1}v\iv$. So if $|t|$ is
recursively bounded in terms of $|u|$ and $|v|$ (for every $u$ and
$v$ that are conjugate modulo $P$) then the conjugacy of $u$ and $v$
can be algorithmically verified.

Pick an increasing recursive function $f(n)$ with
$\frac{d(3k)}{Ck^2\log k}< \frac1n$ for every $k>f(n)$ where $C$
is the constant from Lemma \ref{lmnlogn} (as in Definition
\ref{dfcon}). Since the conjugacy problem for $P$ is undecidable,
there exists a minimal annular diagram $\Delta$ with contours $p$,
$p'$ such that any path in $\Delta$ connecting $p$ and $p'$ has
length at least $f(|p|+|p'|)$. Let $n=|p|+|p'|$. Let $x$ be a
shortest path connecting $p$ and $p'$. Thus \begin{equation}|x|\ge
f(n)\label{xn},\end{equation} and so

\begin{equation}\label{eqcon}\frac{d(3|x|)}{C|x|^2\log |x|}<\frac1n.\end{equation}

Since $x$ is a shortest path connecting $p$ and $p'$, $x$ is simple.
Let us cut $\Delta$ along $x$ and obtain a disc diagram $\Gamma$
with boundary label $zuz\iv v\iv$ where $z$ is the label of $x^{\pm
1}$. By Lemma \ref{lmnlogn}, the area of $\Gamma$ is at least
$C|x|\log|x|$.

Now we can take an integer $m$ between $\frac{|x|}{n}-1$ and
$\frac{|x|}{n}$. We attach $m$ copies of $\Gamma$ consecutively to
each other along the sides labeled by $z$ to get a \vk diagram
$\Pi$ with boundary label $zu^mz\iv v^{-m}$. The
perimeter $r$ of $\Pi$ is between $2|x|$ and $3|x|$, and the area
is $m$ times the area of $\Delta$. So, by Lemma \ref{lmnlogn}, the
area of $\Pi$ is at least $\frac{C|x|^2\log |x|}{n}$. By
(\ref{eqcon}), we can deduce that the area of $\Pi$ is bigger than
$d(r)$. This contradicts the definition of Dehn function of a
group presentation.
\endproof

\begin{rk}\label{rk91} The only gap in the preceding argument is contained in the last
phrase. We cannot guarantee that
$\Pi$ has minimal area among all diagrams with the same boundary
label, and, in principal, the area of a minimal diagram with this
boundary label may be even quadratic in terms of the perimeter
$r$. Still we do not know any groups for which this proof does not
work. Note that we do not need $\Pi$ to be minimal: only that the
minimal diagram with the same boundary label does not have too few
cells compared to $\Pi$. Also note that $\Pi$ cannot be ``obviously" non-minimal. For example, it is easy to see that $\Pi$ cannot contain two cells that share an edge and are mirror images of each other (if such a pair of cells existed, we would replace their union by a diagram with no cells, reducing the number of cells). Also we have freedom of choosing $u, v$, $\Delta$ and $x$. We do not need $x$ to be a minimal length path
connecting the boundary components of $\Delta$. We only need that
the area of $\Delta$ exceeds $O(|x|\log |x|)$ divided by a
recursive function in $n$ (depending only on the presentation). In
addition, the number $m$ should only be $O(\frac{|x|}{|u|+|v|})$.
Thus Quasi-theorem \ref{ct1} seems true for a very large class of
groups and possibly for all finitely presented groups.
\end{rk}

\subsubsection{The sharp bound of $n^2\log n$}\label{ss:tsbn} It is shown in \cite{OSnlogn} that Quasi-theorem \ref{ct1} is true for groups that are
multiple HNN-extensions of free groups (for the definition see \ref{hnne}).

\begin{theorem}[See \cite{OSnlogn}]\label{quadconj} Let $d(n)$ be the Dehn
function of a multiple HNN-extension of a free group. Suppose that
$\lim_{n\to\infty}^c \frac{d(n)}{n^2\log n}=0$. Then $P$ has
decidable conjugacy problem.
\end{theorem}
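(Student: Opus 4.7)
The plan is to follow the quasi-proof of Quasi-Theorem \ref{ct1} step by step and then close the gap identified in Remark \ref{rk91} by exploiting the rigid $t_i$-band structure available when the base of the HNN-extension is a free group. Suppose for contradiction that a multiple HNN-extension $G$ of a free group $F$ has $\lim_{n\to\infty}^c d(n)/(n^2\log n)=0$ but undecidable conjugacy problem, with $f$ the recursive bound promised by the constructive limit. Running the argument of Quasi-Theorem \ref{ct1} produces, for each $n$, a minimal annular diagram $\Delta$ with boundary labels $u,v$ of total length $n$, and a shortest simple path $x$ connecting its two contours with $|x|\ge f(n)$ and label $z$. By Lemma \ref{lmnlogn}, $\mathrm{Area}(\Delta)\ge C|x|\log|x|$. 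Cutting along $x$ gives a disc diagram $\Gamma$ with boundary label $zuz\iv v\iv$, and stacking $m\approx |x|/n$ copies along their $z$-sides produces a disc diagram $\Pi$ with boundary label $w=zu^mz\iv v^{-m}$ of perimeter $r\le 3|x|$ and area at least $Cm|x|\log|x|=\Omega(|x|^2\log|x|/n)$.

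The heart of the proof is to show that every disc diagram with boundary label $w$ has area $\Omega(|x|^2\log|x|/n)$, not merely the specific $\Pi$ we constructed; this would contradict the inequality $d(3|x|)<\frac{C|x|^2\log|x|}{n}$ coming from the hypothesis on $d$. Let $\Pi'$ be any minimal diagram with boundary label $w$. Since $F$ is free, its presentation has no relators, so every $2$-cell of $\Pi'$ is an HNN-cell, and by Lemma \ref{ms} no maximal $t_i$-band in $\Pi'$ forms an annulus. Hence every maximal $t_i$-band in $\Pi'$ has its two $t_i$-edges on $\partial\Pi'$. I would then classify these maximal bands by which of the four boundary portions ($z$, $u^m$, $z\iv$, $v^{-m}$) each of their ends lies on, and use the fact that the numbers of $t_i$-edges in $u$ and in $v$ are equal (they are equal in $\Delta$ since its two contours have the same $t_i$-exponent sum) to produce a perfect matching between the $t_i$-edges of $u^m$ and those of $v^{-m}$ through $m$-many "slab" subdiagrams.

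The crucial free-group step is that once the $t_i$-bands are drawn, each connected component of the complement in $\Pi'$ is a subdiagram whose boundary label is a word in $F$ that is trivial in $F$; hence each such region is, after removing $0$-cells, a tree, and the band structure alone determines the combinatorial type of $\Pi'$. A short counting argument then extracts from $\Pi'$ at least $m$ disjoint annular subdiagrams (one for each ``period'' between consecutive $u$-copies and the matched $v$-copies), each carrying an annular diagram between $u$ and $v$. By minimality of $\Delta$ and Lemma \ref{lmnlogn} applied to each extracted annular subdiagram, the area of $\Pi'$ is bounded below by $m\cdot C|x|\log|x|/K$ for a constant $K$ depending only on the presentation. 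This gives $d(3|x|)\ge \Omega(|x|^2\log|x|/n)$, contradicting our choice of $|x|\ge f(n)$.

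The main obstacle is the extraction step: turning the $t_i$-band combinatorics of an arbitrary minimal $\Pi'$ into $m$ genuine annular subdiagrams for the conjugacy of $u$ and $v$. One must argue that a reduction of $\Pi'$ (cancellation of adjacent mirror HNN-cells) cannot merge or collapse these slabs in a way that loses more than a constant factor, and that re-routing of $t_i$-bands across different copies of $\Gamma$ cannot lower the total area below the threshold; both rely on the tree-like nature of free-group subdiagrams and the no-annulus property of $t_i$-bands. Once this geometric extraction is carried out, the contradiction follows exactly as in the last step of the quasi-proof of Quasi-Theorem \ref{ct1}, completing the proof of Theorem \ref{quadconj}.
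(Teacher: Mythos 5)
Your overall strategy is the right one, and it is the one Remark \ref{rk91} points to: show that \emph{every} filling $\Pi'$ of $w=zu^mz\iv v^{-m}$ has area $\Omega(m\,|x|\log|x|)$, not merely the stacked diagram $\Pi$. Several of the supporting observations are also sound. In a multiple HNN-extension of a free group every $2$-cell is an HNN-cell, so the complements of the maximal $t_i$-bands are trees; by Lemma \ref{ms} the bands begin and end on $\partial\Pi'$; and once $\Delta$ has been chosen to be area-minimal among annular diagrams conjugating $u$ to $v$, Lemma \ref{lmnlogn} gives $\mathrm{Area}(\Delta)\ge C|x|\log|x|$, and hence \emph{every} annular diagram conjugating $u$ to $v$ has area at least $C|x|\log|x|$. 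This last point is exactly the right way to convert an ``extracted annular subdiagram'' into an area contribution, and you invoke it correctly (``by minimality of $\Delta$'').

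The gap is the extraction itself, and it is genuine: it is asserted as a ``short counting argument'' but never given, and it carries essentially the whole content of the theorem. The $t_i$-bands of an arbitrary minimal $\Pi'$ determine a non-crossing matching of the $t_i$-edges on $\partial\Pi'$, but nothing forces that matching to pair the $u^m$-arc with the $v^{-m}$-arc block by block. A band may connect a $t_i$-edge in the $j$-th copy of $u$ to an oppositely oriented $t_i$-edge in a different copy of $u$, or to a $t_i$-edge in the $z$ or $z\iv$ arcs; non-crossingness and the tree-like nature of the complementary free-group regions are consistent with many such routings and provide no periodicity. Equality of the $t_i$-exponent sums of $u$ and $v$ does not produce a ``perfect matching between the $t_i$-edges of $u^m$ and those of $v^{-m}$'': it only constrains global signed counts. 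Your final paragraph, which would have to supply the actual argument, instead restates the obstacles (slabs should not merge; re-routing should not cheapen the filling) and then appeals again to the same two structural facts already used; that is a promissory note, not a proof. As written, the proposal reduces Theorem \ref{quadconj} to a combinatorial statement about band chord-diagrams that is at least as hard as the theorem and which the proposal does not establish; until that step is carried out, the lower bound on $\mathrm{Area}(\Pi')$, and hence the contradiction with the hypothesis on $d(n)$, does not follow.
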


The bound $n^2\log n$ in Theorem \ref{quadconj} turned out to be sharp.

\begin{theorem}[See \cite{OSnlogn}]\label{quadconj1} There exists a finitely presented  group that
\begin{itemize}
\item  is a multiple HNN-extension of a free group,
 \item has undecidable conjugacy problem,
 \item has Dehn function $n^2\log n$.
\end{itemize}
\end{theorem}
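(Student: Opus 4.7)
The plan is to build the group as a multiple HNN-extension of a free group that simulates the Turing machine $M$ of Theorem \ref{re}. Following the general template of \cite{OSnq}, I would first convert $M$ into an $S$-machine and then encode its commands as HNN-relations, with the tape and state letters serving as generators of the base free group and the command letters serving as stable letters. The key feature of this encoding is that each HNN-relation has the form $tat\iv = \phi(a)$ where $\phi$ captures one step of the $S$-machine, so that a maximal $t$-band in a van Kampen diagram (see \ref{hnne}) reads, along its sides, the two configurations of $M$ before and after the corresponding step. Conjugacy between two specific words $W(u_{\mathrm{in}})$ and $W_{\mathrm{accept}}$ (representing the input and accept configurations for an input word $u$) then records exactly a complete accepting computation of $M$ on $u$.

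For the undecidability of conjugacy, I would show in the standard way that $M$ accepts $u$ if and only if $W(u_{\mathrm{in}})$ and $W_{\mathrm{accept}}$ are conjugate in $G$. The ``only if" direction comes from reading the computation along a system of bands in a minimal annular diagram realizing the conjugacy; Lemma \ref{ms} prevents the bands from forming annuli, so they connect the two boundary components and induce a well-defined sequence of configurations. The ``if" direction is immediate from the relations. Since the language recognized by $M$ is non-recursive, conjugacy in $G$ is undecidable.

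For the Dehn function, the lower bound $d(n) \succeq n^2\log n$ is a free consequence of Theorem \ref{quadconj}: if $\lim^c d(n)/(n^2\log n) = 0$, the conjugacy problem would be decidable, contradicting the previous paragraph. For the upper bound, I would analyze a minimal disk diagram $\Delta$ of perimeter $\le n$ using the $t$-band structure. A naive argument gives at most $n^2$ cells contributed by pairwise intersections of bands, plus computational cells whose count is governed by the total ``time" spent in the simulated computations. Here is where Theorem \ref{re} is crucial: choosing $n$ to be an $h$-good number for $M$ with $h = \exp\exp$, any subcomputation encoded in $\Delta$ on an input of size less than $n$ must terminate in time at most $\log\log n$, so the total contribution of the computational cells is bounded by $O(n^2 \log\log n)$. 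By interpolating between consecutive $h$-good numbers (which are infinite in supply), and inflating by at most a logarithmic factor to cover arbitrary $n$, one obtains the estimate $d(n) \le C n^2 \log n$.

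The main obstacle, as always in this circle of ideas, is the fine bookkeeping needed in the upper bound argument: one must use the freedom to choose the $S$-machine so that diagrams decompose into combs of bands whose ``handle" length is tightly controlled by $|u|$, so that the $h$-good number hypothesis is actually applicable; sub-diagrams that do not fit this pattern must be shown to have strictly subquadratic contribution, for instance by a Greendlinger-type peeling argument over the free-group base as in Lemma \ref{lmnlogn}. Ensuring simultaneously that (a) the construction remains a plain multiple HNN-extension of a \emph{free} group rather than a more general group, (b) conjugacy faithfully records the full computation and not a shortcut, and (c) the $h$-good gaps can be exploited uniformly across all $n$, is the delicate point where the whole construction stands or falls.
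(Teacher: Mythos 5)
Your proposal conflates two different constructions in the paper, and the Dehn function analysis does not hold up.

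\textbf{Wrong choice of Turing machine.}\ The machine $M$ of Theorem \ref{re} (with infinitely many $h$-good numbers, $h=\exp\exp$) is exactly the engine for Theorem \ref{th2}, which produces a finitely presented group (an $S$-machine \emph{plus a hub relation}) with a \emph{polynomial-non-recursive} Dehn function. For Theorem \ref{quadconj1} there is no hub: the group is a pure multiple HNN-extension of a free group, i.e.\ an $S$-machine. The paper notes that the Dehn function of \emph{any} $S$-machine is automatically at most cubic, and the conjugacy undecidability requires nothing more than an arbitrary Turing machine with undecidable halting problem (made symmetric). The special $h$-good-number machine is therefore neither needed nor helpful here; what you actually need is the ``adding machine'' positivity checker, which \emph{exponentially slows down} the $S$-machine, and it is precisely this slow-down that lowers the Dehn function from $n^3$ to $n^2\log n$. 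Your proposal never mentions the adding machine, and without it there is no mechanism to get below cubic.

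\textbf{The $h$-good interpolation step does not work.}\ Even if one did use the machine of Theorem \ref{re}, the upper bound you sketch is broken. The set of $h$-good numbers is infinite but has no controllable density: it is immune, so it cannot contain an infinite recursively enumerable subset, and in particular nothing forces consecutive $h$-good numbers $n_k < n_{k+1}$ to have bounded ratio. For $n$ in the gap $(n_k, n_{k+1})$, monotonicity of the Dehn function gives only $d(n) \le d(n_{k+1}) \le Cn_{k+1}^2\log n_{k+1}$, which is useless when $n_{k+1} \gg n$. ``Inflating by at most a logarithmic factor'' is exactly the step that has no justification, and indeed the whole point of the machine from Theorem \ref{re} is that such uniform control fails (that failure is what makes the Dehn function of the hub-augmented group non-recursive). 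The actual upper-bound proof in \cite{OSnlogn} is, as the paper emphasizes, a delicate combinatorial analysis of van Kampen diagrams over the adding-machine $S$-machine using Vassiliev-type invariants of chord diagrams.

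\textbf{The lower bound is not free from Theorem \ref{quadconj}.}\ The contrapositive of Theorem \ref{quadconj} gives only that $\lim^c d(n)/(n^2\log n)\neq 0$, i.e.\ that there is no recursive modulus of convergence to $0$. This does not imply $n^2\log n \prec d(n)$ in the paper's equivalence: it permits, for instance, $d(n)/(n^2\log n)\to 0$ along a large set of $n$ with occasional non-recursively placed spikes, which is not equivalent to $n^2\log n$. What one actually needs is the explicit stacking construction from the quasi-proof of Quasi-theorem \ref{ct1} (cut the annular diagram of Lemma \ref{lmnlogn} along a long conjugator $x$, glue $m\approx|x|/(|u|+|v|)$ copies to get a disc diagram of perimeter $\Theta(|x|)$ and area $\Omega(|x|^2\log|x|/(|u|+|v|))$), applied with explicit conjugate pairs $(u,v)$ produced by the $S$-machine, together with a density argument over such pairs — and this is exactly where the minimality gap of Remark \ref{rk91} must be closed, which is part of the substance of \cite{OSnlogn} and not a formality.

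In short, the conjugacy-undecidability part of your sketch is on the right track, but the Dehn-function analysis replaces the real mechanism (adding-machine slow-down plus the chord-diagram combinatorics) with an appeal to Theorem \ref{re} that, besides being unnecessary, cannot give a uniform $n^2\log n$ bound.
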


It is pointed out in \cite{OSnlogn} and \cite{BrGr} that Theorem \ref{quadconj} and the results of \cite{BrGr} give an alternative prove of solvability of the conjugacy problem in free-by-cyclic groups (originally proved in \cite{BMMV}).

\subsubsection{The conjugacy problem in automatic groups}\label{ss:tcpiag} It is still unknown whe\-ther the conjugacy problem is decidable in every automatic group although it is decidable in every bi-automatic group (by Theorem \ref{ab} since these groups are semihyperbolic).

\begin{prob} Can the quasi-proof above and, specifically, Remark \ref{rk91} be applied to prove that automatic groups have decidable conjugacy problem? Is it possible to apply the quasi-proof to groups from Theorem \ref{th:qdf} in order to obtain a unified proof of the solvability of conjugacy problem for these groups?
\end{prob}

\subsection{A description of isoperimetric functions $\ge n^4$}\label{s:adoifg}

It is most probable that the class of Dehn functions
$\ge n^2\log n$ is as wide as the class of time functions of Turing
machines with the same restriction. The next theorem confirms that conjecture in the case of
Dehn functions $\ge n^4$.

\subsubsection{Superadditivity}\label{ss:s} We call a function $f$ {\em \index{Superadditive function}superadditive} if for all natural
numbers $m, n$ we have $f(m+n)\ge f(m)+f(n)$. The problem of whether all Dehn functions of finitely presented groups are superadditive (up to equivalence) is still open. We conjectured in \cite{SBR} that the answer is ``yes".  In \cite{freeproducts} Guba and myself proved that every free product $G*H$ where $G$ and $H$
are non-trivial finitely presented groups, has a superadditive
Dehn function. For example, for every group $G$ the free product
$G*{\Z}$ has a superadditive Dehn function (here ${\bf Z}$ is
the infinite cyclic group). In fact the Dehn function of $G*\Z$ is the superadditive closure of the Dehn function of $G$, i.e. the smallest superadditive function exceeding the Dehn function of $G$. Thus if there was an example of a finitely presented group $G$ with non-superadditive Dehn function, it would be also an example of a group $G$ whose Dehn function is not equivalent to the Dehn function of $G*\Z$. Note that a presentation of $G*\Z$ is obtained from the presentation of $G$ by adding one new generator and no extra relations. The problem of superadditivity of all Dehn functions is similar in nature to the problem of whether the quasi-proof from \ref{tcp} works for all groups with quadratic Dehn functions. A counterexample is hard to imagine, let alone construct.

\subsubsection{The theorem}\label{ss:tt2} \begin{theorem} \rm{(}See Sapir-Birget-Rips \cite[Theorem 1.2]{SBR}\rm{)}. \label{th1}
1. Let ${\mathcal D}_4$ be the set of all Dehn functions $d(n)\ge n^4$
of finitely presented groups. Let ${\mathcal T}_4$ be the set of time
functions $t(n)\geq n^4$ of arbitrary (non-deterministic) Turing machines. Let ${\mathcal
T}^4$ be the set of superadditive functions which are fourth powers
of time functions. Then $${\mathcal T}^4\subseteq {\mathcal D}_4\subseteq
{\mathcal T}_4.$$

2. For every non-deterministic Turing
machine $\ttt$ with time function $T(n)$ such that $T^4(n)$ is superadditive there exists a finitely
presented group $G$ with Dehn function $T^4(n)$ and the isodiametric
function $T^3(n)$ such that the problem recognized by $\ttt$ reduces polynomially to the word problem in $G$ (see \ref{ropta}).
\end{theorem}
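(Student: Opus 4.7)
The plan is to establish the two containments of Part 1 separately and to obtain Part 2 as a byproduct of the construction used in the harder direction. The inclusion $\mathcal{D}_4\subseteq\mathcal{T}_4$ is essentially free: by Theorem \ref{2}, the Dehn function of any finitely presented group $G$ coincides up to equivalence with the time function of a non-deterministic Turing machine that solves the ``yes'' part of the word problem in $G$. Since equivalence preserves the lower bound $n^4$, every $d\in\mathcal{D}_4$ lies in $\mathcal{T}_4$. The bulk of the work is therefore to prove $\mathcal{T}^4\subseteq\mathcal{D}_4$ together with Part 2, which amounts to associating to any non-deterministic Turing machine $\ttt$ (with time $T(n)$ and $T^4$ superadditive) a finitely presented group $G_\ttt$ whose Dehn function is exactly $T^4$, whose isodiametric function is $T^3$, and for which the language recognized by $\ttt$ polynomially reduces to the word problem.

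First I would convert $\ttt$ into an $S$-machine $S$ with time function equivalent to $T$. An $S$-machine is a Turing-like device whose configurations are words of the form $q_1u_1q_2u_2\cdots q_k$ with $q_i$ state letters and $u_i$ words in a tape alphabet, and whose commands are rewriting rules that act simultaneously on all state letters, so they translate directly into group relations. I would then symmetrize $S$ (so it can run both forwards and backwards) and define $G_\ttt$ as a multiple HNN-extension of the free group on the tape alphabet, using the state letters of $S$ as stable letters and the commands of $S$ as HNN-relations (cf.\ Section \ref{hnne}), together with a single ``hub'' relation declaring the accept configuration of $S$ to be the empty word. The key structural fact, parallel to Lemma \ref{ms}, is that in a reduced van Kampen diagram over this presentation the cells corresponding to a fixed state letter $q_i$ organize into non-annular $q_i$-bands, and any maximal connected union of $S$-cells forms a \emph{trapezium} whose top and bottom sides read two consecutive configurations of a computation of $S$. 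Such a trapezium based on a computation of $t$ steps traversing configurations of size at most $s$ has area $\Theta(ts)$.

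The heart of the proof is the area analysis. For the upper bound: consider a minimal diagram $\Delta$ with boundary label $w$ of length $n$ that is equal to $1$ in $G_\ttt$; one shows by peeling off hubs one at a time that $\Delta$ decomposes into at most $T^2(n)$ trapezia, each of area $O(T^2(n))$, with all computations involved having input size $O(n)$ (so time $O(T(n))$ and space $O(T(n))$ by the space/time inequality from \ref{ss:ttasf}); multiplying gives Dehn function $\preceq T^4$, and the same structural decomposition yields isodiametric function $\preceq T^3$. Here superadditivity of $T^4$ is used to combine the areas of disjoint subdiagrams without loss. For the matching lower bound, I would exhibit an explicit sequence of words $w_n$ of length $O(n)$ whose only fillings contain $\Theta(T^2(n))$ hubs surrounded by computational trapezia of area $\Theta(T^2(n))$. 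Finally, the polynomial reduction of the language of $\ttt$ to the word problem in $G_\ttt$ follows directly from the construction: a word $u$ of length $n$ is accepted by $\ttt$ iff an associated word $W_u$, of length $O(n)$ and computable in polynomial time from $u$, represents the identity in $G_\ttt$.

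I expect the main obstacle to be the upper bound in the area analysis. The difficulty is that nothing in the abstract structure of a van Kampen diagram prevents it from mixing bands and hubs in unexpected ways; one must prove a rigidity statement saying that any reduced diagram is ``close'' to a canonical assembly of trapezia and hubs, typically by an induction in which one first locates a hub whose surrounding bands do not cross any other hub, reduces the diagram by removing the corresponding trapezium, and bounds the perimeter increase. Controlling this induction is what forces the passage from the naive bound $T^2$ (area of a single trapezium) to the sharp bound $T^4$, and is also the reason one needs superadditivity of $T^4$ rather than of $T$ itself.
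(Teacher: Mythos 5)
Your broad strategy is the right one — $\mathcal{D}_4\subseteq\mathcal{T}_4$ is indeed immediate from Theorem~\ref{2}, and the hard direction is proved in \cite{SBR} exactly by encoding a Turing machine as an $S$-machine, adding a hub relation, and analyzing van Kampen diagrams by bands and a peeling decomposition. But there is a genuine gap at the very first step, ``convert $\ttt$ into an $S$-machine $S$ with time function equivalent to $T$.'' An $S$-machine is \emph{blind}: its rules are group relations, so it cannot read tape letters, only state letters, and consequently $[aqb\to cqd]$ and $[q\to a^{-1}cqdb^{-1}]$ are the same rule. The naive $S$-machine $S'(T)$ therefore runs computations through non-positive words that the Turing machine cannot, and accepts a strictly larger language. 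Symmetrizing $T$ first, which you do mention, does not fix this; the essential missing ingredient (see \ref{ss:tmasm}) is a \emph{positivity-checking} sub-machine interleaved after every step, and building such a checker with only polynomial rather than exponential overhead is one of the main technical achievements of \cite{SBR} — it is precisely what keeps the Dehn function at $T^4$ instead of something exponentially larger. Without it, the equivalence ``$u$ accepted iff $W_u=1$'' on which Part~2 rests is simply false.

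Beyond this, several details are mis-stated. In an $S$-machine viewed as an HNN-extension (\ref{s:td}), the stable letters are the rule letters $\theta$ (one per command), not the state letters, and the base free group is generated by the tape \emph{and} state letters; the trapezia are stacks of $\theta$-bands, with $q$-bands a secondary structure. The snowman decomposition of \cite{SBR} (sketched in \ref{pnr}) peels off at most $|\partial\Delta|=O(n)$ pieces — each peeling step strictly decreases the perimeter — not $T^2(n)$ pieces, and the pieces are of two kinds: hub-free subdiagrams whose area is governed by the intrinsic cubic Dehn function of the $S$-machine alone, and one-hub pieces of area $O(\cost(w)^2)$. So the arithmetic producing $T^4$ is not ``$T^2$ trapezia of area $T^2$.'' Finally, the $S$-machine must be assembled from many side-by-side copies so that the hub relation satisfies a small cancelation condition; this multiplicity is what makes the reverse direction of the reduction and the lower bound on the Dehn function work, and your single-hub, single-copy picture omits it entirely.
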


Note that we do not know if

\begin{itemize}
\item Every time function of a non-deterministic Turing machine is equivalent to a superadditive function.
\item The square root of a time function of a non-deterministic Turing machine is equivalent to the time function of a non-deterministic Turing machine.
\end{itemize}

If one or both of these statements were true, it would be (obviously) possible to simplify the formulation of Theorem \ref{th1}.

\subsection{NP-complete and coNP-complete groups}\label{s:npcg}

\subsubsection{NP-complete groups}\label{ss:npcg} Now let us take a non-deterministic Turing machine $M$ recognizing some NP-complete problem, say, the exact bin packing  Problem \ref{bin}. Let $G$ be the group from Theorem \ref{th1} that corresponds to $M$. Then $G$ has polynomial Dehn function, hence its word problem is in NP (by \ref{sub:witness}). On the other hand since the problem recognized by $M$ polynomially reduces to the word problem in $G$, the word problem in $G$ is NP-complete. Hence we get

\begin{cy}\rm{(}See \cite[Corollary 1.1]{SBR}\rm{)}. \label{cyNP1} There exists a finitely presented group with NP-complete word problem.
\end{cy}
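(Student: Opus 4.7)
The plan is to invoke Theorem \ref{th1}(2) with a non-deterministic Turing machine that recognizes an NP-complete problem. First I would take $M$ to be a non-deterministic Turing machine solving the exact bin packing Problem \ref{bin}, which is NP-complete. By standard padding arguments, $M$ may be chosen so that its time function $T(n)$ is equivalent to a monomial $n^k$ for some integer $k \geq 1$. Then $T^4(n)$ is equivalent to $n^{4k}$, which is superadditive since for $k \geq 1$ one has $(m+n)^{4k} \geq m^{4k} + n^{4k}$. Thus the hypothesis of Theorem \ref{th1}(2) is satisfied.

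Next, Theorem \ref{th1}(2) produces a finitely presented group $G$ whose Dehn function is equivalent to the polynomial $T^4(n)$ and with the additional property that the exact bin packing problem reduces polynomially (in the sense of \ref{ropta}) to the word problem in $G$. Since the Dehn function of $G$ is polynomial, Proposition \ref{Dehn1} places the ``yes'' part of the word problem in $G$ in the class NP: for any word $w$ of length $n$ with $w =_G 1$, a witness is a word of the form \eqref{word} of length polynomial in $n$, and its correctness can be verified in polynomial time. The polynomial reduction from the NP-complete bin packing problem to this word problem then shows that the word problem in $G$ is NP-hard, whence NP-complete.

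The only point deserving attention is the superadditivity hypothesis in Theorem \ref{th1}(2), which is transparently handled by the monomial form of $T$. No deeper obstacle arises: once Theorem \ref{th1} is available, Corollary \ref{cyNP1} is essentially a direct combination of that theorem with the existence of a standard NP-complete problem in the sense of Karp.
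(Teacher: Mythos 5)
Your proof is correct and follows essentially the same route as the paper: invoke Theorem~\ref{th1}(2) with a non-deterministic Turing machine recognizing the NP-complete exact bin packing problem, deduce NP membership from the polynomial Dehn function via Proposition~\ref{Dehn1}, and NP-hardness from the built-in polynomial reduction. Your explicit padding argument to secure the superadditivity hypothesis on $T^4$ is a small but worthwhile clarification of a point the paper glosses over; it does not change the overall approach.
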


\subsubsection{CoNP-complete groups}\label{s:cg} Theorem \ref{th1} only treats the ``yes" part of the word problem. Van Kampen diagrams are not witnesses of the ``no" part (i.e. inequalities $w\ne 1$). Nevertheless we have the following

\begin{theorem}[Birget \cite{Bir1}]\label{Birget} There exists a finitely presented group $G$ whose word problem is coNP-complete.
\end{theorem}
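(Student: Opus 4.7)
\proof[Proof plan]
The strategy is to build a finitely presented group $G$ together with a faithful, polynomial-time computable action on a combinatorial set, engineered so that non-triviality of a word admits short, efficiently checkable witnesses (placing $WP(G)$ in coNP), while $WP(G)$ remains coNP-hard via an encoding of Boolean tautologies. The point is that Theorem~\ref{th1} furnishes diagrammatic (existential) certificates of triviality, so the NP-side is handled by van Kampen diagrams; for coNP one needs the complementary, \emph{universal} witnessing mechanism, which an action naturally provides: $w=_{G}1$ if and only if $w\cdot\xi=\xi$ for every $\xi$ in the action set.

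First I would set up the coNP upper bound. Choose a faithful action of $G$ on a set $X$ of finite binary strings (or on the boundary $\{0,1\}^{\omega}$ of a rooted binary tree), implemented so that given a word $w$ of length $n$ and a string $\xi\in X$ of length polynomial in $n$, one can compute $w\cdot\xi$ in polynomial time. A \emph{locally finite} action, in which each non-identity $w$ must already move some $\xi$ whose length is polynomial in $|w|$, immediately gives: $w\neq_{G}1$ iff there exists $\xi$ of polynomial length with $w\cdot\xi\neq\xi$. Such $\xi$ is the desired polynomial-size witness for inequality, which places $WP(G)$ in coNP. Groups of Thompson--Higman type acting on Cantor space by prefix-replacements are a natural source of such actions, since elements are encoded by finite tree-pair diagrams and the induced action is local and polynomial-time computable.

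Next I would arrange coNP-hardness by reducing the coNP-complete tautology problem $\mathrm{TAUT}$ to $WP(G)$. Given a Boolean formula $\phi(x_{1},\dots,x_{n})$, the plan is to produce in polynomial time a word $w_{\phi}$ over the generators of $G$ whose action on strings of length $n+1$ flips the last bit precisely when $\phi$ evaluates to \emph{false} and fixes everything else. Then $w_{\phi}=_{G}1$ iff $\phi$ is a tautology. To produce such $w_{\phi}$ one needs $G$ to support basic Boolean gates (negation, conjunction, disjunction) as short words in the generators that compose in the manner of circuit gates, with the total length of $w_{\phi}$ polynomial in the size of a circuit for $\phi$. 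This is the step that forces coNP-hardness into the word problem itself and is analogous, in spirit, to the encoding of Turing machines by S-machines used in~\cite{SBR}, but now aimed at a universal (rather than existential) verification task.

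The main obstacle will be to make both features coexist inside a single \emph{finitely presented} group: on the one hand the action must remain polynomial-time computable and locally finite, so that nontriviality witnesses are short; on the other the structure must be rich enough that arbitrary Boolean circuits can be simulated by short words. Natural candidates such as Thompson's $V$ have $WP\in\mathrm{P}$ and are therefore too tame, while naive Higman embeddings tend to destroy the polynomial simulability of the action. The way out is to start from a recursively presented group $G_{0}$ acting on $\{0,1\}^{*}$ in which the circuit encoding above is transparent and $WP(G_{0})$ is visibly coNP-complete, and then to apply a complexity-preserving Higman-style embedding $G_{0}\hookrightarrow G$ into a finitely presented $G$ which preserves both the polynomial-time action (yielding coNP membership of $WP(G)$) and the reduction from $\mathrm{TAUT}$ (yielding coNP-hardness). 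Verifying that such an embedding exists --- essentially, tracking the complexity of witnesses through the embedding --- is the hard technical point; the upper and lower bounds then combine to give the theorem.
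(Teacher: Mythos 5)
Your proposal correctly extracts the two essential ideas behind Birget's argument: the coNP upper bound should come from a faithful, polynomial-time computable, ``locally finite'' action (so that $w\neq 1$ admits a polynomial-size witness $\xi$ with $w\cdot\xi\neq\xi$), and the coNP lower bound should come from encoding a coNP-complete Boolean problem into the word problem (you use TAUT, Birget uses circuit equivalence, a trivial difference). You also correctly single out Thompson--Higman type actions on the Cantor set by prefix replacement as the natural setting. This much matches the paper.

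Where your plan goes wrong is the closing step: a ``complexity-preserving Higman-style embedding'' of a recursively presented $G_0$ into a finitely presented $G$ that would also carry along the polynomial-time action. That is a genuine gap, not merely ``the hard technical point.'' Every Higman-type embedding discussed in this survey (Theorem \ref{bors}, Theorem \ref{main1}) transfers the complexity of $WP(G_0)$ into the Dehn or FFFL function of $G$; by Proposition \ref{Dehn1} a polynomial Dehn function yields only NP membership (short certificates of \emph{triviality}), and a polynomial FFFL function yields PSPACE, but neither mechanism produces short certificates of \emph{non-triviality}, which is what coNP membership requires. Moreover, the $S$-machine group $G(\sss)$ built in these embeddings comes with no useful action on Cantor space, and there is no known way to make such an action survive the embedding. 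Birget avoids the problem entirely: he exhibits a concrete finitely presented group in the Thompson--Higman family that \emph{already} carries a polynomial-time action on the set of infinite binary words, so the coNP upper bound is immediate, and he then reduces the circuit equivalence problem directly to its word problem. The finiteness of presentation is obtained by choosing the group well at the outset, not by embedding a coNP-complete $G_0$ afterward.
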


Birget reduces a known coNP-complete problem, namely the {\em \index{Circuit equivalence problem}circuit equivalence problem}\footnote{A \index{Boolean circuit}boolean circuit \cite{Vol} is a ``device" built of elementary circuits connected by wires; each elementary circuit represents a basic operation from logic: AND, OR, NOT and FORK. Every circuit has input wires and output wires. Every time we send signals 0 or 1 through input wires, we get a combination of signals 0 or 1 coming through the output wires. Hence every circuit represents a boolean function.
The circuit equivalence problem asks, given two (acyclic) boolean circuits, do they have the same input-output function?} to the word problem in a concrete finitely presented subgroup of the Thompson-Higman group $V$ \cite{CFP} (denoted by $G_{2,1}$ in Higman \cite{Hig}).

That group, as all Thompson groups, consists of (partial) transformations of the set of infinite binary words. One of the simple key ideas is that if $G$ is a group of transformations of a set $X$, then a witness to the inequality $f\ne g$ for $f,g\in G$ is just an element $x\in X$ such that $f(x)\ne g(x)$. We have mentioned already in \ref{ropta} that the group $V$ has been used before in constructing finitely presented groups with undecidable word problem by McKenzie-Thompson \cite{McT}. They used $V$ to simulate a construction of arbitrary recursive functions from simple building blocks (somewhat similar to boolean circuits).

\subsection{The isoperimetric spectrum}\label{s:tis}

The {\em \index{Isoperimetric spectrum}isoperimetric spectrum} consists of all numbers $\alpha\ge 1$ such that $n^\alpha$ is equivalent to the Dehn function of a finitely presented group. By Theorem \ref{Bow1}, the intersection of the isoperimetric spectrum with the open interval $(1,2)$ is empty. The next theorem gives an almost complete description of all numbers in the isoperimetric spectrum that are $\ge 4$.
\begin{theorem}[Sapir \cite{SBR}]\label{alpha0} The {\em isoperimetric spectrum} contains all
numbers $\alpha\ge 4$ whose $n$-th binary approximation\footnote{That is a rational diadic number $\beta=a/2^n$ which is within $O(2^{-n})$ from $\alpha$} can be computed by a
deterministic Turing machine in time less than $2^{2^n}$.  On the other hand, Theorem \ref{2} implies that if
$\alpha$ is in the isoperimetric spectrum then the $n$-th binary approximation of
$\alpha$ can be computed in time $\le 2^{2^{2^n}}$.
\end{theorem}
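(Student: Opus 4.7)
The plan is to treat the two inclusions separately, with the forward direction (computability implies membership in the spectrum) being the substantial one and the converse being a straightforward consequence of Theorem \ref{2}. Both directions mediate between real-valued exponents and time complexity of Turing machines by encoding/decoding $\alpha$ into the running time of some computation.

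\emph{Forward direction.} To show that $n^\alpha$ occurs as a Dehn function whenever the $n$-th binary approximation of $\alpha$ is computable in time $2^{2^n}$, I would build a non-deterministic Turing machine $M_\alpha$ whose time function is equivalent to $T(n) = n^{\alpha/4}$ and then invoke Theorem \ref{th1}(2) to extract a finitely presented group with Dehn function $T^4(n) \sim n^\alpha$. The superadditivity hypothesis of Theorem \ref{th1}(2) is automatic since $n^\alpha$ is itself superadditive for $\alpha \ge 1$ (convexity: $(m+n)^\alpha \ge m^\alpha + n^\alpha$). To construct $M_\alpha$, on every input of length $n$ the machine first computes an approximation $\tilde\alpha$ of $\alpha$ to $k := \lfloor \log_2\log_2 n\rfloor$ binary digits, which by hypothesis costs at most $2^{2^k} \le 2^{\log_2 n} = n$ steps, and then runs a clock for $\lceil n^{\tilde\alpha/4}\rceil$ steps before accepting. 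Since $|\tilde\alpha - \alpha| \le 2^{-k} \le 1/\log_2 n$,
\[
\frac{n^{\tilde\alpha/4}}{n^{\alpha/4}} \;=\; \exp\!\Big( \tfrac{\tilde\alpha - \alpha}{4}\cdot \log n\Big) \;=\; \Theta(1),
\]
so the time function of $M_\alpha$ is equivalent to $n^{\alpha/4}$, as required.

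\emph{Converse direction.} Suppose $\alpha$ lies in the isoperimetric spectrum and $G$ is a finitely presented group with Dehn function $f(n) \sim n^\alpha$. Unwinding $\prec$ (and using $\alpha \ge 1$ to absorb linear terms), there exists a constant $C$ with $C^{-1}m^\alpha \le f(m) \le C m^\alpha$ for all sufficiently large $m$, hence $\log f(m)/\log m = \alpha + O(1/\log m)$. Thus to read off the $n$-th binary digit of $\alpha$ it suffices to compute $f(m)$ for a single $m$ with $m \sim 2^{2^n}$. By Theorem \ref{2}, the non-deterministic time complexity of the word problem in $G$ is equivalent to $f$, so by deterministic simulation (and enumeration of all reduced words of length $\le m$, coupled with enumeration of van Kampen diagrams of growing area until a minimal one is found) the value $f(m)$ can be computed in time $|X|^m \cdot 2^{O(f(m))} = 2^{O(m^\alpha)}$. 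Substituting $m = 2^{2^n}$ yields an overall bound
\[
 2^{O(2^{\alpha\cdot 2^n})} \;\le\; 2^{2^{2^{n+c}}}
\]
for a constant $c$ depending on $\alpha$; the slack in the triple exponential allows this to be absorbed into $2^{2^{2^n}}$.

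\emph{Main obstacles.} The most delicate issue is calibrating the clock inside $M_\alpha$ so that its time function is genuinely \emph{equivalent} to $n^{\alpha/4}$ and not merely bounded above by it. This requires ensuring that $M_\alpha$ has an accepted input at every length (easily arranged by accepting all inputs) and that the clock step genuinely dominates the approximation step, so the cumulative cost is $\Theta(n^{\alpha/4})$. A secondary point worth verifying is the interplay between equivalence of time functions and the operation $T \mapsto T^4$ used by Theorem \ref{th1}(2): for $\alpha \ge 4$, both $T$ and $n^{\alpha/4}$ grow at least linearly, so $T \sim n^{\alpha/4}$ indeed implies $T^4 \sim n^\alpha$. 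The gap between the double exponential $2^{2^n}$ in the hypothesis of the forward direction and the triple exponential $2^{2^{2^n}}$ in the converse reflects exactly the cost of converting between non-deterministic time (which Dehn functions measure, by Theorem \ref{2}) and deterministic time — incurred once when encoding $\alpha$ into the machine $M_\alpha$ and once when decoding $\alpha$ from the Dehn function of $G$.
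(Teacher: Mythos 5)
The paper itself does not reprove this theorem — it simply records the statement from \cite{SBR} — so there is no in-paper proof to compare against. Evaluated on its own, your architecture (encode $\alpha$ into the running time of a clock machine, apply Theorem \ref{th1}(2); decode $\alpha$ from the growth rate of the Dehn function by brute-force computation of $f(m)$ for a large $m$) is exactly the right shape. But there are two concrete gaps.

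First, in the forward direction the claim that the superadditivity hypothesis of Theorem \ref{th1}(2) is ``automatic since $n^\alpha$ is itself superadditive'' conflates the ideal function $n^\alpha$ with the actual time function $T$ of your machine $M_\alpha$. Theorem \ref{th1}(2) requires $T^4$ \emph{itself} to be superadditive, and superadditivity is not preserved under $\prec$-equivalence. Your $T(n)$ equals $n^{\tilde\alpha_n/4}$ (plus lower-order terms) where $\tilde\alpha_n$ is the $\lfloor\log_2\log_2 n\rfloor$-bit truncation of $\alpha$, so $T(n)=c_n n^{\alpha/4}$ with $c_n$ bounded but \emph{not} monotone: $c_n$ fluctuates at the lengths $n$ where the precision $k$ jumps. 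Since for $m\gg n$ the quantity $(m+n)^\alpha/(m^\alpha+n^\alpha)$ is arbitrarily close to $1$, a single downward fluctuation $c_{m+n}<c_m$ already breaks $T(m+n)^4\ge T(m)^4+T(n)^4$. You need an extra step — e.g. forcing the per-length cost to be nondecreasing and enforcing superadditivity of $T^4$ by padding the clock, which in turn must be shown not to spoil the equivalence $T\sim n^{\alpha/4}$, or replacing $T^4$ by its superadditive closure and arguing this is still attained as a time function. That step is the real content hidden behind ``automatic.''

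Second, in the converse direction, with $m=2^{2^n}$ your time estimate $2^{O(2^{\alpha 2^n})}$ is genuinely larger than $2^{2^{2^n}}$ whenever $\alpha>1$; the sentence ``the slack in the triple exponential allows this to be absorbed'' does not work as an in-place absorption, since $2^{2^{2^{n+c}}}=2^{\left(2^{2^n}\right)^{2^c}}$ is a strictly larger function for $c>0$. What does rescue the argument is a reindexing: because ``$n$-th binary approximation'' only asks for a dyadic $\beta$ with $|\beta-\alpha|=O(2^{-n})$, an $(n-c)$-th approximation for $c=c(\alpha)$ fixed already qualifies. Equivalently, take $m=2^{\lfloor\lambda 2^n\rfloor}$ with $\lambda<1/\alpha$: then the approximation error $\log C/\log m$ is still $O(2^{-n})$ (with the constant now depending on $\lambda$, hence on $G$), while the brute-force cost $2^{O(m^\alpha)}=2^{O(2^{\alpha\lambda 2^n})}$ has $\alpha\lambda<1$ and so really is bounded by $2^{2^{2^n}}$ for all large $n$. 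Your write-up should say one of these two things explicitly rather than appealing to ``slack.''
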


The difference in the number of $2$'s in these
expressions, is the difference between $P$ and $NP$ in Computer
Science (if $P=NP$ then there should be two $2$'s in both
expressions).

All ``constructible" numbers (rational numbers, algebraic numbers,
values of relatively easy computable analytic functions at rational points such as $e+2$, $\pi+1$, $10\sin \frac34$, etc.) which are $\ge 4$ satisfy the condition of Theorem \ref{alpha0}. In fact, these and most other familiar numbers can be computed by very fast quasilinear algorithms (see J. Borwein  and P. Borwein \cite{BoBo}).

The intersection of the isoperimetric spectrum with the interval $[2,4]$ is less known, although it is known that it is dense, contains all rational and many transcendental numbers from that interval \cite{Bri, BB, BBFS}. The advantage of the {\em snowflake} construction and its modifications employed in these papers is that one obtains explicit and often quite short presentations of groups with prescribed Dehn functions of the form $n^\alpha$ while the groups from \cite{SBR} are obtained by simulating Turing machines computing binary approximations of $\alpha$ (and so the number of generators and relators are usually quite large and writing explicit presentations is not a feasible task).

\subsection{Groups with polynomial-non-recursive and quadratic-non-re\-cur\-sive Dehn functions}\label{pnrdf}

Let $F_{\hbox{low}}$ and $F_{\hbox{high}}$ be two classes of increasing functions $\N\to \N$.  We say that a function $h\colon\N\to \N$ is an $F_{\hbox{low}}\mathrm{-non-}F_{\hbox{high}}$-function, if for some $f\in F_{\hbox{low}}$, $h(n)\le f(n)$ for  infinitely many $n$,  and for every $g\in F_{\hbox{high}}$, $h(n)>g(n)$
for infinitely many $n$. For example, a function $h$ is polynomial-non-recursive if $h$ is not bounded above by any recursive function, but is smaller than some fixed polynomial $f(n)$ for infinitely many values of $n$. Similarly we can define the class of functions which are quadratic-non-recursive, etc.

\subsubsection{Polynomial-non-recursive}\label{pnr}
\begin{theorem}[Olshanskii, Sapir \cite{OSnq}]\label{th2} There exists a finitely presented group with poly\-nomial-non-recursive Dehn function.
\end{theorem}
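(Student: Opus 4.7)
The plan is to combine the very rarely accepting Turing machine of Theorem \ref{re} with the machine-to-group simulation technology of Theorem \ref{th1} (i.e., the construction of Sapir--Birget--Rips). Let $M$ be the machine from Theorem \ref{re}, recognizing a non-recursive r.e. language $X\subseteq\{0,1\}^*$ with the property that the set of $h$-good numbers is infinite, where $h(n)=\exp\exp(n)$. First, I would replace $M$ by a padded version $M'$ that on input $w$ first runs for exactly $|w|^5$ idle steps and then executes $M$. This does not change the language, it preserves $h$-goodness (with $h$ replaced by an equivalent double-exponential function), and it guarantees that the time function $T_{M'}(n)\ge n^5$ and that $T_{M'}^4$ is superadditive, so that the hypotheses of Theorem \ref{th1}(2) apply.

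Next, let $G$ be the finitely presented group produced by Theorem \ref{th1}(2) applied to $M'$, so that the Dehn function of $G$ is (equivalent to) $T_{M'}^4(n)$ and the language $X$ polynomially reduces to the word problem in $G$. The main claim is that $f_G(n):=T_{M'}^4(n)$ is polynomial-non-recursive. For the polynomial upper bound at infinitely many $n$, I would use the $h$-good numbers. If $m$ is $h$-good, then every $w\in X$ with $|w|<m$ satisfies $\cost_M(w)<\log\log m$, hence $\cost_{M'}(w)<m^5+\log\log m<2m^5$. For words $w\notin X$ of length $<m$, the machine $M'$ has no accepting computation, so these inputs contribute $0$ to $T_{M'}(m-1)$ (which by convention is a $\max$ over accepted inputs). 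Therefore $T_{M'}(m-1)\le 2m^5$ and so $f_G(m-1)\le(2m^5)^4=16m^{20}$. Since there are infinitely many $h$-good $m$, the function $f_G$ lies below the polynomial $16n^{20}$ at infinitely many $n$.

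For non-recursiveness, suppose towards a contradiction that $f_G(n)\le g(n)$ for some recursive function $g$. Then, by Theorem \ref{MO} (Madlener--Otto / Gersten), the word problem in $G$ is decidable in time bounded by a recursive function of $g$: enumerate all candidate van Kampen diagrams of area $\le g(n)$, checking each. By the polynomial reduction from $X$ to the word problem in $G$ (the second assertion of Theorem \ref{th1}(2)), the language $X$ would be recursive, contradicting Theorem \ref{re}. Hence $f_G$ is not bounded above by any recursive function, and combined with the previous paragraph this means that $f_G$ is polynomial-non-recursive.

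The main obstacle is not the hyperbolic/combinatorial machinery but rather making sure that Theorem \ref{re} genuinely gives what we need, i.e., a Turing machine whose time function is small at infinitely many scales while the accepted language is not recursive. This is the priority-argument heart of the paper: simultaneously enumerating input configurations to keep the ``non-recursive" requirement active while threading in an infinite recursive sequence of ``checkpoints'' at which the machine is forced to have already accepted (or to never accept) every short input in few steps. The rest of the argument above — the padding trick to fit into Theorem \ref{th1}, the computation of $f_G$ at $h$-good scales, and the decidability argument for non-recursiveness — is essentially formal once Theorem \ref{re} is available.
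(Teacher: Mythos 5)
Your plan aligns with the paper's: the machine of Theorem \ref{re}, the \cite{SBR} group, $h$-good numbers for the polynomial upper bound, and non-recursiveness of $X$ for the non-recursive direction. The arithmetic at $h$-good scales and the non-recursiveness argument through Theorem \ref{MO} and the polynomial reduction are both fine. The gap is in the appeal to Theorem \ref{th1}(2): that theorem requires $T_{M'}^4$ to be superadditive, and padding by $|w|^5$ idle steps does not deliver this. Having $T_{M'}(n)\ge n^5$ is not enough: superadditivity of $T^4$ already requires (taking $m=n$) that $T(2n)\ge 2^{1/4}\,T(n)$, a geometric-growth condition, and this fails wherever $T_M$ has a long near-plateau at a value dwarfing the polynomial padding. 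Such plateaus are unavoidable for the machine of Theorem \ref{re}: take $n_0$ with $T_M(n_0)>2^{n_0}$ (possible since $T_M$ is not recursively bounded); the next $h$-good $m$ then exceeds $\exp\exp(2^{n_0})$, so the monotone function $T_M$ is squeezed into $[2^{n_0},\log\log m)$ over the astronomically long interval $[n_0,m-1]$, and its per-doubling growth factor must drop below $2^{1/4}$ long before the $n^5$ term becomes comparable with $T_M$. At such $n$ one has $T_{M'}^4(2n)<2\,T_{M'}^4(n)$, the hypothesis of Theorem \ref{th1}(2) fails, and you cannot invoke it as a black box.

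The paper avoids this by not using Theorem \ref{th1}(2) at all. It takes the group $G(M)$ directly from \cite{SBR} and estimates the Dehn function via the snowman decomposition of a minimal van Kampen diagram: the hub-free subdiagrams $E_1,\Gamma_i$ each have area at most $c|\partial|^3$ (total $O(l^4)$), and each single-hub subdiagram $\Pi_i$ has area at most $c\,\cost(w)^2$ for an accepted word $w$ of length $O(l)$. For $l$ in a window around an $h$-good number $b$ one has $\cost(w)<\log b\le l$, so the $\Pi_i$ contribute $O(l^3)$ in total and the area of the whole diagram is $O(l^4)$. Combined with the undecidability of the word problem of $G(M)$ (inherited from the undecidable halting problem of $M$), this gives the statement. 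That computation is the superadditivity-free upper-bound half of the \cite{SBR} argument, which is all your plan actually uses; the superadditivity hypothesis in Theorem \ref{th1}(2) only enters for the matching lower bound (Dehn function at least $T^4$), which you never need. Replacing the black-box invocation by this direct diagram estimate is precisely what the paper does, and is what would make your argument complete.
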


\begin{proof} We shall sketch a proof showing how to deduce Theorem \ref{th2} from the results of \cite{SBR}. It shows a general strategy of estimating Dehn functions of groups from \cite{SBR} corresponding to the so called $S$-{\em machines} (see Section \ref{msmarc}). Let $M$ be the Turing machine from Theorem \ref{re}, that is $M$ has undecidable halting problem and infinitely many $h$-good numbers where $h(n)=\exp\exp n$. In \cite{SBR}, it is shown
how for every Turing machine $M$, one can construct a group $G=G(M)$. The fact that $M$ has undecidable halting problem, and \cite[Propositions 4.1, 12.1]{SBR} imply that
$G$ has undecidable word problem. The presentation of $G$ from \cite{SBR} shows that $G$ is obtained from some special multiple HNN-extension of a free group (called an $S$-{\em machine}) with free letters $\theta_1,\ldots, \theta_m$ by imposing one relation which is called the {\em hub} (see \ref{s:twp}).

To estimate the Dehn function of $G$, we need to estimate the areas of van Kampen diagrams over the presentation of $G$ from
\cite{SBR}. Let $\Delta$ be a van Kampen diagram over that presentation with boundary label $w$ of length $l$. We
assume that $\Delta$ is minimal.

By \cite[Proposition 11.1]{SBR} we can do the so called {\em \index{Snowman decomposition}snowman decomposition} of $\Delta$, to obtain
three sequences of diagrams $E_1$,\ldots, $E_s$,
$\Gamma_1$,\ldots, $\Gamma_{s-1}$, $\Pi_1$,\ldots, $\Pi_{s-1}$, with the
following properties:

\begin{enumerate}
\item[(S1)] $E_s = \Delta$.
\item[(S2)] $E_1$ contains no hubs, i.e. $E_1$ is a diagram over $G'$.
\item[(S3)] For every $i=1,\ldots, s-1$, we have that $E_{i+1}$ is a union of subdiagrams
$\Gamma_i, E_i, \Pi_i$ having no common cells, and $\Pi_i$ is either empty or contains exactly one hub cells and no $\theta_j$-edges on its boundary.
\end{enumerate}

This implies that we can glue together the diagrams
$E_1, \Gamma_1,\ldots,\Gamma_{s-1}$, $\Pi_1,\ldots,\Pi_{s-1}$
so as to obtain
a diagram with boundary label $w$. Therefore in order to find
the upper bound  for the area of $\Delta$ we need to estimate
the sum of the areas of
$E_1, \Gamma_1,\ldots,\Gamma_{s-1}, \Pi_1,\ldots,\Pi_{s-1}$.

By \cite[Lemma 11.21]{SBR} for every $i=1,\ldots, s-1$,
$$|\partial(E_i)|<|\partial(E_{i+1})|.$$ Therefore $$|\partial(E_i)|\le |w|, \ i=1,\ldots, s,$$
and $s<|w|$.

The diagram $E_1$ and for every $i$ the diagram $\Gamma_i$
contain no hubs.
So by \cite[Lemma 8.1]{SBR},
$$\area(\Gamma_i)\le
c_1|\partial(\Gamma_i)|^3$$
and $$\area(E_1)\le
c_1|\partial(E_1)|^3$$
for some constant $c_1$.
We also know that $|\partial(E_1)|\le |w|$ and by \cite[Lemma 11.21]{SBR}
$|\partial(\Gamma_i)|\le c_2|\partial(E_{i+1})|\le c_2|w|$,
\ for $i=1,\ldots, s-1$ and some constant $c_2$.
Therefore
\begin{equation*}
\begin{array}{l}
\area(E_1)+\area(\Gamma_1)+ \ldots +\area(\Gamma_{s-1}) \ \le \\
c_1|\partial(E_1)|^3+
c_1|\partial(\Gamma_1)|^3 +
c_1|\partial(\Gamma_2)|^2+ \ldots \\
+c_1|\partial(\Gamma_{s-1})|^3 \ \le\\
c_1c_2s |w|^3\le c_3|w|^4.
\end{array}
\label{eqq1}
\end{equation*}
for some constant $c_3$.

It remains to estimate the area of $\Pi_i$. Let $\Pi$ be one of the $\Pi_i$, $i=1,\ldots, s-1$. Let $W$ be the word written on the boundary of $\Pi$. Note that $|W|\le c_5l$ for some constant $c_5$ because the union of $E_i, \Gamma_i, \Pi_i$ is $E_{i+1}$ by property $(S3)$ and the estimates of the length
$|\partial(E_i)|, |\partial(\Gamma_i)|$ above.

We can assume that $\Pi$ is a minimal diagram. Then by \cite[Propositions 4.1]{SBR} some subword $w$ of the boundary label of $\Pi$ is an accepted configuration of $M$ and the area of $\Pi$ is bounded from above by $c_6 \cost(w)^2$ for some constant $c_6$. Moreover $\cost(w)$ is either bounded by $c_7 |w|$ or does not exceed $c_8\cost(w')$ for some input configuration $w'$ with $|w'|\le |w|$ and some constants $c_7,c_8$. Now suppose $\frac{b}{2c_5}<l< \frac{b}{c_5}$ for some $h$-good number $b\gg 1$. Then $|w'|\le |w|<b$. Hence $\cost(w')< \log b\le l$. Thus the area of $\Pi$ is bounded by $c_9l^2$ for some constant $c_9$. Thus the area of $\Delta$ is bounded by some polynomial in $l$. Since the set of $h$-good numbers of $M$ is infinite by the choice of $M$, the group $G$ has polynomial-non-recursive Dehn function. \end{proof}

\subsubsection{Quadratic-non-recursive}\label{ss:qnr} The following theorem which is much stronger than Theorem \ref{th2} was proved in \cite{OSnq}.

\begin{theorem}[Olshanskii \cite{OSnq}]\label{nq} There exists a finitely presented group with quadratic-non-recursive Dehn function.
\end{theorem}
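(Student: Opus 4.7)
The plan is to follow the same overall strategy as in the proof of Theorem \ref{th2} outlined above, but to replace the auxiliary ``$S$-machine'' group $G'$ (the multiple HNN-extension of a free group to which one adds the hub relator) by one whose Dehn function on hub-free diagrams is \emph{quadratic} rather than cubic, and to sharpen the snowman decomposition accordingly. As in Theorem \ref{th2}, the starting point will be the Turing machine $M$ from Theorem \ref{re} with $h(n)=\exp\exp n$: its halting set is non-recursive, so the resulting group will have non-recursive Dehn function, and the existence of infinitely many $h$-good numbers will give an infinite sequence of lengths on which the area is small.

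The first main step is to construct a finitely presented group $H$ simulating $M$, playing the role of $G'$ in \ref{pnr}, with the following two properties: (a) the word problem of $H$ encodes accepted configurations of $M$ (so that a disc diagram over $H$ with an accept configuration on the boundary has area controlled by $\cost(w)^2$ as in \cite[Prop.~4.1]{SBR}), and (b) every minimal hub-free van Kampen diagram $\Delta$ over $H$ satisfies $\area(\Delta)\le C|\partial\Delta|^2$. Property (b) is the crux: in the original $S$-machine construction of \cite{SBR} this bound is only cubic, and it is this cubic contribution (summed over the pieces $E_1,\Gamma_1,\dots,\Gamma_{s-1}$ of a snowman decomposition of length $s\le |w|$) that forces Dehn function $\asymp n^4$. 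To get a quadratic bound, I would build $H$ as an HNN-extension whose associated cyclic subgroups sit inside a hyperbolic (or semihyperbolic) ambient group in such a way that the coarse small-cancelation condition $C(\varepsilon,\mu,\rho)$ from Definition \ref{SC} (compare \ref{ss:tscom}) applies, and then invoke the combination results of \ref{ss:ctfhg} together with a direct area estimate for $t$-bands/corridors (cf.\ Lemma \ref{ms}) to control area by perimeter squared. The point is that once the ambient group is hyperbolic, a $t$-corridor of length $k$ contributes area linear in $k$, and the ``area $=$ (number of corridors)$\times$(length of corridors)'' bookkeeping becomes quadratic instead of cubic.

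Once $H$ is in hand, let $G$ be obtained from $H$ by adjoining the hub relator, exactly as in \cite{SBR}. Then one reruns the snowman argument of \ref{pnr}. For a minimal diagram $\Delta$ with boundary $w$ of length $l$, decompose $\Delta=E_1\cup\Gamma_1\cup\Pi_1\cup\dots\cup\Gamma_{s-1}\cup\Pi_{s-1}$ with $s<l$ and $|\partial E_i|,|\partial\Gamma_i|=O(l)$. Under property (b), the hub-free contribution becomes
\[
\area(E_1)+\sum_{i=1}^{s-1}\area(\Gamma_i)\;\le\;C\sum_{i}|\partial\Gamma_i|^{2},
\]
and the delicate part is that the perimeters of the $\Gamma_i$'s cannot all be of order $l$ simultaneously; a refined version of \cite[Lemma 11.21]{SBR}, proved using the same coarse small-cancelation / hyperbolicity input that gave property (b), should show that $\sum_i|\partial\Gamma_i|^2=O(l^2)$. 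For the hub-containing pieces $\Pi_i$, each has perimeter $O(l)$ and contains an accept configuration of $M$ on its boundary, so by \cite[Prop.~4.1]{SBR} its area is at most $C\cost(w_i)^2$ for some input $w_i$ with $|w_i|\le O(l)$. If $l$ lies below an $h$-good number $b$ (which happens for infinitely many $l$ by Theorem \ref{re}), then $\cost(w_i)\le \log\log b=O(\log\log l)$, so each $\Pi_i$ contributes only $O(\log\log l)^2=o(1)\cdot l$, and the total is $O(l^2)$. On the other hand, since the halting problem for $M$ is undecidable, the Dehn function of $G$ cannot be bounded by any recursive function, giving the quadratic-non-recursive behavior.

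The main obstacle, and what distinguishes this from the polynomial case, is step (b): fabricating an $S$-machine-like HNN-extension $H$ whose hub-free diagrams admit a genuinely quadratic isoperimetric inequality while still faithfully simulating $M$. This requires interleaving the combinatorial $S$-machine design of \cite{SBR} with the hyperbolic/small-cancelation embedding machinery of \cite{Ols,O1,OOS} (see \ref{ss:tscom}), and verifying that bands remain controllable after the embedding. Once this analogue of \cite[Lemma 8.1]{SBR} with exponent $2$ in place of $3$ is established, the rest of the argument is an adaptation of the computation in \ref{pnr}, using the infinite family of $h$-good inputs provided by Theorem \ref{re} to produce infinitely many lengths $l$ at which the Dehn function is $O(l^2)$.
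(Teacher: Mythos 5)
The paper explicitly flags that the route you are taking is exactly the one that cannot work, and that this is why the result required genuinely new ideas. After stating Theorem \ref{nq}, it observes that ``the $S$-machines usually have cubic Dehn functions, so one cannot hope to bound the area of even $E_1$ by a quadratic function.'' Your entire plan hinges on step (b): fabricating an $S$-machine-like HNN-extension $H$ whose hub-free diagrams obey a \emph{quadratic} isoperimetric inequality, so that the snowman bookkeeping of \ref{pnr} drops from $n^4$ to $n^2$. But the obstruction the paper is pointing out is structural. To faithfully simulate the Turing machine $M$, the group $H$ must retain the multi-tape $S$-machine band structure (so that disc diagrams encode computations and Prop.\ 4.1 of \cite{SBR} applies), and that band structure is precisely what forces the cubic lower bound. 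Your suggested fix --- embed the associated subgroups in a hyperbolic ambient group, invoke coarse small cancelation and the combination theorems of \ref{ss:ctfhg}, and hope that $t$-corridors then contribute linearly --- does not address this: hyperbolicity of the ambient group has no reason to be compatible with the HNN relations that implement the machine, and once you perturb those relations you lose the diagram-to-computation dictionary that the whole argument depends on. You also do not produce the auxiliary ``refined Lemma 11.21'' you need to conclude $\sum_i|\partial\Gamma_i|^2 = O(l^2)$ --- with $s$ up to $l$ pieces each of perimeter up to $O(l)$, that sum is a priori $O(l^3)$, so this is not a detail but the heart of the matter.

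The actual proof in \cite{OSnq} abandons the snowman decomposition altogether. It uses (i) a \emph{new} $S$-machine construction simulating the machine $M$ of Theorem \ref{re}, and (ii) a \emph{new} way of decomposing van Kampen diagrams over the $S$-machine presentation plus the hub, governed by several diagram parameters (area, perimeter, and others) together with cutting-and-pasting operations, each of which strictly decreases a combined expression in those parameters; the quadratic upper bound at the $h$-good scales then comes out of the resulting induction rather than from a term-by-term perimeter-squared estimate on snowman pieces. So while you correctly identified the overall architecture inherited from Theorem \ref{th2} (simulate the machine of Theorem \ref{re}, add a hub, use $h$-good numbers to get infinitely many small-area scales, and use non-recursiveness of the halting set to rule out any recursive bound), the key technical step you propose is one the paper explicitly says cannot succeed, and the inductive decomposition that replaces it is missing from your sketch.
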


Comparing with \ref{pnr}, the proof of Theorem \ref{nq} requires significantly new ideas. It is easy to see that the proof from \ref{pnr} does not give quadratic upper bound for infinitely many values of $n$ (for instance, the $S$-machines usually have cubic Dehn functions \cite{SBR}, so one cannot hope to bound the area of even $E_1$ by a quadratic function). The proof of Theorem \ref{nq} involves
\begin{itemize}
\item A new construction of an $S$-machine simulating the Turing machine from Theorem \ref{re}.
\item A new way of decomposing a diagram over the presentation of the $S$-machine plus the hub. Several parameters of diagrams and several cutting and pasting operations on diagrams are introduced. One of the parameters is the area, another one is the perimeter, and there are several others. Every cutting operation reduces certain expression involving the parameters which makes the induction possible.
\end{itemize}

\section{Higman embedding}\label{hem}

\subsection{A characterization of groups with word problem in NP and other time complexity classes}\label{s:acogw}

The fundamental result of Higman \cite{HigEmb} gave an algebraic characterization  of groups with recursively enumerable word problem (i.e. recursively enumerable set of words that are equal to 1 in the group).

\begin{theorem}[Higman \cite{HigEmb}]\label{Hig} The word problem in a finitely generated group $G$ is recursively enumerable if and only if $G$ is a subgroup of a finitely presented group.
\end{theorem}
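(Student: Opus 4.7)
The ``if'' direction is immediate: a finitely presented group $H = \la X \mid R\ra$ has recursively enumerable word problem because, by Lemma~\ref{lvk} and Proposition~\ref{prop1}, the set of words representing $1$ in $H$ is exactly the set of boundary labels of van Kampen diagrams over the presentation, and this set is enumerable; the word problem of any finitely generated subgroup $G \le H$ is obtained by substituting finitely many words in $X^{\pm 1}$ for the generators of $G$, hence is also recursively enumerable. All the work is in the ``only if'' direction. Let $G = F/N$ with $F = F(X)$ free of finite rank and $N \lhd F$ recursively enumerable. I plan to follow Higman's original strategy, organized around the notion of a \emph{benign} subgroup $H$ of a finitely generated group $K$: this means there exist a finitely presented overgroup $L \supseteq K$ and a finitely generated subgroup $L_0 \le L$ with $L_0 \cap K = H$. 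The proof then splits into two steps: (i) every recursively enumerable subgroup of a finitely generated free group is benign; (ii) if $N \lhd F$ is benign then $F/N$ embeds in a finitely presented group.

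Step (i) is the technical heart. The plan is to assemble a toolkit of closure properties for benign subgroups---closure under finite intersections, conjugation, images under a rank-preserving endomorphism of $F$, and ``pumping along a recursive enumeration''---and then to encode the transitions of a Turing machine $M$ enumerating $N$ by finitely many HNN-stable letters and relations added on top of $F$ in the spirit of \ref{hnne}, so that at each stage the intersection of a suitable finitely generated subgroup with $F$ is exactly the set of relators produced by $M$ so far. A more modern, quantitative version of the same idea is to replace $M$ by an $S$-machine as in \cite{SBR} and in the proof of Theorem~\ref{th1} (see \ref{pnr}), and to turn that $S$-machine into a group presentation containing a distinguished copy of $F$; discarding the Dehn-function bookkeeping leaves exactly the benignness statement.

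Step (ii) is a shorter amalgamation argument. Granting (i), let $L \supseteq F$ be finitely presented with a finitely generated $L_0 \le L$ such that $L_0 \cap F = N$. The HNN-extension $L^{*} = \la L, t \mid [t,a] = 1 \text{ for each generator } a \text{ of } L_0\ra$ is finitely presented, and inside $L^{*}$ the stable letter $t$ commutes with every element of $L_0$, and in particular with every element of $N$. Combining $L^{*}$ with a second copy built from a disjoint stable letter via an appropriate amalgamation, one obtains a finitely presented group in which the relation $[t,f] = 1$ holds precisely for $f \in N$; the natural map $F \to F/N$ then extends to an embedding of $G$ into that overgroup.

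The main obstacle, unsurprisingly, is step (i). Encoding Turing machine transitions by HNN-relators is easy to do naively, but ensuring that the ambient finitely presented group does not secretly drag extra elements of $F$ into $L_0$---through cancellations or amalgamation effects further up the tower---requires a van Kampen diagram analysis of the resulting multiple HNN-extension modeled on Lemma~\ref{ms} (no stable-letter annuli in minimal diagrams), propagated carefully through every stage of the construction. This is where essentially all of the length of Higman's proof, and of its modern reformulations via $S$-machines, is concentrated.
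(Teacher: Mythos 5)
Your ``if'' direction is fine: for finitely presented $H=\la X\mid R\ra$ the set of words equal to $1$ is r.e.\ (enumerate products of conjugates of relators, equivalently boundary labels of van Kampen diagrams via Lemma~\ref{lvk} and Proposition~\ref{prop1}), and substituting fixed words for the generators of a finitely generated $G\le H$ keeps the word problem r.e.\ For the ``only if'' direction you propose Higman's original route through benign subgroups, which is a genuinely different route from the one this paper sketches in~\ref{te1}. The paper's argument is purely $S$-machine-theoretic: take an $S$-machine $\sss$ recognizing the relator set, a companion $\sss'$ with an empty input sector, form the amalgam $H$ of $G(\sss)$ and $G(\sss')$ over the common subgroup generated by tape and input state letters, observe that $W(u)=1$ in $G(\sss)$ and $W(u)'=1$ in $G(\sss')$ force $u=1$ in $H$, and then prove injectivity of the induced map $P\to H$ by a van Kampen diagram analysis (referred to \cite{OSsur}). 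This is a single construction and never passes through the intermediate notion of a benign subgroup; your incidental remark that ``discarding the Dehn-function bookkeeping leaves exactly the benignness statement'' misdescribes the $S$-machine proof, which encodes the enumeration directly into one amalgam rather than into a benignness witness. Your benign-subgroup route trades the $S$-machine diagram analysis for Higman's closure-property bookkeeping; both are legitimate in principle.

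However, your Step (ii) as written has a real gap. Producing a finitely presented $P\supseteq F$ and $t\in P$ with the property that $[t,f]=1$ holds, for $f\in F$, precisely when $f\in N$ does \emph{not} place $F/N$ inside $P$: the elements of $N$ are not killed in $P$ --- they merely centralize $t$ --- so $F\hookrightarrow P$ has trivial kernel and there is no subgroup of $P$ isomorphic to $F/N$. (Conjugation by $F$ does act on the set $\{t^f:f\in F\}$ with kernel exactly $N$, but that is a permutation action, not a subgroup of $P$.) The sentence ``the natural map $F\to F/N$ then extends to an embedding of $G$'' is a non-sequitur: a quotient map does not extend to an embedding, and nothing in the construction kills $N$. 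The actual Higman rope trick that converts benignness of $N\lhd F$ into an embedding of $F/N$ is more delicate: it introduces an auxiliary copy $\bar F$ of $F$, works inside a free product such as $C*\bar F$ with a twisted-diagonal subgroup $\langle x_1\bar x_1,\dots,x_n\bar x_n\rangle$ and the benignness witness $L_0$, and builds an HNN-extension whose stable letter identifies two copies of $F$ \emph{along $N$}, so that a genuine copy of $F/N$ appears as a subgroup of the final finitely presented group. You should replace your Step (ii) with Higman's actual principal lemma (the treatments in Rotman's or Lyndon--Schupp's books, cited in the paper, spell this out), or else drop the benign-subgroup detour entirely in favour of the $S$-machine amalgam used in~\ref{te1}.
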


There are several different proofs of Theorem \ref{Hig}. See, for example,  Rotman's book \cite{Ro} and  Manin's book \cite{Manin} (where you can also find an interesting discussion of the philosophical significance of that theorem). See also \ref{te1} below.

After \cite{HigEmb}, there were several results showing that embedding into a finitely presented group can preserve or even improve the algorithmic properties of the group. First results have been obtained by Cla\-pham \cite{Cla} and Valiev \cite{Va} (see \cite{OScol} for the history of these results): they
proved that the solvability (even r.e. degree) of the word problem
and the level in the polynomial hierarchy of the word problem is
preserved under some versions of Higman embedding.

By Theorem \ref{MO} the Dehn function of a finitely presented group is recursive if and only if the group has decidable word problem. Moreover by Proposition \ref{Dehn1} for every finitely presented  group $G$ with Dehn function $T(n)$ there exists a nondeterministic Turing machine $M(G)$ which solves the word problem in $G$ and has time function equivalent to $T(n)$. Roughly speaking, this Turing machine takes a word $w$ over the generators of $G$ and just inserts relators of $G$. It stops and accepts $w$ when it gets 1.
Clearly this machine solves the word problem in every finitely generated subgroup of $G$ as well. Therefore if a finitely generated group $G$ is a subgroup of a finitely presented group with polynomial isoperimetric function then the word problem in $G$ is in NP (i.e. it can be solved by a non-deterministic Turing machine with polynomial time function).

The drawback is that the word problem in a group $G$ can be easy to solve but the Dehn  function of $G$ can be huge. A typical example is the Baumslag-Solitar group $\mathrm{BS}(1,2)=\langle a,b\ | \ bab\iv=a^2\rangle$. This group has exponential Dehn function (see Example \ref{example2}). But we have mentioned in \ref{emp} that this group is a subgroup of $\GL(2, {\bf Q})$ so the word problem in $\mathrm{BS}(1,2)$ can be solved in at most quadratic time: it is easy to see that the word problem of every finitely generated  group of matrices over
a field of rational numbers can be solved in at most quadratic time by a deterministic Turing machine. In fact it is possible to solve the word problem there in time $n(\log n)^2(\log\log n)$ (this follows from the fact that the product of two $n$-digit numbers can be computed in time $n\log n\log\log n$ \cite{Knuth}).

Nevertheless, the following theorem shows that every finitely generated group $G$  with word problem in NP can be embedded into a finitely presented group $H$ with polynomial isoperimetric function. Thus if we can solve the word problem in $G$ using a very clever and fast Turing machine, then we can use the simple-minded but almost as fast  Turing machine $M(H)$ to solve the word problem in $G$.

\begin{theorem}[Birget, Olshanskii, Rips, Sapir \cite{BORS}] Let $G$ be a finitely generated group with word problem solvable by a non-deterministic Turing machine with time function $\le T(n)$ such that $T(n)^4$ is superadditive (that is $T(m+n)^4\ge T(m)^4+T(n)^4$ for every $m,n$). Then $G$ can be embedded into a finitely presented group $H$ with isoperimetric function equivalent to $n^2T(n^2)^4$. In particular, the word problem of a finitely generated group is in NP if and only if  this group is a subgroup of a finitely presented group with polynomial isoperimetric function.
\label{bors}
\end{theorem}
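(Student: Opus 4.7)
The plan is to adapt the $S$-machine machinery from Theorem \ref{th1} so that the finitely generated group $G$ embeds isomorphically into the finitely presented group $H$ produced by the construction, rather than only having its word problem polynomially reducible to $\mathrm{WP}(H)$. First, fix a non-deterministic Turing machine $M$ accepting the language $W(G)$ of group words over the generators $X$ of $G$ that are equal to $1$ in $G$, with time function bounded by $T(n)$. By standard padding modifications we may assume $M$ is symmetric (its accepted language is closed under inversion and cyclic shifts, matching the closure properties of $W(G)$) and that its input alphabet coincides with $X \cup X^{-1}$, with input configurations being precisely reduced $X$-words written on the first tape.

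Next, simulate $M$ by a multiple HNN-extension of a free group — an $S$-machine $\mathcal{S}$ — whose stable letters encode the commands of $M$ and whose base free group has the tape alphabet of $M$ (so in particular $X$) among its generators. This is done as in \cite[Sections 3--4]{SBR}. Form $H$ by adjoining to $\mathcal{S}$ the single hub relation of \cite[Section 11]{SBR}, which glues an accept configuration of $M$ into a closed disk. The map $G \to H$ sending each $x \in X$ to the corresponding tape letter is by construction a homomorphism; injectivity is the first thing that must be checked. If a reduced $w$ over $X$ becomes trivial in $H$, take a minimal van Kampen diagram $\Delta$ for $w$ and apply the snowman decomposition of \cite[Proposition 11.1]{SBR}: $\Delta$ breaks into a hub-free core $E_1$, annular pieces $\Gamma_i$, and single-hub pieces $\Pi_i$. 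Unwinding the outermost hub produces an accepting computation of $M$ with input obtained by reading off a suitable subword of $\partial \Delta$, which by the design of the simulation and symmetry of $M$ forces $w \in W(G)$, i.e.\ $w =_G 1$. Conversely, any accepting computation of $M$ on $w \in W(G)$ translates into a diagram over $\mathcal{S}$ capped by a hub, witnessing $w =_H 1$.

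For the isoperimetric estimate, let $\Delta$ be a minimal diagram over $H$ of perimeter $n$. The snowman decomposition gives $s \le n$, with each boundary $|\partial E_1|, |\partial \Gamma_i|, |\partial \Pi_i| \le cn$. The hub-free pieces $E_1$ and $\Gamma_i$ are diagrams over $\mathcal{S}$ alone; by the cubic area bound for $S$-machine diagrams \cite[Lemma 8.1]{SBR} their combined contribution is $O(s \cdot n^3) = O(n^4)$, which is absorbed by $n^2 T(n^2)^4$ since $T(n) \ge n$. Each one-hub piece $\Pi_i$ reduces, after peeling off its hub, to an $S$-machine disk whose boundary is an accept configuration encoding an input $w'$ with $|w'| \le |\partial \Pi_i| \le cn \le n^2$. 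The time to accept $w'$ is at most $T(|w'|) \le T(n^2)$, and the area of an $S$-machine disk simulating a computation of length $t$ on input of length $m$ is bounded by $O(m\, t^2)$, or after the quadratic-in-time accounting used in \cite[Propositions 4.1, 12.1]{SBR}, by $O(T(n^2)^4)$. Summing over $s \le n$ hub pieces and using superadditivity of $T^4$ to absorb the cross terms yields an overall area bound of order $n \cdot T(n^2)^4 \cdot n = n^2 T(n^2)^4$.

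The hard part will be the bookkeeping in the single-hub estimate: one must show that simulating a non-deterministic computation of time $t$ by an $S$-machine van Kampen diagram costs only $t^{O(1)}$ in area (not exponential, as a naive tape-and-head simulation would suggest), and that the conversion between the perimeter of $\Pi_i$ and the length of the corresponding input to $M$ loses at most a constant factor per step and at most a factor $n$ globally — accounting for the squared argument $T(n^2)$ rather than $T(n)$. This is where the structural properties of $S$-machines (bands not forming annuli, as in Lemma \ref{ms}; cubic area of hub-free diagrams; and the snowman lemma controlling the nesting of hubs) do the real work, and where superadditivity of $T^4$ is essential to ensure that distributing the perimeter $n$ across the $\Pi_i$ does not blow up the total area. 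Once these estimates are in place, the converse direction — that $H$ has Dehn function at least $n^2 T(n^2)^4$ in the worst case — follows by exhibiting an explicit family of hubs forcing the simulation of accepting computations of maximal time, echoing the argument of Theorem \ref{th1}.
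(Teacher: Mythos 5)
The central construction in your proposal does not yield a homomorphism from $G$ into $H$, and this is not a detail that can be patched by the injectivity check you defer; it is a structural gap. Forming $H$ by taking a single $S$-machine $\mathcal{S}$ simulating $M$ and adjoining one hub relation is the construction of $G(\sss)$ from Section \ref{s:twp}, used in Theorem \ref{th1} for groups with prescribed Dehn functions. In that group, what becomes trivial is not the word $w$ over $X$ but the \emph{admissible input configuration} $W(w)$ — $w$ sandwiched between the state letters $k_1, k_2$ on tape $1$, together with state letters and empty content on the other tapes. The accepting computation of $M$ on $w$ produces, after being wound around the hub, a van Kampen diagram with boundary $W(w)$, not $w$. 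The tape letters $X$ generate a free subgroup of $G(\sss)$ with no relations among them, so the map $x \mapsto x$ does extend to a homomorphism from the free group $F(X)$ to $G(\sss)$, but it does not factor through $G = F(X)/N$ because $w \in N$ does not imply $w =_{G(\sss)} 1$. Your sentence ``any accepting computation of $M$ on $w \in W(G)$ translates into a diagram over $\mathcal{S}$ capped by a hub, witnessing $w =_H 1$'' is exactly where the argument breaks: what it witnesses is $W(w) =_H 1$.

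The paper's construction (Section \ref{te1}, following \cite{BORS, OSsur}) circumvents this with an amalgamated product. One takes \emph{two} $S$-machines $\sss$ and $\sss'$ — the first recognizing the relators $R$, the second a copy that never touches the input sector so its input sector stays empty throughout every computation — and forms $H$ as the amalgamated product of $G(\sss)$ and $G(\sss')$ over the common subgroup generated by the input state letters and tape letters. Then $W(u) = 1$ in $G(\sss)$ for $u \in R$, and $W'(u) := W(u)$ with $u$ erased satisfies $W'(u) = 1$ in $G(\sss')$, so $u = W(u)\, W'(u)^{-1} = 1$ in the amalgam. This is what produces a genuine homomorphism $G \to H$ and, after a further argument, an embedding. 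Because the amalgamation is essential, your area analysis — which treats $\Delta$ as a diagram over a single $S$-machine plus hub and applies the snowman decomposition of \cite[Proposition 11.1]{SBR} directly — also needs to be reworked: diagrams over $H$ have cells from two $S$-machines and the cuts must respect the amalgamated structure (see \cite{OSsur}). The appeal to superadditivity of $T^4$ and to the polynomial relationship between computation time and $S$-machine disc area is correct in spirit, but it sits on top of a construction that does not embed $G$.
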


For matrix groups over rationals, Theorem \ref{bors} implies that every such group is embedded into a finitely presented group with Dehn function at most $n^{10+\varepsilon}$ for every $\varepsilon>0$. Young's result from \cite{Young} (see \ref{emp}) implies that for matrix groups over $\Z$ one can reduce the exponent $10+\varepsilon$ to $2$.

\begin{prob} Is it true that every finitely generated matrix group over a field (in particular, the field of rational numbers) embeds into a finitely presented group with quadratic Dehn function?
\end{prob}

Theorem \ref{bors} immediately implies

\begin{cy}\label{cyNP} The word problem of a finitely generated group $G$ is in NP if and only if $G$ embeds into a finitely presented group with polynomial isoperimetric function.
\end{cy}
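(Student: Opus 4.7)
The plan is to deduce the corollary directly from Theorem~\ref{bors}, handling each direction separately.

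For the ``if'' direction, suppose $G$ is a finitely generated subgroup of a finitely presented group $H$ whose Dehn function is bounded by some polynomial $p(n)$. First I would observe, following Proposition~\ref{Dehn1} and the discussion immediately preceding Theorem~\ref{bors}, that the word problem in $H$ lies in NP: given a word $w$ of length $n$ in the generators of $H$ that equals~$1$ in $H$, one nondeterministically guesses a word of the form (\ref{word}) from Proposition~\ref{prop1} of length $O(p(n)+n)$, and verifies its validity in linear time. To transfer this to $G$, I would fix, once and for all, words $u_1,\dots,u_k$ in the generators of $H$ representing the finite generating set $x_1,\dots,x_k$ of $G$; then a word $w(x_1,\dots,x_k)$ equals $1$ in $G$ if and only if the substituted word $w(u_1,\dots,u_k)$, whose length is $O(|w|)$, equals $1$ in $H$. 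This polynomial-time reduction places the word problem of $G$ in NP.

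For the ``only if'' direction, suppose the word problem of $G$ is in NP, solved by a nondeterministic Turing machine with time function $T(n)$ bounded by some polynomial $n^c$. The invocation of Theorem~\ref{bors} requires the superadditivity of $T(n)^4$, so the first step is to replace $T$ by a genuine polynomial majorizing it: set $T'(n)=n^c$, which is superadditive for $c\ge 1$, and hence $T'(n)^4=n^{4c}$ is superadditive as well. Since $T\prec T'$, the group $G$ is still recognized in nondeterministic time $T'$. Applying Theorem~\ref{bors} to $G$ with time bound $T'$, I obtain an embedding of $G$ into a finitely presented group $H$ whose isoperimetric function is equivalent to $n^2 T'(n^2)^4=n^{2+8c}$, a polynomial.

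I do not expect any genuine obstacle here, since the corollary is a direct packaging of Theorem~\ref{bors} together with Proposition~\ref{Dehn1}. The only mildly delicate point is the superadditivity hypothesis in Theorem~\ref{bors}, but it is harmless because every polynomial upper bound can be replaced by a monomial $n^c$ whose fourth power is automatically superadditive. No new combinatorial or geometric input is needed beyond what is already established.
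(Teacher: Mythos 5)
Your proof is correct and follows essentially the same route as the paper, which deduces the corollary directly from Theorem~\ref{bors} (whose own statement already contains the ``in particular'' clause that is the corollary) together with Proposition~\ref{Dehn1} for the ``if'' direction. The one point the paper leaves implicit—that an arbitrary polynomial time bound can be replaced by a monomial $n^c$ to meet the superadditivity hypothesis on $T(n)^4$—you have handled explicitly and correctly.
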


By Fagin's Theorem \ref{ct:fagin} this corollary gives an algebraic characterization of groups where the word problem admits an algebraic description.

Using Corollary \ref{cyNP} (and the proof of Theorem \ref{bors}), in order to embed a finitely generated group with word problem in NP into a finitely presented group with polynomial isoperimetric function, one needs first construct a Turing machine which solves the word problem, then convert it into an $S$-machine, then convert the $S$-machine into a group.  As a result the group we construct will have a relatively complicated set of relations. In some particular cases like the free Burnside groups $B_{m,n}$, the free solvable groups, etc., it is possible to modify this construction and get simple presentations of groups with polynomial isoperimetric functions where these groups embed \cite{OSfree}. For the Baumslag-Solitar group $\mathrm{BS}(1,2)$ it was done in \cite{OSfree} too, and an even  much simpler presentation was found by Cornulier and Tessera \cite{CT} (see also Theorem \ref{th:qdf}, part (2) ).

Theorem \ref{bors} is also interesting from the logic point of view. One can consider a group as a logical system where the defining relations
are axioms, and the inference rules are constructing step by step van
Kampen diagrams. The van Kampen diagrams are then {\em proofs} of their boundary
labels. Then the Dehn function becomes the syntactic complexity of proofs. The computational
complexity of the word problem is the complexity in
the metaworld of the group theory. The embedding of groups becomes a conservative
extention of theories.

With this vocabulary, the result means that the complexity of proofs in
the outer world of groups (that is, in its metamathematical semantics)
after appropriate conservative extension to a finitely axiomatized theory becomes the complexity of proofs
in the syntactical sense (up to a polynomial correction).

In this formulation, one can ask a similar question for any logical system. Several results of this type for general logical systems can be found in \cite{Kraj}.

By Theorem \ref{Gro} the class of groups with polynomial isoperimetric functions is a subclass of the class of groups with simply connected asymptotic cones. Hence we formulate

\begin{prob} \label{prob67} Does every finitely generated group with word problem in NP embed into a group all of whose asymptotic cones are simply connected? Equivalently (by Theorem \ref{bors}): does every finitely presented group with polynomial isoperimetric function embed into a group all of whose asymptotic cones are simply connected?
\end{prob}

Note that by Theorem \ref{Gro}, groups with simply connected asymptotic cones have linear isodiametric functions. On the other hand, for every $k\ge 4$ there are groups with Dehn function $n^k$ and isodiametric function $n^{{\frac 34}k}$ by \cite{SBR}. Thus if the embedding as in Problem \ref{prob67} exists, it must distort lengths at least polynomially. The embeddings in \cite{BORS} distort lengths linearly (see \cite{BORS}), so a quite different construction should be used.

\subsection{The space functions of Turing machines and the filling functions of groups}\label{s:tsfot}

The results about space functions of Turing machines and the FFFL functions of groups \cite{OlFFFL} are similar to the results about time functions and isoperimetric functions obtained in \cite{SBR,BORS}. On the one hand the situation is simpler because it  is easier to control the space function than the time function of a machine. Also the issues with superadditivity does not appear in this situation. Indeed, if we want to check if two words are equal to 1 using a Turing machine, then the time spent will be the sum of times needed to checking each word (hence we need superadditivity of the Dehn function) but the storage space is approximately the maximal of the spaces needed for each word because after we proved that one of these words is 1, we can clean the tape and start proving that the other word is equal to 1 (so superadditivity is not needed). On the other hand, the descriptions are more finalized which requires more detailed consideration of geometry of van Kampen diagrams. If in \cite{SBR, BORS}, for example, we just cut the diagram into pieces and estimate the number and the areas  of the pieces, here one needs to carefully examine the lengths of the cuts. Also working with time and Dehn functions, we could modify Turing and $S$-machines by adding artificial tapes which can potentially by very long, but serve as a ``temporary storage space". Dealing with space and FFFL functions, we cannot do that which adds to the complexity of the situation.

Nevertheless the following complete description of FFFL functions of groups was obtained by Olshanskii.

\begin{theorem}[Olshanskii \cite{OlFFFL}] \label{realiz} Every space function $f(n)$ of a Turing machine is equivalent (as in \ref{ss:ttasf}) to the FFFL function of some finitely presented group. \end{theorem}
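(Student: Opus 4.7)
The plan is to mimic the strategy of Theorems~\ref{th1} and \ref{bors} (simulation of a machine by an $S$-machine, then by a group built from HNN-extensions plus a hub), but to track the fragmenting free filling length instead of area or time. Proposition~\ref{propos} already supplies the ``$\succeq$'' direction at the level of general principles: any finitely presented group has $\mathrm{FFFL}$ equivalent to the space function of a non-deterministic Turing machine recognizing its word problem. So the content is the converse: given a Turing machine $M$ with space function $f=s_M$, produce a finite presentation $\pp=\langle X\mid R\rangle$ whose $\mathrm{FFFL}$ is equivalent to $f$.

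First I would construct an $S$-machine $S(M)$ that simulates $M$ step by step, with the crucial additional property that the \emph{tape length} of $S(M)$ at every moment is bounded by a constant multiple of the current tape length of $M$, and vice versa. This is the space-sensitive analogue of the time-sensitive simulations used in \cite{SBR,BORS}; the difference is that here we cannot afford any auxiliary ``temporary storage'' tapes of unrelated length, so the $S$-machine alphabet and command set have to be designed so that every configuration occurring in an accepting computation has length $O(|\text{configuration of }M|)$. Then I would form the group $G(M)$ as the multiple HNN-extension of a free group corresponding to $S(M)$, together with a single hub relation gluing up accept configurations into loops in the Cayley complex, exactly as in \ref{msmarc}/\cite{SBR}.

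For the upper bound $\mathrm{FFFL}(n)\preceq f(n)$, I would proceed diagram-theoretically. Take $w$ with $|w|\le n$ and $w=1$ in $G(M)$, and choose a minimal van Kampen diagram $\Delta$ over $\pp$ with boundary label $w$. Apply the snowman decomposition of \cite[Prop.~11.1]{SBR} to peel off hub cells one at a time. Each ``peel'' reduces $\Delta$ to a smaller snowman $E_i$ surrounded by a collar $\Gamma_i$ and one hub $\Pi_i$, and the FFFL-style reductions are allowed to: (a) cyclically rotate the current word (changing the base point), and (b) split off a whole subdiagram. I would use (b) to detach $\Gamma_i\cup\Pi_i$ as its own piece to be reduced separately, and then reduce the hub $\Pi_i$ by running the $S$-machine computation \emph{backwards} on the tape: because $S(M)$ is space-equivalent to $M$, the intermediate configurations written on the boundary of the shrinking subdiagram have length $O(f(n))$. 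Summing and maximizing these contributions gives an intermediate word length bounded by $O(n+f(n))\asymp f(n)$ (we may harmlessly assume $f(n)\ge n$).

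For the lower bound $\mathrm{FFFL}(n)\succeq f(n)$, I would exhibit a sequence of words $w_n$ equal to $1$ in $G(M)$ with $|w_n|=O(n)$ but whose filling requires intermediate words of length $\Omega(f(n))$. The natural candidate is $w_n=\Sigma_n\,\kappa$, where $\Sigma_n$ is the boundary label obtained from an accepting computation of $M$ on an input $u_n$ of length $n$ that witnesses the space function $f(n)$, and $\kappa$ is the hub. The content of the argument is that in any FFFL-reduction sequence for $w_n$ the hub must eventually be ``unglued,'' and at the moment of ungluing the current word must contain a full encoding of some configuration of $S(M)$ along the simulated computation; by the space-equivalence of $S(M)$ and $M$ and the choice of $u_n$, at least one such configuration has length $\Omega(f(n))$. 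Rotations and splittings do not help, since the encoded tape content on any piece containing the hub must still be present. This is the main obstacle: one needs a fragmentation-insensitive invariant of the simulated configuration (analogous to the $t$-band / color analyses in \ref{hnne} and \cite{BrGr}) that survives arbitrary cyclic rotations of the base point and arbitrary subdiagram splits. Once such an invariant is set up, combining it with the upper bound yields $\mathrm{FFFL}\asymp f$, completing the realization.
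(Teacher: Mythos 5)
Your high-level plan follows the same strategy the survey ascribes to Olshanskii: simulate the machine by a space-controlled $S$-machine without auxiliary tapes, attach a hub to form $G(M)$, decompose van Kampen diagrams \`a la the snowman procedure for the upper bound, and build witness configurations for the lower bound. You also correctly note that superadditivity drops out and that the auxiliary-tape trick of \cite{SBR,BORS} is unavailable. But two of the steps you sketch contain genuine gaps, and they are exactly the ones the survey flags as the content of \cite{OlFFFL}.

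First, the upper bound. The snowman decomposition of \cite[Prop.~11.1]{SBR} was designed to estimate \emph{areas} of the pieces $E_i,\Gamma_i,\Pi_i$; it gives no control on the length of the paths along which you cut. When you propose to ``detach $\Gamma_i\cup\Pi_i$ as its own piece,'' the two new pieces share a cut arc, and FFFL charges you for the \emph{sum} of all current boundary lengths, including that arc. There is no reason, without further work, that these cuts have length $O(f(n))$: the cut to the innermost hub can pass through the entire snowman. The survey explicitly singles this out as the new technical difficulty relative to \cite{SBR,BORS} (``one needs to carefully examine the lengths of the cuts''), and your plan does not engage with it. This is not a gap you can wave at with ``summing and maximizing these contributions.''

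Second, the lower bound. You correctly identify that one must build a fragmentation-insensitive invariant forcing an intermediate word of length $\Omega(f(n))$, and you honestly call it ``the main obstacle.'' But identifying the obstacle is not overcoming it. In particular, $\theta$-band and color arguments are \emph{not} automatically fragmentation-insensitive: a split can cut a band in two, after which neither fragment alone encodes the configuration. Until you produce a quantity that persists through cyclic shifts and arbitrary splits and grows with $f(n)$, the lower bound is open. Without both of these the proof plan, while aimed correctly, does not yet yield the theorem.
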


Since space functions of Turing machines  can be ``arbitrarily complicated" (similar to time functions), we obtain finitely presented groups with ``arbitrary complicated" FFFL functions.

Also similar to the isoperimetric spectrum one can define the {\em \index{Space spectrum}space spectrum} of finitely presented groups as the set of all $\alpha$ such that  $n^{\alpha}$ is equivalent to the FFFL function of a finitely presented group. Since the problem with non-determinism vs determinism is not an issue, we have a complete description of the space spectrum.
\begin{theorem}[Olshanskii \cite{OlFFFL}, compare with Theorem \ref{alpha0}] \label{alpha} For a real number $\alpha\ge 1,$ the function $n^{\alpha}$  is equivalent to the FFFL function of a finitely presented group if and only if the $n$th binary approximation of $\alpha$ is  computable by a deterministic Turing machine using space $\le 2^{2^n}.$
\end{theorem}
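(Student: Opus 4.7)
The plan is to prove both implications in parallel with the proof of Theorem~\ref{alpha0} for the isoperimetric spectrum, using Proposition~\ref{propos} (FFFL = non-deterministic space) and the realization Theorem~\ref{realiz} in place of Theorem~\ref{2} and the Dehn-function realization of \cite{SBR}. The reason only two iterated exponentials appear in the statement (rather than the three in Theorem~\ref{alpha0}) is that Savitch's theorem $\mathrm{NSPACE}(s)\subseteq\mathrm{DSPACE}(s^2)$ only squares the space function, and under the equivalence $\sim$ of \ref{ss:ttasf} any function of the form $2^{c\cdot 2^n}$ (with $c$ constant) is equivalent to $2^{2^n}$; this absorbs the NP-vs-P gap that was responsible for the third exponential in Theorem~\ref{alpha0}.

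\emph{Realization direction.} Suppose $\alpha\geq 1$ and some deterministic Turing machine $A$, on input $1^n$, outputs a dyadic approximation $\alpha_n$ with $|\alpha-\alpha_n|\leq 2^{-n}$ using space at most $2^{2^n}$. I would construct a deterministic Turing machine $T$ whose space function is equivalent to $n^{\alpha}$ as follows. On an input of length $m$, $T$ sets $k=\lfloor\log_2\log_2 m\rfloor$ and simulates $A$ on $1^k$ to obtain $\alpha_k$; this uses space at most $2^{2^k}\leq m$. Since $|\alpha-\alpha_k|\leq 2^{-k}=O(1/\log m)$, one has $m^{\alpha}/m^{\alpha_k}=m^{\pm O(1/\log m)}=\Theta(1)$, so the integer $N:=\lfloor m^{\alpha_k}\rfloor$ satisfies $N=\Theta(m^{\alpha})$ and has only $O(\log m)$ binary digits. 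Then $T$ computes $N$ in space $O(\log^2 m)$ by iterated squaring, writes $N$ tally marks on an auxiliary tape, and idles, thereby deterministically using exactly $N=\Theta(m^{\alpha})$ cells. The space function of $T$ is thus equivalent to $n^{\alpha}$, and applying Theorem~\ref{realiz} to $T$ produces a finitely presented group $G$ whose FFFL function is equivalent to the space function of $T$, hence to $n^{\alpha}$.

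\emph{Computability direction.} Conversely, suppose $G$ is finitely presented with $\mathrm{FFFL}(G)\sim n^{\alpha}$. By Proposition~\ref{propos} there is a non-deterministic Turing machine $M$ recognising the set of words equal to $1$ in $G$ whose space function $s_M$ is equivalent to $n^{\alpha}$. Because $s_M\sim n^\alpha$, one has $\alpha=\lim_{N\to\infty}\log s_M(N)/\log N$ with error of order $1/\log N$, so taking $N=2^{2^n}$ is enough to recover $\alpha$ to precision $2^{-n}$; the $n$-th binary approximation is then read off from $\lfloor\log_2 s_M(N)\rfloor/2^n$. To compute $\lfloor\log_2 s_M(N)\rfloor$ deterministically, I would test successively for $L=1,2,4,\ldots$ whether there exists a word $w$ of length $\leq N$ equal to $1$ in $G$ whose minimum accepting space under $M$ is at least $L$. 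This reduces, by negation, to the reachability question ``is $w$ accepted by $M$ using only configurations of size $<L$?'' in the configuration graph of $M$, which by Savitch's theorem is decidable in deterministic space $O(L^2)$; enumerating the candidate $w$ adds $O(N)$ space. The search terminates at $L=\Theta(N^{\alpha})$, so the total space is $O(N^{2\alpha})=2^{O(2^n)}$, which is equivalent to $2^{2^n}$ as noted above.

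\emph{Main obstacle.} The most delicate point on the realization side is that the padding step must genuinely force $T$ to use at least $\Omega(m^{\alpha})$ cells, not merely permit it to use that much; writing out $N$ explicit tally marks on a work tape accomplishes this, so that the space function of $T$ is truly equivalent to $n^{\alpha}$ (rather than only dominated by it). On the computability side one must be careful that successive testing and the binary search do not secretly blow the space budget beyond $2^{O(2^n)}$, which is handled by reusing the $O(L^2)$ work tape across values of $L$ and across candidate words $w$. The deepest ingredient, however, is Theorem~\ref{realiz} itself, invoked here as a black box: its proof in \cite{OlFFFL} translates the given Turing machine $T$ into an $S$-machine and then (via a hub construction as in \cite{SBR,BORS}) into a finite group presentation, all while controlling the geometry of van Kampen diagrams closely enough to match FFFL with the space function of $T$ up to equivalence.
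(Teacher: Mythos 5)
The paper states this theorem as a citation from \cite{OlFFFL} without giving an argument, so there is no in-paper proof to compare against; your template --- mirror Theorem~\ref{alpha0}, substituting Theorem~\ref{realiz} for the \cite{SBR} realization, Proposition~\ref{propos} for Theorem~\ref{2}, and Savitch's theorem (which merely squares the space) for the NP-versus-P gap --- is the natural one, and your realization direction is sound.

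The computability direction has two gaps. First, the claimed reduction ``by negation'' to a single Savitch query is not valid as stated: knowing that $w$ is not accepted by $M$ within space $<L$ does not distinguish a word equal to $1$ in $G$ with minimum accepting space $\ge L$ from a word not equal to $1$ in $G$ at all; since there always are words of the latter kind, your loop over $L$ would never declare ``no''. The fix is to hard-code an integer $D>\alpha$ into the deciding machine (legitimate, since $\alpha$ is a fixed real attached to the fixed group $G$) and use the a priori bound $s_M(N)\le N^{D+1}$, valid for all large $N$, both to cap the loop over $L$ and to decide whether $w$ is accepted at all by a Savitch call with that budget; the resulting space is then $O(N^{2D+2})$ rather than the $O(N^{2\alpha})$ you wrote, though this is still $2^{O(2^n)}$. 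Second, $2^{c2^n}=(2^{2^n})^c$ is not a constant rescaling of $2^{2^n}$, so one cannot pass from space $2^{O(2^n)}$ to the literal bound $\le 2^{2^n}$ merely by citing the equivalence $\sim$ of \ref{ss:ttasf}; what is actually needed is the observation that a machine computing $\alpha_m$ in space $\le 2^{c2^m}$ yields an $n$-th binary approximation of $\alpha$ in space $\le 2^{2^n}$ by running it at index $m=n-\lceil\log_2 c\rceil$, the finitely many lost bits of precision being absorbed by the $O(2^{-n})$ tolerance built into ``$n$-th binary approximation''. You perform exactly this index shift explicitly in the realization direction (via $k=\lfloor\log_2\log_2 m\rfloor$), so you have the right tool; it just needs to be invoked in the converse as well.
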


An analog of Theorem \ref{bors} also was obtained in \cite{OlFFFL}. Note that by Proposition \ref{propos}, the space complexity of the word problem in a finitely presented group $G$ does not exceed the FFFL function of $G.$ It follows from \cite{MO}, \cite{CMO} that the converse statement fails. An easier example is given by the Baumslag 1-relator \index{Group!Baumslag}group from \cite{B} $G=\langle a,b\mid (aba^{-1})b(aba^{-1})^{-1}=b^2\rangle$. Its FFFL function is not bounded from above by any multi-exponential function (see Gersten \cite{G2} and Platonov \cite{Pl}) while the space complexity of the word problem for $G$ is polynomial (see A. G. Miasnikov, A. Ushakov, and Dong Wook Won \cite{MUW}).

\begin{theorem}[Olshanskii \cite{OlFFFL}] \label{main1} Let $G$ be a finitely generated group such that the word problem for $G$ is decidable by a deterministic Turing machine with space function $f(n)$. Then $G$ embeds into a finitely presented group with FFFL function equivalent to $f(n).$ \end{theorem}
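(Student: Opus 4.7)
The plan is to refine the Higman-style embedding from \cite{BORS,SBR}, replacing control of time (the Dehn function) by control of space (the FFFL function). Given the deterministic Turing machine $M$ recognizing the word problem of $G$ with space function $f(n)$, I would first construct an $S$-machine $\mathcal{S}$ (a multiple HNN-extension of a free group in the sense of \ref{hnne}) that simulates $M$ configuration-by-configuration, with the critical property that every accepting computation of $\mathcal{S}$ on input $w$ proceeds through configurations whose total length is bounded by $O(f(|w|))$. Unlike the time-oriented constructions of \cite{SBR,BORS}, auxiliary tapes that could grow beyond what $M$ already uses are forbidden: the $S$-machine must be engineered so that the instantaneous description occupies essentially the same amount of space as $M$'s, which is what forces the final FFFL bound to match $f$ rather than some larger function such as the time of $M$.

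Next, following the scheme of \cite{SBR,BORS}, I would form the finitely presented group $H$ as the group presented by $\mathcal{S}$ together with a hub relation, obtaining a multiple HNN-extension of a free group with one additional defining relator. The group $G$ embeds into $H$ via the natural input encoding, and injectivity follows from a band argument: a minimal diagram whose boundary is a $G$-word and which contains hub cells or $\theta_i$-cells can, by the absence of annular $t_i$-bands in minimal diagrams (Lemma \ref{ms} and Corollary \ref{hnn}), be reduced to a diagram containing none of these, contradicting minimality unless the word is trivial in $G$ itself.

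The heart of the proof is the upper bound $\mathrm{FFFL}_H(n)\preceq f(n)$: given a word $w$ of length $n$ equal to $1$ in $H$, I would describe an explicit sequence of elementary FFFL-moves (relator insertion/deletion, free reduction, cyclic shift, and fragmentation into a pair of null-homotopic words) reducing $w$ to the empty word while keeping the total length of intermediate fragments at $O(f(n))$. The construction translates the deterministic computation of $M$ on $w$ step-by-step into such a reduction: using fragmentation, one isolates the current $\mathcal{S}$-configuration (a word of length $\le f(n)$) together with a short bookkeeping envelope, discarding already-processed history, so that at every moment the reducer holds only the active configuration. Geometrically this is ``eating'' the trapezium-shaped van Kampen diagram one vertical computation strip at a time. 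The lower bound $\mathrm{FFFL}_H(n)\succeq f(n)$ follows from Proposition \ref{propos}: any FFFL reduction yields a nondeterministic Turing machine recognizing the word problem of $H$ in space $\sim \mathrm{FFFL}_H(n)$, and restricting to the subgroup $G$ gives such a machine for $G$. The $S$-machine is rigid enough — its configurations are in bijection with those of $M$ along each valid strip — that any recognizer of $G$-words obtained this way must expend space $\succeq f(n)$ on a suitable family of witnesses $w_n$ encoding computations of $M$ that saturate its space usage.

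The main obstacle is the upper bound. FFFL demands that the sum of current boundary lengths never exceed the budget, so one cannot simply ``open up'' a large subdiagram and work on it cell by cell if the exposed boundaries themselves are long; in particular, the snowman decomposition of \cite{SBR} used to bound Dehn functions, which splits along paths of length $O(n)$ and tracks area, must be replaced by a \emph{narrow} decomposition that tracks the width of intermediate subdiagrams rather than their area. Designing $\mathcal{S}$ so that its configurations are genuinely compact — so that the diagram-to-computation correspondence admits fragmentation along very short cuts separating ``past'' from ``future'' — is the principal technical difficulty, and is precisely where the determinism of $M$ becomes indispensable: a deterministic computation yields a linear (rather than tree-like) sequence of configurations to be eaten by the reduction, so that at any moment only one configuration, not a branching family, needs to be stored.
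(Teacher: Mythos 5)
The overall architecture you describe — simulate $M$ by an $S$-machine whose instantaneous descriptions stay within $M$'s space, attach a hub, control FFFL by eating the diagram a narrow strip at a time rather than by the area-based snowman decomposition — does match the discussion of \cite{OlFFFL} in \ref{s:tsfot}. But there is a genuine gap in the embedding step, and the lower-bound argument as stated does not work.

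\emph{The embedding.} The group you call $H$ — the $S$-machine $\mathcal{S}$ together with a single hub relation — does not contain $G$ as a subgroup. Consider a word $w$ written solely in the input/tape letters of $\mathcal{S}$. By exactly the band argument you invoke (Lemma \ref{ms} and its corollary), a reduced diagram over $H$ whose boundary carries no $\theta$-edges and no $q$-edges contains no $\theta$-cells; since the hub is bounded entirely by $q$-letters and those $q$-edges would have to match $\theta$-bands or another hub, such a diagram contains no hub cells either. Hence it has no cells at all and $w=1$ in the free group. So the subgroup of $H$ generated by the tape letters is free, and ``the natural input encoding'' cannot embed anything but a free group. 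To obtain $u=1$ in the target for each relator $u\in R$ of $G$, the construction needs the device described in \ref{te1}: one takes the amalgamated product of $G(\mathcal{S})$ and $G(\mathcal{S}')$ (a second copy that runs with the input sector blanked out) over the free subgroup generated by input-state and tape letters, so that $W(u)=1$ and $W'(u)=1$ jointly cancel the non-input part of the input configuration and force $u=1$. Your band argument, which concludes ``unless the word is trivial in $G$ itself,'' is proving triviality in the free group, not in $G$; this is exactly the missing piece.

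\emph{The lower bound.} You argue that $\mathrm{FFFL}_H(n)\succeq f(n)$ because any FFFL reduction yields a nondeterministic Turing machine recognizing the word problem of $G$ and ``any recognizer of $G$-words obtained this way must expend space $\succeq f(n)$.'' This does not follow. Nothing prevents a different machine from deciding the word problem of $G$ in much less space than $M$; moreover $\mathrm{FFFL}_H$ measures nondeterministic space (Proposition \ref{propos}), and deterministic and nondeterministic space are only equivalent up to Savitch's square, which is not an equivalence in the sense of \ref{ss:ttasf}. The lower bound has to be proved by direct analysis of van Kampen diagrams over the specific presentation of $H$: exhibit a family of words $w_n$ equal to $1$ in $H$ such that every reduced diagram for $w_n$ is forced — by the rigidity of $\mathcal{S}$ — to trace an $M$-computation whose instantaneous descriptions grow to $\sim f(n)$, and therefore every FFFL sequence for $w_n$ must at some moment hold a fragment of that length. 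That is a statement about $H$, not about the abstract space complexity of $G$. Relatedly, the paragraph claiming determinism of $M$ is needed to avoid ``storing a branching family'' misidentifies the issue: once a diagram is fixed, the trapezium determines a single linear history whether or not $M$ branches, and FFFL is itself a nondeterministic quantity. The determinism hypothesis is there so that $\mathrm{FFFL}_H$, which is tied to nondeterministic space, can be matched to $f$ on the nose without a Savitch-type loss — not because the reduction would otherwise have to hold many configurations at once.
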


Similarly to Corollary \ref{cyNP} one immediately deduces

\begin{cy}[Olshanskii \cite{OlFFFL}] \label{pspace} The word problem of a finitely generated group  $G$ belongs to $PSPACE$ if and only if $G$ embeds into a  finitely presented group with polynomial FFFL function. \end{cy}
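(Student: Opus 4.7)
The plan is to deduce Corollary \ref{pspace} directly from Theorem \ref{main1} and Proposition \ref{propos}, invoking Savitch's theorem from classical complexity theory to bridge the non-deterministic/deterministic gap that separates these two ingredients.

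For the \emph{only if} direction I would argue as follows. If the word problem of $G$ is in PSPACE, then by definition there is a deterministic Turing machine solving the word problem whose space function $f(n)$ is bounded by a polynomial. Theorem \ref{main1} then produces a finitely presented overgroup $H\supseteq G$ whose FFFL function is equivalent to $f(n)$, hence polynomial. This is a one-line application of the main embedding theorem and requires no further work.

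For the \emph{if} direction, suppose $G$ embeds into a finitely presented group $H$ with polynomial FFFL function $\Phi(n)$. By Proposition \ref{propos}, there is a non-deterministic Turing machine $M$ recognizing the language $\{w : w=_H 1\}$ whose space function is equivalent to $\Phi(n)$, so $M$ uses at most polynomial space. Thus the word problem of $H$ lies in NPSPACE. By Savitch's theorem, $\mathrm{NPSPACE}=\mathrm{PSPACE}$ (in fact with only a quadratic blow-up of space), so the word problem of $H$ is in PSPACE. Finally, since $G$ is a finitely generated subgroup of the finitely presented group $H$, there is a fixed translation $\tau$ sending each generator of $G$ to a fixed word in the generators of $H$; a word $w$ of length $n$ over the generators of $G$ is equal to $1$ in $G$ if and only if $\tau(w)$ is equal to $1$ in $H$, and $|\tau(w)|=O(n)$. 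Feeding $\tau(w)$ into the PSPACE algorithm for $H$ solves the word problem of $G$ in polynomial space, completing the proof.

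The main obstacle, such as it is, lies entirely in the \emph{if} direction: Proposition \ref{propos} only supplies a non-deterministic space bound, while PSPACE is a deterministic class. The key non-group-theoretic ingredient is therefore Savitch's theorem, without which one would only obtain $G\in\mathrm{NPSPACE}$. All other steps (applying Theorem \ref{main1}, pushing the word problem from $H$ down to $G$ via the fixed generator substitution) are routine and essentially formal.
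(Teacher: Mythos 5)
Your proof is correct and follows essentially the same route the paper intends when it says "Similarly to Corollary \ref{cyNP} one immediately deduces"; the forward direction is a direct application of Theorem \ref{main1}, and the reverse direction is Proposition \ref{propos} together with the reduction of the word problem of $G$ to that of $H$ via the fixed embedding. Your explicit invocation of Savitch's theorem to pass from the non-deterministic space bound of Proposition \ref{propos} to the deterministic class PSPACE is exactly the point the paper leaves implicit in its earlier remark that "the space complexity of the word problem in a finitely presented group $G$ does not exceed the FFFL function of $G$," so this is a faithful and slightly more careful spelling-out of the paper's argument rather than a genuinely different one.
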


\subsection{Collins' problem}\label{s:cp}

The conjugacy problem turned out to be much harder to preserve under
embeddings. We have already mentioned the results of Collins and Miller \cite{CM} and Gorjaga and
Kirkinski\u\i  \cite{GK}:  even subgroups of index 2 of
finitely presented groups do not inherit solvability or
unsolvability of the conjugacy problem (see \ref{ss:tcpaq}).

In 1976 D. Collins \cite{KT} posed the following question (problem
5.22): {\em Does there exist a version of the Higman embedding
theorem in which solvability of the conjugacy
problem is preserved?} The solution is given by the following two theorems.

\begin{theorem}[Olshanskii, Sapir \cite{OScol}]\label{th7}
A finitely generated group $H$ has solvable conjugacy problem if and
only if it is Frattini embedded into a finitely presented group $G$
with solvable conjugacy problem.
\end{theorem}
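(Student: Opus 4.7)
The ``if'' direction is routine: recall that $H$ being Frattini embedded in $G$ means that any two elements of $H$ that are conjugate in $G$ are already conjugate in $H$. Thus given $u,v\in H$ presented as words in generators of $H$ (hence also of $G$), they are conjugate in $H$ if and only if they are conjugate in $G$, and the latter is decidable by hypothesis. So the whole content of the theorem is to produce, starting from any finitely generated $H$ with decidable conjugacy problem, a finitely presented Frattini overgroup $G$ in which the conjugacy problem is also decidable.

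For the ``only if'' direction, my plan is to follow the general $S$-machine strategy that underlies \cite{SBR, BORS} but to enhance it so that it is sensitive to conjugacy rather than just to the word problem. Fix a Turing machine $M$ that, on input a pair $(u,v)$ of words in generators of $H$, decides whether $u$ and $v$ are conjugate in $H$. The first step is to convert $M$ into an $S$-machine $\sss$ of the kind used in \cite{SBR}: a multiple HNN-extension of a free group whose configurations simulate computations of $M$. The second step is to embed $\sss$ into a finitely presented group $G$ by adding a single ``hub'' relation together with a few auxiliary cells, using the BORS-style construction with tapes arranged so that every accepted input word is forced into a commutator-type relation with the hub. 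The Frattini embedding $H\hookrightarrow G$ is then built in so that generators of $H$ are read off designated tape letters; this is analogous to the way generators of the subject group are encoded in the BORS proof of Theorem \ref{bors}.

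The main obstacle, and where most of the work will have to go, is controlling \emph{annular} (Schupp) diagrams over the presentation of $G$, since by \ref{tcpaad} the conjugacy problem in $G$ is governed by annular rather than disc diagrams. Specifically, I would need a ``conjugacy snowman decomposition'' that is analogous to the snowman decomposition used in \ref{pnrdf} but cuts an annular diagram $\Delta$ with boundary labels $u,v\in H$ into (i) a bounded collection of annular pieces living over the $\sss$-presentation and (ii) disc subdiagrams attached along hubs. Using the $t_i$-band machinery of Lemma \ref{ms} and Corollary \ref{hnn}, together with the fact that no $t_i$-annulus can survive in a minimal diagram, I would extract from $\Delta$ an accepting annular computation of $\sss$ whose two boundary words are $u$ and $v$. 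The crucial property of $M$ (and hence of $\sss$) is that it accepts $(u,v)$ only when $u\sim_H v$; this will force the tape content of the resulting accepting computation to encode an actual conjugator in $H$, giving $u\sim_H v$. This is the Frattini property.

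The final step is to verify that $G$ itself has solvable conjugacy problem. Here one proceeds in two stages: first, a technical lemma shows that two words $w_1,w_2$ over the generators of $G$ are conjugate in $G$ iff they can be ``reduced,'' using a recursive (in fact linearly bounded) procedure based on band-surgery on annular diagrams, to words lying in a distinguished subgroup isomorphic to $H$ or to words in the free part of $\sss$ whose conjugacy is obvious; second, the residual conjugacy question inside $H$ is decided using the algorithm for $H$ that we started with. The recursiveness of this reduction follows from the same control on annular diagrams developed for the Frattini property, so the two parts of the argument share essentially the same combinatorial engine; this is the feature to be most careful with, since a priori the annular diagrams in $G$ need not be bounded in any simple way by the lengths of their boundary labels.
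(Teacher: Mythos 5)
Your overall framework (simulate a machine by an $S$-machine, glue in a hub, control annular diagrams via band-surgery) is in the right family, but the actual mechanism in \cite{OScol} is rather different and the gaps in your sketch are exactly where the difficulty lives. The paper does \emph{not} start from a Turing machine deciding conjugacy in $H$. Instead it first Higman-embeds $H$ into a finitely presented group $H_1$ preserving solvability of the word problem (e.g.\ via \cite{BORS}), and then uses the Miller $S$-machine $M(H_1)$ of \ref{ss:tmm}, whose defining feature is that $qw\sim q$ in $M(H_1)$ iff $w=1$ in $H_1$; the \emph{word} problem of $H_1$ (not the conjugacy problem of $H$) is what the machine encodes, and conjugacy statements are extracted through the geometry. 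Your claim that having $\sss$ accept $(u,v)$ only when $u\sim_H v$ ``forces the tape content to encode an actual conjugator, giving the Frattini property'' misses the real obstacle: $S$-machines are blind and symmetric (see \ref{ss:tmasm}), so they accept far more computations than the underlying TM, and minimal annular diagrams over the hub presentation (spirals, rolls, etc., as distinguished in \cite{OScol}) can create conjugacies in $G$ that do not trace back to any accepting computation at all. This is not a cosmetic worry --- the whole reason the theorem is delicate is the Collins--Miller/Gorjaga--Kirkinski\u\i{} phenomenon (\ref{ss:tcpaq}) that conjugacy solvability is \emph{not} stable under finite extensions, so one must actively design the presentation to forbid spurious conjugations, not merely observe what the machine accepts.

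Two further points your sketch does not account for. First, the construction in \cite{OScol} requires certain tape subwords to remain positive along computations, and the standard positivity-checking $S$-machines of \cite{SBR,BORS} fail here because they can themselves insert negative letters; the fix is to reintroduce Boone--Novikov style $x$-letters together with Baumslag--Solitar relations of the type in Example \ref{example2}, an essential ingredient you omit entirely. Second, your closing claim that deciding conjugacy in $G$ ``uses essentially the same combinatorial engine'' as the Frattini verification is incorrect: as remarked in \ref{s:cp}, some conjugacy instances in $G$ reduce to solving systems of equations in free groups, so the algorithm for $G$ invokes Makanin--Razborov (\ref{ss:oeifg}), a much heavier tool, and no complexity reduction from the conjugacy problem of $H$ to that of $G$ is obtained. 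So while the skeleton of your plan is not unreasonable, as written it glosses over each of the three points that actually make the theorem hard.
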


\begin{theorem}[Olshanskii, Sapir \cite{OScol1}]\label{th8} Every countable recursively presented
group with solvable word and power problems is embeddable into a
finitely presented group with solvable conjugacy and power problem.
\end{theorem}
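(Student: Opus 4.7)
The plan is to build on the machinery developed for Theorem \ref{th7} (the solution of Collins' problem) while enhancing both the hypotheses and the conclusion to handle the power problem. Let $H$ be a countable recursively presented group with solvable word and power problems. First I would reduce to the case where $H$ is finitely generated: by a Higman-Neumann-Neumann style construction one can embed any countable group into a $2$-generator group, and with care this embedding preserves recursivity of the presentation and the solvability of the word and power problems. The latter preservation requires a careful choice of the embedding so that power relations in the larger group can be checked using the power problem in $H$ combined with bookkeeping computations involving the free letters.

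Next I would construct a recursive non-deterministic $S$-machine $\mathcal{S}$ whose language of accepted configurations encodes not just the word problem of $H$, but also the power problem: accepted inputs of one type certify that a given word $w$ equals $1$ in $H$, while accepted inputs of another type certify statements of the form $u=v^n$, with the exponent $n$ recorded on an auxiliary tape. Since both the word and power problems of $H$ are solvable, $\mathcal{S}$ can be built by a direct simulation, and moreover $\mathcal{S}$ can be arranged to satisfy the structural conditions needed for the Sapir-Birget-Rips-style conversion from $S$-machines to finite group presentations with controllable geometry of van Kampen and annular diagrams, as used in the proofs of Theorems \ref{bors} and \ref{th7}.

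Then I would convert $\mathcal{S}$ (together with a hub relation) into a finitely presented group $G$ containing $H$, modifying the construction from \cite{OScol} so that: (i) every element of $H$ retains its identity in $G$, and in fact the embedding is Frattini as in Theorem \ref{th7}; (ii) conjugacy in $G$ is analyzed via a reduction to conjugacy in $H$ using the band/trapezium technology of the $S$-machine applied to minimal annular diagrams, yielding solvability of the conjugacy problem in $G$; (iii) given $u,v\in G$, the question whether $u=v^n$ for some $n$ is reduced, using normal forms inherited from $\mathcal{S}$, either to a direct check in $H$ or to a membership test in the language of $\mathcal{S}$.

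The hard part will be the power problem in $G$. In Theorem \ref{th7} one only has to decide conjugacy, and the proof uses a careful analysis of minimal annular diagrams showing that conjugators in $G$ between elements of $H$ can be chosen to be short and essentially controlled by $H$. For the power problem one additionally has to control centralizers and infinite cyclic subgroups inside $G$: one must rule out unexpected coincidences $u=v^n$ that are witnessed only through the non-$H$ machinery (hubs, $S$-machine letters, bands). I expect this to require a refined version of the snowman decomposition of \cite{SBR}, applied to diagrams whose boundaries have the form $uv^{-n}$, showing that any such diagram either forces $u$ and $v$ into cyclic subgroups of a conjugate of $H$ (where the power problem is solvable by hypothesis) or forces the exponent $n$ to be bounded by a computable function of $|u|+|v|$, so that finitely many candidate values of $n$ suffice. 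Pulling this off cleanly, without destroying the conjugacy analysis needed for (ii), is the principal difficulty.
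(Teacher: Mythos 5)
The paper's own route to Theorem~\ref{th8} is explicitly described as a ``relatively easy application of Theorem~\ref{th7}'': one first passes from $H$ to an intermediate finitely generated recursively presented group with solvable conjugacy (and power) problem --- this is where the hypotheses of solvable word and power problems for $H$ are consumed --- and then invokes the already-established Theorem~\ref{th7}, together with a check that the finitely presented group it produces also has solvable power problem. Your plan instead rebuilds the entire $S$-machine machinery of \cite{OScol} from scratch, with extra tape structure to encode the power problem and a ``refined snowman decomposition'' of \cite{SBR} for diagrams with boundary $uv^{-n}$. Even if this could all be carried out, it is vastly more work than the statement requires; the whole point of recording Theorem~\ref{th8} as a separate result in \cite{OScol1} is that the heavy lifting has already been done in \cite{OScol}.

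There is also a concrete logical gap in your step~(ii). You assert that ``conjugacy in $G$ is analyzed via a reduction to conjugacy in $H$ $\ldots$ yielding solvability of the conjugacy problem in $G$.'' But solvability of the conjugacy problem in $H$ is \emph{not} among the hypotheses of Theorem~\ref{th8}; only solvable word and power problems are assumed. The annular-diagram analysis of \cite{OScol} does reduce conjugacy in $G$ to conjugacy in the base group, and that is precisely why the input group in Theorem~\ref{th7} is required to have solvable conjugacy problem. Feeding the raw $H$ of Theorem~\ref{th8} directly into that machine therefore dead-ends at an undecidable subproblem. To repair this you must first embed $H$ into a larger finitely generated group whose conjugacy problem you can solve --- exactly the step the paper's short derivation supplies and your sketch omits. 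The remaining step~4 is offered only as an expectation (``I expect this to require$\ldots$'') and cannot be credited as a proof as written.
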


Recall that a subgroup $H$ of a group $G$ is {\em \index{Frattini embedded subgroup}Frattini embedded} in
$G$ if every two elements of $H$ that are conjugate in $G$ are also
conjugate inside $H$. It is clear that if $H$ is Frattini embedded into $G$ and $G$ has solvable conjugacy problem, then $H$ also has solvable conjugacy problem. We say that $G$ has solvable {\em \index{Power problem}power
problem} if there exists an algorithm which, given $u, v$ in $G$
says if $v=u^n$ for some $n\ne 0$.

Theorem \ref{th8} is a relatively easy application of Theorem \ref{th7}.

In the same Problem 5.22 of \cite{KT} Collins asked whether a version of Higman embedding can preserve the recursively enumerable hierarchy (the recursive sets are at the bottom of that hierarchy). The affirmative answer is also given in \cite{OScol}.

The construction in \cite{OScol} is much more complicated than in
\cite{BORS} or \cite{OlFFFL}. First we embed $H$ into a finitely presented group
$H_1$ preserving the solvability of the word problem (say, one can use \cite{BORS}). Then we use
the Miller $S$-machine $M(H_1)$ (see \ref{ss:tmm}) to solve the word problem in $H$. In
order to overcome technical difficulties, we needed certain parts of
words appearing the computation to be always positive\footnote{A word is called {\em \index{Word!positive}positive} if it does not contain inverses of the alphabet letters.}. The standard
positivity checkers do not work because they are $S$-machines as
well, and can insert negative letters. So we used some ideas from
the original Boone-Novikov proofs. That required introducing new
generators, $x$-letters (in addition to the $a$-, $q$-, and
$\theta$-letters in $S$-machines) and Baumslag-Solitar relations (as in Example \ref{example2}). In
addition, to analyze the conjugacy problem in $G$, we had to
consider annular diagrams which are more complicated than \vk disc
diagrams. Different types of annular diagrams (spirals, roles, etc.)
required different treatment.

We do not have any reduction of the complexity of the conjugacy
problem in $H$ to the complexity of the conjugacy problem in $G$. In
particular, solving the conjugacy problem in $G$, in some cases
required solving systems of equations in free groups (i.e. the
Makanin-Razborov algorithm, see \ref{ss:oeifg}).

\subsection{A finitely presented non-amenable group without free subgroups}\label{s:afpnagwfs}

One of the most important applications of our versions of Higman
embeddings so far was the construction of a finitely presented
counterexample to the von Neumann problem, i.e. a finitely presented
non-amenable group without non-Abelian free subgroups \cite{OSamenab}.

\subsubsection{Short history of the problem}\label{ss:shotp} Suppose that a group $G$ acts on a metric space $X$ by isometries. We say that the action is paradoxical if $X$ can be decomposed as a disjoint union $A_1\sqcup \ldots \sqcup A_m\sqcup B_1\sqcup\ldots \sqcup B_n$ ($m,n>1$) such that for some $g_1,\ldots, g_m, h_1,\ldots, h_n$ of $G$ we have $X=g_1A_1\cup \ldots \cup g_mA_m=h_1B_1\cup\ldots \cup h_nB_n$. Hausdorff \cite{Haus}
proved that the action of $SO(3)$ on the unit sphere minus a countable set of points is paradoxical:
one can subdivide the 2-sphere minus a countable
set of points into 3 parts $A$, $B$, $C$, such that the union of any two of these parts can be obtained by
rotating the third part (hence we can cut a sphere into a finite number of pieces, rotate these pieces and obtain two spheres of the same radius). Banach and Tarski \cite{BT} generalized
Hausdorff's result and proved what is now known as Banach-Tarski paradox.
Von Neumann
\cite{vN} noticed that the cause of the paradoxes is the structure of the group $G$. In particular the cause of the Hausdorff paradox is that $\SO_3(\R)$ contains a non-Abelian free subgroup. He called the groups that cannot act paradoxically {\em \index{Group!amenable}amenable}. Von
Neumann showed that the class of amenable groups contains Abelian
groups, finite groups and is closed under taking subgroups,
extensions, and infinite unions of increasing sequences of groups.
Day \cite{day} and Specht \cite{Specht} showed that this class is
closed under homomorphic images. The class of groups without
non-Abelian free subgroups  is also closed under these operations and
contains Abelian and finite groups.

The problem of existence of non-amenable groups without non-Abelian
free subgroups probably goes back to von Neumann and became known as
the ``von Neumann problem" in the fifties.

First counterexamples to the von Neumann problem were constructed by
Olshanskii \cite{OlAmen}. He proved that the Tarski monsters, both
torsion-free and torsion (see \cite{Olbook}), are not amenable. Later
Adian \cite{Adian} showed that the non-cyclic \index{Free Burnside group}free Burnside group of
odd exponent $n\ge665$ with at least two generators, that is the
group given by the presentation $$\langle a_1,\ldots, a_m\mid u^n=1,
\hbox{ for all group words $u$ in the alphabet }
\{a_1,\ldots, a_m\} \rangle,$$ is not amenable. Recently L\"uck and Osin \cite{LO} constructed the first example of residually finite non-amenable torsion group.

All these examples are not finitely presented. For the Tarski monsters and Burnside groups this is because these groups are not hyperbolic but are inductive limits of hyperbolic groups (see \ref{ss:qohg}), a similar argument applies to the groups of L\"uck and Osin.
The question about existence of finitely presented counterexample to von Neumann's problem was explicitly formulated by Grigorchuk
in \cite{KT} and by J. Cohen in \cite{Cohen}. There exists one
finitely presented group without 
non-Abelian free subgroups for which the problem of amenability is
non-trivial: it is the  R.Thompson's group $F$ (for the definition of $F$
look in Section \ref{td}). The question of whether $F$ is not
amenable was formulated by R. Thompson in the beginning of the 70s (unpublished), and, in print, by R. Geoghegan in 1979. A considerable
amount of work has been done to answer this question but it is still
open.

\subsubsection{The result}\label{ss:tr5}

Together with A. Olshanskii, we proved the following theorem.

\begin{theorem}[Olshanskii, Sapir \cite{OSamenab}] \label{main} For every sufficiently
large odd $n$, there exists a finitely presented group \label{g1}
${\mathcal G}$ which satisfies the following conditions.
\begin{enumerate}
\item\label{22} ${\mathcal G}$ is an HNN-extension of a finitely generated
infinite group of exponent $n$.
\item\label{33} ${\mathcal G}$ is an extension of a group of
exponent $n$ by an infinite cyclic group.
\item\label{44} ${\mathcal G}$ contains a subgroup isomorphic to the free
Burnside group of exponent $n$ with $2$ generators.
\item\label{55} ${\mathcal G}$ is a non-amenable finitely presented group
without free non-cyclic subgroups.
\end{enumerate}
\end{theorem}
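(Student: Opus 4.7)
The plan is to construct $\mathcal{G}$ by Higman-embedding Adian's free Burnside group into a finitely presented group via an $S$-machine, in the spirit of the constructions of \S\ref{hem}, but with the crucial additional requirement that the kernel of a natural map $\mathcal{G}\to\Z$ have exponent $n$. First I would fix a sufficiently large odd $n$ and invoke Adian \cite{Adian, Ad}: the free Burnside group $B=B(m,n)$ is infinite and non-amenable for $m\ge 2$, and has decidable word problem by Novikov--Adian. There is therefore an $S$-machine $M$ recognizing the word problem of $B$, and the construction of \cite{SBR, BORS} produces from $M$ a finitely presented group containing $B$. The task is to modify this construction so that the ambient group has the HNN-over-Burnside structure (1) and the exponent-$n$-by-$\Z$ structure (2).

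The crux of the argument is to arrange the presentation so that the subgroup $H\le \mathcal{G}$ generated by the ``tape letters'' (i.e.\ the generators corresponding to those of $B$, together with the auxiliary $q$- and $\theta$-letters of $M$ other than one distinguished $\theta$-letter, which plays the role of the stable letter $t$) is an infinite group of exponent $n$, and so that $\mathcal{G}$ is precisely an HNN-extension of $H$ with stable letter $t$. This is done by adjoining finitely many \emph{Burnside hubs}: relations of the form $w^n=1$ for carefully chosen short words $w$ on the tape letters, analogous to the ``hub'' relations of \cite{SBR} but imposing a Burnside rather than an acceptance condition. One then proves, by induction on the length of $u$, that every $u\in H$ satisfies $u^n=1$ in $\mathcal{G}$: the $S$-machine reduces $u$ to one of the short forms on which a Burnside hub is already posted, and the resulting reduction is lifted back through $\theta$- and $q$-bands (as in \ref{hnne}--\ref{pnrdf}) to produce a van Kampen diagram with boundary $u^n$. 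The same argument, applied to elements of the full kernel $N=\ker(\mathcal{G}\to\Z)$ (where $t\mapsto 1$, $H\mapsto 0$), shows that $N$ has exponent $n$, yielding (2); the HNN structure over $H$ yields (1).

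Property (3) requires that the embedded copy of $B(2,n)$ not collapse under the new presentation. This is ensured by choosing $M$ to \emph{faithfully} solve the Burnside word problem, and by the Frattini-type injectivity technique of \cite{OScol} applied to the auxiliary $S$-machine letters. Property (4) then follows: non-amenability of $\mathcal{G}$ is inherited from the non-amenability of $B(2,n)\hookrightarrow \mathcal{G}$ by Adian; and any non-abelian free subgroup $F\le\mathcal{G}$ would project to $\Z$ either trivially, in which case $F\subseteq N$ would be a free subgroup of the exponent-$n$ group $N$ (impossible), or onto an infinite cyclic image, in which case $F\cap N$ would be a free subgroup of infinite rank inside $N$ (again impossible).

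The main obstacle is simultaneously controlling two infinite families of relations through a finite presentation: the Burnside identities $w^n=1$ for \emph{all} $w\in N$ (not merely $w\in H$), and the tape-word identities supplied by $M$. Verifying that the finitely many Burnside hubs, combined with the HNN machinery, genuinely force exponent $n$ on all of $N$ --- which contains elements of the form $t^{k_1}h_1t^{-k_1}\cdots t^{k_j}h_jt^{-k_j}$ --- requires a detailed diagrammatic analysis of the interaction between $\theta$-bands, $q$-bands, and Burnside hubs, relying on the geometry of diagrams over Burnside presentations developed by Adian, Novikov and Olshanskii. The delicate balance is to impose \emph{enough} Burnside hubs to make $N$ of exponent $n$, but not so many as to collapse the copy of $B(2,n)$ living inside $H$; striking this balance is the central technical content of \cite{OSamenab}.
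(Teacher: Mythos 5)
Your overall scaffolding is right — Higman-embed $B(m,n)$ into a finitely presented group via an $S$-machine, then obtain $\mathcal G$ as an HNN-extension whose kernel over $\Z$ has exponent $n$, and deduce (4) from (2) and (3) by the projection-to-$\Z$ argument — but the central mechanism you propose for forcing exponent $n$ is not the one in the paper, and it is the place where your argument would break.

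You suggest adjoining finitely many \emph{Burnside hub} relations $w^n=1$ for short words $w$ on the tape letters and then proving by induction on length that every element of the base $H$ and of the kernel $N$ has order dividing $n$, the $S$-machine supposedly reducing long words to the short posted ones. This does not work as stated: the $S$-machine from the Higman embedding is built to recognize the word problem of $B(m,n)$ on the $a$-letters, not to normalize arbitrary words involving $q$- and $\theta$-letters, so there is no reason a finite list of $w^n=1$ relations, propagated through $\theta$- and $q$-bands, will cover all of $H$, let alone all of $N$ (which contains all $t$-conjugates of $H$). You would be trying to encode the infinitely many Burnside identities by brute force into the presentation, and you flag this yourself as ``the central technical content'' — but the paper avoids the difficulty entirely rather than confronting it head-on.

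The actual construction in \cite{OSamenab} imposes \emph{no} $n$th-power relations at all. Start from the finitely presented Higman host ${\mathcal G}'=\langle{\mathcal C}\mid{\mathcal R}\rangle$ containing a copy $\langle{\mathcal B}\rangle\cong B(m,n)$ with ${\mathcal B}\subset{\mathcal C}$. Introduce a fresh copy ${\mathcal A}=\{a_1,\dots,a_m\}$ of ${\mathcal B}$, a stable letter $t$, the relations $t^{-1}a_it=b_i$, and finitely many relations $y=u_y$ ($y\in{\mathcal C}$) writing every old generator as a small-cancelation word in the new letters ${\mathcal A}$. The key point is structural, not diagrammatic: the $u$-relations make $\langle{\mathcal C}\rangle\subseteq\langle{\mathcal A}\rangle$, so the HNN relations give $t^{-1}\langle{\mathcal A}\rangle t=\langle{\mathcal B}\rangle$, and since $\langle{\mathcal B}\rangle$ has exponent $n$ a priori (it is the Higman-embedded copy of $B(m,n)$), the conjugation forces $\langle{\mathcal A}\rangle$ to have exponent $n$ too, with no new $n$th-power relations needed. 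The kernel $N=\ker({\mathcal G}\to\Z)$ is then the ascending union $\bigcup_s t^s\langle{\mathcal A}\rangle t^{-s}$ of conjugates of $\langle{\mathcal A}\rangle$, hence is of exponent $n$, giving (1) and (2) simultaneously. The hard work — analogous in spirit to what you call ``the delicate balance'' — is instead proving that the $u$-relations (imposed with a strong small-cancelation condition and analysed via a generalization of $0$-cells, see \ref{t0c}) do not collapse the embedded $B(2,n)$; that is where the serious diagrammatic analysis over Burnside-type presentations enters. So you have correctly identified \emph{where} the difficulty lies but misidentified both the presentation and the key algebraic idea that makes the exponent-$n$ condition come for free.
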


By a theorem of Adian \cite{Adian},
part (\ref{44}) implies that ${\mathcal G}$ is not amenable. Thus parts
(\ref{22}) and (\ref{44}) imply part (\ref{55}).

\subsubsection{The proof}\label{ss:tpr}

Let us present the main ideas of our construction. We first embed
the free Burnside group \label{bmn}$B(m,n)=\langle {\mathcal B}\rangle$
of odd exponent $n\gg 1$ with $m>1$ generators $\{b_1,\ldots, b_m\}={\mathcal
B}$ into a finitely presented group ${\mathcal G}'=\langle {\mathcal C}\mid
{\mathcal R}\rangle$ where ${\mathcal B}\subset {\mathcal C}$. This is done as
in \cite{BORS, OSsur} using an explicitly constructed $S$-machine. Then we take a copy \label{a11}${\mathcal
A}=\{a_1,\ldots, a_m\}$ of the set ${\mathcal B}$, and a new generator ${
t}$, and consider the group given by generators ${\mathcal C}\cup {\mathcal
A}\cup \{t\}$ and the following three sets of relations.

\begin{enumerate}
\item[(1)] \label{rrel} the set ${\mathcal R}$ of the relations of
the finitely presented group ${\mathcal G}'$;
\item[(2)] \label{urel}($u$-relations) $y=u_y$, where $u_y, y\in {\mathcal C},$
is a certain word in ${\mathcal A}$ satisfying the small cancelation condition $C'(\lambda)$ for a very small $\lambda$;
\item[(3)] \label{rhorel} (HNN-relations) $t\iv a_it =b_i,
i=1,\ldots, m$; these relations make $\la {\mathcal A}\ra$ a conjugate of
its subgroup of exponent $n$ (of course, the group $\la {\mathcal A}\ra$
gets factorized).
\end{enumerate}
The resulting group ${\mathcal G}$ is obviously generated by the set
${\mathcal A}\cup \{{t}\}$ and is an HNN-extension of its
subgroup $\langle {\mathcal A}\rangle$ with the stable letter ${t}$.
Every element in $\la {\mathcal A}\ra$ is a conjugate of an element of
$\langle {\mathcal B}\rangle$, so $\la {\mathcal A}\ra$ is an $m$-generated
group of exponent $n$. This immediately implies that ${\mathcal G}$  is
an extension of a group of exponent $n$ (the union of increasing
sequence of subgroups ${t}^s\la {\mathcal A}\ra {t}^{-s},
s=1,2,\ldots )$ by a cyclic group. It turns down that $\la {\mathcal A}\ra$ contains a copy of
the free Burnside group $B(2,n)$. Thus the group is non-amenable by Adian \cite{Adian} (see \cite{OSamenab} and \cite{SaICM} for details).

\section{Methods: $S$-machines and related constructions}\label{msmarc}

Several theorems discussed in this survey have been proved using $S$-machines, a natural blend of Turing machines and groups (see Theorems \ref{quadconj1}, \ref{th1}, \ref{alpha0}, \ref{th2}, \ref{nq}, \ref{bors}, \ref{realiz},  \ref{main1}, \ref{th7}, \ref{main}). Here we shall present a short introductions to $S$-machines.

\subsection{The definition}\label{s:td} There are several ways to look at $S$-machines introduced in \cite{SBR}. One can view them as a version of Turing machines (see \ref{tm}), as rewriting systems (as in \cite{SBR}), as groups that are certain multiple HNN-extensions of free groups (see \cite{OSnlogn}) or as semigroups of partial 1:1-transformations of the set of admissible words as in \cite{OSamenab}. For different applications, one needs different points of view.
Probably the easiest way to introduce $S$-machines is by defining them as multiple HNN-extension of a free
group.

\subsubsection{The Miller machine} \label{ss:tmm} Let us start with an
example that we call the {\em \index{Miller machine}Miller
machine}. This important example is due to C. Miller \cite{Mi1}. Let $G=\la
X\mid R\ra$ be a finitely presented group. The Miller machine is the
group $M(G)$ generated by $X\cup \{q\}\cup \{\theta_x\mid x\in
X\}\cup\{\theta_r\mid r\in R\}$ subject to the following relations
$$\theta x=x\theta,\quad \theta_x xq=qx\theta_x,\quad \theta_r q=qr\theta_r$$
where $\theta$ is any letter in $\Theta=\{\theta_x\mid x\in
X\}\cup\{\theta_r\mid r\in R\}$. Clearly, this is an HNN-extension
of the free group $\la X, q\ra$ with free letters $\theta\in
\Theta$. The main feature of $M(G)$ discovered by Miller is that
{\em $M(G)$ has undecidable conjugacy problem provided $G$ has
undecidable word problem.} In fact it is easy to see that $qw$ is
conjugated to $q$ in $M(G)$ if and only if $w=1$ in $G$.

To see that $M(G)$ can be viewed as a version of a Turing machine, consider any word
$uqv$ where $u, v$ are words in $X\cup X\iv$. If we conjugate $uqv$
by $\theta_r$, we get the word $uqrv$ because $\theta_r
q=qr\theta_r$ and $\theta_r$ commutes with $u$ and $v$ (here and
below we do not distinguish words that are freely equal). Hence
conjugation by $\theta_r$ amounts to executing a command $[q\to
qr]$. Similarly, conjugation by $\theta_x$ amounts to executing a
command $[q\to x\iv qx]$. If $u$ ends with $x$, then executing this
command means moving $q$ one letter to the left. Thus conjugating
words of the form $uqv$ by $\theta$'s and their inverses, we can
move the ``head" $q$ to the left and to the right, and insert
relations from $R$.

The work of the Miller machine $M(G)$ can be drawn in the form of a
van Kampen diagram (see Figure \ref{fig1}) that we call a {\em \index{Trapezium}trapezium}. It consists of horizontal $\theta$-bands. The bottom side of
the boundary of the trapezium is labeled by the first word in the
computation ($uqv$), the top side is labeled by the last word in the
computation ($q$), the left and the right sides are labeled by the
{\em history of computation}, the sequence of $\theta$'s and their
inverses corresponding to the commands used in the {\em computation}
$uqv\to\ldots \to q$. The words written on the top and bottom sizes of
the $\theta$-bands are the intermediate words in the computation. We
shall always assume that they are freely reduced.

\begin{center}
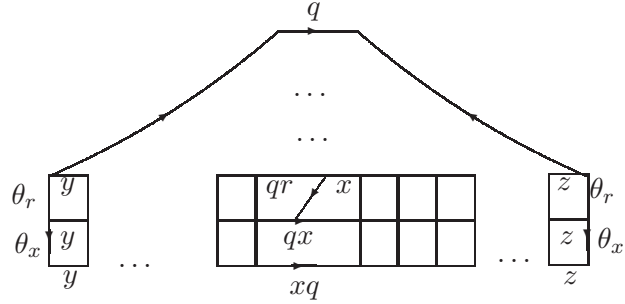
\begin{figure}[H]
\unitlength .7mm 
\linethickness{0.4pt}
\ifx\plotpoint\undefined\newsavebox{\plotpoint}\fi 
\begin{picture}(122.75,59)(0,0)
\put(55.25,10.75){\framebox(19.75,8.5)[cc]{}}
\put(55.25,19.25){\framebox(19.75,8.5)[cc]{}}
\put(75,10.75){\framebox(7.25,8.5)[cc]{}}
\put(75,19.25){\framebox(7.25,8.5)[cc]{}}
\put(82.25,10.75){\framebox(7.25,8.5)[cc]{}}
\put(82.25,19.25){\framebox(7.25,8.5)[cc]{}}
\put(89.5,10.75){\framebox(7.25,8.5)[cc]{}}
\put(89.5,19.25){\framebox(7.25,8.5)[cc]{}}
\put(111,11){\framebox(7.25,8.5)[cc]{}}
\put(111,19.5){\framebox(7.25,8.5)[cc]{}}
\put(48,10.75){\framebox(7.25,8.5)[cc]{}}
\put(48,19.25){\framebox(7.25,8.5)[cc]{}}
\put(16,10.75){\framebox(7.25,8.5)[cc]{}}
\put(16,19.25){\framebox(7.25,8.5)[cc]{}}
\put(32.25,10.75){\makebox(0,0)[cc]{$\ldots$}}
\put(104.25,11.75){\makebox(0,0)[cc]{$\ldots$}}
\put(64.5,6){\makebox(0,0)[cc]{$xq$}}
\put(63.25,15.5){\makebox(0,0)[cc]{$qx$}}
\put(20,8){\makebox(0,0)[cc]{$y$}}
\put(19.5,15.5){\makebox(0,0)[cc]{$y$}}
\put(114.25,16){\makebox(0,0)[cc]{$z$}}
\put(115,8.5){\makebox(0,0)[cc]{$z$}}
\put(12,14.75){\makebox(0,0)[cc]{$\theta_x$}}
\put(122.75,15.5){\makebox(0,0)[cc]{$\theta_x$}}
\put(64.38,10.75){\vector(1,0){.07}}\put(62,10.75){\line(1,0){4.75}}
\put(64.63,19.25){\vector(1,0){.07}}\put(62.75,19.25){\line(1,0){3.75}}
\put(16,15.25){\vector(0,-1){.07}}\put(16,17.5){\line(0,-1){4.5}}
\put(118.13,15.63){\vector(0,-1){.07}}\multiput(118.25,17.75)(-.03125,-.53125){8}{\line(0,-1){.53125}}
\put(65.5,23.5){\vector(-2,-3){.07}}\multiput(68.5,27.75)(-.03370787,-.04775281){178}{\line(0,-1){.04775281}}
\put(11.25,24){\makebox(0,0)[cc]{$\theta_r$}}
\put(121,25){\makebox(0,0)[cc]{$\theta_r$}}
\put(19.5,25.75){\makebox(0,0)[cc]{$y$}}
\put(72,25.75){\makebox(0,0)[cc]{$x$}}
\put(113.75,26.25){\makebox(0,0)[cc]{$z$}}
\put(59.75,25.25){\makebox(0,0)[cc]{$qr$}}
\put(66,34.75){\makebox(0,0)[cc]{$\ldots$}}
\put(65.5,43.25){\makebox(0,0)[cc]{$\ldots$}}
\put(67,55.25){\vector(1,0){.07}}\put(59.5,55.25){\line(1,0){15}}
\put(66.25,59){\makebox(0,0)[cc]{$q$}}
\put(38.75,39.75){\vector(3,2){.07}}\qbezier(16,27.75)(39.75,38)(59.5,55.25)
\put(95.25,39.75){\vector(-3,2){.07}}\qbezier(118,27.75)(94.25,38)(74.5,55.25)
\end{picture}
\caption{Trapezium of the Miller machine for a deduction
$uqv\to\ldots \to q$. Here $u=y\ldots x$, $v=\ldots z$.} \label{fig1}
\end{figure}
\end{center}

Viewed as  a Turing machine, the Miller machine has one tape and one state letter.
\subsubsection{The formal definition}  Like Turing machine (see \ref{tm}) general
{\em \index{$S$-machine}$S$-machines} can have many tapes and many state letters. Here is a
formal definition.

Let $F(Q,Y)$ be the free group generated by two sets of letters
$Q=\sqcup_{i=1}^N Q_i$ and $Y=\sqcup_{i=1}^{N-1} Y_i$ where $Q_i$ are
disjoint and non-empty (below we always assume that $Q_{N+1}=Q_1$,
and $Y_N=Y_0=\emptyset$).


The set $Q$ is called the set of $q$-letters, the set $Y$ is called
the set of $a$-letters.

In order to define an HNN-extension, we consider also a collection
$\Theta$ of $N$-tuples of $\theta$-letters. Elements of $\Theta$ are
called {\em \index{$S$-machine!rule of}rules}. The components of $\theta$ are called {\em
brothers} $\theta_1,\ldots, \theta_N$. We always assume that all
brothers are different. We set $\theta_{N+1}=\theta_1$.

With every $\theta\in \Theta$, we associate two sequences of
elements in $F(Q\cup Y)$: $B(\theta)=[U_1,\ldots, U_N]$,
$T(\theta)=[V_1,\ldots, V_N]$, and a subset $Y(\theta)=\cup Y_i(\theta)$
of $Y$, where $Y_i(\theta)\subseteq Y_i$.

The words $U_i, V_i$ satisfy the following restriction:

\begin{itemize}
\item[(*)] For every $i=1,\ldots, N$, the words $U_i$ and $V_i$ have the form
$$U_i=v_{i-1}k_iu_i, \quad V_i=v_{i-1}'k_{i}'u_{i}'$$ where $k_{i}, k_{i}'\in
Q_{i}$, $u_{i}$ and $u_{i}'$ are words in the alphabet $Y_{i}^{\pm
1}$, $v_{i-1}$ and $v_{i-1}'$ are words in the alphabet
$Y_{i-1}^{\pm 1}$.
\end{itemize}

Now we are ready to define an $S$-machine $\sss$ by generators and
relations. The generating set $X$ of the $S$-machine $\sss$ consists
of all $q$-, $a$- and $\theta$-letters. The relations are:

$$U_i\theta_{i+1}=\theta_i V_i,\,\,\,\, i=1,\ldots, s, \qquad \theta_j a=a\theta_j$$
for all $a\in Y_j(\theta)$.

Every $S$-rule $\theta=[U_1\to V_1,\ldots, U_s\to V_s]$ has an inverse
$\theta\iv=[V_1\to U_1,\ldots, V_s\to U_s]$; we set
$Y_i(\theta\iv)=Y_i(\theta)$.

\begin{rk} {\rm Every $S$-machine is indeed an HNN-extension of
the free group $F(Y,Q)$ with finitely generated associated
subgroups. The free letters are $\theta_1$ for every $\theta\in
\Theta$. We leave it as an exercise to find the associated
subgroups.}
\end{rk}

\subsubsection{Turing machines as $S$-machines}\label{ss:tmasm} Every Turing machine $T$
(see the definition in \ref{tm}) can be considered as an $S$-machine $S'(T)$
in the natural way: the generators of the free group are all tape
letters and all state letters. The commands of the Turing machine
are interpreted as rules of the $S$-machine. The main problem in
that conversion is the following: there is a much bigger freedom in
applying $S$-rules than in executing the corresponding commands of
the Turing machine. Indeed, a Turing machine is in general not
{\em \index{Turing machine!symmetric}symmetric} (i.e. if $[U\to V]$ is a command of the Turing
machine then $[V\to U]$ is usually not) while every $S$-machine is
symmetric. Another - more important - difference is that Turing machines work only with
positive words, and $S$-machines work with arbitrary group words. This, for example, means that the rules $[aqb\to cqd]$ and $[q\to a\iv cqdb\iv]$ are equivalent (the corresponding group presentations are equivalent). Hence an $S$-machine is blind: it decides which rule to apply based solely on the state letters of the configuration, it does not ``see" the content of the tape  (unlike the Turing machine which takes into account the tape letters observed by the head). Hence the language accepted by $S'(T)$ is usually much bigger than
the language accepted by $T$.

Nevertheless, it can be proved that if $T$ is symmetric, and a
computation $w_1\to w_2\to\ldots $ of the $S$-machine $S'(T)$ involves
only positive words, then that is a computation of $T$.

This leads to the following idea of converting any Turing machine
$T$ to an $S$-machine $S(T)$. First we construct a symmetric
Turing machine $T'$ that is equivalent to $T$ (recognizes the same
language). That is a fairly standard Computer Science trick (see
\cite{SBR}).

The second step is to compose the $S$-machine $S'(T')$ with a
machine that checks positivity of a word. That machine starts
working after every step of $S'(T')$. That is if an application of a
rule of $S'(T')$ gives a non-positive (reduced) word, then the
checking machine does not allow the machine $S'(T')$ to proceed to
the next step.

There are several checking machines. One of them - the {\em \index{Adding machine}adding
machine} - is very simple but its time function is exponential (see
\cite{OSnlogn}). Another one is very complicated but it has a
quadratic time function (it was first constructed in \cite{SBR} and used in \cite{SBR,BORS}).

\subsection{The conjugacy problem}\label{s:tcp}

If a Turing machine $M$ has undecidable halting problem, then the $S$-machine $S(M')$ (where $M'$ is a symmetric Turing machine that is equivalent to $M$) has undecidable conjugacy problem \cite{SBR}. It is easy to see that the Dehn function of any $S$-machine is at most cubic. The adding machine used for checking positivity of admissible words exponentially slows down the machine. A nice consequence  is that the Dehn function of  the resulting $S$-machine is $n^2\log n$ \cite{OSnlogn} which is the minimal possible for multiple HNN-extensions of free groups having undecidable conjugacy problem by \ref{ss:tsbn}. Note that although the upper bound of $n^2\log n$ is easy to anticipate, a proof of it involves very non-trivial combinatorics of van Kampen diagrams (some Vassiliev-type invariants of chord diagrams are used, see \cite{OSnlogn}).

\subsection{The word problem} \label{s:twp} To obtain a group with undecidable word problem and, more generally, with a prescribed  Dehn function, we do the following. Take an $S$-machine $\sss$, with many tapes. In order to make the number of tapes large enough, just glue the copies of the same $S$-machine side-by-side so that the trapezia of the new $S$-machine are glued from copies of trapezia of the old $S$-machine as in \ref{ss:rp2}. Let $h$ be the accept word of the $S$-machine (all tapes are empty, all state letters are in the accept state). Let the presentation of the group $G(\sss)$ be obtained from the presentation of $\sss$ by adding one {\em \index{Hub}hub} relation $h=1$. Now if $W$ is an admissible word accepted by $\sss$, we can take the corresponding trapezium $\Delta$, identify the left and right sides of it (labeled by the history of computation) to obtain an annulus with $W$ on the outside boundary component and $h$ on the inside boundary component, then glue the cell corresponding to the hub relation to the inside boundary component to obtain a disc $\Delta'$. This van Kampen diagram shows that if $W$ is accepted by $\sss$, it is equal to 1 in $G(\sss)$. The fact that if $W=1$ in $G(\sss)$ then $W$ is accepted by $\sss$ is proved by some small cancelation argument using the fact that the number of tapes is large. Thus, for example, if $\sss$ has undecidable halting problem, then $G(\sss)$ has undecidable word problem. Proving that the Dehn function of $G$ polynomial in terms of the time function of $\sss$ is much harder \cite{SBR} and requires the snowman decomposition discussed in \ref{pnr}.

\subsection{The embedding}\label{te1} Let now $P=\la X\mid R\ra$ be a finitely generated group with recursively enumerable set of relators $R$. Take an $S$-machine $\sss$ that recognizes the language $R$. We can assume that $\sss$ has sufficiently many tapes, and in tape 1 (between the state letters $k_1,k_2$) we have the input word in the input configuration. Consider now another $S$-machine $\sss'$ which is a copy of $\sss$ (with copies of the state letters) but it does not do anything in the input sector, so during the work of $\sss'$ the input sector is always empty. We can also assume that the state letters of the input configurations in $\sss, \sss'$ do not appear in the middle of (reduced) computations, the input state letters of $\sss$ and $\sss'$ coincide, all other state letters are different.  Now let $H$ be the amalgamated product of $G(\sss)$ and $G(\sss')$ with the amalgamated subgroup generated by the input state letters and all tape letters. Let $W(u)$ be the input configuration of $\sss$ corresponding to the input word $u$. It has subword $u$ between state letters $k_1$ and $k_2$. Let $W(u)'$ be the word $W(u)$ with this occurrence of $u$ deleted. Then $W(u)=1$ in $G(\sss)$(and thus $u\in R$) and $W'(u)=1$ in $G(\sss')$. Hence $u=1$ in $H$. Therefore every $u\in R$ is equal to 1 in $H$ and there exists a homomorphism from $P$ to $H$. It can be proved that this homomorphism is injective. This proves the Higman embedding theorem. Proving that $H$ has isoperimetric  function that polynomially depends on the time function of $\sss$ is of course more complicated (see \cite{OSsur}). Again it involves cutting diagrams into pieces which are treated separately.

\section{Open problems}\label{op}

Here we collect the open problems mentioned in the text. It may be more convenient for the reader to have all the problems in one place.

\begin{prob}[See \ref{ss:tcc}] Is there (time or space) complexity bound for the word problem in a finitely presented residually finite group. More precisely, what is the highest class in the time complexity hierarchy \cite{GJ} where the word problem in every finitely presented residually finite group belongs? An even more bold question: is the word problem of every finitely presented residually finite group in NP?
\end{prob}

\begin{prob} [D. Cohen \cite{Coh}, also attributed to Stallings, see \ref{ss:tif}] Is it true that for every finitely presented group $G$ there exists a constant $a>1$ such that $$f(n)\le a^{g(n)}$$ where $f, g$ are, respectively, the Dehn function and the isodiametric function of $G$.
\end{prob}

\begin{prob}[See \ref{ss:tchtt}] Find a common cause of the theorems of Cartan-Hadamard type from \ref{ss:tchtt}. \end{prob}

\begin{prob}[See \ref{ss:wr}] Find a combinatorial proof of Theorem \ref{Wen}. Is the Dehn function of the central square of $G_{10}$ equivalent to $n^2\log n$?
\end{prob}\begin{prob}[Rips, see \ref{ss:rc}] Is $F$ automatic?
\end{prob}\begin{prob}[See \ref{ss:rc}] Is $F$ semihyperbolic?
\end{prob}\begin{prob}[E. Rips, see \ref{ss:rp2}] Is the conjugacy problem solvable in every finitely presented group with quadratic Dehn function?
\end{prob}

\begin{prob}[See \ref{ss:tcpiag}] Can Remark \ref{rk91} be applied to prove that automatic groups have decidable conjugacy problem?
\end{prob}

\begin{prob}[See \ref{s:acogw}] Is it true that every finitely generated matrix group over a field (in particular, the field of rational numbers) embeds into a finitely presented group with quadratic Dehn function?
\end{prob}

\begin{prob}[See \ref{s:acogw}] Does every finitely generated group with word problem in NP embed into a group all of whose asymptotic cones are simply connected? Equivalently (by Theorem \ref{bors}): does every finitely presented group with polynomial isoperimetric function embed into a group all of whose asymptotic cones are simply connected?
\end{prob}

\medskip
{\bf About the list of references.} After each reference in the bibliography we put numbers of sections where we cite this reference. Each of the section numbers is the highest section number preceding the reference. Say, Section number 4.1 means that the reference appears in Section 4.1, before 4.1.A.

\printindex

\end{document}